\newcommand*{\R}{{\mathbb R}}
\newcommand*{\N}{{\mathbb N}}
\newcommand*{\Z}{{\mathbb Z}}
\newcommand*{\loc}{{\mathrm{loc}}}
\newcommand*{\eps}{\varepsilon}
\newcommand*{\Om}{\Omega}
\newcommand{\dd}{\text{d}}
\newcommand{\abs}[1]							
{\left| #1 \right|}
\newcommand{\smallabs}[1]						
{\lvert #1 \rvert}	
\newcommand{\norm}[1]							
{\left\lVert #1 \right\rVert}	
\newcommand{\smallnorm}[1]						
{\lVert #1 \rVert}						
\newcommand{\ip}[2]								
{\left< #1 , #2 \right>}
\DeclareMathOperator{\vol}{vol}					
\DeclareMathOperator{\id}{id}					
\DeclareMathOperator{\tr}{tr}					
\DeclareMathOperator*{\esslim}{ess\,lim}
\renewcommand{\S}{\mathbb{S}}					
\newcommand{\B}{\mathbb{B}}
\renewcommand{\H}{\mathbb{H}}					
\newcommand{\cF}{\mathcal{F}}
\newcommand{\cI}{\mathcal{I}}
\newcommand{\cL}{\mathcal{L}}
\newcommand{\cQ}{\mathcal{Q}}
\newcommand{\cR}{\mathcal{R}}
\newcommand{\cS}{\mathcal{S}}
\providecommand*{\vint}[1]{\mathchoice
          {\mathop{\vrule width 5pt height 3 pt depth -2.5pt
                  \kern -9pt \kern 1pt\intop}\nolimits_{\kern -5pt{#1}}}
          {\mathop{\vrule width 5pt height 3 pt depth -2.6pt
                  \kern -6pt \intop}\nolimits_{\kern -3pt{#1}}}
          {\mathop{\vrule width 5pt height 3 pt depth -2.6pt
                  \kern -6pt \intop}\nolimits_{\kern -3pt{#1}}}
          {\mathop{\vrule width 5pt height 3 pt depth -2.6pt
                  \kern -6pt \intop}\nolimits_{\kern -3pt{#1}}}}
\newcommand*{\jint}{\fint}
\DeclareMathOperator{\lip}{lip}
\DeclareMathOperator{\dist}{dist}
\DeclareMathOperator{\diam}{diam}
\DeclareMathOperator{\rad}{rad}
\DeclareMathOperator{\Mod}{Mod}
\DeclareMathOperator{\pcap}{cap}
\DeclareMathOperator{\len}{len}
\DeclareMathOperator*{\supp}{supp}	
\numberwithin{equation}{section}
\theoremstyle{plain}
\newtheorem{theorem}[equation]{Theorem}
\newtheorem{prop}[equation]{Proposition}
\newtheorem{cor}[equation]{Corollary}
\newtheorem{lemma}[equation]{Lemma}
\theoremstyle{definition}
\newtheorem{defn}[equation]{Definition}
\newtheorem{remark}[equation]{Remark}
\newtheorem{example}[equation]{Example}
\newtheorem*{stass}{Standing Assumptions}
\begin{document}

\title[On homogeneous Newton-Sobolev spaces]
{On homogeneous Newton-Sobolev spaces of functions in metric measure spaces of uniformly locally controlled geometry} 
\author{Ryan Gibara}
\address{Department of Mathematics, Physics and Geology, Cape Breton University, Sydney, NS~B1Y3V3, Canada}
\email{ryan\textunderscore gibara@cbu.ca}
\author{Ilmari Kangasniemi}
\address{Department of Mathematical Sciences, P.O.~Box 210025, University of Cincinnati, Cincinnati, OH~45221-0025, U.S.A.}
\email{kangaski@ucmail.uc.edu}
\author{Nageswari Shanmugalingam}
\address{Department of Mathematical Sciences, P.O.~Box 210025, University of Cincinnati, Cincinnati, OH~45221-0025, U.S.A.}
\email{shanmun@uc.edu}
\thanks{
 N.S.'s work is partially supported by the NSF (U.S.A.) grant DMS~\#2054960. I.K.'s work is partially 
 supported by the NSF (U.S.A.) grant DMS~\#2247469.}

\begin{abstract}
We study the large-scale behavior of Newton-Sobolev functions on
complete, connected, proper, separable metric measure spaces equipped with a Borel measure $\mu$ with 
$\mu(X) = \infty$ and $0 < \mu(B(x, r)) < \infty$ for all $x \in X$ and $r \in (0, \infty)$.
Our objective is to understand 
the relationship between the Dirichlet space $D^{1,p}(X)$, defined using upper gradients, 
and the Newton-Sobolev space $N^{1,p}(X)+\R$, for $1\le p<\infty$.
We show that 
when $X$ is of uniformly locally $p$-controlled geometry,
these two spaces do not coincide under a wide variety of geometric and
potential theoretic conditions. We also show that when the metric measure space is the standard hyperbolic space $\mathbb{H}^n$
with $n\ge 2$, these two spaces coincide precisely when $1\le p\le n-1$. We also provide additional characterizations of when
a function in $D^{1,p}(X)$ is in $N^{1,p}(X)+\R$ in the case that the two spaces do not coincide.
\end{abstract}

\maketitle

\noindent
    {\small \emph{Key words and phrases}: Upper gradients, Sobolev spaces, Dirichlet spaces, uniformly locally controlled geometry,
    	$p$-parabolic ends, $p$-hyperbolic ends, Gromov hyperbolicity, uniformization, standard hyperbolic spaces.
}

\medskip

\noindent
    {\small Mathematics Subject Classification (2020):
Primary: 46E36.
Secondary: 31E05, 30L15, 53C23.
}

\tableofcontents

\section{Introduction}

Throughout this paper, we will assume that $1\le p<\infty$, and that 
$(X,d,\mu)$ is a connected proper (i.e., closed and bounded subsets of $X$ are compact)
metric measure space with $\mu$ a Borel regular measure satisfying $\mu(X)=\infty$.

In the context of nonlinear potential theory on a complete metric space that is unbounded, 
boundary conditions manifest as sought-after behavior of solutions ``at infinity.''
There are many geometric notions of boundary ``at infinity'' of a complete unbounded metric space.
Of particular relevance in potential theory are the notions of ends at $\infty$, as in
Definition~\ref{def:ends}, and the metric boundary of (conformal) uniformizations of
the space, as in subsection~\ref{sub:hypunif}. 

In this paper, we consider metric measure spaces
 $(X,d,\mu)$ that are of \emph{uniformly locally $p$-controlled geometry}; that is, spaces which are
uniformly locally doubling and support
a uniformly local $p$-Poincar\'e inequality, as in Definition~\ref{def:controlled-geom} below.
A Riemannian manifold of non-negatively pinched curvature is necessarily of globally
$p$-controlled geometry, while negatively pinched Riemannian manifolds are of uniformly
locally $p$-controlled geometry but are not of globally $p$-controlled geometry. In this paper,
we consider non-smooth analogs of both cases. We pay particular attention to 
metric measure spaces of globally $p$-controlled
geometry as well as Gromov hyperbolic spaces of uniformly locally $p$-controlled geometry 
as these play special roles in potential theory and quasiconformal mappings in nonsmooth settings,
see for example~\cite{BHK}.

In this setting of unbounded spaces, a natural class of functions to consider as 
encoding the boundary data is the 
homogeneous analog of Newton-Sobolev spaces, the Dirichlet spaces $D^{1,p}(X)$ 
of measurable functions having $p$-integrable upper gradients. In particular, if it turns 
out that there are more functions in $D^{1,p}(X)$ than just functions in $N^{1,p}(X)+\R$, 
then we are dealing with a larger class of functions, in some cases resulting in a richer theory 
of Dirichlet problems for Dirichlet data ``at infinity''.
As such, the primary purpose of the present paper is to investigate 
under what conditions on the metric measure space $(X,d,\mu)$ we have that 
$D^{1,p}(X)$ and $N^{1,p}(X)+\R$ do not agree. The latter class $N^{1,p}(X)+\R$ is considered
to be a tamer class, as demonstrated by Corollary~\ref{cor:integrable-trace-zero}
and the discussion in Section~\ref{Sec:N1p-tame}.

The following are the principal results of this paper, and they summarize the results obtained throughout the paper.
In particular, these results show that the question at hand is not answered by considering the single property of $p$-parabolicity
versus $p$-hyperbolicity of ends of the space, nor solely the $p$-controlled geometry of the space.

\begin{theorem}\label{thm:main1}
Let $1\le p<\infty$ and let $(X,d,\mu)$ be a complete, connected, proper, separable metric measure space of 
uniformly locally $p$-controlled geometry
with $\mu(X)=\infty$ and $0 < \mu (B(x, r)) < \infty$ for all $x \in X, r \in (0, \infty)$. 
Then $D^{1,p}(X)\ne N^{1,p}(X)+\R$ if any one of the following conditions holds:
\begin{enumerate}[label=(\roman*)]
\item $X$ is of globally $p$-controlled geometry {\rm{(}}this is Corollary~\ref{cor:globalcontrol}{\rm{)}}.
\item $X$ is Gromov hyperbolic and roughly starlike, $\inf_{x\in X}\mu(B(x,1))>0$, 
$\partial_\eps X$ has at least two points, and $p\ge \beta/\eps$, where $\beta$ and
$\eps$ are positive real numbers such that the uniformized metric measure space $(X,d_\eps,\mu_\beta)$ is a uniform domain
of globally $p$-bounded geometry {\rm{(}}this is Theorem~\ref{prop:two-pts-large-p}{\rm{)}}.
\item $X$ is Gromov hyperbolic and roughly starlike, $\sup_{x\in X}\mu(B(x,1))<\infty$, 
and $\partial_\eps X$ has exactly one point, where $\eps$ is a positive real number such that the 
uniformized metric space $(X,d_\eps)$ is a uniform domain
{\rm{(}}this is Corollary~\ref{cor:one_bdry_point}{\rm{)}}.
\item $X$ is $p$-parabolic {\rm{(}}this is Theorem~\ref{thm:para}{\rm{)}}.
\item $X$ is a length space with at least two ends,
and  
$\inf_{x\in X}\mu(B(x,1))>0$ {\rm{(}}this is Theorem~\ref{thm:2ends}{\rm{)}}.
\item $X$ is a length space with at least one $p$-parabolic end,
and $\inf_{x\in X}\mu(B(x,1))>0$ {\rm{(}}this is Corollary~\ref{cor:parabolic_end}{\rm{)}}.
\end{enumerate}
One the other hand, the standard hyperbolic space $X=\mathbb{H}^n$, which is a Gromov hyperbolic space of locally $p$-controlled 
geometry with exactly one end, satisfies $D^{1,p}(X)=N^{1,p}(X)+\R$ if and only if $1\le p\le n-1$ {\rm{(}}this is 
Theorem~\ref{thm:Hn_classification}
{\rm{)}}. 
\end{theorem}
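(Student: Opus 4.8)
The statement is a synthesis: each of (i)--(vi) and the classification on $\mathbb{H}^n$ is proved separately in the body of the paper, so the plan for the compiled theorem is to assemble those pieces while keeping in view the single mechanism behind (i)--(vi). For each of these items it suffices to exhibit one \emph{witness} $u\in D^{1,p}(X)\setminus(N^{1,p}(X)+\R)$, and the certification that $u\notin N^{1,p}(X)+\R$ is always the same: if $u=v+c$ with $v\in N^{1,p}(X)$ then $v\in L^p(X,\mu)$, and since $\mu(X)=\infty$ while uniform local doubling and the uniform local $p$-Poincar\'e inequality provide good local representatives (quasicontinuity, Lebesgue points), such a $v$ must have averages over some sequence of balls escaping to infinity tending to $0$. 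Hence any $u$ whose large-scale behaviour precludes returning to one value $c$ at infinity --- because $u\to\infty$, or tends to two distinct values along two ends, or has a non-constant trace under a uniformization, or decays too slowly to be in $L^p$ --- cannot lie in $N^{1,p}(X)+\R$, and the content of (i)--(vi) is that each of these geometrically distinct situations occurs with a witness of finite $p$-energy.

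Concretely, for (iv) and (vi) I would use an Evans-type potential, taking nested Lipschitz cutoffs $\varphi_j$ with $\varphi_j\equiv1$ on $B(x_0,R_{j-1})$, $\supp\varphi_j\subset B(x_0,R_j)$ (contained in the $p$-parabolic end $E$ in (vi)) and $\norm{g_{\varphi_j}}_p\le 2^{-j}$, and setting $u=\sum_j(1-\varphi_j)$ (extended by $0$ off $E$ in (vi)); then $u$ is locally finite, $g_u\le\sum_j g_{\varphi_j}\in L^p$, $u\to\infty$ along the escaping direction, and $\inf_x\mu(B(x,1))>0$ makes the relevant end of infinite measure, so no $u-c$ is in $L^p$. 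For (v), in a length space with two ends choose a compact $K$ whose complement has two unbounded components and a Lipschitz $u$ equal to $0$ on one of them outside a bounded neighbourhood of $K$, to $1$ on the other outside that neighbourhood, and transitioning only inside a bounded set, so that $g_u$ is bounded with bounded support; both ends have infinite measure, forcing $c=0$ and $c=1$ simultaneously. Item (i) splits: if $X$ is $p$-parabolic it is covered by (iv), while if $X$ is $p$-hyperbolic the witness is a finite-energy function decaying at infinity at the sharp Sobolev--Poincar\'e rate, hence in $L^{p^*}$ but not in $L^p$ (on $\R^n$ this is $(1+\abs x)^{-\alpha}$ with $n/p-1<\alpha\le n/p$). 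Items (ii) and (iii) go through the Bonk--Heinonen--Koskela uniformization $(X,d_\eps,\mu_\beta)$, a bounded uniform domain of globally $p$-bounded geometry by hypothesis: when $\partial_\eps X$ has at least two points and $p\ge\beta/\eps$ --- the exponent range in which the $p$-energy transforms favourably, since $\int g_u^p\,d\mu=\int g_u^{p}e^{(\beta-p\eps)d(\cdot,x_0)}\,d\mu_\beta$ --- the extension to $X$ of a non-constant Lipschitz function on $\partial_\eps X$ lies in $D^{1,p}(X)$ but has non-constant boundary trace, whereas $N^{1,p}(X)$-functions have vanishing trace at $\partial_\eps X$ (the infinite measure accumulates there), so it is not in $N^{1,p}(X)+\R$; when $\partial_\eps X$ is a single point the witness is instead a function blowing up logarithmically as $d_\eps$ approaches that point, unbounded with finite energy.

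For $X=\mathbb{H}^n$ I would work in the ball model, where $\int_{\mathbb{H}^n}g_u^p\,d\mu$ is comparable to $\int_{\mathbb{B}^n}\abs{\nabla u}^p(1-\abs{y})^{p-n}\,dy$ and $\int_{\mathbb{H}^n}\abs{v}^p\,d\mu$ to $\int_{\mathbb{B}^n}\abs{v}^p(1-\abs{y})^{-n}\,dy$. For $p>n-1$ the witness is the hyperbolic Poisson extension $h$ of a non-constant first spherical harmonic, explicitly of the form $h(y)=f(\abs y)\,y_1/\abs y$ with $f$ smooth and $f(1)=1$, so $\abs{\nabla h}$ is bounded on $\overline{\mathbb{B}^n}$; then $\int_{\mathbb{B}^n}\abs{\nabla h}^p(1-\abs{y})^{p-n}\,dy$ converges \emph{precisely} when $p>n-1$, and as $h$ is bounded, non-constant, and continuous up to $S^{n-1}$ with boundary value $\not\equiv c$ for every $c$, while $\int(1-\abs y)^{-n}\,dy=\infty$ on any boundary cone, no $h-c$ lies in $L^p(\mathbb{H}^n)$. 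For $1\le p\le n-1$ I would show every $u\in D^{1,p}(\mathbb{H}^n)$ is in $N^{1,p}(\mathbb{H}^n)+\R$ in three steps. First, finiteness of $\int_{\mathbb{B}^n}\abs{\nabla u}^p(1-\abs y)^{p-n}\,dy$ together with H\"older on radial segments gives a radial limit $\widetilde u(\omega)=\lim_{r\to1}u(r\omega)$ for a.e.\ $\omega\in S^{n-1}$. Second, $\widetilde u$ is a.e.\ a constant $c$: otherwise $\norm{u_r-\overline{u_r}}_{L^p(S^{n-1})}$ (with $u_r=u(r\,\cdot\,)$) stays bounded below as $r\to1$, so by the Poincar\'e inequality on $S^{n-1}$ the spherical gradients $\int_{S^{n-1}}\abs{\nabla_{S^{n-1}}u_r}^p$ stay bounded below, and since $p-n\le-1$ this forces $\int_{\mathbb{B}^n}\abs{\nabla u}^p(1-\abs y)^{p-n}\,dy=\infty$, a contradiction. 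Third, with $\sigma=1-r$ and $g(\sigma)=\abs{\partial_r u((1-\sigma)\omega)}$, the one-dimensional Hardy inequality $\int_0^{\sigma_0}\bigl(\int_0^\sigma g\bigr)^p\sigma^{-n}\,d\sigma\le C_{n,p}\int_0^{\sigma_0}g(\sigma)^p\sigma^{p-n}\,d\sigma$ (valid because $n>1$) upgrades the pointwise bound $\abs{u(r\omega)-c}\le\int_r^1\abs{\partial_t u(t\omega)}\,dt$ into $\int_{\mathbb{B}^n}\abs{u-c}^p(1-\abs y)^{-n}\,dy\lesssim\norm{g_u}_{L^p(\mathbb{H}^n)}^p<\infty$, so $u-c\in L^p(\mathbb{H}^n)$; since $g_{u-c}=g_u\in L^p$, this gives $u-c\in N^{1,p}(\mathbb{H}^n)$ and hence $D^{1,p}(\mathbb{H}^n)=N^{1,p}(\mathbb{H}^n)+\R$.

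The step I expect to be hardest is the forward implication of the $\mathbb{H}^n$ classification for $p\le n-1$: one must extract a genuine constant ``value at infinity'' from a merely measurable finite-energy function and then control the rate of approach, and both the constancy of $\widetilde u$ and the Hardy estimate sit exactly at the borderline weight exponent $p-n=-1$, so there is no slack. A secondary difficulty is the uniformization bookkeeping in (ii) and (iii): one must match $\eps$ and $\beta$ to the $p$-energy and import the trace theory for Newton--Sobolev functions on bounded uniform domains of globally $p$-bounded geometry carefully enough that ``non-constant boundary trace'' really obstructs membership in $N^{1,p}(X)+\R$, and, in applying this to $\mathbb{H}^n$, to note that the available threshold $\beta/\eps$ exceeds $n-1$ (it equals $n$ for the standard uniformization), so the range $n-1<p<n$ is genuinely handled by the direct Poisson-extension argument rather than by (ii).
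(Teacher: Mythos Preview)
Your treatments of (ii), (iv), (v), and (vi) are essentially the paper's: the Evans-type series of cutoffs for (iv), the step function with compactly supported gradient for (v), and the $d_\eps$-Lipschitz function separating two boundary points for (ii) all match. For (vi) the paper reduces to (iv) or (v) via the observation that a space with a single $p$-parabolic end is itself $p$-parabolic, rather than constructing directly on the end as you suggest, but your variant is fine.

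There is a genuine gap in your plan for (i). You split into $p$-parabolic (handled by (iv)) and $p$-hyperbolic, and for the latter invoke a function ``in $L^{p^*}$ but not $L^p$'', illustrated only by $(1+\abs{x})^{-\alpha}$ on $\R^n$. You do not explain how to produce such a witness in a \emph{general} globally $p$-controlled space; the exponent $p^*$ depends on the lower-mass-bound exponent, which need not match the upper one, and the radial profile must be tuned to the actual growth of $\mu(B(x_0,r))$. The paper avoids the dichotomy entirely: using only global doubling (plus continuity of $r\mapsto\mu(B(x_0,r))$, obtained after passing to a bilipschitz geodesic metric), it builds a bounded radial function $f=\phi(d(\cdot,x_0))$ by choosing radii $r_i$ with $\mu(B(x_0,r_{i+1}))=(C_d+1)\mu(B(x_0,r_i))$ and interpolating; $\phi$ has a finite limit $c_\infty$, yet the annular contributions to $\int\abs{f-c_\infty}^p\,d\mu$ are bounded below by a constant and hence sum to infinity. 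A similar comment applies to (iii): your direct logarithmic-blowup construction is only sketched, whereas the paper proves (using $\sup_x\mu(B(x,1))<\infty$ and the uniformization comparison estimates) that a one-boundary-point roughly starlike Gromov hyperbolic space is $p$-parabolic, and then invokes (iv).

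For $\H^n$ with $p\le n-1$ your route is correct but genuinely different from the paper's. Your constancy-of-trace step via the Poincar\'e inequality on $\S^{n-1}$ is the Strichartz argument the authors cite; the paper instead proves a ``lateral estimate'' comparing spherical averages over isometric caps via a rotation-plus-radial-translation map in the polar model, explicitly avoiding spherical Poincar\'e. Your Hardy inequality in $\sigma=1-\abs{y}$ with weight $\sigma^{-n}$ is equivalent, after the change of variables $r=2\,\mathrm{artanh}(1-\sigma)$, to the paper's one-dimensional exponential Sobolev inequality $\int_0^\infty\abs{f}^p e^{\kappa t}\,dt\le(p/\kappa)^p\int_0^\infty\abs{f'}^p e^{\kappa t}\,dt$. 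For $p>n-1$ your Poisson extension and the paper's $\eta(r)\psi(\theta)$ with $\psi$ a bump on $\S^{n-1}$ are different witnesses for the same mechanism: the tangential derivative contributes $\int\sinh^{n-1-p}(r)\,dr<\infty$, while the function takes two distinct values on sets of infinite hyperbolic measure.
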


Hence, the main case left unresolved by the above theorem is when $X$ is Gromov hyperbolic and 
roughly starlike, $1 \le p \le \beta/\eps$, and $X$ has precisely one end which is $p$-hyperbolic. 
Moreover, the result on $\H^n$ shows that there exist examples with $D^{1,p}(X)=N^{1,p}(X)+\R$ in this 
category. 

Subsequent to writing the manuscript, a comment by Ilkka Holopainen led the authors to a literature 
reference for Theorem~\ref{thm:Hn_classification}, see~\cite[Theorem~5.8]{Strichartz}. The 
proof given in~\cite{Strichartz} and the proof given in the present paper have similar core ideas.
However, the two proofs differ significantly in their execution; in particular, we do not explicitly
use Poincar\'e inequalities on $\mathbb{S}^{n-1}$ while~\cite{Strichartz} does. For this reason, we chose to
retain our proof in this paper as our methods might be useful in other contexts.

In the setting where
$D^{1,p}(X)\ne N^{1,p}(X)+\R$, it is desirable to know how to identify functions in $D^{1,p}(X)$ that do
belong to $N^{1,p}(X)+\R$. This is the following second main result of this paper.
The first claim of Theorem~\ref{thm:main2} is established in Corollary~\ref{cor:integrable-trace-zero}, while 
the second claim follows from Theorem~\ref{thm:lp-f-at-infty}.

\begin{theorem}\label{thm:main2}
Let $1\le p<\infty$ and let $(X,d,\mu)$ be a complete, connected, proper, separable 
metric measure space of uniformly  locally $p$-controlled geometry with 
$\mu(X)=\infty$ and $0 < \mu (B(x, r)) < \infty$ for all $x \in X, r \in (0, \infty)$.
Suppose that $u\in D^{1,p}(X)$. 
\begin{enumerate}
\item[(i)] If $X$ is Gromov hyperbolic and roughly starlike, $\inf_{x\in 
X}\mu(B(x,1))>0$, 
$\int_X|u|^p\, d\mu$ is finite, and $\eps, \beta$ are positive real numbers such that the 
uniformized metric measure space $(X, d_\eps, \mu_\beta)$ is a uniform domain of globally
$p$-bounded geometry, then for each $\xi\in\partial_\eps X$ that is a
$\mu_\beta$-Lebesgue point of $u$, we have that 
\[
\lim_{r\to 0^+}\frac{1}{\mu_\beta(B_{d_\eps}(\xi,r))}\, \int_{B_{d_\eps}(\xi,r)}|u|\, \dd \mu_\beta=0.
\]
\item[(ii)] If $X$ supports a global $(p,p)$-Sobolev inequality, then
$u$ is in $N^{1,p}(X)$ if and only if for $\Mod_p$-a.e.~path $\gamma:[0,\infty)\to X$ that tends to $\infty$
we have that $\lim\limits_{t\to\infty}u(\gamma(t))=0$.
\end{enumerate}
\end{theorem}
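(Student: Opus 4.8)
\emph{Proof strategy.}

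For part~(i), I would first note that the hypothesis $\int_X|u|^p\,\dd\mu<\infty$ together with $u\in D^{1,p}(X)$ gives $u\in N^{1,p}(X,d,\mu)$, and, since the uniformizing density is bounded, also $u\in L^p(X,\mu_\beta)$. The plan is then to transport the problem to the uniformized space $\Omega:=(X,d_\eps,\mu_\beta)$, which under the standing hypotheses ($X$ Gromov hyperbolic and roughly starlike, $\inf_{x\in X}\mu(B(x,1))>0$) is a bounded uniform domain with completion $\overline\Omega=X\cup\partial_\eps X$. The conformal change $d\mapsto d_\eps$ carries upper gradients on $(X,d,\mu)$ to upper gradients on $\Omega$ of comparable $p$-energy, and it sends every $d$-bounded subset of $X$ to a set whose $d_\eps$-closure avoids $\partial_\eps X$; since bounded-support functions are dense in $N^{1,p}(X,d,\mu)$ (already the cutoffs $u\varphi_R$, with $\varphi_R$ equal to $1$ on $B(x_0,R)$ and to $0$ off $B(x_0,2R)$, converge to $u$), these two facts realize $u$ as a function in $N^{1,p}_0(\Omega)$, i.e.\ the extension $\tilde u$ of $u$ by $0$ across $\partial_\eps X$ lies in $N^{1,p}(\overline\Omega,\mu_\beta)$. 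Since $\mu_\beta(\partial_\eps X)=0$ and $\tilde u\equiv 0$ on $\partial_\eps X$, the asserted vanishing of $\mu_\beta(B_{d_\eps}(\xi,r))^{-1}\int_{B_{d_\eps}(\xi,r)}|u|\,\dd\mu_\beta$ at a $\mu_\beta$-Lebesgue point $\xi\in\partial_\eps X$ of $u$ is equivalent to the Lebesgue value of $\tilde u$ at $\xi$ being $0$; this holds because the boundary of a uniform domain carrying a doubling measure is uniformly $p$-fat, forcing the Lebesgue value of the Newtonian function $\tilde u$ at $\xi$ to agree with its quasicontinuous value there, which is $0$. This is exactly Corollary~\ref{cor:integrable-trace-zero}.

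For part~(ii) I would treat the two implications separately; both are contained in Theorem~\ref{thm:lp-f-at-infty}. The forward implication is elementary and uses only $u\in N^{1,p}(X)$: then $g_u\in L^p(X)$, so $\int_\gamma g_u<\infty$ for $\Mod_p$-a.e.\ path $\gamma\colon[0,\infty)\to X$ tending to $\infty$, whence $\lim_{t\to\infty}u(\gamma(t))$ exists along such paths. To see the limit is $0$, let $\varphi_R$ be the cutoff equal to $1$ on $B(x_0,R)$, to $0$ off $B(x_0,2R)$, with upper gradient at most $2/R$; since $u\in L^p(X)$ one verifies $u\varphi_R\to u$ in $N^{1,p}(X)$, so along a sequence $R_j\to\infty$ with $\sum_j\|g_{u-u\varphi_{R_j}}\|_{L^p}<\infty$ the functions $u\varphi_{R_j}$ converge to $u$ uniformly on $\Mod_p$-a.e.\ path $\gamma$ to $\infty$; as every $u\varphi_{R_j}$ vanishes beyond $B(x_0,2R_j)$, hence near the end of $\gamma$, so does the uniform limit $u$, giving $\lim_{t\to\infty}u(\gamma(t))=0$. (This direction needs neither the Sobolev inequality nor the Poincar\'e inequality.)

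The reverse implication is the substantive one, and it is where the global $(p,p)$-Sobolev inequality is essential (it fails without one, e.g.\ on $\R^n$). Given $u\in D^{1,p}(X)$ vanishing along $\Mod_p$-a.e.\ path to $\infty$, I would first truncate, $u\mapsto(\sgn u)\min(|u|,b)$, reducing to $0\le u\le b$ with all bounds below uniform in $b$. For each $\lambda>0$, the family $\Gamma^\infty_\lambda$ of paths $\gamma$ to $\infty$ with $u(\gamma(t))\ge\lambda$ for all large $t$ has $\Mod_p(\Gamma^\infty_\lambda)=0$ --- otherwise $\min(1,u/\lambda)$ would fail to tend to $0$ along a path family of positive modulus --- so for every $\delta>0$ there is $\rho_\delta\ge0$ with $\|\rho_\delta\|_{L^p}<\delta$ and $\int_\gamma\rho_\delta\ge 1$ for all $\gamma\in\Gamma^\infty_\lambda$. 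Using such obstacle functions I would build, for each ball $B_k=B(x_0,k)$, a function $\psi_k$ of bounded support with $\psi_k=u$ on $B_k$ and $\|g_{\psi_k}\|_{L^p}\le\|g_u\|_{L^p}+\delta_k$, where $\delta_k\to 0$ --- roughly, outside $B_k$ one subtracts from $u$ a capacitary potential assembled from the $\rho_\delta$'s, which vanishes on $B_k$, has small $p$-energy, and already dominates $u$ outside a bounded set. Each $\psi_k$ is a bounded-support Newtonian function, so the global $(p,p)$-Sobolev inequality gives $\|\psi_k\|_{L^p}\le C\|g_{\psi_k}\|_{L^p}\le C'$ independently of $k$; since $\psi_k=u$ on $B_k$, Fatou's lemma yields $\int_X|u|^p\,\dd\mu\le (C')^p$, so $u\in L^p(X)$ and therefore $u\in N^{1,p}(X)$.

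I expect the main obstacle to be the reverse implication of~(ii): converting the metric statement that $u$ vanishes along $\Mod_p$-a.e.\ path to $\infty$ into the variational statement that $u$ can be cut off to bounded support at arbitrarily small $p$-energy cost, i.e.\ the construction and energy estimate for the competitors $\psi_k$, which must coordinate the behaviour of $u$ on $\Mod_p$-a.e.\ path, the choice of obstacles $\rho_\delta$, and the quasicontinuity of $u$ before the Sobolev inequality can be applied. For part~(i), the analogous delicate point is the bookkeeping in the conformal transfer of upper gradients and of the measure needed to place $D^{1,p}(X)\cap L^p$ inside $N^{1,p}_0(\Omega)$.
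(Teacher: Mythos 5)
Your outline for part~(i) has a genuine gap. You propose to place $u$ in $N^{1,p}_0$ of the uniformized domain by transferring upper gradients conformally, but by Lemma~\ref{lem:1} an upper gradient $g\in L^p(X,\mu)$ of $u$ transforms to $g_\eps=g/\rho_\eps$, which lies in $L^p(X,\mu_\beta)$ only when $p\le\beta/\eps$; Theorem~\ref{thm:main2}(i) imposes no such restriction (indeed the interesting regime in Theorem~\ref{thm:main1}(ii) is $p\ge\beta/\eps$), so your passage to $N^{1,p}(\Omega)$ breaks down there. The paper's proof of (i) (Lemma~\ref{lemma:key} and Corollary~\ref{cor:integrable-trace-zero}) uses no Sobolev structure at all: it is a packing argument showing that if the Lebesgue value of $u$ at $\xi$ were $c\ne 0$, then along a uniform curve to $\xi$ one finds infinitely many pairwise disjoint balls $B(z_{r_k},1/C_1)$, each of $\mu$-measure bounded below in terms of $a=\inf_x\mu(B(x,1))>0$ and each meeting $\{|u|\ge c/2\}$ in a definite fraction of its measure, forcing $\mu(\{|u|\ge c/2\})=\infty$ and contradicting $u\in L^p(X,\mu)$. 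Even granting $p\le\beta/\eps$, you would still need to justify that the quasicontinuous value of $\tilde u$ agrees with the Lebesgue value at \emph{every} Lebesgue point of $\partial_\eps X$, not merely quasi-everywhere.

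For part~(ii), your forward implication is correct and close in spirit to the paper's, though the paper's argument is shorter: it applies Fuglede's lemma to $|u|+g_u\in L^p$ and notes that a limit $c_\gamma>0$ along a curve of infinite length would force $\int_\gamma|u|\,\dd s=\infty$. The reverse implication is where your proposal is incomplete: the competitors $\psi_k$ (bounded support, $\psi_k=u$ on $B_k$, $\|g_{\psi_k}\|_{L^p}\le\|g_u\|_{L^p}+\delta_k$) are never constructed, and the assertion that a potential assembled from the admissible functions $\rho_\delta$ ``vanishes on $B_k$ and already dominates $u$ outside a bounded set'' is precisely the hard step — admissibility for $\Gamma_\lambda^\infty$ controls path integrals of $\rho_\delta$ along bad curves, not the pointwise size of $u$ off a bounded set, so the existence of such a subtrahend equal to $u$ near infinity with small energy is asserted rather than proved. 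The paper's Proposition~\ref{prop:pathwiseNewt} avoids modifying $u$ altogether: it takes a lower semicontinuous upper gradient $\rho$ with $\int_\gamma\rho\,\dd s=\infty$ along every exceptional path, sets $\rho_0=\rho+h_0$ with $h_0>0$ continuous and in $L^p$, and defines the compactly supported functions $u_r(x)=\inf\bigl\{\int_\gamma\rho_0\,\dd s:\gamma\text{ joins }x\text{ to }X\setminus B(x_0,r)\bigr\}$, which have upper gradient $\rho_0$ by Lemma~\ref{lem:lsc_lemma} (this in turn needs the $MEC_p$ property of Lemma~\ref{lem:MECp}); the global $(p,p)$-Sobolev inequality bounds $\|u_r\|_{L^p}$ uniformly, monotone convergence gives $\lim_r u_r\in L^p$, and an Arzel\`a--Ascoli extraction of near-minimizing curves combined with the hypothesis on limits along paths yields $|u(x)-c|\le\lim_r u_r(x)$ a.e. To salvage your route you would have to carry out the construction of $\psi_k$ in full detail; as written, the central energy estimate is missing.
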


The second claim in the above theorem is a natural extension of the study done in~\cite{EKN, KN, KKN, LN}, and
has its precursors in~\cite{Kud} for the Euclidean $[0,\infty)$, \cite{Usp, Timan} for radial lines in $\R^n$, and
\cite{Port, Feff} for vertical lines in $\R^n$. Interestingly,
the works~\cite{Usp, Timan} tell us that when $u\in D^{1,p}(\R^n)$, for $\mathcal{H}^{n-1}$-almost every
direction in $\mathbb{S}^{n-1}$ the radial limit along that direction exists and is independent of the direction precisely when
$1\le p<n$. In contrast, from Theorem~\ref{thm:main1}(i), we know that 
for each $1\le p<\infty$
there exists a function $f\in D^{1,p}(\R^n)$ for which, even if the limit along $p$-almost every curve tending to $\infty$
is independent of the curve, the function $f$ fails to be in $N^{1,p}(X)+\R$. Moreover, in the case that $D^{1,p}(X)\ne N^{1,p}(X)+\R$,
either there is a function in $D^{1,p}(X)$ for which there exist multiple limits along a positive 
$p$-modulus family of curves tending to $\infty$ (as is the case of the hyperbolic space $\mathbb{H}^n$ with $p>n-1$), or else $X$ does not support a global $(p,p)$-Sobolev inequality
(as is the case with the Euclidean $\R^n$).

Of the extant literature mentioned above, the manuscripts~\cite{KN, KKN} consider the family $\Gamma^{+\infty}$
of locally rectifiable curves $\gamma$ in $X$ such that $\gamma\setminus B$ is nonempty for each ball $B\subset X$, while the other
papers focused on families of radial curves~\cite{EKN, Usp, Timan}, and uni-directional families of curves~\cite{Port, Feff, EKN}.
A priori, $\Gamma^{+\infty}$ is a larger family than the family of all curves tending to infinity as considered in the present paper.
However, any curve $\gamma$ that belongs to $\Gamma^{+\infty}$ but does not tend to infinity in our sense has the property that
it intersects some ball $B(x_0,n)$ infinitely often, and subfamilies of such curves have $p$-modulus zero. Hence, in spirit, our
notion of tending to infinity agrees with the notion of infinite curves from~\cite{KN}.

In this paper, we avoid using the term \emph{homogeneous spaces} as there is more than one contender for that term.
In addition to $D^{1,p}(X)$, which consists of \emph{functions}  (see also~\cite{EKN, KN, KKN, LN}), 
certain equivalence classes of 
functions from $D^{1,p}(X)$ are considered
in~\cite{Peet, Sh-2024, ShGib2024}, and the $D^{1,p}(X)$-closure of $N^{1,p}(X)$ is considered,
for example, in~\cite{V, HZ, DKM, ST}. 
This last class serves as the test class of functions in determining global $p$-harmonicity of functions on $X$,
and so is intimately connected to the first class $D^{1,p}(X)$ which serves as the boundary data for the 
Dirichlet problem. We do point out here that  $D^{1,p}(X)$ being larger than $N^{1,p}(X)+\R$ \emph{does not}
automatically guarantee existence of globally $p$-harmonic functions in the class $D^{1,p}(X)$ with the prescribed
``boundary data'' from $D^{1,p}(X)$. Indeed, it was shown in~\cite{BBS-A} that in metric measure spaces 
that are of globally $p$-controlled
geometry and are annular quasiconvex (as for example $\R^n$ is when $n>1$) there are no nonconstant
$p$-harmonic functions in $X$ with finite $p$-energy, even though, as we show in this note, in such 
spaces we have $D^{1,p}(X)\ne N^{1,p}(X)+\R$. 
We refer interested readers to~\cite{Maz} for more on Sobolev spaces.

\vskip .3cm
\noindent {\bf Acknowledgments:} The authors thank Ilkka Holopainen for a
discussion that lead them to the reference~\cite{Strichartz}.

\section{Background} \label{Sec:Background}

In this section, we gather together notions and results needed throughout the paper. 

We use $\overline{\R}$ to denote the natural two-point compactification $[-\infty, \infty]$ of $\R$, 
while $[0, \infty]$ denotes the one-point compactification of the ray $[0, \infty)$. Given $A_1, 
A_2 \in \overline{\R}$, we use $A_1 \lesssim A_2$ to denote that there exists a constant $C \in (0, \infty)$ such 
that $A_1 \le C A_2$; in case the reader needs to be aware of which parameters $C$ is dependent or 
independent on, this will be clarified in writing. We also denote $A_1 \approx A_2$ if $A_1 \lesssim A_2$ 
and $A_2 \lesssim A_1$.  

Let $(X, d)$ be a metric space. If $\mu$ is a Borel-regular measure on $(X, d)$, then we call the triple $(X,d,\mu)$ a \emph{metric measure space}. We also say that $(X,d)$ is \emph{proper} if closed and bounded subsets of $X$ are compact; this property is also sometimes called the Heine-Borel property.

Given $x\in X$ and
$r>0$, we set $B(x,r):=\{y\in X\, :\, d(x,y)<r\}$ and call it the ball centered at $x$ with radius $r$. 
We also call the set $\overline{B}(x,r):=\{y\in X\, :\, d(x,y)\le r\}$ the closed ball centered at $x$ with radius $r$.
Given a ball $B\subset X$, we use
$\rad(B)$ to denote the prescribed radius of $B$. If $x \in X$ and $A\subset X$, we denote $\diam(A):=\sup\{d(x,y)\, :\, x,y\in A\}$ and $\dist(x,A):=\inf\{d(x,y)\, :\, y\in A\}$. 

The length of a curve $\gamma$ in $X$ is denoted by $\ell(\gamma)$, and the path-integral of a Borel function $g$ on
$\gamma$ is denoted by $\int_\gamma g\, ds$. We refer the reader to~\cite{AT, HKSTbook} for more on these notions. 

\begin{stass} 
Throughout this paper, unless otherwise specified, we assume that $1 \le p < \infty$, and that $(X,d,\mu)$ is a 
connected proper 
metric measure space such that $\mu$ is a Borel-regular measure with 
$\mu(X) = \infty$ and $0 < \mu(B(x, r)) < \infty$ for all $x \in X$ and $r \in (0, \infty)$.
\end{stass}

\subsection{Geometry and potential theory related to curves in $X$}

\begin{defn}\label{def:modp}
Given a family $\Gamma$ of curves in $X$, we say that a non-negative Borel function $\rho$ is admissible for $\Gamma$ if
\[
\int_\gamma\!\rho\, \dd s\ge 1
\]
whenever $\gamma\in\Gamma$. The \emph{$p$-modulus} of $\Gamma$ is the number
\[
\Mod_p(\Gamma)=\inf_{\rho}\int_X\!\rho^p\, \dd\mu,
\]
where the infimum is over all admissible functions $\rho$ for $\Gamma$.
\end{defn}

Note that the $p$-modulus of the empty set is zero, and in fact, the $p$-modulus is an outer measure on the collection of all
curves in $X$; we refer the interested reader to~\cite{HKSTbook} for details regarding the $p$-modulus. The following lemma
is a key tool in potential theory on metric measure spaces, and is 
due to Fuglede~\cite{Fuglede}; see also~\cite{KoMc}.

\begin{lemma}\label{lem:useful}
	Let $(X, d, \mu)$ satisfy our standing assumptions. Then the following two statements 
	hold true.
	\begin{enumerate}
		\item If $\Gamma$ is a family of curves in $X$, then $\Mod_p(\Gamma)=0$ if and only if there exists a non-negative
	Borel function $\rho\in L^p(X)$ such that $\int_\gamma\rho\, ds=\infty$ for each $\gamma\in\Gamma$.
	    \item If $\{g_k\}_k$ is a sequence of non-negative Borel functions on $X$ that is convergent to a function $g\in L^p(X)$,
	then there exists a subsequence $\{g_{k_m}\}_m$ and a family $\Gamma$ of curves in $X$ with $\Mod_p(\Gamma)=0$ such that
	whenever $\gamma$ is a rectifiable curve in $X$ with $\gamma\not\in\Gamma$, then
	\[
	\lim_{m\to\infty}\int_\gamma\! |g_{k_m}-g|\, \dd s=0.
	\]
	\end{enumerate}
\end{lemma}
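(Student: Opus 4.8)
The plan is to prove the two assertions in turn, obtaining part~(2) from part~(1) by a summation trick; both arguments are the classical ones due to Fuglede, which I would reproduce for completeness. For part~(1), the implication ``$\Leftarrow$'' is immediate: if $\rho\in L^p(X)$ is a non-negative Borel function with $\int_\gamma\rho\,\dd s=\infty$ for every $\gamma\in\Gamma$, then $t\rho$ is admissible for $\Gamma$ for each $t>0$, so $\Mod_p(\Gamma)\le t^p\int_X\rho^p\,\dd\mu$, and sending $t\to 0^+$ forces $\Mod_p(\Gamma)=0$. For ``$\Rightarrow$'', assuming $\Mod_p(\Gamma)=0$ I would, for each $k\in\N$, select an admissible function $\rho_k$ for $\Gamma$ with $\int_X\rho_k^p\,\dd\mu<2^{-kp}$, i.e.\ $\norm{\rho_k}_{L^p(X)}<2^{-k}$, and put $\rho:=\sum_k\rho_k$. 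This is a non-negative Borel function, and since $p\ge 1$ the triangle inequality in $L^p(X)$ gives $\norm{\rho}_{L^p(X)}\le\sum_k 2^{-k}=1$, so $\rho\in L^p(X)$; moreover, for each $\gamma\in\Gamma$, monotone convergence applied to arc length along $\gamma$ yields $\int_\gamma\rho\,\dd s=\sum_k\int_\gamma\rho_k\,\dd s\ge\sum_k 1=\infty$, as required.

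For part~(2), I would first pass to a subsequence $\{g_{k_m}\}_m$ with $\norm{g_{k_m}-g}_{L^p(X)}\le 2^{-m}$, which is possible since $g_k\to g$ in $L^p(X)$, and (taking $g$ to be Borel) set $h_m:=\abs{g_{k_m}-g}$ and $H:=\sum_m h_m$. Exactly as in part~(1), $H$ is a non-negative Borel function with $\norm{H}_{L^p(X)}\le\sum_m 2^{-m}=1$, so $H\in L^p(X)$. Taking $\Gamma$ to be the family of rectifiable curves $\gamma$ with $\int_\gamma H\,\dd s=\infty$, part~(1) applied with witness function $H$ gives $\Mod_p(\Gamma)=0$. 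If $\gamma$ is a rectifiable curve with $\gamma\notin\Gamma$, then $\int_\gamma H\,\dd s<\infty$, and monotone convergence again gives $\sum_m\int_\gamma h_m\,\dd s=\int_\gamma H\,\dd s<\infty$, whence the terms of this convergent series tend to $0$, i.e.\ $\int_\gamma\abs{g_{k_m}-g}\,\dd s\to 0$ as $m\to\infty$.

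I do not expect a genuine obstacle here; the points that need care are the measurability bookkeeping --- a countable sum of non-negative Borel functions is Borel, and the composition of a Borel function with an arc-length parametrization is Borel, so that every path integral appearing is a well-defined element of $[0,\infty]$ --- and the interchange of countable summation with path integration, which is justified by the monotone convergence theorem for arc length along each rectifiable curve. One should also keep in mind the standard convention that $\int_\gamma\rho\,\dd s=\infty$ whenever $\gamma$ is not locally rectifiable, so that such curves are automatically accounted for in part~(1), together with the elementary fact that $L^p$-convergence permits the passage to the rapidly convergent subsequence used in part~(2).
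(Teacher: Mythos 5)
Your proof is correct and is precisely the classical argument of Fuglede that the paper itself invokes by citation rather than reproving: the summation trick $\rho=\sum_k\rho_k$ with $\|\rho_k\|_{L^p(X)}<2^{-k}$ for part~(1), and the application of part~(1) to $H=\sum_m|g_{k_m}-g|$ along a rapidly convergent subsequence for part~(2). The points you flag (Minkowski's inequality for the infinite sum, monotone convergence along each rectifiable curve, and fixing a Borel representative of $g$) are exactly the right ones, and nothing further is needed.
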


Given a Borel function $\rho:X\to[0,\infty]$ with $\int_X\rho^p\, d\mu$ finite, and $x,y\in X$, we say that $x\sim_\rho y$ if there 
exists a rectifiable curve $\gamma$ with endpoints $x$ and $y$ such that $\int_\gamma\rho\, ds$ is finite.
The relation $\sim_\rho$ is an equivalence relation, decomposing $X$ into pairwise disjoint equivalence classes. It follows
from~\cite{JJRRS} that when $\mu$ is doubling and $X$ is complete, the equivalence classes corresponding to 
$\sim_\rho$ must be Borel sets. More generally, equivalence classes in complete separable metric measure spaces
equipped with a locally finite Borel measure must be measurable sets, see~\cite[Theorem~9.3.1]{HKSTbook}.
We will make use of this fact in this paper.

The following definition was first identified by Ohtsuka as a property possessed by Euclidean spaces in~\cite{O1, O2}.

\begin{defn}\label{def:mecp}
We say that $X$ has the \emph{$p$-main equivalence class property}, or $MEC_p$ property, if for each 
$\rho:X\to[0,\infty]$ with $\int_X\rho^p\, \dd\mu<\infty$ there exists an equivalence class $E(\rho)$ under $\sim_\rho$ such that
$\mu(X\setminus E(\rho))=0$.
\end{defn}

We will show in Lemma~\ref{lem:MECp} that if a space $X$ that satisfies our standing assumptions 
also is of locally $p$-controlled geometry as in Definition~\ref{def:controlled-geom},
then $X$ has the $MEC_p$ property.

\subsection{Newton-Sobolev and Dirichlet classes of functions in nonsmooth settings}

In the nonsmooth setting of metric measure spaces, a viable substitute for distributional derivatives is given by the
notion of minimal $p$-weak upper gradients.

\begin{defn}\label{def:ug}
	Given a function $u:X\to\overline{\R}$, we say that a Borel function $\rho:X\to[0,\infty]$ is an \emph{upper gradient} of
$u$ if for every non-constant compact rectifiable curve $\gamma$ in $X$, we have
\begin{equation}\label{eq:fun-ug-ineq}
|u(y)-u(x)|\le \int_\gamma\!\rho\, \dd s,
\end{equation}
where $x$ and $y$ are the two endpoints of $\gamma$. By convention, we interpret the above inequality 
as $\int_\gamma \rho\, ds=\infty$ whenever at least one of $|u(x)|, |u(y)|$ is infinite. We say that $\rho$ is a 
\emph{$p$-weak upper gradient} of $u$ if the family $\Gamma$ of curves $\gamma$ for which~\eqref{eq:fun-ug-ineq} fails
has zero $p$-modulus.
\end{defn}

\begin{remark}\label{rem:minimal-weak}
Thanks to Lemma~\ref{lem:useful}, we know that $u$ has an upper gradient $g$ with $\int_Xg^p\, d\mu$ finite if and only if
it has a $p$-weak upper gradient $\rho$ with $\int_X\rho^p\, d\mu$ finite. By~\cite{Haj}, we know that if $u$ has a $p$-integrable $p$-weak upper 
gradient, then there exists a \emph{minimal $p$-weak upper gradient} $g_u$ of $u$, where the minimality is understood in the
sense that $g_u\le g$ $\mu$-a.e.~in $X$ whenever $g$ is a $p$-weak upper gradient of $u$. Observe that $g_u$ is unique up to sets of
$\mu$-measure zero.
\end{remark}

\begin{defn}\label{def:controlled-geom}
We say that $(X,d,\mu)$ is of \emph{uniformly locally $p$-controlled geometry} if it
is uniformly locally doubling and supports a uniformly local $p$-Poincar\'e inequality, 
i.e.\ if there exist $0<R_0\le \infty$, $C_d\ge 1$, and 
$C,\lambda>0$ such that whenever $x\in X$ and 
$0<r<R_0$, we have 
\begin{equation}\label{eq:doubling}
	\mu(B(x,2r))\le C_d\, \mu(B(x,r))
\end{equation}
and for all function-upper gradient pairs $(u, g)$ on $X$ 
with $u \in L^1_\loc(X)$, 
we have 
\begin{equation}\label{eq:p-p_Sob-Poin}
\jint_{B(x,r)}|u-u_{B(x,r)}|\, d\mu\le C\, r\, \left(\jint_{{B(x,\lambda r)}}\!g^p\, \dd\mu\right)^{1/p}.
\end{equation}
Moreover, we say that $(X,d,\mu)$ is of 
\emph{globally $p$-controlled geometry} if we can take $R_0=\infty$.
\end{defn}

The property~\eqref{eq:doubling} in the above definition is called the \emph{uniform 
local doubling property} of the measure $\mu$.

\begin{defn}\label{def:global-pp}
We say that $(X,d,\mu)$ supports a global $(p,p)$-Sobolev inequality for 
compactly supported functions if there is a constant
$C>0$ such that 
\[
\int_X\!|u|^p\, \dd\mu\le C\, \int_X\! g_u^p\, \dd\mu
\]
whenever $u\in N^{1,p}(X)$ has compact support in $X$.
\end{defn}

\begin{remark}\label{rem:Keith}
Thanks to~\cite[Theorem~2]{Keith}, we know that $X$ supports a $p$-Poincar\'e inequality 
(locally or not) for all measurable function-upper gradient
pairs if and only if it supports a $p$-Poincar\'e inequality for all compactly supported Lipschitz functions $u$ in terms of the 
local Lipschitz constant function $\lip u$. In the Euclidean setting, and more generally in the Riemannian setting, the 
$p$-Poincar\'e inequality is known to hold for Lipschitz functions $u$ and their upper gradients $g:=|\nabla u|$, where 
$|\nabla u|$ also coincides with $\lip u$ almost everywhere. Thus, in these settings, every measurable function 
with a $p$-weak upper gradient in the class $L^p(X)$ is automatically in the class $L^1_{\text{loc}}(X)$.
Moreover, if $u\in L^1_{\text{loc}}(\Omega)$, where $\Omega$ is a Euclidean domain, we know that $u$ has 
a $p$-weak upper gradient in $L^p(X)$ if and only if
it has a weak derivative $\nabla u$, and in this case, we  also have that $|\nabla u|$ is a minimal $p$-weak upper gradient of 
$u$; see~\cite[Theorem~7.1.13]{Haj}.
\end{remark}

When $(X,d)$ is connected, as is the case throughout this paper, 
the scale $R_0$ in the above condition can be increased, provided we also increase the doubling constant $C_d$ and
the Poincar\'e inequality constant $C$ as well. Hence, we can assume that $R_0\ge 1$. 

Note that if $\mu$ is globally doubling, i.e.\ when we can select $R_0=\infty$ for the aforementioned doubling property of $\mu$, then there 
exist $Q>0$ 
and $C\ge 1$ such that whenever $x\in X$ and $0<r<R<\infty$, we have
\begin{equation}\label{eq:lower-mass-bdd}
	\frac1C\, \left(\frac{r}{R}\right)^Q\le \frac{\mu(B(x,r))}{\mu(B(x,R))}.
\end{equation}
On the other hand, when $\mu$ is globally doubing and $X$ is connected, there is also a positive number
$s>0$ such that
\begin{equation}\label{eq:uppr-mass-bdd}
	\frac1C\, \left(\frac{r}{R}\right)^s\ge \frac{\mu(B(x,r))}{\mu(B(x,R))}
\end{equation}
whenever $x\in X$ and $0<r<R<2\diam(X)$.
Note that if $X$ supports a global $p$-Poincar\'e inequality for some $1\le p<\infty$, i.e.\ when we can 
select $R_0=\infty$ for the $p$-Poincar\'e inequality mentioned above, then $X$ is necessarily connected.

Next, we give a brief introduction to the construction of Newton-Sobolev spaces, which are suitable substitutes for
Sobolev spaces in the nonsmooth metric setting. When the ambient metric measure space is a Euclidean domain, 
these function spaces coincide with the standard Sobolev spaces; see the discussion in~\cite{Haj, Sh0}.

\begin{defn}\label{def:Dirich-Newt}
	We say that a measurable function $u:X\to\R$ is in the \emph{Dirichlet class} $D^{1,p}(X)$ if there exists an upper gradient $g$ 
of $u$ such that $\int_Xg^p\, \dd\mu$ is finite. We say that $u$ is in the \emph{Newton-Sobolev class} $N^{1,p}(X)$ if $u\in D^{1,p}(X)$ and $\int_X|u|^p\, \dd\mu$ is finite.
\end{defn} 

Thanks to Lemma~\ref{lem:useful}, we know that $u\in D^{1,p}(X)$ if and only if it has a $p$-weak upper gradient $g$ such that
$\int_Xg^p\, \dd\mu<\infty$; see Remark~\ref{rem:minimal-weak}.

\begin{remark}\label{rem:pointwise-Newton}
If we modify an upper gradient on a set of $\mu$-measure zero, the resulting function may not always be an upper gradient of $u$,
but it will be a $p$-weak upper gradient of $u$. For this reason, we can unambiguously say that 
$g\in L^p(X)$ if $\int_Xg^p\, \dd\mu$ is finite.
Modifying $u$ on a set of $\mu$-measure zero is a more serious issue, as the resulting function might not have \emph{even a single} 
$p$-weak upper gradient that belongs to $L^p(X)$. In the rest of the paper, when we say that $u\in L^p(X)$, 
we actually mean it to be a short-hand for $u$ being a pointwise well-defined measurable function with $\int_X|u|^p\, \dd\mu$ finite.
\end{remark}

We will make use of a localized version of the definition of quasiconvex spaces.

\begin{defn}\label{def:unif_loc_QC}
	We say that a metric space $(X, d)$ is \emph{uniformly locally quasiconvex} if there exist 
	constants $R \in (0, \infty]$ and $C_q \in [1, \infty)$ such that for all $x, y \in X$ with $d(x, 
	y) \le R$, there exists a rectifiable curve $\gamma_{xy}$ from $x$ to $y$ with $\ell(\gamma_{xy}) 
	\le C d(x, y)$.
\end{defn}

We point out the following lemma, which follows from a suitably localized version of the proof 
of~\cite[Theorem~8.3.2]{HKSTbook}; the details are left to the interested reader.

\begin{lemma}\label{lem:bdd_geom_is_ULQC}
	Let $1 \le p < \infty$, and let $(X, d, \mu)$ be a space of uniformly locally $p$-controlled 
	geometry that satisfies our standing assumptions. Then $X$ is uniformly locally quasiconvex, with 
	$R$ and $C_q$ dependent only on the data $R_0, C_d, C, \lambda$ associated with the locally 
	$p$-controlled geometry of $X$.
\end{lemma}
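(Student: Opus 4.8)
The plan is to localize the proof of \cite[Theorem~8.3.2]{HKSTbook}, invoking the doubling property \eqref{eq:doubling} and the Poincar\'e inequality \eqref{eq:p-p_Sob-Poin} only on balls of radius comfortably below $R_0$; since $X$ is connected we may assume $R_0 \ge 1$ at the outset, as noted after Definition~\ref{def:controlled-geom}. The key auxiliary objects are the \emph{$\delta$-chain distances}: for $\delta > 0$ and $y \in X$, let $u_\delta^y(x)$ be the infimum of $\sum_{i=1}^{k} d(z_{i-1}, z_i)$ over all finite chains $x = z_0, z_1, \dots, z_k = y$ with $d(z_{i-1}, z_i) < \delta$ for every $i$. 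First I would observe that, because $X$ is connected, the set $\{x \in X : u_\delta^y(x) < \infty\}$ is both open and closed and contains $y$, hence equals $X$. Consequently $u_\delta^y$ is finite everywhere, is locally $1$-Lipschitz with respect to $d$ (so locally bounded, hence in $L^1_\loc(X)$), and --- by subdividing any compact rectifiable curve into finitely many subarcs of diameter less than $\delta$ --- admits the constant function $g\equiv 1$ as an honest upper gradient. Thus $(u_\delta^y, 1)$ is a legitimate function--upper gradient pair to which \eqref{eq:p-p_Sob-Poin} applies on every ball of radius less than $R_0$.

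Next I would run the standard telescoping argument on $u := u_\delta^y$. Iterating \eqref{eq:p-p_Sob-Poin} over the balls $B(x, 2^{-j}r)$ and using \eqref{eq:doubling}, one obtains, for every $x \in X$ and every $0 < r < R_0$, a bound $|u(x) - u_{B(x,r)}| + \jint_{B(x,r)}|u - u(x)|\, d\mu \le C_1 r$ with $C_1$ depending only on $C_d, C, \lambda$ (continuity of $u$ lets us evaluate at the point $x$). Comparing the averages of $u$ over $B(x,2d)$ and $B(x',2d)$ for $d = d(x,x')$, again via \eqref{eq:doubling}, then yields $|u(x) - u(x')| \le C_2\, d(x,x')$ whenever $d(x,x') < R := R_0/4$, with $C_2 = C_2(C_d, C, \lambda)$ and --- crucially --- independent of $\delta$ and of $y$. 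Taking $x' = y$ gives $u_\delta^y(x) = u_\delta^y(x) - u_\delta^y(y) \le C_2\, d(x,y)$ for all $x, y$ with $d(x,y) < R$ and all $\delta > 0$.

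Finally I would extract an actual curve using properness. Fix $x, y$ with $d(x,y) < R$; for each $n \in \N$, choose a $(1/n)$-chain from $x$ to $y$ of total length at most $C_2 d(x,y) + 1/n \le 2C_2\, d(x,y)$. All vertices of these chains lie in the compact set $\overline{B}(x, 2C_2 d(x,y))$, and parametrizing each chain on a fixed interval proportionally to its consecutive distances produces maps into this compact set whose jumps tend to $0$; an Arzel\`a--Ascoli argument then yields a uniformly convergent subsequence whose limit is a rectifiable curve $\gamma_{xy}$ from $x$ to $y$ with $\ell(\gamma_{xy}) \le 2C_2\, d(x,y)$. This gives uniform local quasiconvexity with $R = R_0/4$ and $C_q := 2C_2$ depending only on $R_0, C_d, C, \lambda$. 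The only points needing genuine care --- and the reason the conclusion is \emph{uniformly local} rather than global --- are the bookkeeping in the telescoping and averaging steps, to guarantee that every ball used has radius below $R_0$, and the chain-to-curve limit, which is routine but slightly delicate because the approximating chains are only "almost continuous."
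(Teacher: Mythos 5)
Your proof is correct and is exactly what the paper has in mind: the paper gives no written proof, deferring instead to ``a suitably localized version of the proof of [HKSTbook, Theorem~8.3.2]'', and your chain-distance functions $u_\delta^y$ with upper gradient $1$, the telescoping/averaging Lipschitz estimate, and the Arzel\`a--Ascoli extraction of a limit curve constitute precisely that localization. The only caveat is cosmetic: to keep every ball (including the $\lambda$-dilated ones and those used in the doubling comparisons) below the scale $R_0$, the threshold should be something like $R = R_0/(8\lambda)$ rather than $R_0/4$, which you already flag as bookkeeping.
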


Next, we recall the definition of relative $p$-capacity of condensers.

\begin{defn}\label{def:p-capacity}
Let $1 \le p < \infty$. Given two sets $E,F\subset X$, the \emph{relative $p$-capacity} of the 
condenser $(E,F)$ is the number
\[
\pcap_p(E,F):=\inf_u\int_X \!g_u^p\, \dd\mu,
\]
where the infimum is over all $u\in N^{1,p}(X)$ that satisfy $u\ge 1$ on $E$ and $u\le 0$ on $F$.
\end{defn}

A note of caution is in order here: in~\cite{HKSTbook}, 
the relative $p$-capacity was defined using $u\in D^{1,p}(X)$, and so our definition is more restrictive than theirs. Instead,
our definition of relative $p$-capacity, designed to deal with ends at infinity, follows the definition used in~\cite{HoloKos}.
As a consequence of the more restrictive definition, we do not in general have $\pcap_p(E,F)=\pcap_p(F,E)$ when $\mu(X)=\infty$. Indeed, if 
$F=X\setminus O$ for some bounded open set $O$ with $E\Subset O$, as is the case 
in some of the applications in this paper, 
then there are no functions $u$ in $N^{1,p}(X)$ satisfying $u\ge 1$ on $F$, and hence we will have
$\pcap_p(F,E)=\infty$ while $\pcap_p(E,F)<\infty$.

\begin{lemma}\label{lem:mod-cap}
Let $1 \le p < \infty$, and let $(X, d, \mu)$ satisfy our standing assumptions. Suppose that 
$x_0\in X$, $0<r<R$. Set 
$\Gamma(B(x_0,r), X\setminus B(x_0,R))$
to be the collection of all curves in $X$ with one endpoint in $B(x_0,r)$ and the other endpoint in $X\setminus B(x_0,R)$. Then,
\[
\pcap_p(B(x_0,r), X\setminus B(x_0,R))=\Mod_p(\Gamma(B(x_0,r), X\setminus B(x_0,R))).
\]
\end{lemma}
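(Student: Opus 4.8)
The plan is to prove the capacity-modulus equality $\pcap_p(B(x_0,r), X\setminus B(x_0,R))=\Mod_p(\Gamma)$ by showing two inequalities, where $\Gamma:=\Gamma(B(x_0,r), X\setminus B(x_0,R))$ denotes the connecting curve family. This is a classical identity; the point here is to verify that it goes through under the present standing assumptions together with the more restrictive definition of $\pcap_p$ using $N^{1,p}(X)$ rather than $D^{1,p}(X)$.

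\medskip

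\textbf{Step 1: $\Mod_p(\Gamma)\le \pcap_p(B(x_0,r), X\setminus B(x_0,R))$.} First I would take any admissible $u\in N^{1,p}(X)$ for the condenser, so $u\ge 1$ on $B(x_0,r)$ and $u\le 0$ on $X\setminus B(x_0,R)$, and let $g_u$ be its minimal $p$-weak upper gradient. The claim is that $g_u$ (after possibly modifying on a $\mu$-null set and adding the upper gradient of a $\Mod_p$-null exceptional family, via Lemma~\ref{lem:useful}) is admissible for $\Gamma$: for a curve $\gamma\in\Gamma$ with endpoints $x\in B(x_0,r)$ and $y\in X\setminus B(x_0,R)$ along which the upper gradient inequality holds, we get $1\le |u(x)-u(y)|\le \int_\gamma g_u\,ds$. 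Since the curves for which the upper gradient inequality fails form a $\Mod_p$-null family, and $p$-modulus is an outer measure, $g_u$ is admissible for $\Gamma$ up to a null family; hence $\Mod_p(\Gamma)\le \int_X g_u^p\,d\mu$. Taking the infimum over admissible $u$ gives this direction. No connectedness or doubling is needed here.

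\medskip

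\textbf{Step 2: $\pcap_p(B(x_0,r), X\setminus B(x_0,R))\le \Mod_p(\Gamma)$.} This is the direction where the restrictive definition of $\pcap_p$ matters, and I expect it to be the main obstacle. Given an admissible $\rho\ge 0$ for $\Gamma$ with $\int_X\rho^p\,d\mu<\infty$, define $u(z):=\min\{1,\inf_\gamma \int_\gamma \rho\,ds\}$ where the infimum is over all rectifiable curves $\gamma$ from $B(x_0,r)$ to $z$ (with $u=1$ if no such curve exists; using that $X$ is connected and, by Lemma~\ref{lem:bdd_geom_is_ULQC}, locally quasiconvex, one checks the relevant curves exist locally so $u$ is finite and that $u$ is measurable). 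Then $u\equiv 0$ on $B(x_0,r)$ after the normalization — wait, one uses instead $u(z):=\min\{1,\inf_\gamma\int_\gamma\rho\,ds\}$ over curves ending at $z$ and \emph{starting} in $\overline{B}(x_0,r)$, so that $u=0$ on $B(x_0,r)$; then $1-u$ is a candidate, being $1$ on $B(x_0,r)$ and, by admissibility of $\rho$, equal to $0$ on $X\setminus B(x_0,R)$. A standard argument shows $\rho$ is an upper gradient of $u$ (hence of $1-u$), so $g_{1-u}\le\rho$ $\mu$-a.e. The crucial extra point, absent in the $D^{1,p}$ formulation, is that $1-u\in N^{1,p}(X)$, i.e.\ $\int_X|1-u|^p\,d\mu<\infty$: this holds because $1-u$ is supported in $\overline{B}(x_0,R)$, which has finite measure by the standing assumptions, and $0\le 1-u\le 1$. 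Therefore $1-u$ is admissible for the condenser and $\pcap_p(B(x_0,r),X\setminus B(x_0,R))\le\int_X g_{1-u}^p\,d\mu\le\int_X\rho^p\,d\mu$; taking the infimum over admissible $\rho$ finishes the proof.

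\medskip

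\textbf{Main obstacle.} The delicate points are (a) verifying measurability of the function $u$ defined via infima of path integrals — here I would invoke that in complete separable metric measure spaces with locally finite Borel measure the relevant sets are measurable (the fact quoted after Lemma~\ref{lem:useful}), or argue directly that $u$ is an increasing limit of lower semicontinuous truncations; and (b) checking that the curves realizing the infimum actually exist, which is where local quasiconvexity from Lemma~\ref{lem:bdd_geom_is_ULQC} is used, together with connectedness of $X$. Once $u$ is shown to be a legitimate Newtonian function with $\rho$ as an upper gradient, the finiteness of $\mu(\overline{B}(x_0,R))$ is exactly what upgrades the classical $D^{1,p}$-argument to the $N^{1,p}$-based capacity used here, so no genuinely new idea beyond the standard proof is required.
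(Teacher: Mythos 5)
Your proposal is correct and follows essentially the same route as the paper: the easy direction comes from admissibility of an upper gradient of a test function, and the converse from building the test function $\min\{1,\inf_\gamma\int_\gamma\rho\,ds\}$ (the paper measures the $\rho$-distance from $X\setminus B(x_0,R)$ directly rather than taking $1-u$ of the version measured from the inner ball, and invokes Lemma~\ref{lem:upper-is-upper} for both measurability and the upper gradient property), with finiteness of $\mu(\overline{B}(x_0,R))$ supplying membership in $N^{1,p}(X)$ exactly as you say. One minor remark: your appeal to Lemma~\ref{lem:bdd_geom_is_ULQC} is not justified under the standing assumptions alone (no controlled geometry is assumed in this lemma), but it is also unnecessary, since the convention $\inf\emptyset=\infty$ already forces $u=1$ at points not reachable from the inner ball, which is all the admissibility argument needs.
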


\begin{proof}
	First, let $u\in N^{1,p}(X)$ with $u\ge 1$ on $B(x_0,r)$ and $u\le 0$ on $X\setminus B(x_0,R)$, and let $g$ be 
an upper gradient
of $u$. Then for each rectifiable compact curve $\gamma\in \Gamma(B(x_0,r), X\setminus B(x_0,R))$, setting 
$x$ and $y$ to be the two endpoints
of $\gamma$, we have
\[
1\le |u(x)-u(y)|\le \int_\gamma\! g\, \dd s,
\] 
that is, $g$ is admissible for $\Gamma(B(x_0,r), X\setminus B(x_0,R))$. Thus, we have that
\[
\pcap_p(B(x_0,r), X\setminus B(x_0,R))\ge \Mod_p(\Gamma(B(x_0,r), X\setminus B(x_0,R))).
\]
On the other hand, if $\rho$ is a admissible for $\Gamma(B(x_0,r), X\setminus B(x_0,R))$ with $\int_X\rho^p\, d\mu$ finite,
then the function $u:X\to\R$ given by 
\[
u(x):=\min\left\{1,\, \inf_\gamma\int_\gamma\!\rho\, \dd s\right\}
\]
with infimum over all rectifiable curves in $X$ with one endpoint in $X\setminus B(x_0,R)$ and the other at $x$ is a measurable function.
Since $u$ is also compactly supported and bounded, therefore $u$ belongs to $L^p(X)$. Moreover, $\rho$ is an upper gradient
of $u$ by Lemma~\ref{lem:upper-is-upper}. It follows that $u\in N^{1,p}(X)$. Furthermore, we have that $u=0$ on $X\setminus B(x_0,R)$
and $u=1$ on $B(x_0,r)$, the latter holding because $\rho$ is admissible for $\Gamma(B(x_0,r), X\setminus B(x_0,R))$. It follows that
\[
\pcap_p(B(x_0,r), X\setminus B(x_0,R))\le \Mod_p(\Gamma(B(x_0,r), X\setminus B(x_0,R))).
\]
\end{proof}

For further details regarding the notions discussed above, we refer the reader to~\cite{HKSTbook}.

\subsection{Large-scale geometry and potential theory of $X$}

We consider condensers of the form $(B(x_0,r), X\setminus B(x_0,R))$ and $(B(x_0,r),X\setminus\Om)$, 
where $x_0\in X$, $R > r > 0$, and $\Om$ is a bounded domain with
$\overline{B}(x_0,r)\subset \Om$. 
Note that when $0<r<R_1<R_2$, we have 
$\pcap_p(B(x_0,r), X\setminus B(x_0,R_1))\ge \pcap_p(B(x_0,r), X\setminus B(x_0,R_2))$, and if
$B(x_0,R_1)\subset\Om\subset B(x_0,R_2)$, then we have 
\[
\pcap_p(B(x_0,r), X\setminus B(x_0,R_1))\ge \pcap_p(B(x_0,r), X\setminus \Om) \ge \pcap_p(B(x_0,r), X\setminus B(x_0,R_2)).
\]

\begin{defn}\label{def:para-hyp}
We say that $X$ is \emph{$p$-hyperbolic} if $\lim_{R\to\infty}\, \pcap_p(B(x_0,r), X\setminus B(x_0,R))>0$ for some
$x_0\in X$ and $r>0$, and 
we say that $X$ is \emph{$p$-parabolic} if it is not $p$-hyperbolic.
\end{defn}

The early works regarding the above notions for the case $p=2$ are by Grigor'yan~\cite{Grig}; we refer the interested
reader to~\cite{Grig} and the references therein for more on the relationships between $2$-hyperbolicity and 
the transience of Brownian motion. In the metric setting, see~\cite{Holo, HoloKos, HoloSh}.

Under the assumption that $(X,d,\mu)$ is of locally $p$-controlled geometry, the space 
$X$ is $p$-hyperbolic if and only if for each $x_0\in X$ and $r>0$,  we have that
$\lim_{R\to\infty}\, \pcap_p(B(x_0,r), X\setminus B(x_0,R))>0$; see~\cite[Lemma~3.15]{HoloSh}.
The proof of that result in~\cite{HoloSh} required only the $MEC_p$-property of $X$,
but thanks to Lemma~\ref{lem:MECp} below, under our standing assumptions we know that $X$ has this property whenever $X$ is of locally $p$-controlled geometry.
We also wish to correct an erroneous reference in the proof given in~\cite{HoloSh}; the reference~[21] of that paper,
in that proof, should instead be the reference listed as~[22] there. The specific result used there is actually~\cite[Theorem~6.4]{Sh2}. 

The following is from~\cite[Proposition~2.3]{HoloKos}, see also~\cite{KZ, Holo} for the Riemannian setting. 
Note that the converse of the result need not hold. 

\begin{lemma}\label{lem:parab-volume}
Let $(X, d, \mu)$ satisfy our standing assumptions. Suppose that $1<p<\infty$, and fix $x_0\in 
X$. If
\[
\int_1^\infty\! \left(\frac{r}{\mu(B(x_0,r))}\right)^{1/(p-1)}\, \dd r=\infty, 
\]
then $X$ is $p$-parabolic.
\end{lemma}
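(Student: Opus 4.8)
The plan is to establish $p$-parabolicity directly from Definition~\ref{def:para-hyp}, namely to show that $\lim_{R\to\infty}\pcap_p(B(y,r),X\setminus B(y,R))=0$ for every $y\in X$ and every $r>0$; this limit exists because the capacity is nonincreasing in $R$, and its vanishing for all such $y,r$ is precisely the failure of $p$-hyperbolicity. First I would reduce to the given basepoint $x_0$: if $y\in X$ and $c:=d(x_0,y)$, then $B(y,t)\subset B(x_0,t+c)$, so $\mu(B(y,t))\le\mu(B(x_0,t+c))$, and the substitution $s=t+c$ shows that divergence of $\int_1^\infty(t/\mu(B(x_0,t)))^{1/(p-1)}\,dt$ forces divergence of $\int_1^\infty(t/\mu(B(y,t)))^{1/(p-1)}\,dt$. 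Hence it suffices to fix $y=x_0$ and an arbitrary $r_0>0$, and to produce, for each large $R$, an admissible function for the condenser $(B(x_0,r_0),X\setminus B(x_0,R))$ whose $p$-energy tends to $0$.

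The admissible functions will be radial. Write $V(t):=\mu(B(x_0,t))$, $\overline{V}(t):=\mu(\overline{B}(x_0,t))$, and
\[
\Phi(R):=\int_{r_0}^{R}\Bigl(\frac{t}{\overline{V}(t)}\Bigr)^{1/(p-1)}\,dt.
\]
Since $\overline{V}(t)\ge V(r_0)>0$ for $t\ge r_0$ and $p>1$, the integrand is bounded on $[r_0,R]$, so $\Phi(R)<\infty$; and because the metric spheres about $x_0$ are pairwise disjoint, $V=\overline{V}$ off a countable set, so $\Phi(R)=\int_{r_0}^{R}(t/V(t))^{1/(p-1)}\,dt$, which by hypothesis tends to $\infty$ as $R\to\infty$. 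Let $\phi_R:[0,\infty)\to[0,1]$ equal $1$ on $[0,r_0]$ and $0$ on $[R,\infty)$, and equal $1-\Phi(R)^{-1}\int_{r_0}^{s}(t/\overline{V}(t))^{1/(p-1)}\,dt$ for $s\in[r_0,R]$, and put $u_R:=\phi_R\circ d(\cdot,x_0)$. Since $\phi_R$ is Lipschitz and $d(\cdot,x_0)$ is $1$-Lipschitz, a standard argument along rectifiable curves (via the a.e.\ chain rule) shows that the Borel function $g_R$ given by $g_R(x)=\Phi(R)^{-1}(d(x,x_0)/\overline{V}(d(x,x_0)))^{1/(p-1)}$ when $r_0<d(x,x_0)<R$ and $g_R(x)=0$ otherwise is an upper gradient of $u_R$; as $u_R$ is bounded and supported in $\overline{B}(x_0,R)$, which has finite measure, $u_R\in N^{1,p}(X)$, and $u_R\ge 1$ on $B(x_0,r_0)$, $u_R\le 0$ on $X\setminus B(x_0,R)$, so $u_R$ is admissible. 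Pushing $\mu$ forward under $d(\cdot,x_0)$ to a Radon measure $\nu$ on $[0,\infty)$ whose distribution function $t\mapsto\nu([0,t])$ is $\overline{V}$, one obtains
\[
\pcap_p\bigl(B(x_0,r_0),X\setminus B(x_0,R)\bigr)\le\int_X g_R^{p}\,d\mu=\frac{1}{\Phi(R)^{p}}\int_{(r_0,R]}\Bigl(\frac{t}{\overline{V}(t)}\Bigr)^{p/(p-1)}d\nu(t)=:\frac{N(R)}{\Phi(R)^{p}}.
\]

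The heart of the argument is the estimate $N(R)\le C_0+p\,\Phi(R)$, with $C_0=C_0(p,r_0,x_0)$ independent of $R$. I would obtain this by Lebesgue--Stieltjes integration by parts, using $d\nu=d\overline{V}$ together with the measure inequality $d\bigl(\overline{V}(t)^{-1/(p-1)}\bigr)\le-(p-1)^{-1}\overline{V}(t)^{-p/(p-1)}\,d\overline{V}(t)$ on $(r_0,\infty)$, valid because $x\mapsto x^{-1/(p-1)}$ is $C^{1}$, decreasing, and convex while $\overline{V}$ is nondecreasing. This gives $N(R)\le(p-1)\int_{(r_0,R]}t^{p/(p-1)}\,d\bigl(-\overline{V}(t)^{-1/(p-1)}\bigr)$; integrating by parts against the $C^{1}$ factor $t^{p/(p-1)}$ and discarding the boundary contribution at $R$ (which is nonpositive) yields $N(R)\le(p-1)r_0^{p/(p-1)}\overline{V}(r_0)^{-1/(p-1)}+p\int_{r_0}^{R}(t/\overline{V}(t))^{1/(p-1)}\,dt=C_0+p\,\Phi(R)$. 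Since $p>1$ and $\Phi(R)\to\infty$,
\[
\pcap_p\bigl(B(x_0,r_0),X\setminus B(x_0,R)\bigr)\le\frac{C_0}{\Phi(R)^{p}}+\frac{p}{\Phi(R)^{p-1}}\xrightarrow[\;R\to\infty\;]{}0,
\]
which completes the proof; the bound $\pcap_p\lesssim\Phi(R)^{1-p}$ reproduces the familiar sharp Euclidean-type estimate.

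I expect the main obstacle to be the lack of regularity of $\overline{V}$: it is only nondecreasing and right-continuous, and it may genuinely have jumps, exactly at those radii $t$ for which the metric sphere $\{d(\cdot,x_0)=t\}$ carries positive $\mu$-measure. Consequently the integration by parts must be done as a Lebesgue--Stieltjes computation, and the displayed measure inequality must be verified by hand at the jumps, where it reduces to the elementary estimate $a^{-q}-b^{-q}\ge q\,b^{-q-1}(b-a)$ for $0<a\le b$ and $q=1/(p-1)$ (immediate from monotonicity of $x\mapsto x^{-q-1}$). The remaining points — that $g_R$ is a bona fide Borel upper gradient of $u_R$ even though the formula for $g_R$ represents $\phi_R'$ only Lebesgue-a.e.\ and may fail to do so on a $\mu$-nonnull union of spheres, and that the pushforward identity $\int_X(h\circ d(\cdot,x_0))\,d\mu=\int_{[0,\infty)}h\,d\overline{V}$ holds for nonnegative Borel $h$ — are routine but should be stated carefully.
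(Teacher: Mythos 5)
Your argument is correct. Note that the paper itself supplies no proof of Lemma~\ref{lem:parab-volume}: it quotes the statement from \cite[Proposition~2.3]{HoloKos} and instead proves the discrete variant Lemma~\ref{lem:parab-volume-sum}, by testing each annular condenser $(\Om_j,X\setminus\Om_{j+1})$ with a truncated distance function and invoking the summability criterion \cite[Lemma~2.1]{HoloKos}; the cited source obtains the integral criterion the same way, via dyadic discretization. You take a genuinely different, self-contained route: for each $R$ you build the single sharp radial profile with density $(t/\overline{V}(t))^{1/(p-1)}$ and control its energy by a Lebesgue--Stieltjes integration by parts, which yields the quantitative decay $\pcap_p(B(x_0,r_0),X\setminus B(x_0,R))\lesssim \Phi(R)^{1-p}$ rather than mere vanishing. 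The trade-off is clear: the discrete approach is softer and sidesteps all regularity questions about $\overline{V}$, whereas yours recovers the sharp Euclidean-type rate at the cost of the Stieltjes bookkeeping you correctly identify (the jump inequality $a^{-q}-b^{-q}\ge q\,b^{-q-1}(b-a)$ handling the atoms of $d\overline{V}$, the identification of $V$ with $\overline{V}$ off a countable set of radii so that divergence of $\Phi$ follows from the hypothesis, and the verification that $g_R$ is an honest upper gradient despite representing $\phi_R'$ only a.e.). Two steps you included are easy to overlook but genuinely necessary: the reduction from an arbitrary center $y$ to $x_0$, since Definition~\ref{def:para-hyp} defines $p$-hyperbolicity with an existential quantifier over $(x_0,r)$ and the standing assumptions do not include the controlled geometry needed to invoke \cite[Lemma~3.15]{HoloSh}; and checking divergence of the integral with $\overline{V}$ in place of $V$, since $\overline{V}\ge V$ pushes the integrand in the wrong direction.
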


However, the above lemma is not suitable for our needs; we prove the following modification of this lemma.

\begin{lemma}\label{lem:parab-volume-sum}
Let $(X, d, \mu)$ satisfy our standing assumptions, and suppose that $1<p<\infty$. If there 
exists an increasing sequence of bounded open sets $\Om_j$, $j\in\N$, with $\overline{\Om}_j\subset \Om_{j+1}$ and $X=\bigcup_{j\in\N}\Om_j$, 
for which the distances $\Delta_j=\dist(\Om_j,X\setminus\Om_{j+1})$ satisfy
\begin{equation}\label{eq:p_para_cond}
\sum_{j=1}^\infty \left(\frac{\Delta_j^p}{\mu(\Om_{j+1}\setminus\Om_j)}\right)^{1/(p-1)}=\infty,
\end{equation}
then $X$ is $p$-parabolic.
\end{lemma}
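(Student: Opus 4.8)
The plan is to construct an explicit admissible test function (or a sequence of them) for the condensers $(B(x_0,r), X\setminus\Om_j)$ that has small $p$-energy, thereby showing $\pcap_p(B(x_0,r),X\setminus\Om_j)\to 0$ as $j\to\infty$; since $\Om_j\upto X$, this forces $\lim_{R\to\infty}\pcap_p(B(x_0,r),X\setminus B(x_0,R))=0$ by the monotonicity remarks preceding Definition~\ref{def:para-hyp}, i.e.\ $X$ is $p$-parabolic. The natural candidate is the classical ``resistance ladder'' function: on each annular shell $\Om_{j+1}\setminus\Om_j$ (for $j\ge j_0$, where $B(x_0,r)\subset\Om_{j_0}$) one lets $u$ decay linearly in the distance variable by an amount $a_j$, so that the upper gradient $g_u$ can be taken to be $a_j/\Delta_j$ on that shell and $0$ elsewhere. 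Here I would use that $\dist(\Om_j, X\setminus\Om_{j+1})=\Delta_j>0$, so the function $x\mapsto \min\{1, \dist(x, X\setminus\Om_{j+1})/\Delta_j\}$ is Lipschitz with Lipschitz constant $1/\Delta_j$ and hence has $1/\Delta_j$ times an indicator as an upper gradient; one then telescopes these across the shells with weights $a_j$.

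The key steps, in order, are: (1) Fix $x_0$ and $r$; choose $j_0$ with $\overline B(x_0,r)\subset\Om_{j_0}$, and set $N\ge j_0$. (2) For $j_0\le j\le N$ pick nonnegative weights $a_j$ with $\sum_{j=j_0}^{N} a_j = 1$, and define $u_N$ to equal $1$ on $\Om_{j_0}$, to drop by $a_j$ as one crosses the shell $\Om_{j+1}\setminus\Om_j$ (realized via the truncated distance functions above), and to equal $0$ outside $\Om_{N+1}$; this $u_N$ is Lipschitz, compactly supported, hence in $N^{1,p}(X)$, with $u_N\ge 1$ on $B(x_0,r)$ and $u_N\le 0$ on $X\setminus\Om_{N+1}$, so it is admissible for the condenser. (3) Estimate $\int_X g_{u_N}^p\,d\mu \le \sum_{j=j_0}^{N} (a_j/\Delta_j)^p\,\mu(\Om_{j+1}\setminus\Om_j)$. (4) Optimize over the weights $a_j$ subject to $\sum a_j=1$: by the standard Lagrange/H\"older computation the minimum equals $\left(\sum_{j=j_0}^N \bigl(\Delta_j^p/\mu(\Om_{j+1}\setminus\Om_j)\bigr)^{1/(p-1)}\right)^{-(p-1)}$, with the optimal choice $a_j \propto (\Delta_j^p/\mu(\Om_{j+1}\setminus\Om_j))^{1/(p-1)}$. (5) Let $N\to\infty$: by hypothesis \eqref{eq:p_para_cond} the sum in the denominator diverges, so $\pcap_p(B(x_0,r),X\setminus\Om_{N+1})\to 0$, and monotonicity of relative capacity under the ordering $B(x_0,R)\subset\Om_{N+1}$ for suitable $R$ (equivalently, exhausting $X$) yields $\lim_{R\to\infty}\pcap_p(B(x_0,r),X\setminus B(x_0,R))=0$, giving $p$-parabolicity.

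A few technical points need care but should be routine. One must check that the truncated distance functions are genuinely admissible upper-gradient pairs on $X$ (they are, since $1$-Lipschitz functions have their Lipschitz constant as an upper gradient, and here we do not even need the Poincar\'e inequality — only the standing assumptions and properness for measurability/finiteness of $\mu(\Om_{j+1}\setminus\Om_j)$). One also must verify that $u_N\in N^{1,p}(X)$: it is bounded, compactly supported in the bounded open set $\Om_{N+1}$, and $\mu(\Om_{N+1})<\infty$ by the standing assumption that $\mu$ is finite on balls together with properness, so $\int_X|u_N|^p\,d\mu<\infty$. Finally one should note that if only finitely many terms of the series \eqref{eq:p_para_cond} are nonzero the hypothesis cannot hold (each term is positive since $\Delta_j>0$ and $0<\mu(\Om_{j+1}\setminus\Om_j)<\infty$; if some shell had zero measure one would instead merge shells), so the divergence is genuine.

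\textbf{Main obstacle.} The only real subtlety is step (4)--(5): making sure the optimization is valid even when some $\Delta_j$ are large or the measures $\mu(\Om_{j+1}\setminus\Om_j)$ behave irregularly, and handling the endpoint case $p=1$ separately if needed (the exponent $1/(p-1)$ degenerates; but the lemma is stated for $1<p<\infty$, so this is not an issue here). Comparing with Lemma~\ref{lem:parab-volume}, the present statement is a discretized version where one has replaced the integral test over radii by a sum over an arbitrary exhaustion; the proof is essentially the same ``series/capacity'' argument, and the main work is just bookkeeping the telescoped test function and the H\"older optimization, so I expect no genuine difficulty.
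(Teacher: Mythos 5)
Your proposal is correct and follows essentially the same route as the paper: the paper uses the very same truncated distance functions $u_j=\min\{1,\dist(\cdot,X\setminus\Om_{j+1})/\Delta_j\}$ to bound $\pcap_p(\Om_j,X\setminus\Om_{j+1})\le\mu(\Om_{j+1}\setminus\Om_j)/\Delta_j^p$, and then concludes by citing the serial-capacitor inequality of Holopainen--Koskela (\cite[Lemma~2.1]{HoloKos}). Your telescoping of the shells with weights $a_j$ and the H\"older/Lagrange optimization is precisely an inline proof of that cited lemma, so the two arguments coincide in substance.
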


\begin{proof}
For each positive integer $j$ consider the function $u_j:X\to\R$ given by
\[
u_j(x)=\min\{1,\dist(x,X\setminus\Om_{j+1})/\Delta_j\}.
\]
Then $\Delta_j^{-1}\, \chi_{\Om_{j+1}\setminus\Om_j}$ is an upper gradient of $u_j$, and as $\Om_{j+1}$ has
compact closure, we have that $u_j\in N^{1,p}(X)$ with
\[
\pcap_p(\Om_j,X\setminus\Om_{j+1})\le \int_X\!g_{u_j}^p\, \dd\mu\le \frac{\mu(\Om_{j+1}\setminus\Om_j)}{\Delta_j^p}.
\]
It follows that
\[ 
\left(\pcap_p(\Om_j,X\setminus\Om_{j+1})\right)^{-1/(p-1)}
\ge \left(\frac{\mu(\Om_{j+1}\setminus\Om_j)}{\Delta_j^p}\right)^{-1/(p-1)}.
\]
Thus, if $\overline{B}_1$ is any closed ball in $\Om_1$, then~\cite[Lemma~2.1]{HoloKos} yields that 
$\lim_{j\to\infty}\pcap_p(\overline{B}_1,X\setminus\Om_j)=0$, 
and hence $(X,d,\mu)$ is $p$-parabolic.
\end{proof}

Next, we discuss the notion of ends of a metric measure space, using the notions from~\cite{AdSh, Est}. 
See~\cite{EO, Anc, AndSc} for earlier constructions in the setting of simply connected Riemannian manifolds 
of negative sectional curvature.

\begin{defn}\label{def:ends}
A sequence $\{F_j\}_j$ of subsets of $X$ is said to be an \emph{end at infinity} if 
$\overline{F_{j+1}} \subset F_j$ and $\dist(X\setminus F_j, F_{j+1})>0$ for all $j\in\N$, the set 
$\bigcap_{j\in\N}F_j$ is empty, and for each $j\in \N$, there exists a compact set $K_j\subset X$ such that $F_j$ is an unbounded component of $X\setminus K_j$. By a slight abuse of notation, we say that two ends at infinity, $\{E_j\}_j$ and $\{F_j\}_j$, 
are the same if
for every positive integer $j$ there exist positive integers $i,k$ such that $E_i\subset F_j$ and $F_k\subset E_j$.
\end{defn}

Since the only ends we consider in this paper are ends at infinity as in the above definition, we may refer to them merely as ends for brevity.

If $\{F_j\}$ is an end with associated compact sets $K_j$, $j \in \N$ as in the above definition, 
then $K_j\subset X\setminus F_j=:\widehat{K}_j$. Note that $\widehat{K}_j$ itself need not be compact. For each $j\in\N$, let
$\Gamma_j$ denote the collection of all locally rectifiable curves $\gamma:[0,\infty)\to X$ such that $\gamma(0)\in K_1$ and
$\gamma(\tau,\infty)\subset F_j$ for some $\tau>0$, and let $\Gamma:=\bigcap_j\Gamma_j$. Note that 
$\Gamma_{j+1}\subset\Gamma_j$ and, for each $j > 1$, the quantity $\Mod_p(\Gamma_j)$ is finite.

\begin{defn}\label{def:ends-hyp}
We say that the end $\{F_j\}_j$ is a \emph{$p$-hyperbolic end} if $\lim_{j\to\infty}\Mod_p(\Gamma_j)>0$. We say that $\{F_j\}_j$ is a \emph{$p$-parabolic end} if it is not a $p$-hyperbolic end. 
\end{defn}

We note that $\lim\limits_{j\to\infty}\Mod_p(\Gamma_j)>0$ if and only if $\Mod_p(\Gamma)>0$, see~\cite[Theorem~4.2]{Sh1}.

\subsection{Gromov hyperbolicity, uniformization}\label{sub:hypunif}

We next discuss a large-scale metric notion of hyperbolicity, proposed first by Gromov in~\cite{Gro}. We state the 
definition for geodesic spaces; while the notion of Gromov hyperbolicity is typically defined for a more general 
class of spaces, it has a simpler geometric characterization in this setting as opposed to the more natural
Gromov product definition given in the setting of geometric groups.

\begin{defn}\label{def:Gromov}
Given $\delta > 0$, a complete geodesic metric space $(X,d)$ is said to be \emph{Gromov $\delta$-hyperbolic} if, whenever 
$x,y,z\in X$ are three distinct points and $[x,y], [y,z]$, and $[z,x]$ are geodesic curves with endpoints $x,y$, $y,z$, and
$z,x$, respectively, then 
\[
	[z,x]\subset \bigcup_{w\in [x,y]\cup[y,z]}B(w,\delta).
\]
Moreover, a space $(X,d)$ that is Gromov $\delta$-hyperbolic for some $\delta > 0$ is called \emph{Gromov hyperbolic}.
\end{defn}

Furthermore, the following notion of rough starlikeness is often used in tandem with Gromov hyperbolicity, see for example~\cite{BHK}.

\begin{defn}\label{def:roughly-starlike}
Let $(X, d)$ be an unbounded metric space, let $z_0 \in X$, and let $M > 0$. We say that a curve 
$\gamma \colon [0, \infty) \to X$ of infinite length is a \emph{geodesic ray starting from $z_0$} if 
$\gamma(0) = z_0$ and $\gamma\vert_{[0, t]}$ is a geodesic for every $t > 0$. The space $X$ is 
called \emph{$M$-roughly $z_0$-starlike} if, for every $x \in X$, there exists a geodesic ray 
$\gamma$ starting from $z_0$ such that $\dist (x, \gamma) \le M$. Moreover, a space $X$ is 
\emph{roughly starlike} if it is $M$-roughly $z_0$-starlike for some $M > 0$ and $z_0 \in X$. 
\end{defn}

Note that if a space $(X,d)$ is $M$-roughly $z_0$-starlike and $z_1 \in X$, then $X$ is $(M+d(z_0, z_1))$-roughly 
$z_1$-starlike. Hence, unless one is concerned with the exact value of $M$, the base point $z_0$ of a roughly 
starlike space can be chosen arbitrarily.

The notion of Gromov hyperbolicity as described above has the additional advantage in comparison to the original definition 
in that the description in Definition~\ref{def:Gromov} is closely linked to the notion of uniform domains, see~\cite{BHK}. We now give a brief overview of this connection, as we use this link as a tool in this paper.

\begin{defn}\label{def:uniform}
A metric space $(Z,d_Z)$ is a \emph{uniform domain} if it is locally compact, non-complete, and 
there exists a constant $A\ge 1$ satisfying the following condition: for each pair of distinct points 
$x,y\in Z$, there exists a curve $\gamma$ in $Z$ with endpoints $x, y$, called an $A$-\emph{uniform curve}, 
such that $\ell(\gamma)\le A\, d_Z(x,y)$ 
and, for each point $z$ in the trajectory of $\gamma$ and for all subcurves $\gamma_{xz}$ and $\gamma_{zy}$ of $\gamma$ with endpoints $x,z$ and
$z,y$, respectively, we have
\[
	\dist(z,\partial Z)\ge \frac{1}{A}\, \min\{\ell(\gamma_{xz}), \ell(\gamma_{zy})\}.
\]
Here, $\partial Z:=\overline{Z}\setminus Z$, where $\overline{Z}$ is the metric completion of $Z$.
\end{defn}

Given a Gromov hyperbolic space $(X,d)$ and $\eps>0$, we define a new metric $d_\eps$,
called a \emph{uniformized metric}, on $(X,d)$ as follows:
we first fix a point $z_0\in X$, and denote
\begin{equation}\label{eq:rho_def}
	\rho_\eps(z) := e^{-\eps d(z, z_0)}
\end{equation}
for all $z \in X$. Then, for each pair of points $x,y\in X$, we set
\begin{equation}\label{eq:d-eps}
d_\eps(x,y):=\inf_\gamma \, \int_\gamma\! \rho_\eps(\gamma(s))\, \dd s
\end{equation}
where the infimum is over all arc-length parametrized curves $\gamma$ with endpoints $x$ and $y$.

The following lemma is from~\cite[Proposition~4.5]{BHK}. As it is a purely metric result, we elect 
not to assume our standing assumptions in its statement.

\begin{lemma}\label{lem:BHK1}
Given a proper Gromov $\delta$-hyperbolic space $(X,d)$, there exists a positive real number
$\eps(\delta)$ such that whenever $0<\eps\le \eps_0(\delta)$, we have that $(X,d_\eps)$ is an $A$-uniform domain, where $A = A(\delta, \eps)$.
\end{lemma}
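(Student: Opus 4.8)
The plan is to reprove \cite[Proposition~4.5]{BHK} by exhibiting an explicit family of uniform curves. Since $X$ is geodesic, any two points $x,y\in X$ are joined by a $d$-geodesic $[x,y]$, and I would show that, once $\eps$ is small enough depending on $\delta$, each $[x,y]$ is an $A$-uniform curve in $(X,d_\eps)$ with $A=A(\delta,\eps)$; this simultaneously yields that $(X,d_\eps)$ is a uniform domain and the asserted dependence of the constants. Throughout I assume $X$ is unbounded --- the only relevant case for us, since $\mu(X)=\infty$ with $\mu$ finite on balls forces it --- because when $X$ is bounded the uniformized space is complete and there is nothing to prove.

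First I would record the elementary properties of the metric $d_\eps$ of \eqref{eq:d-eps}: it is a genuine metric; it induces the same topology as $d$ on every $d$-bounded set, so $(X,d_\eps)$ is locally compact by properness; and it is non-complete, since a $d$-geodesic ray emanating from $z_0$ --- which exists by properness, unboundedness, and Arzel\`a--Ascoli --- has finite $\rho_\eps$-length $\int_0^\infty e^{-\eps t}\,dt=1/\eps$, so its trace is $d_\eps$-Cauchy with no limit in $X$. Hence the $d_\eps$-completion $\overline{X}_\eps$ carries a nonempty boundary $\partial_\eps X:=\overline{X}_\eps\setminus X$. Along the way I would prove the two length facts that drive everything else: (a) for any curve $\sigma$ and any two of its points $u,v$ one has $\ell_{d_\eps}(\sigma_{uv})\ge\eps^{-1}|\rho_\eps(u)-\rho_\eps(v)|$, because $d(\cdot,z_0)$ is $1$-Lipschitz along $\sigma$; and (b) using $\delta$-thinness of the triangle with vertices $z_0,u,v$, which forces $d(\cdot,z_0)$ to descend essentially linearly along $[u,v]$ from $d(u,z_0)$ to $\dist(z_0,[u,v])=(u\mid v)_{z_0}+O(\delta)$ and then climb back up, the comparison $\ell_{d_\eps}([u,v])\approx\eps^{-1}\bigl(2e^{-\eps(u\mid v)_{z_0}}-\rho_\eps(u)-\rho_\eps(v)\bigr)$, with implicit constants of multiplicative type $e^{\pm O(\eps\delta)}$. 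Keeping these errors bounded is precisely what forces $\eps\le\eps_0(\delta)$.

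Next I would establish the corkscrew estimate $\dist_{d_\eps}(z,\partial_\eps X)\approx\rho_\eps(z)$, with constants depending only on $\delta$ and $\eps$: the upper bound by running a $d$-geodesic ray from $z$ to infinity and bounding the $\rho_\eps$-length of its tail by $\eps^{-1}\rho_\eps(z)$; the lower bound from (a) and properness, since $\rho_\eps$ along any curve decays no faster than exponentially in $d$-arclength, so a curve that reaches $\partial_\eps X$ --- and hence leaves every $d$-ball --- must accumulate $\rho_\eps$-length $\gtrsim\rho_\eps(z)$. The cigar condition of Definition~\ref{def:uniform} for $\gamma=[x,y]$ is then essentially combinatorial: letting $m$ be the point of $[x,y]$ closest to $z_0$, whichever of $\gamma_{xz},\gamma_{zy}$ avoids $m$ is $d(\cdot,z_0)$-monotone, hence has $d_\eps$-length $\eps^{-1}|\rho_\eps(z)-\rho_\eps(w)|\le\eps^{-1}\rho_\eps(z)$, where $w$ is its far endpoint and $\rho_\eps(w)\le\rho_\eps(z)$ since $z$ lies between $w$ and $m$; this is exactly $\dist_{d_\eps}(z,\partial_\eps X)\gtrsim\min\{\ell_{d_\eps}(\gamma_{xz}),\ell_{d_\eps}(\gamma_{zy})\}$.

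Finally, the length condition $\ell_{d_\eps}([x,y])\le A\,d_\eps(x,y)$: by (b) this is equivalent to the lower bound $d_\eps(x,y)\gtrsim e^{-\eps(x\mid y)_{z_0}}\min\{1,d(x,y)\}$, i.e.\ to the statement that no curve from $x$ to $y$ is substantially shorter in $d_\eps$-length than the $d$-geodesic $[x,y]$. When $d(x,y)$ is bounded this is elementary: a competitor that stays near $x$ sees a conformal factor comparable to $\rho_\eps(x)$, while one that strays has large $d$-length inside a region where $\rho_\eps$ is still comparable to $\rho_\eps(x)$. The genuinely hard case --- which I expect to be the main obstacle --- is $d(x,y)$ large with $[x,y]$ descending into a deep ``valley'' near the projection of $z_0$: one must rule out cheap competitors that detour around the valley, and this is precisely a Gehring--Hayman-type estimate whose proof uses $\delta$-hyperbolicity through the fact that a curve avoiding a bounded neighborhood of $[x,y]$ must be exponentially long in $d$, followed by a careful splitting of the competitor to harvest the required $\rho_\eps$-length $\gtrsim e^{-\eps(x\mid y)_{z_0}}$. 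Collecting the constants from all of these steps produces $A=A(\delta,\eps)$ and completes the proof.
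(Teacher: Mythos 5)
First, note that the paper does not prove this lemma at all: it is quoted verbatim from \cite[Proposition~4.5]{BHK}, so any argument you give is necessarily ``different from the paper's.'' Your plan does, in fact, reproduce the skeleton of the BHK proof: show that $d$-geodesics are the uniform curves, derive the corkscrew bound $\dist_{d_\eps}(z,\partial_\eps X)\gtrsim \eps^{-1}\rho_\eps(z)$ from the $1$-Lipschitz property of $d(\cdot,z_0)$, get the cigar condition from coarse unimodality of $d(\cdot,z_0)$ along $[x,y]$, and reduce the length condition to the two-sided comparison $\ell_{d_\eps}([x,y])\approx \eps^{-1}\bigl(2e^{-\eps(x\mid y)_{z_0}}-\rho_\eps(x)-\rho_\eps(y)\bigr)$ together with a matching lower bound for $d_\eps(x,y)$. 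All of that is the right architecture, and your facts (a) and (b) are correct as stated (with the caveat that in the cigar argument you invoke \emph{exact} monotonicity of $d(\cdot,z_0)$ along the half of $[x,y]$ avoiding the nadir, whereas only coarse monotonicity up to additive $O(\delta)$ holds; this is harmless but must be tracked, as it is one of the places where the restriction $\eps\le\eps_0(\delta)$ enters).

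The genuine gap is the step you yourself flag as ``the main obstacle'': the lower bound $d_\eps(x,y)\gtrsim \eps^{-1}e^{-\eps(x\mid y)_{z_0}}\min\{1,\eps\,d(x,y)\}$ over \emph{all} competitor curves. Your fact (a), applied to a competitor $\sigma$ split at its closest point to $z_0$, gives $\ell_{d_\eps}(\sigma)\ge \eps^{-1}(2e^{-\eps h_\sigma}-\rho_\eps(x)-\rho_\eps(y))$ with $h_\sigma=\dist(z_0,|\sigma|)$, and this is useful only when $h_\sigma\le (x\mid y)_{z_0}+O(\delta)$; when $\sigma$ detours far from $z_0$ the right-hand side can even be negative, and the naive substitute $\ell_{d_\eps}(\sigma)\gtrsim \eps^{-1}\max\{\rho_\eps(x),\rho_\eps(y)\}$ points in the wrong direction relative to $e^{-\eps(x\mid y)_{z_0}}$. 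Closing this case requires the quantitative exponential-divergence estimate (any curve from $x$ to $y$ whose trace avoids the $T$-neighborhood of the nadir of $[x,y]$ has $d$-length $\ge c\,2^{T/\delta}$, or the equivalent statement $h_\sigma\le (x\mid y)_{z_0}+\delta\log_2\ell(\sigma)+O(\delta)$), followed by a balancing of this against the trivial bound $\ell_{d_\eps}(\sigma)\ge \ell(\sigma)\,e^{-\eps\sup_{|\sigma|}d(\cdot,z_0)}$; it is exactly here that the smallness condition $\eps\delta\lesssim 1$ is forced. You name this mechanism but do not carry it out, so as written the proposal is a plan rather than a proof. (A minor point: this step is \emph{not} the Gehring--Hayman theorem of \cite{BHK}, which is a later and separate statement about quasihyperbolic geodesics in the uniformized domain; the estimate you need is the more elementary Gromov-product formula for $d_\eps$ from the lemmas preceding \cite[Proposition~4.5]{BHK}.)
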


\begin{defn}\label{def:partial-eps}
When $(X,d)$ is a Gromov hyperbolic space and $\eps>0$, we denote $\partial_\eps X := \overline{X}^\eps\setminus X$, 
where $\overline{X}^\eps$ is the metric completion of $(X,d_\eps)$.
\end{defn}

The object $\partial_\eps X$ should not be confused with the definition of ends at infinity from the prior subsection; 
while both of these concepts detect behavior at $\infty$, it is possible that a space $X$ has only a single end 
while $\partial_\eps X$ is a continuum of at least two 
points. Indeed, this occurs for the standard hyperbolic space $\mathbb{H}^n$.

Next, suppose that $(X,d,\mu)$ is a proper Gromov hyperbolic metric measure space. For each $\beta>0$, 
we let $\mu_\beta$ denote a \emph{uniformized measure} on $X$ given by
$\dd\mu_\beta=\rho_\beta\,\dd\mu$, that is, for $\mu$-measurable sets $E\subset X$ we set
\begin{equation}\label{eq:mu-beta}
	\mu_\beta(E)=\int_E \rho_\beta\, \dd\mu,
\end{equation}
where $\rho_\beta$ is again as in \eqref{eq:rho_def}. Recall that when $(X,d,\mu)$ is of uniformly 
locally $p$-controlled geometry, $C_d$ denotes the uniform local doubling constant
related to $\mu$;  see Definition~\ref{def:controlled-geom} above. For every such space, we set 
\begin{equation*}
\beta_0:=\frac{17\, \log(C_d)}{3\, R_0}.
\end{equation*}
Note that if we make $R_0$ larger, then correspondingly the doubling constant $C_d$ also could become
larger.

The following result is from~\cite[Theorem~6.2]{BBS1}.

\begin{lemma}\label{lem:BBS1-unif}
Let $1 \le p < \infty$, and let $(X, d, \mu)$ satisfy our standing assumptions. Suppose that 
$(X,d,\mu)$ is a proper Gromov hyperbolic metric measure space of uniformly locally $p$-controlled 
geometry, and let $0<\eps\le \eps_0(\delta)$ be as in Lemma~\ref{lem:BHK1} above. Then  for each 
$\beta>\beta_0$,  we have that 
$(X,d_\eps,\mu_\beta)$ is of globally $p$-controlled geometry.
\end{lemma}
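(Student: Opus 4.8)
The statement is an instance of \cite[Theorem~6.2]{BBS1} specialized to our standing assumptions, so the plan is first to check that those assumptions supply precisely the hypotheses of that theorem --- $(X,d)$ proper and Gromov $\delta$-hyperbolic, $\mu$ Borel regular with $0<\mu(B(x,r))<\infty$, and $(X,d,\mu)$ uniformly locally doubling with a uniformly local $p$-Poincar\'e inequality with data $R_0,C_d,C,\lambda$ --- and then to read off the conclusion that $(X,d_\eps,\mu_\beta)$ is globally doubling and supports a global $p$-Poincar\'e inequality whenever $0<\eps\le\eps_0(\delta)$ and $\beta>\beta_0=17\log(C_d)/(3R_0)$. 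For completeness I sketch below the argument underlying \cite[Theorem~6.2]{BBS1}.

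First I would set up the comparison, from \cite{BHK}, between $d$ and $d_\eps$ and between their balls. Writing $\lambda(x):=e^{\eps d(x,z_0)}$, so that $\rho_\eps(x)=\lambda(x)^{-1}$ and $\rho_\beta(x)=\lambda(x)^{-\beta/\eps}$, the facts to extract are: (a) if $0<r\lesssim\lambda(x)^{-1}R_0$ then $B_{d_\eps}(x,r)$ is comparable as a set to $B(x,\lambda(x)r)$, with $d_\eps\approx\lambda(x)^{-1}d$ there; and (b) if $r$ is of the order of $\dist_{d_\eps}(x,\partial_\eps X)\approx\lambda(x)^{-1}$ or larger, then $B_{d_\eps}(x,r)$ is comparable to a ``shadow'' that in the $d$-metric is a union of balls of radius $\lesssim R_0$ strung along the geodesic rays passing near $x$. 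Combining (a)--(b) with $\dd\mu_\beta=\rho_\beta\,\dd\mu$ gives $\mu_\beta(B_{d_\eps}(x,r))\approx\lambda(x)^{-\beta/\eps}\mu(B(x,\lambda(x)r))$ in the first regime, and a sum of such terms over the shadow in the second.

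Doubling of $\mu_\beta$ then splits into two cases. In the small-radius regime it is immediate from \eqref{eq:doubling}, since $\lambda(x)r\lesssim R_0$ keeps us at a scale where local doubling applies and $d$ oscillates by only $O(R_0)$ on $B(x,\lambda(x)r)$, so the weight $\lambda(\cdot)^{-\beta/\eps}$ is essentially constant there. In the large-radius regime I would sum the local estimates over the Whitney-type pieces of the shadow: iterating \eqref{eq:doubling} bounds the number, hence unweighted mass, of pieces at $d$-distance about $t$ from $x$ by $C_d^{O(t/R_0)}$, while the weight on such a piece is of order $e^{-\beta t}$. Doubling thus reduces to convergence and scale-stability of geometric sums $\sum_t e^{-\beta t}C_d^{ct/R_0}$, which demands $\beta>c\log(C_d)/R_0$; tracking the optimal covering multiplicity $c$ through the BHK geometry of the uniformization (the interplay of $\eps_0(\delta)$, the uniformity constant $A=A(\delta,\eps)$, and chaining along rays) produces the explicit threshold $17\log(C_d)/(3R_0)$. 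For the global $p$-Poincar\'e inequality I would use that $(X,d_\eps)$ is an $A$-uniform domain (Lemma~\ref{lem:BHK1}): given a $d_\eps$-ball $B$, Whitney-decompose it into $d$-balls of radius comparable to their $d_\eps$-distance to $\partial_\eps X$ (hence of $d$-radius $\lesssim R_0$), apply the uniformly local $p$-Poincar\'e inequality \eqref{eq:p-p_Sob-Poin} on each piece, and telescope the oscillations along the uniform curves joining the pieces using the tree-like structure of a uniform domain; the weighted telescoping sums converge precisely because $\beta>\beta_0$, and one reassembles a global $(1,p)$-Poincar\'e inequality with constants depending only on $p,\delta,\eps$ and $R_0,C_d,C,\lambda$.

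The main obstacle is the large-radius/boundary analysis: making comparisons (a)--(b) quantitative, organizing the Whitney/shadow decomposition near $\partial_\eps X$, and --- most delicately --- propagating constants through the chaining so that the exponential sums converge exactly at the stated value $\beta_0$, rather than at some unspecified multiple of $\log(C_d)/R_0$. This bookkeeping is the technical core of \cite{BHK} and \cite{BBS1}, which is why in the body of the paper we simply cite \cite[Theorem~6.2]{BBS1}.
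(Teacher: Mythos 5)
Your proposal matches the paper exactly: the paper offers no proof of this lemma and simply cites \cite[Theorem~6.2]{BBS1}, which is precisely your first step. The additional sketch of the underlying uniformization argument (ball comparisons, two-regime doubling, Whitney decomposition and chaining for the Poincar\'e inequality) is a reasonable outline of what happens in \cite{BBS1}, but it is not required here since the result is being quoted rather than reproved.
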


We also recall the following three comparability results. The first one of the results is shown in \cite[Lemma 4.16]{BHK}, 
and the other two in \cite[Theorem 2.10 and Proposition 4.7]{BBS1}. Here and in what follows, 
if $x \in \overline{X}^\eps$, $r > 0$, and $A \subset \overline{X}^\eps$, we use $B_\eps(x, r)$, 
$\dist_\eps (x, A)$, and $\diam_\eps(A)$ to denote $B(x, r)$, $\dist(x, A)$, and $\diam(A)$ with respect to the metric $d_\eps$. 

\begin{lemma}\label{lem:unif_comparisons}
Let $(X, d, \mu)$ be a $z_0$-roughly $M$-starlike Gromov $\delta$-hyperbolic space that satisfies 
	our standing assumptions. Let $(X, d_\eps, \mu_\beta)$ be a uniformization of $X$ with base point $z_0$, 
	where $0 < \eps \le \eps_0(\delta)$ and $\beta > \beta_0$. Then there exist constants $K_1, K_2 \ge 1$, 
	both dependent only on $M$ and $\eps$, such that for every $x \in X$, we have
	\begin{equation}\label{eq:rho_d_comparison}
		K_1^{-1} \rho_\eps(x) \leq \dist_\eps(x, \partial_\eps X) \leq K_1 \rho_\eps(x),
	\end{equation}
	and for all $0 < r \le d_\eps(x, \partial_\eps X)/2$, we have
	\begin{equation}\label{eq:d_deps_ball_comparison}
		B\left(z, \frac{r}{K_2 \rho_\eps(z)}\right)
		\subset B_\eps (z, r)
		\subset B\left(z, \frac{K_2 r}{\rho_\eps(z)}\right).
	\end{equation}
	Moreover, if $(X, d, \mu)$ is locally uniformly doubling with parameters $R_0$ and $C_d$, and $a_0 > 0$, then there exists a constant $K_3 = K_3(a_0, \beta, C_d, \delta, \eps, M, R_0) > 0$ such that 
we have
	\begin{equation}\label{eq:boundary_to_inside_comparison}
		\frac{r^{\beta/\eps}}{K_3}  \mu \left(B\left(z, \frac{a_0 r}{\rho_\eps(z)} \right) \right) 
		\le \mu_\beta(B(\xi, r))
		\le K_3 r^{\beta/\eps} \mu \left(B\left(z, \frac{a_0 r}{\rho_\eps(z)} \right) \right)
	\end{equation}
	whenever 
$\xi \in \partial_\eps X$, $0<r<2\diam_\eps (X_\eps)$,  
and
$z \in X$ is such that $B_\eps(z, a_0 r) \subset B_\eps(\xi, r)$ with $\dist_\eps(z, \partial_\eps X) \ge 2a_0 r$.
\end{lemma}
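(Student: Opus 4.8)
The plan is to treat the three displayed comparisons in order, as they build on one another. For \eqref{eq:rho_d_comparison}, the starting point is the definition \eqref{eq:d-eps} of $d_\eps$ together with $\rho_\eps(z) = e^{-\eps d(z,z_0)}$: along any arc-length curve $\gamma$ from $x$ to a point approaching $\partial_\eps X$, the density $\rho_\eps$ decays at a controlled exponential rate because $|d(\gamma(s),z_0) - d(x,z_0)| \le s$, so that $\rho_\eps(\gamma(s)) \ge \rho_\eps(x) e^{-\eps s}$; integrating gives the lower bound $\dist_\eps(x,\partial_\eps X) \gtrsim \rho_\eps(x)$ with constant depending only on $\eps$. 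The matching upper bound $\dist_\eps(x,\partial_\eps X) \lesssim \rho_\eps(x)$ is where rough starlikeness enters: picking a geodesic ray $\gamma$ from $z_0$ passing within $M$ of $x$ and following it to infinity, one computes $\int_\gamma \rho_\eps\, ds$ explicitly as essentially a geometric-type integral $\sum e^{-\eps k} \approx \rho_\eps(x)$, using $M$-starlikeness to control the detour from $x$ to $\gamma$. This is exactly \cite[Lemma~4.16]{BHK}, so the intended proof is simply to cite it and record the dependence of $K_1$ on $M,\eps$.

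For \eqref{eq:d_deps_ball_comparison}, the idea is that on a $d$-ball of radius comparable to $\rho_\eps(z)/\eps$ around a point $z$ with $\rho_\eps(z) \approx \dist_\eps(z,\partial_\eps X)$, the density $\rho_\eps$ is comparable to the constant $\rho_\eps(z)$ — again because $\rho_\eps$ changes by at most a bounded multiplicative factor over such a ball — so $d_\eps$ and $\rho_\eps(z)\, d$ are bi-Lipschitz there; translating this bi-Lipschitz equivalence of metrics into the stated inclusions of balls is routine once the radius restriction $r \le d_\eps(x,\partial_\eps X)/2$ is used to stay in the region where the comparison is uniform. This is \cite[Theorem~2.10]{BBS1}, and the proof amounts to quoting it. Finally, \eqref{eq:boundary_to_inside_comparison} compares the uniformized measure $\mu_\beta$ of a $d_\eps$-ball centered at a boundary point $\xi$ with the $\mu$-measure of a corresponding $d$-ball deep inside $X$. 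The mechanism is: $\mu_\beta(B_\eps(\xi,r)) = \int_{B_\eps(\xi,r)} \rho_\beta\, d\mu$, and one decomposes $B_\eps(\xi,r)$ into ``annuli'' at $d_\eps$-distance $\approx 2^{-k}r$ from $\partial_\eps X$; on each annulus $\rho_\beta \approx (2^{-k}r)^{\beta/\eps}$ by \eqref{eq:rho_d_comparison}, and \eqref{eq:d_deps_ball_comparison} together with the uniform local doubling of $\mu$ lets one compare the $\mu$-measure of that $d_\eps$-annulus with a fixed fraction of $\mu(B(z, a_0 r/\rho_\eps(z)))$; summing the geometric series in $k$ (which converges precisely because $\beta/\eps > 0$) yields both inequalities with $K_3$ absorbing the doubling constant, $\beta$, $\eps$, $M$, $R_0$, and $a_0$. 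This is \cite[Proposition~4.7]{BBS1}.

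The genuinely delicate point, and the one I would expect to absorb the most care, is the passage through the local doubling hypothesis in \eqref{eq:boundary_to_inside_comparison}: the balls $B(z, a_0 r/\rho_\eps(z))$ appearing there can have arbitrarily large $d$-radius as $\xi$ is approached and $\rho_\eps(z)$ shrinks, so one is \emph{not} allowed to invoke a global doubling property of $\mu$ on $X$ directly. The argument has to be organized so that doubling is only ever applied at $d$-scales below $R_0$ — i.e.\ on the individual annuli, whose $d$-width is controlled by \eqref{eq:d_deps_ball_comparison} to be comparable to $2^{-k} r / \rho_\eps(z) \cdot (\text{bounded factor})$ relative to the previous one, and then chained across the $O(\log)$-many annuli needed — which is exactly the reason the uniformized space is only \emph{globally} $p$-controlled after the weight $\beta > \beta_0$ is chosen large enough (Lemma~\ref{lem:BBS1-unif}), and it explains the precise form of the constant $\beta_0 = 17\log(C_d)/(3R_0)$. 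Since all three statements are quoted verbatim from \cite{BHK} and \cite{BBS1}, the proof in the paper will consist of pointing to those references and, where needed, noting that the stated parameter dependencies of $K_1, K_2, K_3$ are what those proofs produce; no new computation is required here.
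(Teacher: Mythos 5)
Your proposal matches the paper's approach exactly: the paper offers no proof of this lemma, simply citing \cite[Lemma~4.16]{BHK} for \eqref{eq:rho_d_comparison} and \cite[Theorem~2.10 and Proposition~4.7]{BBS1} for \eqref{eq:d_deps_ball_comparison} and \eqref{eq:boundary_to_inside_comparison}, which is precisely what you do, and your sketches of the cited arguments are accurate. One small inaccuracy in your closing commentary: under the hypothesis $\dist_\eps(z,\partial_\eps X)\ge 2a_0 r$, inequality \eqref{eq:rho_d_comparison} forces $a_0 r/\rho_\eps(z)\le K_1/2$, so the $d$-radius of $B(z,a_0 r/\rho_\eps(z))$ is uniformly bounded rather than arbitrarily large --- although it may still exceed $R_0$, which is why the local doubling must indeed be iterated a bounded number of times, as you say.
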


\section{The case of globally doubling measures} 

In this section, we consider metric measure spaces $(X, d, \mu)$ on which the measure $\mu$ is 
globally doubling. In particular, the following result shows that in this case, one has $D^{1,p}(X) \ne 
N^{1,p}(X) + \R$ under highly general assumptions.
	
\begin{theorem}\label{prop:doubling_example}
Let $(X, d, \mu)$ be a globally doubling metric measure space that satisfies our standing assumptions. 
Suppose that there exists a point $x_0 \in X$ such that the function $r \mapsto \mu(B(x_0, r))$ is continuous on $(0, \infty)$. 
Then for every $p \geq 1$, there exists a locally Lipschitz function $f \in L^p_\loc(X)$ such that $\lip f \in L^p(X, \mu)$ 
but $f - c \notin L^p(X, \mu)$ for each $c \in \R$.
\end{theorem}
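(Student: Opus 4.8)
The plan is to construct $f$ explicitly as a radial function of the distance to $x_0$, chosen so that its upper gradient decays rapidly enough to be $p$-integrable while $f$ itself does not decay to any constant in $L^p$. Write $V(r) := \mu(B(x_0,r))$, which by hypothesis is continuous and (by the standing assumptions) finite, positive, increasing, and with $V(r)\to\infty$ as $r\to\infty$. The key realization is that we do \emph{not} need the volume-growth integral of Lemma~\ref{lem:parab-volume} to diverge; we only need a function whose gradient lies in $L^p$ but which fails to be in $L^p$ modulo constants, and this is a strictly weaker demand. Indeed, set $g(r) := 1/(1 + V(r))$ and define $f(x) := \phi(d(x,x_0))$ where $\phi(r) := \int_0^r g(t)\,\psi(t)\,dt$ for a suitable nonnegative weight $\psi$ to be chosen; by the uniformly local quasiconvexity (Lemma~\ref{lem:bdd_geom_is_ULQC}) and the $1$-Lipschitz property of $x\mapsto d(x,x_0)$, the function $f$ is locally Lipschitz with $\lip f(x) \le \phi'(d(x,x_0)) = g(d(x,x_0))\psi(d(x,x_0))$ a.e.

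The main computation is a dyadic/coarea-type decomposition of the two integrals. Choosing annuli $A_k := \overline{B}(x_0,2^{k+1})\setminus B(x_0,2^k)$, one has $\mu(A_k)\le V(2^{k+1}) \approx V(2^k)$ by global doubling, and on $A_k$ the integrand $(\lip f)^p$ is controlled by $g(2^k)^p \psi(2^{k+1})^p$ up to constants. Hence $\int_X (\lip f)^p\,d\mu \lesssim \sum_k V(2^k)\, g(2^k)^p\,\psi(2^{k+1})^p \approx \sum_k \psi(2^{k+1})^p / (1+V(2^k))^{p-1}$, which converges as soon as $\psi$ is, say, bounded (when $p>1$, using the divergence of $V$; when $p=1$ one instead picks $\psi$ summable along the dyadic scale, e.g. $\psi(t)\approx (1+\log t)^{-2}$ for large $t$). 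On the other hand, $\phi(r)$ is an increasing function tending to a limit $L \in (0,\infty]$; if $L = \infty$ then already $f\notin L^p(X)+\R$ is clear since $|f-c|\to\infty$; if $L<\infty$, then $L - \phi(r)$ is a positive tail which one must show decays too slowly relative to $V$ for $\int_X |f - L|^p\,d\mu$ to be finite, and simultaneously for $c\ne L$ the function $|f-c|$ is bounded below away from zero on an unbounded set of infinite measure, so $\int_X|f-c|^p\,d\mu=\infty$ trivially. The continuity hypothesis on $V$ is what lets us pass cleanly between the radial integral $\int_0^\infty (\cdot)\,dV(r)$ and the measure-theoretic integral over $X$ via the coarea-style layer-cake formula $\int_X h(d(x,x_0))\,d\mu = \int_0^\infty h(r)\,dV(r)$, avoiding atoms of the pushforward measure.

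The cleanest route is therefore: (1) record $\phi$ increasing with $\phi' = g\psi$, so $f$ is locally Lipschitz and $\lip f \le (g\psi)\circ d(\cdot,x_0)$ a.e.; (2) if the pushforward measure $V_*$ has total mass $\infty$ split into two cases — if $\phi(\infty)=\infty$ we are immediately done, and if $\phi(\infty) =: L<\infty$ then for $c\ne L$ note $|f-c|\ge \tfrac12|L-c|$ outside a bounded ball, hence $f-c\notin L^p$; (3) for $c = L$, choose $\psi$ so that the tail $L-\phi(r) = \int_r^\infty g\psi$ satisfies $\int_0^\infty (L-\phi(r))^p\,dV(r) = \infty$ while $\int_0^\infty (g(r)\psi(r))^p\,dV(r)<\infty$ — the former is arranged by making $\psi$ decay only logarithmically (so that $L-\phi(r) \gtrsim$ some $V$-non-integrable tail), the latter by the dyadic estimate above; (4) conclude $f\in L^p_\loc$ from local boundedness of $\phi$ and finiteness of $\mu$ on balls.

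\textbf{Main obstacle.} The delicate point is step (3): one needs a single choice of the weight $\psi$ that \emph{simultaneously} keeps $(g\psi)^p$ integrable against $dV$ and keeps the tail $(L-\phi)^p$ non-integrable against $dV$, and the feasible window for $\psi$ depends on how fast $V$ grows — if $V$ grows only polynomially versus exponentially the bookkeeping differs. The resolution is that the tail $L - \phi(r) = \int_r^\infty g(t)\psi(t)\,dt$ is always $\ge g(r)\cdot\int_r^\infty \psi$ when $g$ is nonincreasing (which it is), so one gets $(L-\phi(r))^p \gtrsim g(r)^p (\int_r^\infty\psi)^p$, and comparing against $(g\psi)^p$ the question reduces to the \emph{one-variable} inequality $\int_0^\infty g(r)^p\big(\int_r^\infty\psi\big)^p dV(r) = \infty$ while $\int_0^\infty g(r)^p\psi(r)^p\,dV(r)<\infty$; this is arranged for \emph{any} infinite measure $V_*$ by a standard trick (choose $\psi$ with $\int_r^\infty\psi$ decaying like $1/\log\log$ relative to the natural "time change" making $dV$ look like Lebesgue measure on a half-line), and it is precisely here that continuity of $V$ is essential so that this time change is well-defined and surjective onto $[0,\infty)$.
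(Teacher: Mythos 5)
Your overall architecture is the same as the paper's (a radial profile $f=\phi(d(\cdot,x_0))$, the trichotomy $\phi(\infty)=\infty$ / $c\ne L$ / $c=L$, and the observation that continuity of $V(r)=\mu(B(x_0,r))$ kills atoms of the pushforward), but the quantitative heart of the argument — producing one profile $\phi$ with $\int_X(\lip f)^p\,d\mu<\infty$ and $\int_X|f-L|^p\,d\mu=\infty$ — is not correctly carried out, and two of the steps as written are false. First, the claim that $\sum_k \psi(2^{k+1})^p/(1+V(2^k))^{p-1}$ converges for bounded $\psi$ when $p>1$ ``using the divergence of $V$'' is wrong: global doubling only bounds $V(2^{k+1})/V(2^k)$ from \emph{above}, so $V(2^k)$ may tend to infinity arbitrarily slowly (e.g.\ like $\log\log k$ along dyadic scales), in which case $\sum_k(1+V(2^k))^{1-p}$ diverges. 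Second, in your resolution of the ``main obstacle'' the inequality $\int_r^\infty g\psi \ge g(r)\int_r^\infty\psi$ is reversed: since $g=1/(1+V)$ is nonincreasing, one has $\int_r^\infty g\psi\le g(r)\int_r^\infty\psi$, i.e.\ you obtain an \emph{upper} bound on the tail $L-\phi(r)$, whereas the non-integrability of $|f-L|^p$ requires a \emph{lower} bound. The reduction to a one-variable inequality therefore does not go through, and the remaining assertion that the two-sided requirement on $\psi$ ``is arranged for any infinite measure $V_*$ by a standard trick'' is exactly the statement to be proved, left unproved. (A minor point: Lemma~\ref{lem:bdd_geom_is_ULQC} is not available here, since the theorem assumes only global doubling, but you do not actually need it — the pointwise chain-rule bound on $\lip$ of a composition suffices.)

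The correct implementation of the ``time change'' you allude to is to parametrize by \emph{measure} rather than by radius, and this is where the continuity hypothesis enters. Choose radii $r_0=1$ and, inductively, $r_i$ with $\mu(B(x_0,r_i))=(C_d+1)\,\mu(B(x_0,r_{i-1}))$; this is possible because $r\mapsto\mu(B(x_0,r))$ is continuous and $\mu(X)=\infty$. Writing $\mu_i=\mu(B(x_0,r_i))$, doubling forces $\mu(B(x_0,2r_i))\le C_d\mu_i<\mu_{i+1}$, hence $r_{i+1}-r_i>r_i\ge 2^i$: the annuli are geometrically \emph{wide} precisely because the measure increments are geometrically \emph{large}. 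Setting $\phi(r_i)=\sum_{j<i}(\mu_{j+1}-\mu_j)^{-1/p}$ with linear interpolation then gives $\lip f\le 2^{-i}(\mu_{i+1}-\mu_i)^{-1/p}$ on the $i$-th annulus, so $\int_X(\lip f)^p\,d\mu\le\sum_i 2^{-pi}<\infty$, while the tail satisfies
\[
\Bigl(\sum_{j\ge i+1}(\mu_{j+1}-\mu_j)^{-1/p}\Bigr)^p(\mu_{i+1}-\mu_i)\;\ge\;\frac{\mu_{i+1}-\mu_i}{\mu_{i+2}-\mu_{i+1}}=\frac{1}{C_d+1}
\]
on each annulus, so $\int_X|f-L|^p\,d\mu=\infty$. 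This discrete choice simultaneously settles both sides of the tension you identified, with no case analysis on the growth rate of $V$; I would recommend replacing your dyadic-in-radius decomposition with this dyadic-in-measure one.
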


\begin{proof}
Recall that we employ the notation $C_d$ for the doubling constant of the measure $\mu$. For each non-negative integer $i$, we pick a radius $r_i$ by first setting $r_0 = 1$ 
and then selecting $r_i$ for $i \geq 1$ inductively such that $\mu(B(x_0, r_i)) = (C_d+1)\mu(B(x_0, r_{i-1}))$. 
We also write $B_i:=B(x_0, r_i)$ and $\mu_i:=\mu(B_i)$ for brevity. Note that this selection of radii is indeed 
always possible: our assumption that $\mu(X) = \infty$ implies that $\lim_{r \to \infty} \mu(B(x_0, r)) = \infty$, 
and thus our continuity assumption implies that $r \mapsto \mu(B(x_0, r))$ maps $[r_i, \infty)$ surjectively onto $[\mu_i, \infty)$ for each non-negative integer $i$.
		
By our selection procedure, we have
\begin{equation}\label{eq:measure_growth_rate}
	\mu_i = (C_d+1)^i \mu_0
\end{equation}
for every $i \geq 0$. We also note that since the doubling condition implies that $\mu(B(x_0, 2r_i)) \leq C_d\,\mu_i < \mu_{i+1}$, 
we must have $r_{i+1} > 2\,r_i$ for every $i\geq 0$. Applying this inductively and using $r_0 = 1$, we have $r_i \geq 2^i$. Moreover, 
since $r_{i+1} > 2\,r_i$ can be rearranged to $r_{i+1} - r_i > r_i$, we conclude that
\begin{equation}\label{eq:radii_difference_growth_rate}
	r_{i+1} - r_i >  2^i
\end{equation}
for every $i \geq 0$. We then define a function $\phi \colon [0, \infty) \to [0, \infty)$ as follows: we first set $\phi(0) = \phi(r_0) = 0$ and
\[
	\phi(r_i) = \sum_{j=0}^{i-1} (\mu_{j+1} - \mu_j)^{-\frac{1}{p}}
\]
for every $i \geq 1$, and then we extend $\phi$ to the rest of $[0, \infty)$ by linear interpolation. Clearly $\phi$ is 
non-decreasing, and by~\eqref{eq:measure_growth_rate}, we can compute that
\[
	\phi(r_i) = \sum_{j=0}^{i-1} (C_d\,\mu_0)^{-\frac{1}{p}} (C_d+1)^{-\frac{j}{p}}.
\]
In particular, it follows by the convergence properties of geometric series that $\lim_{r \to \infty} \phi(r) < \infty$.
		
We then define $f \colon X \to \R$ by $f(x) = \phi(d(x, x_0))$. The map $f$ is a composition of two locally Lipschitz 
functions, so it is locally Lipschitz. Since $f$ is continuous, it is also clearly in $L^p_\loc(X)$. Moreover, if 
$r_{i} < d(x, x_0) < r_{i+1}$, then by \eqref{eq:radii_difference_growth_rate} and the linear interpolation 
definition of $\phi$, we have
\begin{align*}
(\lip f)(x) \leq (\lip \phi)(d(x, x_0)) \cdot (\lip d(\cdot, x_0))(x)
	      &\leq \frac{\phi(r_{i+1}) - \phi(r_i)}{r_{i+1} - r_i} \cdot 1\\
	       &\leq 2^{-i} (\mu_{i+1} - \mu_i)^{-\frac{1}{p}}.
\end{align*}
We also note that $\mu(\partial B(x_0, r)) = 0$ for every $r$, since otherwise $r \mapsto \mu(B(x_0, r))$ would 
have a discontinuity. Thus, we may estimate
\[
\int_X\! (\lip f)^p\, \dd \mu \leq \sum_{i=0}^\infty \int_{B_{i+1} \setminus B_i}\! 2^{-pi} (\mu_{i+1} - \mu_i)^{-1}\,\dd\mu = \sum_{i=0}^\infty 2^{-pi} < \infty.
\]
		
Now, we claim that $f-c$ does not have a finite $L^p$-norm for any $c \in \R$. Since $\lim_{r \to \infty} \phi(r) < \infty$, this is 
clearly the case if $c \neq \lim_{r \to \infty} \phi(r)$. Indeed, otherwise we could find $R > 0$ such that 
$\abs{f - c} \geq \tfrac12\abs{c-\lim_{r \to \infty} \phi(r)}$ when $d(x,x_0) \geq R$, and this causes an infinite $L^p$-norm 
since we necessarily have $\mu(X \setminus B(x_0, R)) = \infty$. Hence, we need only consider the case 
$c = \lim_{r \to \infty} \phi(r)$. In this case, since $\phi$ is non-decreasing, we may use \eqref{eq:measure_growth_rate} and estimate
\begin{multline*}
\int_{B_{i+1} \setminus B_i}\! \abs{f - c}^p\, \dd \mu 
	\geq \int_{B_{i+1} \setminus B_i}\! \abs{\phi(r_{i+1}) - c}^p\, \dd \mu\\ 
= (\mu_{i+1} - \mu_i) \left(\sum_{j=i+1}^\infty (\mu_{j+1} - \mu_j)^{-\frac{1}{p}}\right)^p \geq \frac{\mu_{i+1} - \mu_i}{\mu_{i+2} - \mu_{i+1}} 
= \frac{1}{C_d+1}.
\end{multline*}
Thus,
\[
\int_X \! \abs{f - c}^p\, \dd \mu \geq \sum_{i=0}^\infty \frac{1}{C_d+1} = \infty,
\]
completing the proof.
\end{proof}

\begin{cor}\label{cor:globalcontrol}
Suppose that $(X,d,\mu)$ satisfies our standing assumptions and is of globally $p$-controlled geometry for some $1\le p<\infty$. 
Then there exists a locally Lipschitz function
$f\in D^{1,p}(X)\setminus (N^{1,p}(X)+\R)$.
\end{cor}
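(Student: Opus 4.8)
The plan is to deduce Corollary~\ref{cor:globalcontrol} directly from Theorem~\ref{prop:doubling_example} by verifying that a space of globally $p$-controlled geometry satisfies the hypotheses of that theorem, and then checking that the function $f$ produced there lies in the asserted set. First I would observe that globally $p$-controlled geometry includes, by definition, the global doubling condition~\eqref{eq:doubling} with $R_0=\infty$, so the measure $\mu$ is globally doubling and the standing assumptions are in force. The only nontrivial hypothesis of Theorem~\ref{prop:doubling_example} still to be checked is the existence of a point $x_0\in X$ for which $r\mapsto\mu(B(x_0,r))$ is continuous on $(0,\infty)$.

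For this continuity, I would argue that connectedness of $X$ (guaranteed since $X$ supports a global $p$-Poincar\'e inequality, as noted in the paragraph following Remark~\ref{rem:Keith}) together with the global doubling property forces $\mu(\partial B(x_0,r))=0$ for every $x_0$ and every $r>0$. The mechanism is the reverse-doubling/annular lower mass bound: in a connected globally doubling space one has a genuine lower mass bound of the form~\eqref{eq:lower-mass-bdd}, which in particular implies $\mu(B(x_0,r))>0$ and, more to the point, that spheres are null. One clean way to see this: if $\mu(\partial B(x_0,r_*))>0$ for some $r_*$, then by connectedness there are points of $X$ at distance exactly $r_*$ as well as points arbitrarily close to the sphere on both sides, and the annuli $B(x_0,r_*+\delta)\setminus \overline{B}(x_0,r_*-\delta)$ shrink to $\partial B(x_0,r_*)$; reverse doubling applied to these annuli (or simply monotone convergence, $\mu(\partial B(x_0,r_*))=\lim_{\delta\to 0^+}\mu(B(x_0,r_*+\delta))-\mu(\overline B(x_0,r_*-\delta))$, combined with the fact that a connected doubling space has no isolated ``jumps'' surviving the reverse-doubling lower bound) yields a contradiction. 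Since $r\mapsto\mu(B(x_0,r))$ is monotone, its only possible discontinuities are jumps, and a jump at $r_*$ is exactly $\mu(\partial B(x_0,r_*))=0$; hence the map is continuous on $(0,\infty)$.

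Having verified the hypotheses, Theorem~\ref{prop:doubling_example} supplies a locally Lipschitz $f\in L^p_\loc(X)$ with $\lip f\in L^p(X,\mu)$ and $f-c\notin L^p(X,\mu)$ for every $c\in\R$. To finish I would translate this into the language of $D^{1,p}$ and $N^{1,p}+\R$. Since $f$ is locally Lipschitz, $\lip f$ is an upper gradient of $f$ (invoking the relevant statement from Remark~\ref{rem:Keith} / Lemma~\ref{lem:upper-is-upper}, or the standard fact that the local Lipschitz constant function is an upper gradient of a locally Lipschitz function), and $\lip f\in L^p(X)$ gives $f\in D^{1,p}(X)$. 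On the other hand, if $f$ were in $N^{1,p}(X)+\R$, we could write $f=h+c$ with $h\in N^{1,p}(X)$ and $c\in\R$; but then $f-c=h\in N^{1,p}(X)\subset L^p(X,\mu)$, contradicting the conclusion $f-c\notin L^p(X,\mu)$. Therefore $f\in D^{1,p}(X)\setminus(N^{1,p}(X)+\R)$, as claimed.

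The main obstacle is the continuity step: one must be careful that ``globally doubling + connected'' genuinely rules out spheres of positive measure, since naive doubling alone does not. The cleanest route is to lean on the reverse-doubling inequality~\eqref{eq:uppr-mass-bdd} (valid in connected globally doubling spaces), apply it to thin annuli around a putative heavy sphere, and extract the contradiction; I expect this to be a short but slightly delicate argument that I would present carefully rather than wave through. Everything else is a routine unwinding of definitions.
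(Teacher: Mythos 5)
Your overall strategy (reduce to Theorem~\ref{prop:doubling_example}, then unwind the conclusion into $D^{1,p}(X)\setminus(N^{1,p}(X)+\R)$) is the same as the paper's, and the final translation step is fine. The genuine gap is in your continuity step: it is \emph{not} true that connectedness plus global doubling forces spheres $S(x_0,r)=\{y:d(x_0,y)=r\}$ to be $\mu$-null, and the mechanism you propose does not produce a contradiction. Reverse doubling in the form $\mu(B(x,r))/\mu(B(x,R))\le C(r/R)^s$ degenerates as $r\to R^-$: applied to the thin annuli $B(x_0,r_*+\delta)\setminus \overline{B}(x_0,r_*-\delta)$ it only yields, in the limit, $\mu(B(x_0,r_*))\le C\bigl(\mu(B(x_0,r_*))+m\bigr)$ where $m=\mu(S(x_0,r_*))$, which is satisfied by any $m\ge 0$. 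A concrete counterexample to the underlying claim: take $X=\bigl([0,1]\times\{0\}\bigr)\cup\{(\cos\theta,\sin\theta):\theta\in[0,\pi/2]\}\subset\R^2$ with the restricted Euclidean metric and $\mu=\mathcal{H}^1$. This space is connected, Ahlfors $1$-regular (hence globally doubling), and even quasiconvex, yet every point of the arc lies at distance exactly $1$ from the origin, so $\mu(S((0,0),1))=\pi/2>0$ and $r\mapsto\mu(B((0,0),r))$ jumps at $r=1$. (Unbounded variants are easy to build.) So neither connectedness nor quasiconvexity in the \emph{original} metric suffices.

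The paper closes this gap differently: globally $p$-controlled geometry gives a global Poincar\'e inequality, hence quasiconvexity by \cite[Theorem~8.3.2]{HKSTbook}; by properness and Arzel\`a--Ascoli one then replaces $d$ by a biLipschitz-equivalent \emph{geodesic} metric $d_L$, for which Buckley's result \cite{Buck} guarantees that spheres are null, so Theorem~\ref{prop:doubling_example} applies to $(X,d_L,\mu)$; finally, the biLipschitz invariance of $D^{1,p}$ and $N^{1,p}$ transports the counterexample back to $(X,d,\mu)$. If you want to keep your structure, you must insert this change-of-metric step (or some genuinely different argument exploiting the Poincar\'e inequality); as written, the continuity hypothesis of Theorem~\ref{prop:doubling_example} is unverified and your proof does not go through.
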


\begin{proof}
The hypothesis in Theorem~\ref{prop:doubling_example} that there exists a point $x_0\in X$ for which 
the function $r\mapsto\mu(B(x_0,r))$ is 
continuous on $(0, \infty)$ is equivalent to the assumption that for
each $r>0$, the sphere $S(x_0,r):=\{y\in X\, :\, d(x,y)=r\}\supset\partial B(x_0,r)$ satisfies 
$\mu(S(x_0,r))=0$. It is known that when $(X,d,\mu)$ 
is of globally $q$-controlled geometry for some $1\le q<\infty$, then
$X$ is necessarily quasiconvex~\cite[Theorem~8.3.2]{HKSTbook},
and hence, thanks to the properness of $X$ and the Arzel\`a-Ascoli theorem, 
there is a biLipschitz equivalent metric, $d_L$, with respect to which
$(X,d_L)$ is a geodesic space. Since the homogeneous Sobolev space $N^{1,p}(X)$ and the
Dirichlet space $D^{1,p}(X)$ are biLipschitz invariant, we can thus
assume without loss of generality that the metric $d$ is a geodesic metric on $X$. 
In this case, we also have that for each $x_0\in X$ and $0<r<\infty$, the measure of $S(x_0,r)$ is null, see~\cite{Buck}.
Now the conclusion follows from an application of Theorem~\ref{prop:doubling_example} above.
\end{proof}

\section{Sobolev-Poincar\'e inequalities and unbounded Ahlfors regular spaces} 
	
In this section alone, we are concerned with \emph{Ahlfors $s$-regular measures} $\mu$; that is,  
measures $\mu$ for which there exists a constant $C\ge 1$
such that $C^{-1}\, r^s\le \mu(B(x,r)) \le C\, r^s$ whenever $x\in X$ and $0<r<2\diam(X)$.
We call the constant $C$ the \emph{Ahlfors constant} of $\mu$.
In unbounded Ahlfors regular cases, the conclusion from the prior section can be 
strengthened in a way which allows for 
different exponents for the function and the derivative.
	
\begin{prop}\label{prop:Ahlfors_David_example}
Let $(X, d, \mu)$ be an Ahlfors $s$-regular metric measure space that satisfies our standing assumptions. 
Then for all 
$p, q \in [1, \infty)$, if $q^{-1} + s^{-1} \neq p^{-1}$, then there exists a function $f \in L^p_\loc(X)$ such that 
$\lip f \in L^p(X, \mu)$ but $f - c \notin L^q(X, \mu)$ for any $c \in \R$.
\end{prop}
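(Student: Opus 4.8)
The plan is to mimic the construction in the proof of Theorem~\ref{prop:doubling_example}, building a radial function $f(x) = \phi(d(x,x_0))$ for a suitable fixed $x_0 \in X$, but now tuning the profile $\phi$ so that the gradient and the function are controlled in different $L$-norms. The Ahlfors regularity makes everything explicit: $\mu(B(x_0,r)) \approx r^s$, and $\mu(B(x_0,R)\setminus B(x_0,r)) \approx R^s - r^s$ for $r < R$, so one does not need the inductively chosen radii of the earlier proof; instead I would work with dyadic annuli $A_i = B(x_0, 2^{i+1}) \setminus B(x_0, 2^i)$, which have $\mu(A_i) \approx 2^{is}$. As in Corollary~\ref{cor:globalcontrol}, by biLipschitz invariance of $L^p_\loc$, $\lip f$, and the relevant norms, one may as well pass to a geodesic metric so that spheres have measure zero and the estimates on annuli are clean; alternatively, one checks directly from Ahlfors regularity that $\mu(\partial B(x_0,r)) = 0$ for all but countably many $r$, which suffices.

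The heart of the matter is the choice of $\phi$. On the dyadic annulus $A_i$, the profile $\phi$ should have increment $\delta_i := \phi(2^{i+1}) - \phi(2^i)$, contributing roughly $\delta_i^p \, 2^{-ip} \, \mu(A_i) \approx \delta_i^p \, 2^{-ip} \, 2^{is} = \delta_i^p \, 2^{i(s-p)}$ to $\int (\lip f)^p \, d\mu$ (using that the Lipschitz constant of $\phi$ across $A_i$ is $\approx \delta_i / 2^i$). For $f$ to fail to differ from any constant by an $L^q$ function, one wants $\phi$ bounded (tending to a finite limit $L$) with the $L^q$-tails divergent, which asks for $\bigl(\sum_{j \ge i} \delta_j\bigr)^q \mu(A_i) \approx \bigl(\sum_{j\ge i}\delta_j\bigr)^q 2^{is}$ to be non-summable in $i$; alternatively, if $\phi$ is unbounded, $f - c$ automatically fails to be in $L^q$ on the infinite-measure set at infinity, as in the earlier proof. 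So I would set $\delta_i = 2^{-i\alpha}$ for a parameter $\alpha$ to be chosen, split into the cases according to the sign of $q^{-1} + s^{-1} - p^{-1}$, and pick $\alpha$ accordingly: one case will force $\phi$ to be bounded with divergent $L^q$-tails (the regime where the function is "too heavy" to subtract off a constant), and the complementary case will make $\phi$ unbounded while keeping $\lip f \in L^p$. A short computation with geometric series shows that both requirements — $\sum_i \delta_i^p 2^{i(s-p)} < \infty$ and the $L^q$-failure — can be satisfied simultaneously precisely because the forbidden borderline $q^{-1}+s^{-1} = p^{-1}$ is excluded; that strict inequality is exactly what opens up an interval of admissible exponents $\alpha$.

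The main obstacle, and where care is needed, is the bookkeeping in the case analysis: one must verify that for every pair $(p,q)$ with $q^{-1}+s^{-1} \neq p^{-1}$ there is a single choice of $\alpha$ (equivalently, a single profile $\phi$) that delivers both a finite $\int(\lip f)^p\,d\mu$ and the non-existence of a constant $c$ with $f - c \in L^q(X)$ — and the dichotomy "$\phi$ bounded" versus "$\phi$ unbounded" should be organized so the second, easier case absorbs whatever range of exponents the first cannot. The other routine points — that $f$ is locally Lipschitz hence in $L^p_\loc(X)$, that $\lip f(x) \le (\lip\phi)(d(x,x_0))$ pointwise, and that annuli have the stated measures — go through exactly as in Theorem~\ref{prop:doubling_example}, using Ahlfors regularity in place of the continuity-of-volume hypothesis. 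I would also remark that when $q^{-1}+s^{-1} = p^{-1}$ the construction genuinely breaks down, consistent with the sharp Sobolev embedding, so the hypothesis is not merely an artifact of the method.
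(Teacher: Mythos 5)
Your first case is fine and is essentially a dyadic discretization of what the paper does: with $\delta_i = 2^{-i\alpha}$ the two requirements become $\alpha > s/p - 1$ (for $\sum_i \delta_i^p 2^{i(s-p)} < \infty$) and $\alpha \le s/q$ (for $\sum_i T_i^q 2^{is} = \infty$, $T_i$ the tail), and the interval $(\max\{0, s/p-1\},\, s/q]$ is nonempty exactly when $p^{-1} < q^{-1} + s^{-1}$; the paper's continuous version is $f = \min\{1, d(x_0,\cdot)^{-s/q}\}$.

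The genuine gap is your treatment of the complementary case $p^{-1} > q^{-1} + s^{-1}$. Since $q < \infty$ forces $q^{-1} > 0$, this inequality implies $p < s$, and then \emph{no} radial profile whose nontrivial behavior lives at infinity can work, bounded or unbounded. Unbounded fails: for $1 < p < s$, H\"older gives
\[
\sum_{i}\delta_i \le \Bigl(\sum_i \delta_i^p 2^{i(s-p)}\Bigr)^{1/p}\Bigl(\sum_i 2^{-i(s-p)/(p-1)}\Bigr)^{(p-1)/p} < \infty
\]
(and trivially $\sum_i\delta_i \le \sum_i \delta_i 2^{i(s-1)}$ for $p=1$), so $\lip f \in L^p$ forces $\phi$ to be bounded at infinity. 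Bounded also fails: the same H\"older estimate applied to the tails yields $T_i \lesssim \eps_i\, 2^{-i(s-p)/p}$ with $\eps_i := (\sum_{j\ge i}\delta_j^p 2^{j(s-p)})^{1/p} \to 0$, hence $\sum_i T_i^q 2^{is} \lesssim \sum_i \eps_i^q\, 2^{iqs(1/q + 1/s - 1/p)} < \infty$ because the exponent is negative in this regime; so $f - L \in L^q$ automatically, where $L = \lim_{r\to\infty}\phi(r)$. The correct dichotomy is not ``bounded versus unbounded at infinity'' but ``obstruction at infinity versus obstruction at the base point'': in the regime $p^{-1} > q^{-1} + s^{-1}$ the paper takes $f = \max\{1, d(x_0,\cdot)^{-s/q}\}$, whose $q$-th power has a non-integrable singularity at $x_0$ (the borderline exponent $-s$), while $(\lip f)^p \approx d(x_0,\cdot)^{-p(s+q)/q}$ is integrable near $x_0$ precisely because $p(s+q)/q < s$, and $f \in L^p_\loc$ because $ps/q < s$. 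Your proposal as written does not produce a valid example in this case.
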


To prove this proposition, we first recall and prove the following standard lemma on unbounded Ahlfors regular spaces.
	
\begin{lemma}\label{lem:Ahlfors_David_distance_integral}
Let $(X, d, \mu)$ be an Ahlfors $s$-regular metric measure space that satisfies our standing assumptions, let $x_0 \in X$, and let 
$r \in (0, \infty)$. Then the map $f(x) = d(x_0, x)^\gamma$ is integrable over $B(x_0, R)$ if and only if $\gamma > -s$. 
Similarly, $f$ is integrable over $X \setminus B(x_0, R)$ if and only if $\gamma < -s$.
\end{lemma}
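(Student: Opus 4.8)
The plan is to reduce the claim to a classical computation with polar-type coordinates replaced by a dyadic annular decomposition, exploiting only the Ahlfors $s$-regularity of $\mu$. First I would cover $B(x_0,R)\setminus\{x_0\}$ by the annuli $A_k := B(x_0, 2^{-k}R)\setminus B(x_0, 2^{-k-1}R)$ for $k\ge 0$, and cover $X\setminus B(x_0,R)$ by the annuli $A_k := B(x_0, 2^{k+1}R)\setminus B(x_0, 2^kR)$ for $k\ge 0$; note that $X$ being unbounded together with connectedness (part of the standing assumptions) ensures all these annuli are nonempty and that $\mu(X\setminus B(x_0,R))=\infty$. On $A_k$ one has $d(x_0,x)\approx 2^{-k}R$ (resp. $\approx 2^kR$), so $f(x)=d(x_0,x)^\gamma \approx (2^{\mp k}R)^\gamma$ there, with comparison constants depending only on $\gamma$. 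Meanwhile, Ahlfors regularity gives $\mu(A_k)\le \mu(B(x_0,2^{-k}R))\le C(2^{-k}R)^s$ and, by subtracting and using the lower bound (plus doubling-type reasoning, or directly the regularity bounds since $2^{-k-1}R < 2^{-k}R < 2\diam X$), also $\mu(A_k)\gtrsim (2^{-k}R)^s$; analogously $\mu(A_k)\approx (2^kR)^s$ in the exterior case.

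Next I would assemble the series. For the interior integral,
\[
\int_{B(x_0,R)} d(x_0,x)^\gamma\,d\mu \approx \sum_{k=0}^\infty (2^{-k}R)^\gamma (2^{-k}R)^s = R^{\gamma+s}\sum_{k=0}^\infty 2^{-k(\gamma+s)},
\]
which converges if and only if $\gamma+s>0$, i.e. $\gamma>-s$; the single point $x_0$ contributes nothing since $\mu(\{x_0\})\le \lim_{r\to 0}\mu(B(x_0,r))=0$ by $s$-regularity with $s>0$. For the exterior integral,
\[
\int_{X\setminus B(x_0,R)} d(x_0,x)^\gamma\,d\mu \approx \sum_{k=0}^\infty (2^{k}R)^\gamma (2^{k}R)^s = R^{\gamma+s}\sum_{k=0}^\infty 2^{k(\gamma+s)},
\]
which converges if and only if $\gamma+s<0$, i.e. $\gamma<-s$. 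The upper and lower bounds in both cases follow from the annular comparisons above, so these are genuine equivalences, not just one-sided estimates. One should also remark that $f$ is measurable (it is continuous away from $x_0$, hence Borel) so the integrals make sense.

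The only real subtlety — hardly an obstacle — is justifying the lower bound $\mu(A_k)\gtrsim (\text{radius})^s$ for the annuli: a priori Ahlfors regularity bounds $\mu$ of balls, and $\mu(A_k)=\mu(B(x_0,2^{-k}R))-\mu(B(x_0,2^{-k-1}R))$ could conceivably be small if the two ball-measures nearly coincide. Here I would use that $X$ is connected and unbounded, so each annulus $A_k$ is nonempty, pick a point $y_k\in A_k$ at distance comparable to the annulus radius, observe that a ball $B(y_k, c\cdot 2^{-k}R)\subset A_k$ for a small universal $c$ (by the triangle inequality), and apply the lower Ahlfors bound to that ball. The exterior case is identical. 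With these annular measure and function estimates in hand, the two geometric series computations immediately yield the stated dichotomy, and the borderline case $\gamma=-s$ gives divergence in both regimes (consistent with the strict inequalities in the statement). The whole argument is robust and I would expect to present it in a few lines once the annular estimates are recorded.
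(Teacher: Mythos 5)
Your argument is correct, and it takes a genuinely different route from the paper's. The paper writes $d(x_0,x)^\gamma$ as a constant plus $-\gamma\int_{d(x_0,x)}^{R} r^{\gamma-1}\,\dd r$ (resp.\ $-\gamma\int_{d(x_0,x)}^{\infty} r^{\gamma-1}\,\dd r$ in the exterior case), applies Tonelli to swap the order of integration, and reduces everything to $\int r^{\gamma+s-1}\,\dd r$; in the interior case this only ever invokes Ahlfors regularity through the two-sided bound on $\mu(B(x_0,r))$ itself, so the paper never has to produce a lower bound for the measure of an annulus there, and in the exterior case it gets the needed lower bound on $\mu(B(x_0,r)\setminus B(x_0,R))$ cheaply by taking $r$ large compared to $R$. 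Your dyadic decomposition instead forces you to confront the annular lower bound at every scale, and you correctly identify this as the one nontrivial point: the naive subtraction $C^{-1}(2^{-k}R)^s - C(2^{-k-1}R)^s$ need not be positive unless $C^2<2^s$, so some extra input is required. Your fix --- connectedness plus unboundedness make $d(x_0,\cdot)$ surjective onto $[0,\infty)$, so each annulus contains a ball of comparable radius to which the lower Ahlfors bound applies --- is sound (the restriction $r<2\diam(X)$ in the regularity hypothesis is vacuous since $X$ is unbounded). The trade-off is that the paper's layer-cake computation is shorter and uses less of the standing assumptions, while your annular argument is more modular and makes the two-sided measure estimate $\mu(A_k)\approx(\mathrm{radius})^s$ explicit, which is the form in which Ahlfors regularity is most often reused elsewhere. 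Both yield the same dichotomy, including divergence at the endpoint $\gamma=-s$.
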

	
\begin{proof}
For the integral over $B(x_0, R)$, we write
\[
f(x) = R^\gamma - \gamma \int_{d(x_0, x)}^R\! r^{\gamma - 1}\, \dd r.
\]
The part $R^\gamma$ clearly has a finite integral over the finite-measured $B(x_0, R)$. For the other part, we use the 
Tonelli theorem to switch the order of integrals, moving the condition $r > d(x_0, x)$ to the other integral: this yields
\begin{align*}
\int_{B(x_0, R)} \int_{d(x_0, x)}^R\! r^{\gamma - 1}\, \dd r\, \dd \mu(x)
&= \int_{0}^R \! r^{\gamma - 1} \left(\int_{B(x_0, r)} \dd \mu(x)\right) \dd r\\
&\approx \int_{0}^R\! r^{\gamma + s - 1}\, \dd r,
\end{align*}
with comparisons dependent only on the Ahlfors constant $C$. This integral is finite if and only if $\gamma > -s$.
		
For the other case, it is clear that $f$ is not integrable over $X \setminus B(x_0, R)$ if $\gamma \geq 0$, as otherwise 
$f \geq R^\gamma > 0$ in this infinite-measured set. Suppose then that $\gamma < 0$. In this case, we can write
\[
	f(x) = -\gamma \int_{d(x_0, x)}^\infty \! r^{\gamma - 1} \,\dd r.
\]
Analogously to the other case, we get via the Tonelli theorem that
\begin{align*}
	\int_{X \setminus B(x_0, R)} \int_{d(x_0, x)}^\infty \! r^{\gamma - 1} \, \dd r \, \dd \mu(x)
	&= \int_{R}^\infty \! r^{\gamma - 1} \mu(B(x_0, r) \setminus B(x_0, R))\,\dd r.
\end{align*}
Moreover, there exists $S > R$ such that $\mu(B(x_0, r) \setminus B(x_0, R)) \approx r^s$ when $r \geq 
S$, with the comparison constant again depending only on $C$. Thus, we similarly have 
convergence 
exactly when $\gamma < -s$.
\end{proof}

\begin{proof}[Proof of Proposition \ref{prop:Ahlfors_David_example}]
Let $C$ be the Ahlfors constant of $X$. Fix any $x_0 \in X$. Consider first the case $p^{-1} < q^{-1} + s^{-1}$. We define 
$f \colon X \to \R$ by 
\[
	f(x) = \min\left\{ 1, d(x_0, x)^{-\frac{s}{q}} \right\}.
\]
As $f$ is the composition of the $1$-Lipschitz map $x \mapsto d(x_0, x)$ and a locally Lipschitz map, $f$ is locally Lipschitz. 
Moreover, we have $(\lip f)(x) = 0$ when $d(x_0, x) < 1$, and
\begin{equation}\label{eq:local_lip_estimate}
	(\lip f)(x) \leq \frac{s}{q} \, d(x, x_0)^{-\frac{s+q}{q}}
\end{equation}
otherwise.  
Thus, by Lemma~\ref{lem:Ahlfors_David_distance_integral}, $(\lip f)^p$ is integrable since 
$p(s+q)/q = ps(q^{-1} + s^{-1}) > s$. On the other hand, since $\lim_{d(x, x_0) \to \infty} f(x) = 0$, the only possible 
$c \in \R$ for which $f - c$ may be $L^q$-integrable over $X$ is $c = 0$. And in this case, 
Lemma~\ref{lem:Ahlfors_David_distance_integral} shows that $f^q$ is not integrable.
		
In the other case $p^{-1} > q^{-1} + s^{-1}$, we instead use
\[
	f(x) = \max\left\{ 1, d(x_0, x)^{-\frac{s}{q}} \right\}.
\]
This function is locally Lipschitz in $X \setminus \{x_0\}$, and thus extends to $X$ as a measurable function with a single 
point $x_0$ where $(\lip f)(x_0) = \infty$. To conclude that it is in $L^p_\loc(X)$, we observe that 
$p^{-1} > q^{-1} + s^{-1}$ implies $p^{-1} > q^{-1}$, from which it follows that $ps/q < s$, which in turn shows that 
$f \in L^p(B(x_0, 1))$ by Lemma~\ref{lem:Ahlfors_David_distance_integral}.
		
For the local Lipschitz constant of $f$, we again have the estimate \eqref{eq:local_lip_estimate}, this time when 
$d(x_0, x) \leq 1$, with $\lip f$ vanishing otherwise. Thus, we again get by Lemma~\ref{lem:Ahlfors_David_distance_integral} 
that $(\lip f)^p$ 
is integrable, this time since $p(s+q)/q < s$. For $L^q$-integrability of $f-c$, we again observe that $f^q$ has 
infinite integral over $B(x_0, 1)$ by Lemma \ref{lem:Ahlfors_David_distance_integral}; since $B(x_0, 1)$ has 
finite $\mu$-measure and since $\abs{a+b}^q \le 2^q (\abs{a}^q + \abs{b}^q)$, no addition of a constant to $f$ 
will change this integral to a finite one.
\end{proof}

Proposition~\ref{prop:Ahlfors_David_example} also has a partial converse result 
when $(X, d, \mu)$ has globally $p$-controlled geometry.

\begin{prop}
Suppose that $(X,d,\mu)$ is an Ahlfors $s$-regular metric measure space of globally $p$-controlled 
geometry that satisfies our standing assumptions, 
where $1\le p<s$.
Let $q>1$ satisfy $\tfrac{1}{q}+\tfrac{1}{s}= \tfrac{1}{p}$.
If $f\in D^{1,p}(X)$, then $f-c\in L^q(X)$ for some real number $c$ that may depend on $f$.
\end{prop}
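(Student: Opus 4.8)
The plan is to establish a global Sobolev–Poincaré inequality of the form $\|f - f_{B}\|_{L^q(B)} \lesssim \|g_f\|_{L^p(X)}$ on balls $B$ centered at a fixed $x_0$, with constant independent of the radius, and then let the radius tend to infinity. Since $(X,d,\mu)$ is Ahlfors $s$-regular and of globally $p$-controlled geometry with $1 \le p < s$, the standard Sobolev embedding theory (see \cite{HKSTbook}) gives, for the Sobolev conjugate $p^* = ps/(s-p) = q$, a $(q,p)$-Poincaré inequality on all balls: for every ball $B = B(x_0, R)$,
\[
\left(\jint_B |f - f_B|^q \, \dd\mu\right)^{1/q} \le C\, R \left(\jint_{\lambda B} g_f^p \, \dd\mu\right)^{1/p},
\]
with $C, \lambda$ independent of $R$. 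Rewriting this in non-averaged form and using Ahlfors regularity $\mu(B(x_0,R)) \approx R^s$, the powers of $R$ cancel precisely because $1/q = 1/p - 1/s$, yielding
\[
\left(\int_{B(x_0,R)} |f - f_{B(x_0,R)}|^q \, \dd\mu \right)^{1/q} \le C' \left(\int_X g_f^p \, \dd\mu\right)^{1/p} =: C' M < \infty,
\]
a bound uniform in $R$.

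Next I would control the drift of the averages $c_R := f_{B(x_0,R)}$ as $R \to \infty$. Comparing $c_R$ and $c_{2R}$ by the triangle inequality and the above estimate (applied on $B(x_0,2R)$, using $\mu(B(x_0,R)) \approx \mu(B(x_0,2R))$), one gets $|c_{2R} - c_R| \lesssim M \, \mu(B(x_0,2R))^{-1/q} \approx M R^{-s/q}$. Since $s/q = 1 - s/p \cdot$ — wait, here $s/q = s(1/p - 1/s) = s/p - 1 > 0$ because $p < s$ forces $s/p > 1$. Hence $\sum_k |c_{2^{k+1}R_0} - c_{2^k R_0}| \lesssim M \sum_k 2^{-k s/q} < \infty$, so $c_R$ converges to a finite limit $c$ as $R \to \infty$; a similar dyadic comparison handles non-dyadic radii. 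Then for each fixed $R$, by Fatou's lemma (letting $R' \to \infty$ inside $\int_{B(x_0,R)} |f - c_{R'}|^q$ and using $c_{R'} \to c$) we obtain $\int_{B(x_0,R)} |f - c|^q \, \dd\mu \le (C' M)^q$ uniformly in $R$. Letting $R \to \infty$ via monotone convergence gives $\int_X |f - c|^q \, \dd\mu \le (C'M)^q < \infty$, i.e.\ $f - c \in L^q(X)$.

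The main obstacle is making the uniform-in-$R$ Sobolev inequality genuinely rigorous: one must pass from the $p$-Poincaré inequality in Definition~\ref{def:controlled-geom} (an $L^1$ average on the left, $L^p$ average on a dilated ball on the right) to a $(q,p)$-Poincaré inequality, which in the doubling setting is the classical Sobolev embedding argument combining a telescoping/maximal-function estimate with the Ahlfors lower mass bound — this is where $p < s$ is used and where the exponent $q = p^*$ appears. A secondary point of care is that $f$ is only assumed to lie in $D^{1,p}(X)$, not a priori in $L^1_{\loc}$; one should first note that under globally $p$-controlled geometry with $p < s$, functions in $D^{1,p}(X)$ are automatically in $L^1_{\loc}(X)$ (indeed in $L^q_{\loc}$), so that the averages $f_{B(x_0,R)}$ and the Poincaré inequality make sense — this follows from the local Sobolev inequality applied on a fixed ball. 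Once these two technical points are in place, the dyadic telescoping of averages and the Fatou/monotone convergence limit are routine.
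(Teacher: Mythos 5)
Your proposal is correct and follows essentially the same route as the paper's proof: both invoke the $(q,p)$-Sobolev embedding on balls with $q$ the Sobolev conjugate of $p$, use Ahlfors $s$-regularity so that the resulting bound $\bigl(\int_{B(x_0,R)}|f-f_{B(x_0,R)}|^q\,\dd\mu\bigr)^{1/q}\lesssim \Vert g_f\Vert_{L^p(X)}$ is uniform in $R$, show the averages $f_{B(x_0,R)}$ form a Cauchy net via a dyadic telescoping estimate with decay rate $R^{-s/q}=R^{1-s/p}$, and pass to the limit. The only differences are cosmetic (the paper works directly with dyadic radii $2^j$ and bounds $\int_{B_j}|f-c|^q$ by adding the term $|f_{B_j}-c|\,\mu(B_j)^{1/q}$ rather than using Fatou), and your remark about first securing $f\in L^q_\loc$ is a sensible technical point that the paper leaves implicit.
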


\begin{proof}
Suppose that $f$ has an upper gradient $g\in L^p(X)$. We fix $x_0 \in X$ and set $B_j:=B(x_0, 
2^j)$ for all non-negative integers $j$. 
By the Ahlfors $s$-regularity of $X$, it follows that 
	\begin{equation}\label{eq:ball_comparison}
		\frac{1}{C}\, 2^{js} \le \mu(B_j) \le C 2^{js},
	\end{equation}
where the constant $C$ is the Ahlfors constant of $X$.
Thus, by the Sobolev embedding theorem 
\cite[Theorem~5.1]{HaKo}, we have for all non-negative integers $i$ that 
\begin{multline}\label{eq:SP-est_for_pow_2}
\left(\jint_{B_j}\!|f-f_{B_j}|^q\, \dd\mu\right)^{1/q} \lesssim 2^j\, \left(\jint_{5\lambda B_j}\!g^p\, 
\dd\mu\right)^{1/p}\\
   \le \frac{2^j}{\mu(B_j)^{1/p}}\Vert g\Vert_{L^p(X)}
   \leq C^{1/p} \, 2^{j(1-s/p)} 
   \Vert g\Vert_{L^p(X)}
   \lesssim 2^{j(1-s/p)},
\end{multline}
where the comparison constants in this estimate, as well as all the other uses of $\lesssim$ in 
this proof, are independent of $j$ but may depend on any other parameters.

We next claim that $(f_{B_j})_j$ is a Cauchy sequence. Indeed, observe that for all positive integers 
$j$ and $k$, since $p<s$, we have by \eqref{eq:ball_comparison} and \eqref{eq:SP-est_for_pow_2} the 
estimate 
\begin{multline}\label{eq:cauchy_est}
	\abs{f_{B_{j+k}} - f_{B_j}} 
	\leq \sum_{i=j}^\infty \abs{f_{B_{i+1}} - f_{B_i}} 
	\leq \sum_{i=j}^\infty \jint_{B_{i}}\! \abs{f - f_{B_{i+1}}}\,\dd\mu\\
	 \le \frac{C^2}{2^s} \sum_{i=j}^\infty \jint_{B_{i+1}}\! \abs{f - f_{B_{i+1}}}\,\dd\mu
	 \le \frac{C^2}{2^s}  \sum_{i=j}^\infty \left( \jint_{B_{i+1}}\! \abs{f - 
	 f_{B_{i+1}}}^q\,\dd\mu\right)^{1/q}\\
 	\lesssim 
	\sum_{i=j}^\infty 2^{i(1-s/p)}
	=  
	\frac{ 2^{j(1-s/p)}}{1 - 2^{1-s/p}}.
\end{multline}
Hence, we have that $\lim_{j \to \infty} f_{B_j}$ exists. We denote $c := \lim_{j \to \infty} f_{B_j}$. 
Observe that 
by letting $k \to \infty$ in~\eqref{eq:cauchy_est}, we have for every $j$ the estimate
\begin{equation}\label{eq:chain_of_balls_lim_convergence_rate}
	\abs{f_{B_j} - c} \lesssim 2^{j(1-s/p)}.
\end{equation}
Now, by~\eqref{eq:ball_comparison}, \eqref{eq:SP-est_for_pow_2}, 
and~\eqref{eq:chain_of_balls_lim_convergence_rate}, 
we have for all $j$ that 
\begin{multline*}
\left( \int_{B_j}\!|f-c|^q\, \dd\mu\right)^{1/q} \le \left( \int_{B_j}\!|f-f_{B_j}|^q\, 
\dd\mu\right)^{1/q}+|f_{B_j}-c|\,\mu(B_j)^{1/q}\\
\lesssim 2^{j(1-s/p)} + 2^{j(1 - s/p + s/q)},
\end{multline*}
Since $1/q + 1/s = 1/p$, we have $1 - s/p + s/q = 0$. Thus, letting $j\to\infty$ in the above estimate, 
we obtain
\[
\left(\int_X \!|f-c|^q\, \dd\mu\right)^{1/q} <\infty.
\]
\end{proof}

\section{The case of Gromov hyperbolic spaces with uniformly locally $p$-controlled geometry}\label{Sec:4}

Our objective in this section is to study the relationship between $D^{1,p}(X,d,\mu)$ and 
$N^{1,p}(X,d,\mu)$ when $(X,d,\mu)$ is an  roughly starlike Gromov hyperbolic space with uniformly locally 
$p$-controlled geometry. Our primary tool is to consider a uniformization 
$(\overline{X}^\eps, d_\eps, \mu_\beta)$ of the $(X, d, \mu)$, where $0<\eps\le \eps_0(\delta)$ 
and $\beta>\beta_0$ as in Lemma~\ref{lem:BBS1-unif}.

\subsection{Gromov hyperbolic spaces $X$ with at least two points in $\partial_\eps X$.}\label{sub:GHyper_twopts-bdry}

We start by developing some lemmas as preliminary tools. 

\begin{lemma}\label{lem:1}
Let $(X, d, \mu)$ be a metric measure space that satisfies our standing assumptions, 
let $p \in [1, \infty)$, let $u:X\to\overline{\R}$ be a measurable function, 
and let $(X,d_\eps,\mu_\beta)$ be a uniformization of $(X, d, \mu)$ that is constructed 
with respect to the reference point $z_0\in X$.
If $g$ is an upper gradient of $u$ with respect to the metric $d$, then the function $g_\eps$ defined by
\[
	g_\eps(x) =\frac{1}{\rho_\eps(x)}\, g(x),
\]
where $\rho_\eps$ is as in \eqref{eq:rho_def}, is an upper gradient of $u$ with respect to the metric $d_\eps$.
Moreover, if $h_\eps$ is an upper gradient of $u$ with respect to $d_\eps$, then 
with $\rho_\eps$ as in~\eqref{eq:rho_def},
the function $h$ defined by
\[
	h(x)=\rho_\eps(x)\, h_\eps(x)
\]
is an upper gradient of $u$ with respect to $d$. Furthermore, 
if $p\le \beta/\eps$ and $g\in L^p(X, \mu)$, then $g_\eps\in L^p(X,\mu_\beta)$, 
and if $h_\eps\in L^p(X,\mu_\beta)$ and $p\ge \beta/\eps$, then $h\in L^p(X,\mu)$.
\end{lemma}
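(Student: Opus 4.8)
The plan is to prove the four claims of Lemma~\ref{lem:1} in order, using the elementary relationship between path-integrals in the metrics $d$ and $d_\eps$.

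\emph{Upper gradients transform correctly.} The metric $d_\eps$ is by \eqref{eq:d-eps} the length metric obtained by conformally weighting $d$ by $\rho_\eps$. Consequently, if $\gamma$ is a curve parametrized by $d$-arclength and $\gamma_\eps$ is the \emph{same} curve reparametrized by $d_\eps$-arclength, then the change-of-variables formula for path integrals gives
\[
\int_{\gamma_\eps} g_\eps \, ds_\eps = \int_\gamma \frac{g}{\rho_\eps}\,\rho_\eps \, ds = \int_\gamma g \, ds.
\]
(One should note that $\rho_\eps$ is continuous and strictly positive, so the two arclength parametrizations are related by a locally biLipschitz reparametrization on compact subcurves, which justifies the computation; curves that are compact and rectifiable for $d$ are compact and rectifiable for $d_\eps$ and vice versa, since on any compact set $\overline B(z_0,N)$ the weight $\rho_\eps$ is bounded above and below.) Hence for a non-constant compact rectifiable curve $\gamma$ with endpoints $x,y$, the upper-gradient inequality $|u(y)-u(x)| \le \int_\gamma g\, ds$ becomes exactly $|u(y)-u(x)| \le \int_{\gamma_\eps} g_\eps \, ds_\eps$. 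This proves that $g_\eps$ is an upper gradient of $u$ with respect to $d_\eps$; the second claim, that $h = \rho_\eps h_\eps$ is an upper gradient with respect to $d$, follows by running the identical computation in the reverse direction.

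\emph{The $L^p$ inclusions.} For the integrability statements, recall from \eqref{eq:mu-beta} that $d\mu_\beta = \rho_\beta \, d\mu = e^{-\beta d(\cdot,z_0)}\,d\mu$, while $\rho_\eps(x)^{-p} = e^{p\eps d(x,z_0)}$. Therefore
\[
\int_X g_\eps^p \, d\mu_\beta = \int_X g^p \, \rho_\eps^{-p}\,\rho_\beta \, d\mu = \int_X g^p \, e^{(p\eps - \beta)d(\cdot,z_0)} \, d\mu.
\]
When $p \le \beta/\eps$ we have $p\eps - \beta \le 0$, so $e^{(p\eps-\beta)d(\cdot,z_0)} \le 1$ pointwise, and thus $\int_X g_\eps^p\,d\mu_\beta \le \int_X g^p \, d\mu < \infty$, giving $g_\eps \in L^p(X,\mu_\beta)$. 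Symmetrically,
\[
\int_X h^p \, d\mu = \int_X h_\eps^p \, \rho_\eps^p \, d\mu = \int_X h_\eps^p \, \rho_\eps^p\,\rho_\beta^{-1}\, d\mu_\beta = \int_X h_\eps^p \, e^{(\beta - p\eps)d(\cdot,z_0)}\, d\mu_\beta,
\]
and when $p \ge \beta/\eps$ the exponent $\beta - p\eps$ is nonpositive, so the weight is bounded by $1$ and $\int_X h^p\,d\mu \le \int_X h_\eps^p \, d\mu_\beta < \infty$, giving $h \in L^p(X,\mu)$.

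\emph{Expected obstacle.} The conceptual content is light; the only point requiring a little care is the justification that path integrals transform as claimed under the change of parametrization from $d$-arclength to $d_\eps$-arclength — i.e.\ that \eqref{eq:d-eps} really does make $d_\eps$ the metric derived from the line element $\rho_\eps \, ds$, so that $\int_{\gamma_\eps} f \, ds_\eps = \int_\gamma f \rho_\eps \, ds$ for Borel $f \ge 0$. This is standard (see the references to \cite{BHK} on uniformization), and on compact rectifiable subcurves it reduces to the one-dimensional change of variables; I would state it with a one-line justification rather than belabor it. Everything else is the two exponential-weight estimates above, which are immediate.
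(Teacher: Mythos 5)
Your proof is correct and follows essentially the same route as the paper's: the change-of-variables identity $\int_\gamma g_\eps\, \dd s_\eps = \int_\gamma g\, \dd s$ for compact rectifiable curves, followed by the pointwise comparison of the weights $e^{(p\eps-\beta)d(\cdot,z_0)}$ (resp.\ $e^{(\beta-p\eps)d(\cdot,z_0)}$) with $1$. Your added remark that $\rho_\eps$ is bounded above and below on compact sets, so that compact rectifiable curves for $d$ and for $d_\eps$ coincide, is a small point the paper leaves implicit.
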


\begin{proof}
Note that the metric $d_\eps$ is of the 
form~\eqref{eq:d-eps}. Hence, when $\gamma$ is a nonconstant compact rectifiable curve in $X$, by denoting
its endpoints by $x$ and $y$, we have
\[
|u(x)-u(y)|\le \int_\gamma\! g\, \dd s=\int_\gamma\! g_\eps e^{-\eps d(z_0,\gamma(\cdot))}\, \dd s=\int_\gamma\! g_\eps\, \dd s_\eps,
\]
where $\dd s$ denotes the path integral with respect to the metric $d$ while $\dd s_\eps$ denotes the path integral with 
respect to $d_\eps$. It follows that $g_\eps$ is an upper gradient of $u$ with respect to $d_\eps$.

Moreover,
\[
\int_X\! g^p\, \dd\mu=\int_X\!g_\eps^p\, e^{-p\eps\, d(z_0,\cdot)}\, \dd\mu=\int_X\! g_\eps^p\, e^{(\beta-\eps p)d(z_0,\cdot)}\, \dd\mu_\beta,
\]
and so 
\[
\int_X g_\eps^p\, e^{(\beta-\eps p)d(z_0,\cdot)}\, \dd\mu_\beta<\infty.
\]
Hence, when $\beta\ge \eps\, p$ we know that $\int_X g_\eps^p\, \dd\mu_\beta$ is finite. A similar argument 
for $h$ completes the proof of the lemma.
\end{proof}

\begin{remark}\label{rem:p-large-betaeps}
Note that, if we wish for Lemma \ref{lem:BBS1-unif} to apply to a uniformized space $(X, d_\eps, \mu_\beta)$, 
we are not able to make $\beta>0$ as small as we like, nor are we able to make $\eps>0$ as large as we like.
However, we have the luxury of making $\beta$ as large as we like provided $\beta>\beta_0$,
and $\eps$ as small as we like provided $0<\eps\le \eps_0(\delta)$. Therefore, for
$p>\beta_0/\eps_0(\delta)$, we can choose $\beta$ and $\eps$ within these ranges so that $p=\beta/\eps$,
in which case, $h\in L^p(X,\mu)$ if and only if $h_\eps\in L^p(X,\mu_\beta)$.
\end{remark}

\begin{lemma}\label{lemma:key}
Let $(X,d,\mu)$ be an $M$-roughly $z_0$-starlike Gromov $\delta$-hyperbolic space that satisfies our standing assumptions.
Moreover, suppose that $\mu$ is uniformly locally doubling, and that
$a:=\inf_{x\in X}\mu(B(x,1))>0$.
Let $(X, d_\eps, \mu_\beta)$ be a uniformization of $X$ with base point $z_0$, where $0 < \eps \le \eps_0(\delta)$ and $\beta>\beta_0$,
let $1\le p<\infty$, let $c \in [0, \infty)$, let $f\in L^p(X,\mu)$ be bounded, and suppose that 
there exists a point $\xi\in\partial_\eps X$ such that 
\[
\liminf_{r\to 0^+}\frac{\mu_\beta(B_\eps(\xi,r)\cap\{f\ge c\})}{\mu_\beta(B_\eps(\xi,r))}=1.
\]
Then $c=0$.
\end{lemma}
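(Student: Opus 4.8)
The plan is to argue by contradiction: the claim is trivial when $c=0$, so I assume $c>0$ and show that the $\liminf$ in the hypothesis is in fact $0$, which contradicts its assumed value $1$. The point is that two competing effects near $\xi\in\partial_\eps X$ make $\mu_\beta(B_\eps(\xi,r)\cap\{f\ge c\})$ negligible relative to $\mu_\beta(B_\eps(\xi,r))$ for small $r$. On one hand, since $f\in L^p(X,\mu)$ and $c>0$, Chebyshev's inequality forces the super-level set $\{f\ge c\}$ to have finite $\mu$-measure and hence to be essentially contained in a large $d$-ball about $z_0$, whereas $B_\eps(\xi,r)$ lies far from $z_0$; on the other hand, the weight $\rho_\beta=e^{-\beta d(\cdot,z_0)}=\rho_\eps^{\beta/\eps}$ is comparable to $r^{\beta/\eps}$ throughout $B_\eps(\xi,r)$, so that $\mu_\beta$-mass and $\mu$-mass there agree up to that fixed factor.

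For the upper bound on the numerator I would first record the pointwise estimate: if $z\in B_\eps(\xi,r)$, then $\dist_\eps(z,\partial_\eps X)\le d_\eps(z,\xi)<r$, so \eqref{eq:rho_d_comparison} gives $\rho_\eps(z)<K_1 r$; equivalently $d(z,z_0)>\tfrac1\eps\log\tfrac1{K_1 r}=:R(r)$ with $R(r)\to\infty$ as $r\to0^+$, and moreover $\rho_\beta(z)=\rho_\eps(z)^{\beta/\eps}<(K_1 r)^{\beta/\eps}$. Since $c>0$, Chebyshev's inequality yields $\mu(\{|f|\ge c\})\le c^{-p}\int_X|f|^p\,d\mu<\infty$, and as $B(z_0,R)\uparrow X$ this gives $\eta(r):=\mu(\{|f|\ge c\}\setminus B(z_0,R(r)))\to0$ as $r\to0^+$. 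Integrating $\rho_\beta$ over $B_\eps(\xi,r)\cap\{f\ge c\}\subset(X\setminus B(z_0,R(r)))\cap\{|f|\ge c\}$ then produces
\[
\mu_\beta\bigl(B_\eps(\xi,r)\cap\{f\ge c\}\bigr)\le (K_1 r)^{\beta/\eps}\,\eta(r).
\]

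The crux is the matching lower bound $\mu_\beta(B_\eps(\xi,r))\ge C\, r^{\beta/\eps}$ with $C>0$ independent of $r$, and this is where the Gromov-hyperbolic uniformization estimates enter. I would fix a unit-speed $d$-geodesic ray $\gamma\colon[0,\infty)\to X$ with $\gamma(0)=z_0$ that converges to $\xi$ in the metric $d_\eps$ (such a ray exists because $X$ is proper, geodesic, Gromov hyperbolic and roughly starlike; see \cite{BHK}). Then $\rho_\eps(\gamma(t))=e^{-\eps t}$, and bounding $d_\eps(\gamma(t),\xi)$ by the $d_\eps$-length of the tail $\gamma|_{[t,\infty)}$ gives $d_\eps(\gamma(t),\xi)\le\tfrac1\eps\rho_\eps(\gamma(t))$. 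Fixing a small $\lambda>0$ and, for small $r$, choosing $t_r$ with $\rho_\eps(\gamma(t_r))=\lambda r$ and setting $z_r:=\gamma(t_r)$, one checks from \eqref{eq:rho_d_comparison} that for a sufficiently small $a_0>0$ (depending only on $\lambda,K_1,\eps$) one has $B_\eps(z_r,a_0 r)\subset B_\eps(\xi,r)$ and $\dist_\eps(z_r,\partial_\eps X)\ge 2a_0 r$, while the radius $a_0 r/\rho_\eps(z_r)=a_0/\lambda$ is a fixed constant. Then \eqref{eq:boundary_to_inside_comparison} applies with this $z_r$ and gives $\mu_\beta(B_\eps(\xi,r))\ge K_3^{-1}r^{\beta/\eps}\mu(B(z_r,a_0/\lambda))$; since $\inf_{x\in X}\mu(B(x,1))=a>0$ and $\mu$ is uniformly locally doubling, a $d$-ball of the fixed radius $a_0/\lambda$ has $\mu$-measure bounded below by a positive constant, giving the claimed lower bound. (Alternatively, one can avoid \eqref{eq:boundary_to_inside_comparison} and instead use \eqref{eq:d_deps_ball_comparison} to place a fixed-radius $d$-ball inside $B_\eps(z_r,a_0 r)$ and bound $\rho_\beta$ from below on it directly.)

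Dividing the two estimates,
\[
\frac{\mu_\beta(B_\eps(\xi,r)\cap\{f\ge c\})}{\mu_\beta(B_\eps(\xi,r))}\le\frac{K_1^{\beta/\eps}}{C}\,\eta(r)\longrightarrow0\qquad\text{as }r\to0^+,
\]
so the $\liminf$ in the hypothesis equals $0\ne1$, a contradiction; hence $c=0$. I expect the main obstacle to be the third step — the construction of the inner ``corkscrew'' point $z_r$ and the verification of the hypotheses of \eqref{eq:boundary_to_inside_comparison} (equivalently \eqref{eq:d_deps_ball_comparison}): one must keep the scales $r$, $\rho_\eps(z_r)$ and $\dist_\eps(z_r,\partial_\eps X)$ matched against the constants $K_1,K_2$ and choose $\lambda$ and $a_0$ compatibly. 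The rest is routine measure-theoretic bookkeeping, together with standard facts about the existence of geodesic rays to boundary points.
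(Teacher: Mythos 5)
Your proof is correct, and it takes a genuinely different route from the paper's. The paper also argues by contradiction and also constructs a ``corkscrew'' point $z_r$ near $\xi$ (via an $A$-uniform curve from $\xi$ to $z_0$ rather than a geodesic ray), but it then runs the argument in the opposite direction: it transfers the density hypothesis from $B_\eps(\xi,r)$ down to a fixed-radius $d$-ball $B(z_r,1/C_1)$ using the global doubling of $\mu_\beta$ and an oscillation bound on $d\mu_\beta/d\mu$, concludes that each such ball contributes a definite amount of $\mu$-mass to $\{f\ge c\}$, and sums over a disjoint sequence of such balls to get $\mu(\{f\ge c\})=\infty$, contradicting $f\in L^p$. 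Your argument instead bounds the density ratio directly: Chebyshev gives $\mu(\{|f|\ge c\})<\infty$, the containment $B_\eps(\xi,r)\subset X\setminus B(z_0,R(r))$ with $R(r)\to\infty$ makes the numerator $o(r^{\beta/\eps})$, and the corkscrew point plus \eqref{eq:boundary_to_inside_comparison} (or \eqref{eq:d_deps_ball_comparison}) and $\inf_x\mu(B(x,1))>0$ give the matching lower bound $\mu_\beta(B_\eps(\xi,r))\gtrsim r^{\beta/\eps}$. Your version yields the slightly stronger conclusion that the density ratio actually tends to $0$ for $c>0$, and it avoids the oscillation/doubling bookkeeping for $\mu_\beta$; the paper's version avoids the two-sided comparison of $\mu_\beta(B_\eps(\xi,r))$ with $r^{\beta/\eps}$. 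Your constant-matching in the third step checks out ($\lambda\le\eps/2$, $a_0\le\min\{1/2,\lambda/(2K_1)\}$ make all hypotheses of \eqref{eq:boundary_to_inside_comparison} hold with $a_0 r/\rho_\eps(z_r)=a_0/\lambda$ fixed). The one external fact you invoke --- that the given $\xi\in\partial_\eps X$ is the $d_\eps$-limit of a geodesic ray from $z_0$ --- is indeed in \cite{BHK} (identification of $\partial_\eps X$ with the Gromov boundary), but you could sidestep it entirely by taking $z_r$ on the $A$-uniform curve from $\xi$ to $z_0$ supplied by Lemma~\ref{lem:BHK1}, exactly as the paper does, since $\dist_\eps(z_r,\partial_\eps X)\ge r/(4A)$ and $\rho_\eps(z_r)\approx r$ then follow from the uniformity condition and \eqref{eq:rho_d_comparison}.
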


\begin{proof}
We may assume that the scale of uniform doubling of $\mu$ is $R_0 = 1$. 
We denote the uniform doubling constant of $\mu$ by $C_d$.
Moreover, by Lemma~\ref{lem:BBS1-unif}, $\mu_\beta$ is globally doubling; 
we denote the respective doubling constant by $C_{\eps,\beta}$. 
In addition to this, by Lemma \ref{lem:BHK1}, we know that $(X,d_\eps)$ is a uniform domain with uniformity constant 
$A = A(\delta,\eps) > 1$.
	
Suppose to the contrary that $c > 0$. 
Then by assumption, for each $1/2>\eta>0$
there exists a $r_0>0$ so that whenever $0<r<r_0$, we
have 
\begin{equation}\label{eq:levelset1}
\frac{\mu_\beta(B_\eps(\xi,r)\cap\{f\ge {c}\})}{\mu_\beta(B_\eps(\xi,r))}\ge 1-\eta.
\end{equation}

Let $0 < r < \min \{r_0, d_\eps(z_0, \xi)\}$, and let $\gamma$ be an $A$-uniform simple curve
connecting $\xi$ to the base point $z_0$, and let $z_r$ be a point in the trajectory of $\gamma$ such that the subcurve $\gamma_z$
with end points $\xi$ and $z_r$ satisfies $r/4\le \ell_\eps (\gamma_z)\le r/2$, 
where $\ell_\eps$ denotes length with respect to $d_\eps$. 
Then $r/(4A) \le d_\eps(z_r,\partial_\eps X) \le d_\eps(z_r, \xi) \le r/2$, and so
by the fact that $A>1$, we have 
\[
 B_\eps\left(z_r,\frac{r}{4A}\right) \subset B_\eps(\xi,r).
\]
We then let $K_1, K_2 \ge 1$ be the constants of Lemma \ref{lem:unif_comparisons}, noting that both depend only on $M$ and $\eps$.
By setting $C_1 = C_1(A, M, \eps) := 2 K_1 K_2 A$ and $C_2 = C_2(A, M, \eps) := K_1 K_2 C_1$, we hence conclude that
\[
	B_\eps \left(z_r, \frac{r}{4AC_2}\right) 
	\subset B\left(z_r, \frac{1}{C_1}\right) 
	\subset B_\eps \left(z_r, \frac{r}{4A}\right)
	\subset B_\eps(\xi,r).
\]

In addition to this, we observe that in $B(z_r, 1/C_1)$, we have an oscillation estimate for the Radon-Nikodym -derivative $d\mu_\beta / d\mu = e^{-\beta d(\cdot, z_0)}$ of the form
\begin{equation}\label{eq:osc_est}
	\sup_{x\in B(z_r, 1/C_1)} \frac{d\mu_\beta}{d\mu}(x) \le e^{2\beta /C_1} \left( \inf_{x\in B(z_r, 1/C_1)} \frac{d\mu_\beta}{d\mu}(x) \right).
\end{equation}
Moreover, by the global doubling of $\mu_\beta$, 
we may select a constant $c_0 = c_0(A, C_2, C_{\eps,\beta}) \in (0, 1)$ such that
\begin{align*}
	\mu_\beta\left(B_\eps\left(z_r,\frac{r}{4AC_2}\right)\right)
	\ge c_0\, \mu_\beta(B_\eps(\xi, r)).
\end{align*}
We then select $\eta < c_0/2$ in \eqref{eq:levelset1}, in which case we have
\begin{align*}
	&\mu_\beta(B_\eps(z_r,r/(4AC_2))\cap\{f\ge 1\})\\
	&\qquad= \mu_\beta(B_\eps(z_r,r/(4AC_2)))
	- \mu_\beta(B_\eps(z_r,r/(4AC_2))\setminus\{f\ge 1\})\\
	&\qquad\ge c_0 \mu_\beta(B_\eps(\xi, r)) 
	- \mu_\beta(B_\eps(\xi, r)\setminus\{f\ge 1\})\\
	&\qquad \ge (c_0 / 2) \mu_\beta(B_\eps(\xi, r)).
\end{align*}
Thus, by using \eqref{eq:osc_est}, we obtain that
\begin{multline*}
	\frac{\mu(B(z_r,1/C_1)\cap\{f\ge 1\})}{\mu(B(z_r,1/C_1))}
	\ge \frac{1}{e^{2\beta /C_1}} 
		\frac{\mu_\beta(B(z_r,1/C_1)\cap\{f\ge 1\})}{\mu_\beta(B(z_r,1/C_1))}\\
	\ge \frac{1}{e^{2\beta /C_1}}	
		\frac{\mu_\beta(B_\eps(z_r,r/(4AC_2))\cap\{f\ge 1\})}{\mu_\beta(B_\eps(\xi, r))}
	\ge \frac{c_0}{2\,e^{2\beta /C_1}}.
\end{multline*}

Now for each positive integer $k$, we select a radius $r_k \in (0, \min (r_0, d_\eps(z_0, \xi)))$ 
such that the collection of balls $\{B(z_{r_k},1/C_1)\}_{k\in\N}$ is pairwise disjoint. 
This choice is clearly possible since $d(z_0, z_{r}) \to \infty$ as $r \to 0$. By the preceeding 
part of the proof, we have for every $k$ the estimate
\[
\mu(B(z_{r_k},1/C_1)\cap\{f\ge {c}\}) \ge \frac{c_0}{2\,e^{2\beta /C_1}} \, \mu(B(z_{r_k},1/C_1)).
\]
By the uniformly local doubling property of $\mu$ and by the hypothesis on $\mu$ and $a$, we have that
\[
\mu(B(z_{r_k},1/C_1)\cap\{f\ge {c}\}) \gtrsim \frac{c_0 a}{e^{2\beta /C_1}},
\]
with the comparison constant dependent only on $C_1$ and $C_d$. Hence, $\mu(\{f\ge 
c\})=\infty$, which is not possible when $f\in L^p(X)$ and $c > 0$. It follows that we must have $c=0$.
\end{proof}

\begin{cor}\label{cor:integrable-trace-zero}
Under the hypotheses on $(X,d,\mu)$ in the statement of Lemma~\ref{lemma:key}, if $f\in L^p(X,\mu)$  
for some $1\le p<\infty$, $\xi\in\partial_\eps X$, and $c\in\R$ 
such that 
\[
\lim_{r\to 0^+}\jint_{B_\eps(\xi,r)}\!|f-c|\, \dd\mu_\beta=0,
\]
then $c=0$.
\end{cor}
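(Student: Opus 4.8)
The plan is to reduce the statement to Lemma~\ref{lemma:key} by two elementary manipulations: first arrange $c \ge 0$ by a sign change, and then replace $f$ by a bounded truncation whose super-level sets still fill $B_\eps(\xi,r)$ in density. If $c = 0$ there is nothing to prove, so I would argue by contradiction. If $c < 0$, I would replace the pair $(f,c)$ by $(-f,-c)$; since $|(-f)-(-c)| = |f-c|$ and $-f \in L^p(X,\mu)$, this preserves all hypotheses, and hence it suffices to rule out the case $c > 0$.

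So assume $c > 0$, and set $c' := c/2 > 0$ and $\tilde f := \min\{|f|,c'\}$. Then $\tilde f$ is measurable with $0 \le \tilde f \le c'$, and $0 \le \tilde f \le |f|$ forces $\tilde f \in L^p(X,\mu)$; it is important to truncate $|f|$ rather than $f$, since a nonzero constant function is not in $L^p(X,\mu)$ when $\mu(X) = \infty$. Next I would verify the density hypothesis of Lemma~\ref{lemma:key} for $\tilde f$ at $\xi$ with threshold $c'$. By Chebyshev's inequality, for small $r > 0$,
\[
\mu_\beta\bigl(B_\eps(\xi,r) \cap \{|f-c| \ge c/2\}\bigr) \le \frac{2}{c}\int_{B_\eps(\xi,r)}\!|f-c|\,\dd\mu_\beta,
\]
so, dividing by $\mu_\beta(B_\eps(\xi,r))$,
\[
\frac{\mu_\beta\bigl(B_\eps(\xi,r) \cap \{|f-c| < c/2\}\bigr)}{\mu_\beta(B_\eps(\xi,r))} \ge 1 - \frac{2}{c}\jint_{B_\eps(\xi,r)}\!|f-c|\,\dd\mu_\beta,
\]
and the right-hand side tends to $1$ as $r \to 0^+$ by hypothesis. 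Since $\{|f-c|<c/2\} = \{c/2 < f < 3c/2\} \subseteq \{f > c'\} \subseteq \{\tilde f \ge c'\}$ (on $\{f > c'\}$ one has $|f| = f > c'$, so $\tilde f = c'$), I conclude
\[
\liminf_{r\to 0^+}\frac{\mu_\beta\bigl(B_\eps(\xi,r) \cap \{\tilde f \ge c'\}\bigr)}{\mu_\beta(B_\eps(\xi,r))} = 1.
\]

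I would then apply Lemma~\ref{lemma:key} to $\tilde f \in L^p(X,\mu)$, the point $\xi \in \partial_\eps X$, and the constant $c' \in (0,\infty)$; its conclusion is $c' = 0$, contradicting $c' = c/2 > 0$. This rules out $c > 0$, and together with the opening reduction it gives $c = 0$, as claimed. I do not expect a genuine obstacle here: the only point needing care is the passage from the $L^1$-averaged hypothesis to the level-set density hypothesis of Lemma~\ref{lemma:key} (the Chebyshev step above) and the choice of truncation keeping $\tilde f$ in $L^p(X,\mu)$. The positivity and finiteness of $\mu_\beta(B_\eps(\xi,r))$ for small $r$, used implicitly, follows from~\eqref{eq:boundary_to_inside_comparison} and the standing assumptions.
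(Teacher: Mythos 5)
Your argument is correct and follows essentially the same route as the paper's proof: reduce to $c>0$ by a sign change, convert the averaged hypothesis into the level-set density hypothesis of Lemma~\ref{lemma:key} via a Chebyshev estimate, and invoke that lemma to force a contradiction. Your extra truncation $\tilde f=\min\{|f|,c/2\}$ is in fact slightly more careful than the paper, which applies Lemma~\ref{lemma:key} directly to $f$ even though that lemma is stated for bounded functions; your step makes the application literal (and is harmless, since the inclusion $\{|f-c|<c/2\}\subseteq\{\tilde f\ge c/2\}$ you verify is exactly what is needed).
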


\begin{proof}
Again, suppose towards contradiction that $c\ne 0$.  By replacing $f$ with $-f$ if necessary,
we assume without loss of generality that $c>0$. Then as
$(f-c)_+\le |f-c|$ and $(f-c)_-\le |f-c|$, we have that 
\[
\lim_{r\to 0^+}\jint_{B_\eps(\xi,r)}\!(f-c)_{\pm}\, \dd\mu_\beta=0.
\]
It follows that for each positive real number $\eta$, we must have 
\[
\lim_{r\to 0^+}\frac{\mu_\beta(B(\xi,r)\cap\{(f-c)_{\pm}\ge \eta\})}{\mu_\beta(B(\xi,r))}=0.
\]
Therefore,
\[
\lim_{r\to 0^+}\frac{\mu_\beta(B(\xi,r)\cap\{c-\eta\le f\le c+\eta\})}{\mu_\beta(B(\xi,r))}=1,
\]
and so, choosing $\eta=c/2$, we get
\[
\lim_{r\to 0^+}\frac{\mu_\beta(B(\xi,r)\cap\{f\ge c/2\})}{\mu_\beta(B(\xi,r))}=1,
\]
violating the conclusion of Lemma~\ref{lemma:key}. Thus, our assumption is wrong, and hence $c=0$.
\end{proof}

\begin{theorem}\label{prop:two-pts-large-p}
Let $(X,d,\mu)$ be a Gromov hyperbolic space that satisfies our standing assumptions and is 
of locally uniformly $p$-controlled geometry for some $1\leq p<\infty$.
Suppose also that there exists a positive number $a$ such that for each $x\in X$, we have $\mu(B(x,1))\ge a$.
Let $(X, d_\eps, \mu_\beta)$ be a uniformization of $X$ with base point $z_0$, where $0 < \eps \leq \eps_0(\delta)$ and $\beta>\beta_0$.
Suppose that $\partial_\eps X$ has at least two points and that $p\ge \beta/\eps$. Then there exists a function 
$f\in D^{1,p}(X,d,\mu)$ such that
for each $c\in\R$ we have $f-c\not\in L^p(X,\mu)$.
\end{theorem}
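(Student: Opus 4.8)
The plan is to transfer the problem to the uniformized space $(X, d_\eps, \mu_\beta)$, which by Lemma~\ref{lem:BBS1-unif} is of globally $p$-controlled geometry, and then build a function on that space whose boundary trace at a point $\xi \in \partial_\eps X$ equals a nonzero constant. Specifically, fix two distinct points $\xi_1, \xi_2 \in \partial_\eps X$; since $(X, d_\eps)$ is a bounded uniform domain, $\partial_\eps X$ is a compact set and we may find disjoint relatively open neighborhoods $U_1 \ni \xi_1$, $U_2 \ni \xi_2$ in $\overline{X}^\eps$. Using the globally $p$-controlled geometry of $(X, d_\eps, \mu_\beta)$ together with the quasiconvexity it implies (Lemma~\ref{lem:bdd_geom_is_ULQC} applied in the global case, or directly Theorem~8.3.2 of~\cite{HKSTbook}), one can construct a bounded Lipschitz (with respect to $d_\eps$) function $v$ on $X$ such that $v \equiv 1$ near $\xi_1$, $v \equiv 0$ near $\xi_2$, and $v$ has a $d_\eps$-upper gradient $g_\eps$ with $\int_X g_\eps^p \, d\mu_\beta < \infty$. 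The natural candidate is a truncated multiple of $d_\eps(\cdot, \overline{X}^\eps \setminus U_1)$, or a McShane-type extension; the finiteness of the $\mu_\beta$-measure of $X$ makes the $L^p$ bound on the gradient easy once $g_\eps$ is bounded and supported away from $\xi_1$.

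Next I would push this function back down to $(X, d, \mu)$. Set $f := v$, viewed as a function on $X$. By the second part of Lemma~\ref{lem:1}, if $g_\eps$ is a $d_\eps$-upper gradient of $v$ in $L^p(X, \mu_\beta)$, then $h := \rho_\eps g_\eps$ is a $d$-upper gradient of $f$; and since $p \ge \beta/\eps$, the same lemma gives $h \in L^p(X, \mu)$. Hence $f \in D^{1,p}(X, d, \mu)$. It remains to verify that $f - c \notin L^p(X, \mu)$ for every $c \in \R$. The point is that near $\xi_1$ the function $f$ is identically $1$, so $f$ is ``far from $0$'' on a $d_\eps$-neighborhood of $\xi_1$, which corresponds to an unbounded region of $(X, d)$; and near $\xi_2$ it is identically $0$, so it is ``far from'' any $c \ne 0$ on the corresponding unbounded region. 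More precisely, if $c \ne 0$ then on $U_2 \cap X$ we have $|f - c| = |c| > 0$, and $\mu(U_2 \cap X) = \infty$ (any $d_\eps$-neighborhood of a boundary point has infinite $d$-diameter, hence infinite $\mu$-measure since $\mu(X) = \infty$ and $X$ is proper), so $\int_X |f-c|^p\, d\mu = \infty$; and if $c = 0$ (or $c$ close to $1$), the same argument applied near $\xi_1$ gives $|f - c| \ge 1/2$ on an infinite-$\mu$-measure set, again forcing the integral to diverge. To make the measure-divergence rigorous one can invoke the displayed consequences of global doubling of $\mu_\beta$ plus~\eqref{eq:boundary_to_inside_comparison}, or more simply argue that a $d_\eps$-ball $B_\eps(\xi_j, r)$ contains points $z$ with $d(z, z_0) \to \infty$, and around each such $z$ the uniformly local lower mass bound $\mu(B(z,1)) \ge a$ applied to a disjoint family forces infinite total $\mu$-measure — the very mechanism used at the end of the proof of Lemma~\ref{lemma:key}.

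The main obstacle I expect is the careful construction of the neighborhoods and the cutoff function $v$ so that it genuinely has a $p$-integrable $d_\eps$-upper gradient. Because $\mu_\beta(X) < \infty$, a bounded Lipschitz function automatically has an $L^p(\mu_\beta)$ gradient, so the real subtlety is making sure one can separate $\xi_1$ from $\xi_2$ by a Lipschitz function at all — this needs that $\overline{X}^\eps$ is a genuine metric space in which $\{\xi_1\}$ and a closed neighborhood of $\xi_2$ are positively separated, which holds since $\partial_\eps X$ has at least two points and $\overline{X}^\eps$ is compact metric. One can take $v(x) = \min\{1, \operatorname{dist}_\eps(x, \overline{X}^\eps \setminus U_1)/t\}$ for a small $t > 0$ chosen so that $\xi_1$ lies in the set where $v = 1$; this $v$ is $(1/t)$-Lipschitz with respect to $d_\eps$, is $1$ near $\xi_1$, and is supported in $\overline{U_1}$, hence $0$ near $\xi_2$. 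The remaining steps — verifying $f \in D^{1,p}$ via Lemma~\ref{lem:1}, and verifying $f - c \notin L^p(X,\mu)$ via the infinite-mass argument near whichever of $\xi_1, \xi_2$ is ``wrong'' for the given $c$ — are then routine. A clean way to organize the last step is: for any $c \in \R$, at least one of the inequalities $|1 - c| \ge 1/2$ or $|0 - c| \ge 1/2$ holds, and in the corresponding neighborhood $U_j \cap X$ we have $|f - c| \ge 1/2$ on a set of infinite $\mu$-measure.
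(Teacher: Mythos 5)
Your proposal is correct and follows essentially the same route as the paper: a $d_\eps$-Lipschitz function separating two points of $\partial_\eps X$ (the paper uses $f(x)=d_\eps(x,B_\eps(\xi,\tau))$ rather than a truncated cutoff, but this is cosmetic), pulled back to $(X,d,\mu)$ via Lemma~\ref{lem:1} using $p\ge\beta/\eps$ and $\mu_\beta(X)<\infty$, with the failure of $f-c\in L^p(X,\mu)$ coming from the infinite $\mu$-measure of a $d_\eps$-neighborhood of the ``wrong'' boundary point (the paper packages this last step as Lemma~\ref{lemma:key}). One small caution: your parenthetical that infinite $d$-diameter of $U_j\cap X$ implies infinite $\mu$-measure is not a proof by itself, but the disjoint-unit-balls argument with the lower mass bound $\mu(B(x,1))\ge a$ that you supply immediately afterwards is exactly the correct (and the paper's) mechanism.
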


\begin{proof}
We fix two distinct points $\xi,\zeta\in\partial_\eps X$, set $\tau=d_\eps(\xi,\zeta)/10$, and consider the function
$f:X\to\R$ given by $f(x):=d_\eps(x,B_\eps(\xi,\tau))$. Then $f=0$ on $B_\eps(\xi,\tau)$ and $f\ge 8\tau$ on $B_\eps(\zeta,\tau)$.
We next show that
\begin{enumerate}
\item[{\bf{(a)}}] $f\in D^{1,p}(X,d,\mu)$, and 
\item[{\bf{(b)}}] $f-c\not\in N^{1,p}(X,d,\mu)$ for each $c\in\R$.
\end{enumerate}
As the constant function $h_\eps\equiv 1$ is an upper gradient of $f$ in $(X,d_\eps)$,
and $\mu_\beta(X)<\infty$,  it follows by Lemma \ref{lem:1} that the function $h$ given by 
$h(x)=e^{-\eps\, d(z_0,x)}$ is an upper gradient of
$f$ in $(X,d,\mu)$ with $h\in L^p(X,\mu)$.
Thus, claim~{\bf(a)} follows. Note that $f\ge 8\tau>0$ in $B_\eps(\zeta,\tau)$, and $f=0$ in $B_\eps(\xi,\tau)$.
Hence, for each $c\in\R$
we have that either $f - c \ge 4\tau \ne 0$ in $B_\eps(\zeta,\tau)$, or else $c - f \ge 4\tau \ne 0$ in 
$B_\eps(\xi,\tau)$, and so by 
Lemma~\ref{lemma:key}, we know that $f-c\not\in L^p(X,\mu)$.
\end{proof}

\begin{example}\label{ex:quasihyperbolic_ball}
Consider the space $(X, d, \mu) := (\B^n, k, k^n d\mathcal{H}^n)$, where $\B^n \subset \R^n$ is the unit ball, 
$k$ is the quasihyperbolic metric, and $\mathcal{H}^n$ is the $n$-dimensional Hausdorff measure on $\mathbb{B}$ 
induced by the Euclidean metric. Then the space $X$ is Gromov hyperbolic and is of uniformly locally bounded 
geometry. The space $X$ is also bilipschitz equivalent to the standard hyperbolic space $\H^n$; we will show 
in Lemma \ref{lem:Hn_counterexamples} that $D^{1,p}(\H^n) \ne W^{1,p}(\H^n) + \R$ when $p > n-1$, which then 
implies that $D^{1,p}(X) \ne W^{1,p}(X) + \R$ when $p > n-1$.
\end{example}

\subsection{$p$-parabolic Gromov hyperbolic spaces} \label{sub:GHyper_p-para}
In contrast to Example \ref{ex:quasihyperbolic_ball}, we then provide an example of a space $(X, k, \mu)$ 
equipped with the quasihyperbolic metric such that $D^{1,p}(X, k, \mu) \ne N^{1,p}(X, k, \mu) + \R$ for all $p \in [1, \infty)$. 

\begin{example}\label{ex:puncture}
The punctured closed ball $X := \overline{\mathbb{B}}\setminus\{0\}$ 
in $\R^n$, equipped with the quasihyperbolic metric $k$ and measure $\mu$, is $p$-parabolic. We use Lemma~\ref{lem:parab-volume}
to verify this. Indeed, for every $t>0$, we find a point $x_t \in X$ such that 
$k(x_t,\mathbb{S}^{n-1})=\log(1/|x_t|)=t$. Therefore,
if we fix a point $x_0 \in \S^{n-1}$ and denote $V(t)=\mu(B(x_0,t))$, we have for sufficiently large $t$ that
\[
	V(t) \approx \int_{|x_t|}^1 \tfrac{1}{s^n}\, s^{n-1}\, ds=\log(1/|x_t|)=t,
\]
with the comparison constant dependent only on the dimension $n$. Hence, 
\[
\int_1^\infty\! \left(\frac{t}{V(t)}\right)^{1/(p-1)}\, dt=\infty,
\]
and thus, by Lemma~\ref{lem:parab-volume}, we know that $X$ is $p$-parabolic for each $p>1$.

Let $u:X\to\R$ be given by $u(x)=[\log(e/|x|)]^q$ for some fixed non-zero real number $q$. Then the 
Euclidean upper gradient $g$ of $u$ is
comparable to $[\log(e/|x|)]^{q-1}\, \tfrac{1}{|x|}$, and so the quasihyperbolic upper gradient $g_k$ is 
comparable to $[\log(e/|x|)]^{q-1}$. Thus, with 
$\mathcal{H}^n$ the $n$-dimensional Hausdorff measure on $X$ induced by the Euclidean metric, we have 
\[
\int_X\!g_k^p\, d\mu \approx \int_X\![\log(e/|x|)]^{pq-p}\, \frac{1}{|x|^n}\, 
\dd\mathcal{H}^n(x)
\approx \int_0^1\![\log(e/t)]^{pq-p}\, \frac{1}{t}\, dt=\int^\infty_1 \! \tau^{pq-p}\, d\tau.
\]

The above integral is finite if and only if $pq-p<-1$, that is, $q<1-\tfrac1p$. On the other hand,
\[
\int_Xu^p\, d\mu \approx \int_0^1[\log(e/t)]^{pq}\, \frac{1}{t}\, dt=\int^\infty_1 \tau^{pq}\, 
d\tau,
\]
and this integral is infinite if and only if $pq>-1$, that is, $q>-1/p$. Hence choosing $q$ such that $0\ne q\in (-1/p,\, 1-1/p)$ gives a
function in $D^{1,p}(X,d,\mu)$ that is not in $N^{1,p}(X,d,\mu)+\R$.
\end{example}

Example~\ref{ex:puncture} above motivates the next lemma and the subsequent theorem.

\begin{lemma}\label{lem:singeton-bdy-parabolic}
	Suppose that $(X,d,\mu)$ satisfies our standing assumptions, is an $M$-roughly $z_0$-starlike Gromov $\delta$-hyperbolic 
	space, and has uniformly locally $p$-controlled geometry 
	for some $1<p<\infty$.
	Let $(X, d_\eps)$ be a uniformization of $X$ with base point $z_0$, 
	where $0 < \eps \le \eps_0(\delta)$.
	If $\sup_{x \in X} \mu(B(x, 1)) < \infty$, and if
	$\partial_\eps X$ has only one point,
	then $X$ is $p$-parabolic.
\end{lemma}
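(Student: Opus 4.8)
The plan is to realize $X$ as an increasing exhaustion by $d_\eps$-balls centered at the single boundary point $\xi \in \partial_\eps X$, and then apply Lemma~\ref{lem:parab-volume-sum}. Since $\partial_\eps X = \{\xi\}$ is a singleton, the $d_\eps$-ball $B_\eps(\xi, r)$ increases to all of $X$ as $r \to \infty$ (up to the finite $d_\eps$-diameter of $X$; if $\diam_\eps X < \infty$ we instead let $r$ increase to that diameter). So I would set $\Omega_j := B_\eps(\xi, R - 2^{-j})$ for a suitable sequence, or more simply pick radii $r_j \uparrow \diam_\eps X$ and let $\Omega_j = B_\eps(\xi, r_j)$; after passing to a subsequence I may assume $\overline{\Omega_j} \subset \Omega_{j+1}$. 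The quantities to control are the $d_\eps$-gaps $\Delta_j = \dist_\eps(\Omega_j, X \setminus \Omega_{j+1})$ and the uniformized-measure shells $\mu_\beta(\Omega_{j+1} \setminus \Omega_j)$, and I must verify that condition~\eqref{eq:p_para_cond} holds, i.e. $\sum_j (\Delta_j^p / \mu_\beta(\Omega_{j+1}\setminus\Omega_j))^{1/(p-1)} = \infty$.

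Wait — here is the subtlety that makes this not immediate: Lemma~\ref{lem:parab-volume-sum} must be applied to the metric measure space whose $p$-parabolicity we want, namely $(X, d, \mu)$, not the uniformized space. Since $p$-parabolicity as defined via relative $p$-capacity is a conformally sensitive notion, I should be careful about which space I work in. Two routes are available. Route one: observe (via Lemma~\ref{lem:1}, with the hypothesis relating $p$ and $\beta/\eps$ — but note that hypothesis is \emph{not} assumed here, so this route is blocked in general) that $N^{1,p}$-capacities transfer. Route two, which I will take: work directly in $(X, d, \mu)$, exhausting by the sets $\Omega_j = B_\eps(\xi, r_j)$ which are perfectly good bounded open subsets of $X$ in the original metric as well (boundedness in $d$ follows from properness of $(X,d)$ together with the fact that $\overline{\Omega_j}$ is $d_\eps$-bounded away from $\xi$, hence is a compact subset of $X$). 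Then $\Delta_j$ in Lemma~\ref{lem:parab-volume-sum} is the \emph{$d$-distance} $\dist_d(\Omega_j, X\setminus\Omega_{j+1})$, and $\mu(\Omega_{j+1}\setminus\Omega_j)$ is the \emph{original} measure of the shell. I would estimate both using Lemma~\ref{lem:unif_comparisons}: the shell $\Omega_{j+1}\setminus\Omega_j$ sits at $d_\eps$-distance $\approx r_j$ from $\xi$, hence at $d$-distance $\approx \eps^{-1}\log(1/r_j)$ from $z_0$ by~\eqref{eq:rho_d_comparison}, so it is covered by boundedly many unit $d$-balls $B(x,1)$ whose $\mu$-measures are all $\le \sup_x \mu(B(x,1)) =: A < \infty$ by hypothesis; the number of such balls is controlled by the $d_\eps$-doubling of $\mu_\beta$ together with~\eqref{eq:boundary_to_inside_comparison}. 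Similarly $\Delta_j$ in the $d$-metric is comparable, via~\eqref{eq:d_deps_ball_comparison} and the exponential factor $\rho_\eps \approx r_j$, to (the $d_\eps$-gap) $/r_j$, which if I choose $r_j$ geometrically, say $r_j = \diam_\eps X - 2^{-j}\diam_\eps X$ or $r_j = (1 - 2^{-j})\diam_\eps X$, gives $\Delta_j \gtrsim 2^{-j}/r_j \gtrsim 2^{-j}$ in the $d$-metric — actually I expect a much larger $d$-gap since shells near $\xi$ are very spread out in $d$; the point is only a lower bound.

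The main obstacle, and where I expect to spend the real effort, is making the shell-measure estimate $\mu(\Omega_{j+1}\setminus\Omega_j) \lesssim (\text{polynomial or geometric in } j) \cdot A$ precise enough that the series~\eqref{eq:p_para_cond} diverges. The mechanism is: covering $\Omega_{j+1}\setminus\Omega_j$ by unit $d$-balls, the covering number $N_j$ is governed by how much $d$-volume the annulus occupies, and since each contributes at most $A$ we get $\mu(\Omega_{j+1}\setminus\Omega_j) \le N_j A$; meanwhile $N_j$ grows at most geometrically in $j$ because the uniformized measure $\mu_\beta$ is globally doubling (Lemma~\ref{lem:BBS1-unif}) and comparable across the annulus via the single-boundary-point structure. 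Combined with $\Delta_j \gtrsim 2^{-j}$ (or better), each summand in~\eqref{eq:p_para_cond} is bounded below by $(c^{-j})^{1/(p-1)}$ for some $c$ depending only on the data, and by choosing the radii $r_j$ to approach $\diam_\eps X$ slowly enough — this is the one free parameter I can tune — I can force $\sum_j (\Delta_j^p/\mu(\Omega_{j+1}\setminus\Omega_j))^{1/(p-1)} = \infty$. A cleaner alternative, which I would present if the direct estimate gets messy, is to first reduce to the model case: since $(X, d_\eps, \mu_\beta)$ is a uniform domain of globally $p$-controlled geometry with a one-point boundary, it is (as in Example~\ref{ex:puncture}) essentially the punctured-ball model, and the one-point boundary forces the original space $(X,d,\mu)$ to ``look like'' a half-line of uniformly bounded $d$-volume growth $V(t) = \mu(B(z_0,t)) \lesssim t$ — because each annular shell $\{t \le d(\cdot,z_0) < t+1\}$ corresponds to a $d_\eps$-shell near $\xi$ of controlled $\mu_\beta$-measure, hence is covered by boundedly many unit balls — and then Lemma~\ref{lem:parab-volume} applies directly: $\int_1^\infty (t/V(t))^{1/(p-1)}\,dt \gtrsim \int_1^\infty t^{-0}\cdots$ no, $\int_1^\infty (t/t)^{1/(p-1)} dt = \infty$, giving $p$-parabolicity of $(X,d,\mu)$ for all $p > 1$. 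I would lean on this second formulation as the primary argument, with Lemma~\ref{lem:parab-volume-sum} as a fallback if the volume growth bound $V(t) \lesssim t$ is only available along a sequence rather than for all $t$.
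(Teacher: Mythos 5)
Your overall strategy --- exhaust $X$ by sets defined through the $d_\eps$-distance to the single boundary point $\xi$ and feed the resulting gaps and shell measures into Lemma~\ref{lem:parab-volume-sum} --- is the right one, and it is the paper's. But your exhaustion is set up backwards, and this is not a cosmetic slip. The ball $B_\eps(\xi,r)$ is a \emph{neighborhood of $\xi$}: it consists of the points $x$ with $d_\eps(x,\xi)<r$, and since $d_\eps(x,\xi)\approx\rho_\eps(x)=e^{-\eps d(x,z_0)}\to 0$ as $d(x,z_0)\to\infty$, every such ball contains the complement of a large $d$-ball. Hence each of your $\Omega_j=B_\eps(\xi,r_j)$ is \emph{unbounded} in $(X,d)$ and of infinite $\mu$-measure; your assertion that $\overline{\Omega_j}$ is ``$d_\eps$-bounded away from $\xi$'' is false (the ball is centered at $\xi$), the hypotheses of Lemma~\ref{lem:parab-volume-sum} (bounded $\Omega_j$ with compact closures) fail, and the implicit test functions $\min\{1,\dist(\cdot,X\setminus\Omega_{j+1})/\Delta_j\}$ would equal $1$ on a set of infinite measure and so do not lie in $N^{1,p}(X)$. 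The correct exhaustion is by the complements $\Omega_j=\{x\in X:\ d_\eps(x,\xi)>2^{-j}\}$, which are bounded in $(X,d)$ precisely because $d_\eps(\cdot,\xi)\gtrsim\rho_\eps$ forces $d(\cdot,z_0)\lesssim j/\eps$ on them.

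Second, in both of your routes the decisive quantitative step is asserted rather than proved: that each shell $\{2^{-j-1}\le d_\eps(\cdot,\xi)<2^{-j}\}$ has $\mu$-measure bounded \emph{uniformly in $j$} (equivalently, that $V(t)=\mu(B(z_0,t))\lesssim t$). This is exactly where the hypothesis $\sup_{x}\mu(B(x,1))<\infty$ must enter, via the chain: on the shell one has $\rho_\beta=\rho_\eps^{\beta/\eps}\approx d_\eps(\cdot,\xi)^{\beta/\eps}\approx 2^{-j\beta/\eps}$ by \eqref{eq:rho_d_comparison}, whence $\mu(\text{shell})\lesssim 2^{j\beta/\eps}\mu_\beta(B_\eps(\xi,2^{-j+1}))$; then \eqref{eq:boundary_to_inside_comparison} converts $2^{j\beta/\eps}\mu_\beta(B_\eps(\xi,2^{-j+1}))$ into $\mu(B(z_j,\cdot))$ for a ball of $d$-radius $O(1)$ centered at a point $z_j$ with $d_\eps(z_j,\xi)=2^{-j}$, and only at that point do $\sup_x\mu(B(x,1))<\infty$ and local doubling yield a uniform bound. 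Your ``covered by boundedly many unit balls'' claim \emph{is} this estimate and cannot be taken for granted: in a general Gromov hyperbolic space the $d$-annuli grow exponentially, and it is the one-point boundary combined with \eqref{eq:boundary_to_inside_comparison} that suppresses that growth. Once both corrections are made, the dyadic choice already gives gaps $\Delta_j$ bounded \emph{below} by a constant (no tuning of radii is needed, via a geodesic argument and \eqref{eq:rho_d_comparison}) and shell measures bounded above, so the series in \eqref{eq:p_para_cond} diverges termwise.
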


\begin{proof}

We fix an arbitrary $\beta > \beta_0$. 
We use $\xi$ to denote the single point in $\partial_\eps X$, and for each
positive integer $j$, we set $\Om_j:=\{x\in X\, :\, d_\eps(x,\xi)>2^{-j}\}$. Our objective is to show that 
the sets $\Omega_j$ satisfy \eqref{eq:p_para_cond}, after which the claim follows by Lemma \ref{lem:parab-volume-sum}.

We begin by observing that if $K_1 = K_1(M, \eps)$ is the constant of~\eqref{eq:rho_d_comparison} from
Lemma~\ref{lem:unif_comparisons}, then by utilizing the fact that $\Omega_{j+1}\setminus\Omega_j \subset B_\eps(\xi, 2^{-j+1})$, we have
\begin{multline*}
	\mu(\Om_{j+1}\setminus\Om_j)=\int_{\Om_{j+1}\setminus\Om_j} \frac{1}{\rho_\beta} \, d\mu_\beta
	\le K_1^{\frac{\beta}{\eps}} \int_{\Om_{j+1}\setminus\Om_j}\frac{1}{d_\eps(x,\xi)^{\beta/\eps}}\, d\mu_\beta\\
	\le K_1^{\frac{\beta}{\eps}} 2^{\frac{(j+1)\beta}{\eps}} \mu_\beta( \Omega_{j+1}\setminus\Om_j )
	\le K_1^{\frac{\beta}{\eps}} 2^{\frac{(j+1)\beta}{\eps}} \mu_\beta( B_\eps(\xi, 2^{-j+1}) ).
\end{multline*}
Next, for all sufficiently large $j$, we may select a point $z_j \in X$ such that 
$d_\eps(z_j, \xi) = 2^{-j}$. It follows by~\eqref{eq:boundary_to_inside_comparison} from Lemma~\ref{lem:unif_comparisons} 
with $r = 2^{-j+1}$ and $a_0 = 1/4$ that
\[
	2^{\frac{(j+1)\beta}{\eps}} \mu_\beta(B_\eps(\xi, 2^{-j+1})) \le K_3 \mu \left( B\left( z_j, \frac{2^{-j-1}}{\rho_\eps(z_j)} \right) \right),
\]
where $K_3$ depends on all of the parameters $\delta$, $M$, $\eps$, $\beta$, $R_0$ and $C_d$. We then again 
apply~\eqref{eq:rho_d_comparison} along with $d_\eps(z_j, \xi) = 2^{-j}$ to obtain that $2^{-j-1}/\rho_\eps(z_j) \le K_1/2$. By combining the above deductions with the locally $p$-controlled geometry of $X$ and the assumption that $\sup_{x \in X} \mu(B(x, 1)) < \infty$, we conclude that
\begin{equation}\label{eq:sup_bounded}
	\sup_{j \in \N} \mu(\Om_{j+1}\setminus\Om_j) < \infty.
\end{equation}

Next, let  $x_j \in\overline{\Om_j}$ and $y_j \in X\setminus\Om_{j+1}$
be such that $d(x_j, y_j)=\Delta_j = \dist(\Omega_j, X \setminus \Omega_{j+1})$, and 
let $\gamma$ be a geodesic in $(X,d)$ with end points $x_j,y_j$. Note that apart from the two end points,
$\gamma$ is contained in $\Om_{j+1}\setminus \Om_j$.
Then, since we have $\dist_\eps(\overline{\Om_j},X\setminus\Om_{j+1}) \ge 2^{-j-1}$ for all $j$ sufficiently large that $\Omega_j \ne \emptyset$, it follows with another use of Lemma~\ref{lem:unif_comparisons} part~\eqref{eq:rho_d_comparison} that
\[
2^{-j} \le 2d_\eps(x_j,y_j)\le 2 \int_\gamma\! \rho_\eps(\gamma(t))\, \dd t \le 2K_1 
\int_\gamma d_\eps(\gamma(t),\xi)\, dt \le 2^{-j+1} K_1\, \Delta_j.
\]
for all such $j$. That is, we have $\Delta_j \ge 1/(2K_1) > 0$ for all sufficiently large $j$. By combining this with \eqref{eq:sup_bounded}, we conclude that
\[
\sum_{j=1}^\infty\left(\frac{\Delta_j^p}{\mu(\Om_{j+1}\setminus\Om_j)}\right)^{1/(p-1)} =\infty,
\]
and thus, by Lemma~\ref{lem:parab-volume-sum}, the desired conclusion follows.
\end{proof}

\begin{theorem}\label{thm:para}
If $(X,d,\mu)$ is a metric measure space that satisfies our standing assumptions
and is $p$-parabolic for some $1\leq p<\infty$, 
then $D^{1,p}(X,d,\mu)\ne N^{1,p}(X,d,\mu)+\R$.
\end{theorem}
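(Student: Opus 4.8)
The plan is to exploit the definition of $p$-parabolicity directly: if $X$ is $p$-parabolic, then for any fixed $x_0 \in X$ and $r > 0$ we have $\lim_{R\to\infty}\pcap_p(B(x_0,r), X\setminus B(x_0,R)) = 0$. By Lemma~\ref{lem:mod-cap}, this says that the modulus of the curve family connecting $B(x_0,r)$ to the complement of $B(x_0,R)$ tends to $0$ as $R\to\infty$. The strategy is to use this vanishing capacity to produce a function $f \in D^{1,p}(X)$ which equals $1$ on $B(x_0,r)$ and whose $p$-energy is arbitrarily small, but which cannot be close (in the $L^p$ sense after subtracting a constant) to any constant. Concretely, for a sequence $R_k \to \infty$ chosen so that $\pcap_p(B(x_0,r), X\setminus B(x_0,R_k)) < 2^{-pk}$, pick admissible functions $u_k \in N^{1,p}(X)$ with $u_k \equiv 1$ on $B(x_0,r)$, $u_k \equiv 0$ outside $B(x_0,R_k)$, and $\int_X g_{u_k}^p\,d\mu < 2^{-pk}$.

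Next I would assemble these into a single function. One natural approach is to set $f = \sum_{k=1}^\infty (u_k - \text{something})$ or, more simply, to take $f = 1 - \lim_k u_k$ in an appropriate sense; but since the $u_k$ may not converge pointwise in a useful way, the cleaner route is: let $v = \sum_{k} (1 - u_k)$ truncated, or better, observe that $1 - u_k$ is $0$ on $B(x_0,r)$ and $1$ outside $B(x_0,R_k)$, so it "climbs to $1$ at infinity" with small energy. Define $f := \sum_{k=1}^\infty (1-u_k)$. Each summand is nonnegative, vanishes on $B(x_0,r)$, and has $g_{1-u_k} = g_{u_k}$ with $L^p$-norm $< 2^{-k}$, so by completeness of $L^p$ the series of upper gradients converges in $L^p(X)$, giving $f$ an upper gradient $g := \sum_k g_{u_k} \in L^p(X)$; hence $f \in D^{1,p}(X)$. (One should check $f$ is finite-valued $\mu$-a.e.\ and well-defined, which follows since on $X\setminus B(x_0,R_k)$ the partial sums increase without obvious bound — this is actually the point.) On $X\setminus B(x_0, R_k)$ we have $f \ge \sum_{j>k}(1-u_j) \ge$ (number of terms that are already $\equiv 1$ there), so $f \to \infty$ as one moves toward infinity; more carefully, $f \ge m$ on $X\setminus B(x_0,R_m)$ since each of $1-u_1,\dots,1-u_m$ equals $1$ there. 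Thus $f$ is unbounded above and grows to $\infty$ on the whole infinite-measure region near infinity.

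Finally, I would argue $f - c \notin L^p(X,\mu)$ for every $c \in \R$. Fix $c$; since $\mu(X) = \infty$ and in fact $\mu(X\setminus B(x_0,R)) = \infty$ for every $R$ (as $\mu(X)=\infty$ and balls have finite measure), and since $f \ge m$ on $X\setminus B(x_0,R_m)$, for $m > c+1$ we have $|f-c| \ge f - c \ge m - c \ge 1$ on the set $X\setminus B(x_0,R_m)$ of infinite measure, so $\int_X |f-c|^p\,d\mu \ge \mu(X\setminus B(x_0,R_m)) = \infty$. Hence $f \in D^{1,p}(X)$ but $f - c \notin N^{1,p}(X)$ for any $c$, so $f \notin N^{1,p}(X)+\R$, proving $D^{1,p}(X) \ne N^{1,p}(X)+\R$.

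The main obstacle I anticipate is verifying that $f$ is finite-valued $\mu$-almost everywhere and genuinely lies in $D^{1,p}(X)$ rather than being identically $+\infty$ on a large set: while each $1-u_k$ is bounded, the sum could in principle diverge pointwise everywhere outside $\bigcap_k B(x_0,R_k)$. The resolution is that pointwise divergence of $f$ is exactly what we want for the $L^p$-obstruction, but for $f$ to have a $p$-integrable upper gradient in the sense of Definition~\ref{def:Dirich-Newt}, we need $f$ to be a well-defined measurable function with $|f| < \infty$ off a $\mu$-null set — which fails if $f \equiv \infty$ on a set of positive measure. To handle this I would instead truncate: define $f := \lim_{m\to\infty} \sum_{k=1}^m (1-u_k)$ but recognize this may be $+\infty$; so the correct construction is to take a rapidly-enough-decaying choice (energies $< 4^{-pk}$, say) and note that the \emph{partial sums} $f_m = \sum_{k=1}^m (1-u_k)$ already satisfy $f_m \in N^{1,p}(X)$ is false ($f_m$ need not be $L^p$), but $f_m \in D^{1,p}(X)$ with $\int g_{f_m}^p \le (\sum 4^{-k})^p$. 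Then $f := \sup_m f_m = \lim_m f_m$ is measurable, has upper gradient $\sum_k g_{u_k} \in L^p$, and the only remaining point is that $f < \infty$ $\mu$-a.e. This follows because on any fixed ball $B(x_0, N)$, only finitely many $u_k$ differ from $1$ (those with $R_k < N$ — wait, it's the opposite), so one must order things so that $f$ restricted to each bounded set is a finite sum plus a controlled tail; choosing $R_k$ increasing and noting $1 - u_k$ is supported in $X\setminus B(x_0,r)$ and bounded by $1$, the tail $\sum_{k>m}(1-u_k)$ on $B(x_0,N)$ is bounded by the number of $k>m$ with $R_k$ overlapping $B(x_0,N)$, which is finite. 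So $f$ is locally a finite sum of bounded functions, hence locally bounded, hence finite everywhere. This bookkeeping is the technical heart; everything else is routine.
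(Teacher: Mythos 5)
Your overall strategy is the same as the paper's: build test functions $u_k$ for condensers with capacity $<2^{-pk}$, sum up $v_k = 1-u_k$, and observe that the sum has a $p$-integrable upper gradient but is $\ge m$ on the infinite-measure set $X\setminus B(x_0,R_m)$. The obstruction to finiteness of $f$ that you flag at the end is, however, a genuine gap in your version, and your proposed resolution of it is incorrect. With a \emph{fixed} inner ball $B(x_0,r)$, the function $1-u_k$ is only guaranteed to vanish on $B(x_0,r)$; on an annulus such as $B(x_0,N)\setminus B(x_0,r)$ every single term $1-u_k$ may take values in $(0,1]$ on a set of positive measure, and nothing in the standing assumptions (no Poincar\'e inequality is available in this theorem) forces $u_k\to 1$ there. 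Your claim that the tail on $B(x_0,N)$ is controlled by ``the number of $k>m$ with $R_k$ overlapping $B(x_0,N)$'' confuses the set where $u_k=0$ (outside $B(x_0,R_k)$) with the set where $u_k=1$ (only $B(x_0,r)$): since $R_k\to\infty$, all large $k$ ``overlap'' $B(x_0,N)$, and the relevant question is where $1-u_k$ vanishes, not where $u_k$ vanishes. So $f$ could be identically $+\infty$ on a set of positive measure, in which case it is not in $D^{1,p}(X)$ under Definition~\ref{def:Dirich-Newt}.

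The repair is exactly the paper's modification of the construction. By Definition~\ref{def:para-hyp}, $p$-parabolicity is the negation of ``$\lim_{R\to\infty}\pcap_p(B(x_0,r),X\setminus B(x_0,R))>0$ for \emph{some} $x_0,r$,'' so the capacity limit vanishes for \emph{every} inner ball. One therefore chooses the inner radii to grow: inductively pick $r_{k+1}>R_k$ (the paper also arranges $\mu(B(x_0,r_{k+1})\setminus B(x_0,R_k))\ge 1$) and takes $u_{k+1}\equiv 1$ on $B(x_0,r_{k+1})$, $u_{k+1}\equiv 0$ off $B(x_0,R_{k+1})$, with energy $<2^{-(k+1)p}$. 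Then $v_k=1-u_k$ vanishes on $B(x_0,r_k)$ with $r_k\to\infty$, so on any fixed ball $B(x_0,N)$ all but finitely many $v_k$ are identically zero; the sum $f=\sum_k v_k$ is locally a finite sum, hence finite-valued and measurable, and the rest of your argument (upper gradient $\sum_k g_{u_k}\in L^p$, and $f\ge k$ on $X\setminus B(x_0,R_k)$ which has infinite measure) goes through verbatim.
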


\begin{proof}
Recall that it is our standing assumption throughout this paper that $\mu(X)=\infty$.

Fix $x_0\in X$ and $r_1=1$. As $X$ is $p$-parabolic, we can find a function $u_1\in N^{1,p}(X,d,\mu)$ 
such that $u_1=1$ on $B(x_0,r_1)$ and $u_1$ has compact support in $X$ with $\int_Xg_{u_1}^p\, 
d\mu<2^{-p}$. Let $R_1>r_1$ such that $u_1=0$ in $X\setminus B(x_0,R_1)$.
We next choose $r_2>R_1$ such that $\mu(B(x_0,r_2)\setminus B(x_0,R_1))\ge 1$; this is possible because 
$\mu(X)=\infty$. Then we can find $u_2\in N^{1,p}(X,d,\mu)$ such that $u_2=1$ on $B(x_0,r_2)$ and $u_2$ 
is compactly supported in $X$ with $\int_Xg_{u_2}^p\, d\mu<2^{-2p}$.  Let $R_2>r_2$ such that
$u_2=0$ in $X\setminus B(x_0,R_2)$. Continuing inductively this way, having chosen $r_k<R_k$ and a function $u_k\in N^{1,p}(X,d,\mu)$ 
with $u_k=1$ on $B(x_0,r_k)$, $u_k=0$ on $X\setminus B(x_0,R_k)$, and $\int_Xg_{u_k}^p\, d\mu<2^{-kp}$, we choose 
$r_{k+1}>R_k$ so that $\mu(B(x_0,r_{k+1})\setminus B(x_0,R_k))\ge 1$, and find $u_{k+1}\in N^{1,p}(X,d,\mu)$ and $R_{k+1}>r_{k+1}$ such that
$u_{k+1}=1$ on $B(x_0,r_{k+1})$, $u_{k+1}=0$ in $X\setminus B(x_0,R_{k+1})$, and $\int_Xg_{u_{k+1}}^p\, d\mu<2^{-(k+1)p}$.

Now setting for each positive integer $k$ the function $v_k=1-u_k$, and then setting $f=\sum_{k=1}^\infty v_k$,
we get a function $f\in D^{1,p}(X,d,\mu)$. However, $f\not\in N^{1,p}(X)+\R$. To see this, note that for each positive integer $n$,
the set $\{f\ge n\}$ contains the union of pairwise disjoint annuli $\mu(B(x_0,r_{k+1})\setminus B(x_0,R_k))$, $k\in\N$, each of measure at 
least $1$, and so $\{f\ge n\}$ has infinite measure. So for each $c\in\R$ we also have that $\{f-c\ge n\}$ also has 
infinite measure for each
positive integer $n$; thus $f\not\in L^p(X,\mu)+\R$.
\end{proof}

Thus, combining the previous two results yields the following immediate corollary.

\begin{cor}\label{cor:one_bdry_point}
	Suppose that $(X,d,\mu)$ satisfies our standing assumptions, is a roughly starlike 
	Gromov $\delta$-hyperbolic space, and has locally $p$-controlled geometry for some $1<p<\infty$.
	Let $(X, d_\eps)$ be a uniformization of $X$, where $0 < \eps < \eps_0(\delta)$. If $\sup_{x \in X} 
	\mu(B(x, 1)) < \infty$ and if $\partial_\eps X$ has only 
	one point, then $D^{1,p}(X,d,\mu)\ne N^{1,p}(X,d,\mu)+\R$.
\end{cor}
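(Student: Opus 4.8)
The plan is to recognize this as an immediate consequence of the two preceding results, so the work reduces to checking that their hypotheses are in force. First I would verify the hypotheses of Lemma~\ref{lem:singeton-bdy-parabolic}: the space $(X,d,\mu)$ satisfies the standing assumptions, it is roughly starlike and Gromov $\delta$-hyperbolic, it has uniformly locally $p$-controlled geometry with $1<p<\infty$, the uniformization $(X,d_\eps)$ is taken with respect to the base point $z_0$ with $0<\eps<\eps_0(\delta)$, and one has $\sup_{x\in X}\mu(B(x,1))<\infty$ together with $\partial_\eps X$ a single point. All of these are present verbatim in the statement of the corollary; the strict inequality $\eps<\eps_0(\delta)$ is stronger than the requirement $\eps\le\eps_0(\delta)$ needed by the lemma, hence harmless, and the auxiliary choice of $\beta>\beta_0$ used inside the lemma's proof is made there and not imposed on us. Thus Lemma~\ref{lem:singeton-bdy-parabolic} applies and yields that $X$ is $p$-parabolic.

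Second, I would feed this into Theorem~\ref{thm:para}. Since $(X,d,\mu)$ satisfies the standing assumptions and is now known to be $p$-parabolic for the fixed exponent $1<p<\infty$, Theorem~\ref{thm:para} directly gives $D^{1,p}(X,d,\mu)\ne N^{1,p}(X,d,\mu)+\R$, which is precisely the asserted conclusion.

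There is essentially no genuine obstacle here beyond bookkeeping: the parabolicity estimate — built from the exhaustion $\Omega_j=\{x\in X:d_\eps(x,\xi)>2^{-j}\}$ and Lemma~\ref{lem:parab-volume-sum} via the comparability results of Lemma~\ref{lem:unif_comparisons} — is carried out in Lemma~\ref{lem:singeton-bdy-parabolic}, and the construction of a function with unbounded super-level sets witnessing $f\notin L^p(X,\mu)+\R$ is carried out in Theorem~\ref{thm:para}. The only point requiring a moment's care is the compatibility of base points: the point $z_0$ used to build the uniformization should be the same one about which $X$ is assumed roughly starlike. Since a roughly starlike space is roughly starlike about any base point (with a possibly larger constant $M$), and since the boundary $\partial_\eps X$ does not depend on the choice of base point, this compatibility is automatic. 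Hence the corollary follows simply by chaining the two cited results.
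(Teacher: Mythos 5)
Your proposal is correct and matches the paper's own argument exactly: the paper presents this corollary as an immediate consequence of Lemma~\ref{lem:singeton-bdy-parabolic} (which yields $p$-parabolicity of $X$) combined with Theorem~\ref{thm:para}. The hypothesis-checking and the base-point remark are sound but routine; nothing further is needed.
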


\section{Results based on ends at infinity}

Next, we investigate how the question of whether 
$D^{1,p}(X) = 
N^{1,p}(X) + \R$ is affected by the number of $p$-parabolic and $p$-hyperbolic ends of the space 
$(X, d, \mu)$. 

Before proceeding, a comparison with the results in Section~\ref{Sec:4} is in 
order. First, we note that as discussed in subsection~\ref{sub:hypunif}, it is possible 
for the uniformized boundary $\partial_\eps X$ of a Gromov hyperbolic space $X$  to contain multiple 
points even if $X$ has only one end. Thus, the results of subsection \ref{sub:GHyper_twopts-bdry} 
differ from the results we show for spaces with multiple ends. 
Moreover, in subsection~\ref{sub:GHyper_p-para} we considered the $p$-parabolicity of the 
\emph{entire} 
space $X$, instead of the $p$-parabolicity of the ends of $X$. It is possible for a space $X$ to be 
$p$-hyperbolic yet only have $p$-parabolic ends; see e.g.\ \cite[Example~8.5]{BBS-B} 
and~\cite[Example~7.2]{BBS-A}.

However, some connections between the parabolicity or hyperbolicity of the space itself and 
its ends do apply, as evidenced by the following lemma.

\begin{lemma}\label{lem:1paraend-paraspace}
	Suppose that $(X, d, \mu)$ satisfies our standing assumptions and 
	has uniformly locally $p$-controlled geometry. If $X$ 
	has finitely many ends at infinity, all of which are $p$-parabolic, then $X$ is $p$-parabolic. 
	Conversely, 
	if $X$ has at least one  $p$-hyperbolic end at infinity, then $X$ is $p$-hyperbolic.
\end{lemma}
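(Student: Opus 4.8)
The plan is to exploit the capacitary characterizations of parabolicity: for the whole space, $p$-parabolicity means $\lim_{R\to\infty}\pcap_p(B(x_0,r),X\setminus B(x_0,R))=0$ (Definition~\ref{def:para-hyp}), while for an end $\{F_j\}_j$ it means $\lim_j\Mod_p(\Gamma_j)=0$ (Definition~\ref{def:ends-hyp}), and by Lemma~\ref{lem:mod-cap} the relevant moduli are themselves relative capacities. Throughout I will freely use that, under our standing assumptions together with locally $p$-controlled geometry, $X$ has the $MEC_p$-property (Lemma~\ref{lem:MECp}), so the capacitary and modulus definitions of parabolicity of $X$ are robust to the choice of $x_0$ and $r$, as recorded after Definition~\ref{def:para-hyp} via~\cite[Lemma~3.15]{HoloSh}.

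For the first claim, suppose $X$ has finitely many ends $\{F^1_j\}_j,\dots,\{F^m_j\}_j$, each $p$-parabolic, with associated compact sets. First I would fix a large ball $B_0 = B(x_0,R_0')$ so that $X\setminus B_0$ decomposes into the (finitely many) unbounded components, each of which is eventually contained in exactly one of the $F^k_j$'s, together with possibly some bounded components; enlarging $B_0$ if necessary, absorb the bounded components so that $X \setminus B_0$ has precisely $m$ components, the $k$-th being a subset of $F^k_{j_0}$ for some index $j_0$. The point is then that any curve from a fixed ball $\overline B_1 \subset B_0$ out to $X\setminus B(x_0,R)$ (for $R$ large) must, after first exiting $B_0$, run to infinity inside one of the $m$ ends, hence lies in $\Gamma^k_j$ for suitable $j=j(R)\to\infty$ as $R\to\infty$. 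Using subadditivity of $p$-modulus and Lemma~\ref{lem:mod-cap}, I would bound
\[
\pcap_p(\overline B_1, X\setminus B(x_0,R)) \;\lesssim\; \pcap_p(\overline B_1, X\setminus B_0) \;+\; \sum_{k=1}^m \Mod_p\big(\Gamma^k_{j(R)}\big),
\]
where the first term is a fixed finite constant (independent of $R$) coming from a cutoff function supported on $B_0$, and each modulus in the sum tends to $0$ as $R\to\infty$ by $p$-parabolicity of the $k$-th end. Actually, to get a clean bound I would instead glue admissible functions: take a Lipschitz cutoff $\psi$ equal to $1$ on $\overline B_1$, supported in $B_0$; take admissible densities $\rho_k$ for $\Gamma^k_{j(R)}$ with small $L^p$-norm, and build $u = \psi \cdot \prod_k (1 - v_k)$ or a similar composition where $v_k$ is the potential associated to $\rho_k$; estimate $g_u^p$ by the Leibniz/chain rule for upper gradients, using that the $\rho_k$ have disjoint supports outside $B_0$. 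Letting $R\to\infty$ forces $\pcap_p(\overline B_1, X\setminus B(x_0,R))\to 0$, and by the $MEC_p$-based robustness this yields $p$-parabolicity of $X$.

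For the converse, suppose some end $\{F_j\}_j$ is $p$-hyperbolic, so $\lim_j \Mod_p(\Gamma_j) = \Mod_p(\Gamma) = c_0 > 0$ where $\Gamma = \bigcap_j \Gamma_j$ (using~\cite[Theorem~4.2]{Sh1}). The key observation is that every curve in $\Gamma_j$ starts in the compact set $K_1$ and eventually enters $F_j$, hence leaves every ball; choosing $R$ large enough that $K_1 \subset B(x_0,R/2)$ and that $F_j \subset X \setminus B(x_0,R)$ for $j$ large, I would argue that every $\gamma \in \Gamma$ contains a subcurve joining $B(x_0, R/2)$ to $X \setminus B(x_0,R)$, so by the subcurve/overflowing property of modulus,
\[
\pcap_p(B(x_0,R/2), X\setminus B(x_0,R)) \;=\; \Mod_p\big(\Gamma(B(x_0,R/2), X\setminus B(x_0,R))\big) \;\ge\; \Mod_p(\Gamma) \;=\; c_0.
\]
Hence $\lim_{R\to\infty}\pcap_p(B(x_0,R/2),X\setminus B(x_0,R)) \ge c_0 > 0$; but an elementary monotonicity/decomposition argument (a ball-chain from a fixed small ball $B(x_0,r_0)$ out through $B(x_0,R/2)$ to $B(x_0,R)$, together with the $MEC_p$-property again) upgrades this to $\lim_{R\to\infty}\pcap_p(B(x_0,r_0),X\setminus B(x_0,R))>0$ for a fixed $r_0$, which is exactly $p$-hyperbolicity of $X$.

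The main obstacle I anticipate is the bookkeeping in the first claim: one must be careful that ``tending to infinity inside one of finitely many ends'' genuinely captures \emph{all} relevant curves once $B_0$ is large enough — in particular ruling out curves that oscillate between different ends or that spend infinite time in bounded components — and then that the gluing of cutoff and end-potentials produces an admissible function whose minimal $p$-weak upper gradient has controlled $L^p$-norm. Here locally $p$-controlled geometry (hence uniform local quasiconvexity, Lemma~\ref{lem:bdd_geom_is_ULQC}) and properness are essential to control the ``collar'' region near $\partial B_0$ and to ensure the $MEC_p$-property applies. The converse is comparatively routine, being essentially monotonicity of modulus under passing to subcurves.
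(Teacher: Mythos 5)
Your outline matches the paper's proof in broad strokes: decompose the complement of a large ball into finitely many unbounded components identified with the ends, pass between capacity and modulus via Lemma~\ref{lem:mod-cap}, and use monotonicity/subcurves for the converse. The converse direction is essentially correct (your detour through the growing inner ball $B(x_0,R/2)$ and the subsequent ``upgrade'' is unnecessary: just fix $r$ with $K_1\subset B(x_0,r)$ and note that every curve of $\Gamma_j$ contains a compact subcurve joining $B(x_0,r)$ to $X\setminus B(x_0,r_j)$, where $r_j=\dist(x_0,F_j)\to\infty$). The first direction, however, has a genuine gap. Your displayed bound
\[
\pcap_p(\overline{B}_1, X\setminus B(x_0,R)) \;\lesssim\; \pcap_p(\overline{B}_1, X\setminus B_0) \;+\; \sum_{k}\Mod_p\bigl(\Gamma^k_{j(R)}\bigr)
\]
cannot prove parabolicity: the first term is a fixed positive constant, so the right-hand side does not tend to $0$ as $R\to\infty$. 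You sense this and pivot to gluing a cutoff with end-potentials, but that construction is only sketched, and the collar bookkeeping you flag as the ``main obstacle'' is precisely what is left undone. It is also unnecessary. The clean observation -- and what the paper does -- is a pure containment of curve families: for suitable radii $T_n$, every compact curve from $B(x_0,r)$ to $X\setminus B(x_0,T_n)$ terminates in one of the unbounded components $E^k_n$ of $X\setminus B(x_0,r_0+n)$, so $\Gamma(B(x_0,r),X\setminus B(x_0,T_n))\subset\bigcup_{k}\Gamma^k_n$ and plain subadditivity of $p$-modulus gives $\Mod_p(\Gamma(B(x_0,r),X\setminus B(x_0,T_n)))\le\sum_k\Mod_p(\Gamma^k_n)\to 0$, with no cutoff function and no additive constant. (Note also that these $\Gamma^k_n$ should be families of \emph{compact} curves ending in $E^k_n$, not curves ``running to infinity inside an end'': the condenser families consist of compact curves.)

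A second, related gap is the claim that one can ``enlarge $B_0$ \dots\ so that $X\setminus B_0$ has precisely $m$ components.'' This is false in general: bounded components of $X\setminus B(x_0,s)$ can occur for arbitrarily large $s$ (e.g.\ a space with small loops attached at all scales out to infinity). What is true, and what the paper establishes in Lemmas~\ref{lem:fin_conn_components} and~\ref{lem:one_unbdd_comp} using properness together with the uniform local quasiconvexity of Lemma~\ref{lem:bdd_geom_is_ULQC}, is that only finitely many components of $X\setminus B(x_0,s)$ reach beyond the concentric ball of radius $s+2CR$; hence all bounded components are confined to some $B(x_0,T_n)$, and for large $s$ the unbounded components are in bijection with the $k$ ends. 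One then works with the condenser $(B(x_0,r),X\setminus B(x_0,T_n))$ rather than pretending the bounded components have been removed. With these two repairs your first direction goes through; as written, it does not close.
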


Before starting the proof of Lemma \ref{lem:1paraend-paraspace} proper, we prove two sub-lemmas. For the following two results, we do not assume our usual standing assumptions, since their statements are purely metric.

\begin{lemma}\label{lem:fin_conn_components}
	Let $(X, d)$ be a proper, connected, unbounded metric space that is locally uniformly 
	quasiconvex with associated constants $R, C$. 
	Then for every $x_0 \in X$ and $r > 0$, only finitely many connected components of $X \setminus 
	B(x_0, r)$ intersect the set $X \setminus B(x_0, r + 2CR)$.
\end{lemma}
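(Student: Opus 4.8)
The plan is to exploit the local uniform quasiconvexity to control how connected components of $X \setminus B(x_0, r)$ can "escape" past a slightly larger ball. First I would fix $x_0 \in X$ and $r > 0$, and suppose toward a contradiction that there are infinitely many distinct connected components $U_1, U_2, \dots$ of $X \setminus B(x_0, r)$, each of which meets $X \setminus B(x_0, r + 2CR)$. For each $i$, pick a point $y_i \in U_i$ with $d(x_0, y_i) \ge r + 2CR$.

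The key observation is that the annulus $A := \overline{B}(x_0, r + 2CR) \setminus B(x_0, r)$ is compact by properness, and each $U_i$ must intersect $A$: indeed, $U_i$ contains $y_i$ with $d(x_0, y_i) \ge r + 2CR$ but, since $X$ is connected and $U_i$ is a component of $X \setminus B(x_0, r)$, if $U_i$ stayed entirely outside $\overline{B}(x_0, r+2CR)$ it would be separated from the rest of $X$; more carefully, $\overline{U_i}$ meets $\overline{B}(x_0, r)$, so by connectedness $U_i$ meets every sphere $S(x_0, t)$ for $t \in (r, r+2CR]$, hence meets $A$. So choose $z_i \in U_i \cap A$ for each $i$. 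By compactness of $A$, after passing to a subsequence we may assume $z_i \to z_\infty \in A$, and in particular $d(z_i, z_j) \le R$ for all large $i, j$.

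Now I would invoke uniform local quasiconvexity: for $i, j$ large, since $d(z_i, z_j) \le R$, there is a rectifiable curve $\gamma_{ij}$ from $z_i$ to $z_j$ with $\ell(\gamma_{ij}) \le C\, d(z_i, z_j) \le CR$. This curve starts at $z_i$, which lies at distance $\le r + 2CR$ from $x_0$; actually I want it to avoid $\overline{B}(x_0, r)$. Here is the refinement: choose the $z_i$ not in the full annulus $A$ but on the outer sphere-ish layer, specifically with $r + CR < d(x_0, z_i) \le r + 2CR$ — this is possible by the same connectedness argument applied to the component $U_i$, which meets both $\overline{B}(x_0, r+CR)$ and $X \setminus B(x_0, r+2CR)$. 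Then any curve of length $\le CR$ starting at such a $z_i$ stays outside $B(x_0, r)$ (since it cannot decrease the distance to $x_0$ below $r + CR - CR = r$, and in fact stays in distance $> r$). Hence $\gamma_{ij}$ lies in $X \setminus B(x_0, r)$ and connects $z_i \in U_i$ to $z_j \in U_j$, forcing $U_i = U_j$ — contradiction. Passing to a subsequence with all $z_i$ within $R$ of each other and using a single common index gives the contradiction for infinitely many distinct components.

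The main obstacle I anticipate is the bookkeeping in the previous paragraph: ensuring the chosen points $z_i$ lie in the right spherical shell so that the quasiconvex curves genuinely avoid $B(x_0, r)$, and handling the triangle-inequality estimate $d(x_0, \gamma_{ij}(t)) \ge d(x_0, z_i) - \ell(\gamma_{ij}) > r$ cleanly. One must also confirm the connectedness claim that a component of $X \setminus B(x_0, r)$ reaching distance $\ge r + 2CR$ must meet every intermediate sphere — this follows because $\overline{U_i}$ is connected, contains points of $\overline{B}(x_0,r)$ (a component of an open set touches the boundary of that set when the whole space is connected and the component is not all of $X$) and points outside $\overline{B}(x_0, r+2CR)$, so the continuous function $d(x_0, \cdot)$ takes all intermediate values on $U_i$. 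Once these metric facts are pinned down, the compactness-plus-quasiconvexity argument closes the proof.
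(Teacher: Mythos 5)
Your argument is correct and follows essentially the same route as the paper's: a quasiconvex curve of length at most $CR$ joining two points that both lie at distance greater than $r+CR$ from $x_0$ cannot re-enter $B(x_0,r)$, so points of distinct components lying in the compact shell must be uniformly separated, and compactness of the shell forces finiteness. The only step to tighten is your justification that each component reaches into the shell $\{\,r+CR < d(x_0,\cdot) \le r+2CR\,\}$: the fact you cite concerns components of \emph{open} sets, whereas $X\setminus B(x_0,r)$ is closed, so one should instead argue (as the paper does) that a component avoiding the relevant sphere would be a component of an open superset of itself, hence open by local connectedness (which follows from local quasiconvexity) as well as closed, contradicting the connectedness of $X$.
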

 
\begin{proof}
	By the hypothesis, $X$ is locally path connected, and thus locally connected. In 
	particular, connected components of open sets in $X$ are open. We fix $x_0 \in X$ and let $r > 0$. Note 
	that if $E_1$ and $E_2$ are two different components of $X \setminus B(x_0, r)$ that intersect $X 
	\setminus B(x_0, r + 2CR)$, then
	\[
		\dist(E_1\setminus B(x_0,r+2CR), E_2\setminus B(x_0,r + 2CR))\ge R,
	\]
	as otherwise $E_1\setminus B(x_0,r+2CR)$ and  $E_2 \setminus B(x_0,r+2CR)$ could be connected with a 
	rectifiable curve in $X\setminus B(x_0,r)$, violating the fact that $E_1$ and $E_2$ are distinct components.
	
	Moreover, if $E$ is a component of $X\setminus B(x_0,r)$ that meets $B(x_0,r+2CR)$, then $E$ must meet 
	$S := \{y\in X\, :\, d(x_0,y)=r+2CR\}$. Indeed, suppose towards contradiction that this is not the case. 
	Then since $E$ is connected and meets $X\setminus B(x_0,r+2CR)$, $E$ does not meet $\overline{B}(x_0, r + 2CR)$. 
	It follows that $E$ is a connected component of both $X \setminus B(x_0, r + 2CR)$ and 
	$X \setminus \overline{B}(x_0, r + 2CR)$. Since $X$ is locally connected, it follows that $E$ 
	is both closed and open. This is a contradiction, since $X$ is connected and $E$ is a nonempty 
	subset of $X$ that does not contain $x_0$. Thus, we conclude that $E$ intersects $S$. 
	
	Hence, the intersections $E \cap S \neq \emptyset$, where $E$ ranges over connected components 
	of $X \setminus B(x_0, r)$ that meet $X \setminus B(x_0, 2CR)$, form a $R$-separated family of sets. 
	Since $X$ is proper, $S$ is compact, and thus that the number of such components $E$ is indeed finite.
\end{proof}

\begin{lemma}\label{lem:one_unbdd_comp}
	Let $(X, d)$ satisfy the assumptions of Lemma \ref{lem:fin_conn_components}, and let $x_0 \in X$. Suppose that $(X, d)$ has exactly $k$ ends at infinity, where $k$ is a positive integer. Then for every $r > 0$, the set $X \setminus B(x_0, r)$ has at most $k$ unbounded components. Moreover, for sufficiently large $r$, the set $X \setminus B(x_0, r)$ has exactly $k$ unbounded components.
\end{lemma}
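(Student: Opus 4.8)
The plan is to reduce the statement to the relationship between unbounded components of complements of balls and the ends of $X$ from Definition~\ref{def:ends}, using Lemma~\ref{lem:fin_conn_components} to keep the count of relevant components finite at every scale. First I would fix $x_0 \in X$ and the quasiconvexity constants $R, C$ from Lemma~\ref{lem:bdd_geom_is_ULQC} (which applies since $X$ is uniformly locally quasiconvex under the standing hypotheses invoked here). For each $r > 0$, let $U_1(r), \dots, U_{m(r)}(r)$ be the unbounded components of $X \setminus B(x_0, r)$; by Lemma~\ref{lem:fin_conn_components}, $m(r) < \infty$. I would observe the basic monotonicity: if $r < r'$, then every unbounded component of $X \setminus B(x_0, r')$ is contained in exactly one unbounded component of $X \setminus B(x_0, r)$, and distinct unbounded components of $X \setminus B(x_0, r')$ that lie in the \emph{same} component at scale $r$ must be separated (as in the proof of Lemma~\ref{lem:fin_conn_components}). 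Hence $r \mapsto m(r)$ is nondecreasing in a suitable sense — more precisely, the nesting defines a directed system whose "branches" I want to identify with ends.

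Next I would construct, from any strictly increasing sequence $r_j \uparrow \infty$, the associated nested families: choosing a component $U^{(j)}$ at scale $r_j$ with $\overline{U^{(j+1)}} \subset U^{(j)}$ and $\dist(X \setminus U^{(j)}, U^{(j+1)}) > 0$ (the latter because $U^{(j+1)}$ is a component of the complement of a strictly larger ball and $X$ is locally connected, so we can insert an intermediate radius), with $\bigcap_j U^{(j)} = \emptyset$ since each $U^{(j)} \subset X \setminus B(x_0, r_j)$. Taking $K_j = \overline{B}(x_0, r_j)$, which is compact by properness, shows $\{U^{(j)}\}_j$ is an end at infinity. Two such sequences of nested unbounded components give the same end iff they are eventually nested in each other. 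This sets up a surjection from "branches of the nesting tree" onto the set of ends of $X$; injectivity follows because two branches that separate at some finite scale $r$ stay in distinct unbounded components for all larger radii (by the separation observation), hence yield genuinely distinct ends. Therefore the number of branches equals $k$, the number of ends.

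From this correspondence, the bound $m(r) \le k$ for every $r$ is immediate: each unbounded component $U_i(r)$ contains at least one full branch of the nesting tree (inductively pick unbounded subcomponents at radii $r, r+1, r+2, \dots$, possible since each $U_i(r)$ is unbounded and its intersection with $X \setminus B(x_0, r+n)$ is nonempty and lies in finitely many components by Lemma~\ref{lem:fin_conn_components}), and distinct $U_i(r)$ contain branches belonging to distinct ends. For the "moreover" part, since $m(r)$ is integer-valued and bounded by $k$, it stabilizes: there is $r_0$ and an integer $m_\infty \le k$ with $m(r) = m_\infty$ for all $r \ge r_0$. I would then argue $m_\infty = k$: the $m_\infty$ components $U_1(r_0), \dots, U_{m_\infty}(r_0)$, followed coherently as $r \to \infty$ (each contains a \emph{unique} unbounded component at every larger scale once $m(r)$ has stabilized), produce exactly $m_\infty$ pairwise-distinct ends, and every end arises this way, since any end $\{F_j\}$ eventually has each $F_j$ contained in $X \setminus B(x_0, r_0)$ (as $\bigcap_j F_j = \emptyset$ and the $F_j$ are nested unbounded components of complements of compact sets, they escape every ball) and hence inside one of the $U_i(r_0)$, with different ends landing in different $U_i(r_0)$ by the separation property. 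Thus $m_\infty = k$.

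The main obstacle I anticipate is making the identification of "branches of the nesting tree" with ends fully rigorous, in particular verifying the $\dist(X \setminus U^{(j)}, U^{(j+1)}) > 0$ condition required by Definition~\ref{def:ends} and checking that the abuse-of-notation equivalence of ends matches the equivalence of branches; both hinge on inserting intermediate radii and using local connectedness together with Lemma~\ref{lem:fin_conn_components} to control how components merge as the radius shrinks. Once that dictionary is in place, the counting statements are formal.
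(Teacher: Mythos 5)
Your proposal is correct and follows essentially the same route as the paper: the monotone nesting of unbounded components (with Lemma~\ref{lem:fin_conn_components} guaranteeing that every unbounded component at one scale contains an unbounded component at every larger scale), the construction of $k+1$ distinct ends from $k+1$ unbounded components to get the upper bound, and the stabilization of the integer-valued, bounded, non-decreasing count plus the identification of every end with one of the resulting nested sequences to get equality. Your extra care about verifying the $\dist(X\setminus F_j, F_{j+1})>0$ condition in Definition~\ref{def:ends} is a reasonable point to flag, and it is handled exactly as you suggest, by inserting intermediate radii.
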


\begin{proof}
	We begin by noting that if $x_0 \in X$, $0 < r_1 < r_2$, and $E$ is an unbounded component of $X\setminus B(x_0,r_1)$, then every connected component of $X\setminus B(x_0,r_2)$ is either contained in $E$, or does not meet $E$. Moreover, there exists an unbounded component $E'$ of $X\setminus B(x_0,r_2)$ that is contained in $E$, as otherwise there would necessarily exist infinitely many bounded 
	components $E(i) \subset E$, $i\in\N$, of $X\setminus B(x_0,r_2)$ that meet the set $X \setminus B(x_0, r_2 + 2CR)$, which in turn would contradict Lemma \ref{lem:fin_conn_components}. As a consequence of this, we observe that the number of unbounded components of $X \setminus B(x_0, r)$ is non-decreasing with respect to $r$.
	
	For the first claim, suppose towards contradiction that $X\setminus B(x_0,r)$ has $(k+1)$ distinct unbounded components, $E_0^{1}, \dots, E_0^{k+1}$. Then there exists at least one unbounded component $E \subset E_0^1$ of $X\setminus B(x_0,r+1)$; we fix one such component and denote it $E_{1}^1$. We continue this process inductively to find connected components $E_{j}^1$ of $X\setminus B(x_0,r+j)$ with $E_{j+1}^1 \subset E_{j}^1$, and then perform the same argument on every other $E_{0}^{i}$ to find a sequence $E_{j}^{i}$. Now, $\{E_{j}^{i}\}_j$ are $(k+1)$ different ends of $X$, contradicting the assumption that $X$ has only $k$ ends. The first claim hence follows.
	
	For the second claim, since the number of unbounded components of $X \setminus B(x_0, r)$ is bounded, integer-valued, and 
	non-decreasing with respect to $r$, there exists $l \in \{1, \dots, k\}$ and $r_0 = r_0(x_0) > 0$ such that $X \setminus B(x_0, r)$ 
	has exactly $l$ components for all $r \ge r_0$. Moreover, the sequences of nested unbounded components of the sets 
	$X \setminus B(x_0, r_0 + j)$ form $l$ distinct ends $\{E_j^1\}_j, \dots, \{E_j^l\}_j$. Now, let $\{F_j\}_j$ be an arbitrary end of 
	$X$, with corresponding compact sets $K_j$. Note that every $K_j$ is contained in some ball $B(x_0, r_0 + i)$ due to 
	compactness of $K_j$, and that every $B(x_0, r_0 + i)$ meets only finitely many $F_j$ since $X$ is proper. From these 
	observations, it follows that $\{F_j\}_j$ must equal one of the ends $\{E_j^i\}_j$
	in the sense of Definition~\ref{def:ends}. Since $X$ has exactly $k$ ends by assumption, we thus must have $l = k$.
\end{proof}

We are now ready to complete the proof of Lemma~\ref{lem:1paraend-paraspace}.

\begin{proof}[Proof of Lemma \ref{lem:1paraend-paraspace}]
	Note that by Lemma~\ref{lem:bdd_geom_is_ULQC}, $X$ is uniformly locally quasiconvex, with 
	constants $R, C$ dependent only on the data associated with the uniformly locally
	$p$-controlled geometry of 
	$X$. 

	Suppose first that $X$ has $k$ ends, all of which are $p$-parabolic. We fix $x_0 \in X$ and $r > 0$. 
	By Lemma~\ref{lem:one_unbdd_comp}, there exists an $r_0 \ge r$ such that the set $X\setminus 
	B(x_0,r_0+n)$ has exactly $k$ unbounded components for every non-negative integer $n$. Thus, the $k$ 
	distinct ends of $X$ can be expressed in the form $\{E_n^1\}_n, \dots, \{E_n^k\}$, where every 
	$E_n^i$ is an unbounded component of $X\setminus B(x_0,r_0+n)$. Additionally, by Lemma 
	\ref{lem:fin_conn_components}, for every $n \in \N$, all the bounded components of $X\setminus 
	B(x_0,r_0+n)$ lie within $B(x_0,T_n)$ for some $T_n\ge r_0+n+ 2C R$. Now, if $\Gamma_n^i$ is the 
	collection of all curves in $X$ with one endpoint in $B(x_0,r)$ and the other in $E_n^i$, we know by 
	assumption that $\lim_{n\to\infty}\Mod_p(\Gamma_n^i)=0$ for each $i$. Moreover, for each $n \in \N$, 
	we know that $\Gamma(B(x_0,r), X\setminus B(x_0,T_n))\subset \bigcup_{i=1}^k \Gamma_n^i$, and hence 
	we also have that $\lim_{n\to\infty}\Mod_p(\Gamma(B(x_0,r), X\setminus B(x_0,T_n)))=0$. Therefore, 
	by Lemma~\ref{lem:mod-cap}, we know that $X$ is $p$-parabolic.
	
	Now, suppose that $X$ has a $p$-hyperbolic end at infinity, which we denote  $\{F_j\}_j$.
	Then we know that $\lim_j\Mod_p(\Gamma_j)>0$, where $\Gamma_j$ is the collection of all curves in 
	$X$ with one endpoint in  the set $K_1$ associated with this end  and the other 
	endpoint in $F_j$.	 We fix $x_0 \in X$ and $r > 0$ with $x_0\in K_1\subset B(x_0,r)$, and 
	denote $r_j=\dist(x_0,F_j)$. We then  see that $\Gamma_j\subset\Gamma(B(x_0,r),X\setminus 
	B(x_0,r_j))$, and hence $\lim_{j\to\infty} \Mod_p(\Gamma(B(x_0,r),X\setminus B(x_0,
	r_j)))>0$. Therefore, by another use of Lemma~\ref{lem:mod-cap}, we have that $X$ is 
	$p$-hyperbolic.
\end{proof}

Next, we show that spaces with multiple ends have $D^{1,p}(X)\ne N^{1,p}(X)+\R$ 
under relatively mild assumptions.

\begin{theorem}\label{thm:2ends}
Let $(X, d, \mu)$ be a metric measure space that satisfies our standing assumptions. 
Suppose that $X$ is a length space, $X$ has at least two ends at $\infty$, 
and there exists an $r > 0$ such that $a := \inf_{x \in X} \mu(B(x, r)) > 0$. Then 
for each $1\le p<\infty$, we have $D^{1,p}(X)\ne N^{1,p}(X)+\R$.
\end{theorem}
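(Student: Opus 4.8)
The plan is to construct a function $f \in D^{1,p}(X)$ that tends to two different constants (say $0$ and $1$) along the two distinct ends, and then show that no shift $f - c$ can lie in $L^p(X, \mu)$ because $X$ has infinite measure near at least one of the ends. First I would invoke Lemma~\ref{lem:one_unbdd_comp}: since $X$ is a length space it is $1$-quasiconvex, hence uniformly locally quasiconvex, so for $r$ sufficiently large the set $X \setminus B(x_0, r)$ has exactly $k \ge 2$ unbounded components, and the bounded components all lie in some ball. Fix such an $r_0$, let $E^1, E^2$ be two of the unbounded components of $X \setminus \overline{B}(x_0, r_0)$, and let $U := X \setminus (E^1 \cup E^2)$ be the (open) complement, which contains $\overline{B}(x_0, r_0)$ together with the bounded components and all the other unbounded components.

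Next I would define $f$ using a truncated distance function that interpolates between $E^1$ and $E^2$ through the ``core'' $U$. A clean choice: set
\[
	f(x) = \min\left\{1,\ \tfrac{1}{\ell}\dist(x, E^1)\right\}\quad\text{on } X \setminus E^2,
	\qquad f \equiv 1 \text{ on } E^2,
\]
for a suitable fixed length scale $\ell > 0$; since $E^1$ and $E^2$ are separated by a positive distance (they are distinct components of $X \setminus B(x_0,r_0)$, and the argument of Lemma~\ref{lem:fin_conn_components} shows any two such unbounded components are $R$-separated away from a large ball — alternatively just use that they are disjoint closed sets with $\overline{B}(x_0,r_0)$ between them), one checks $f$ is well defined and $1$-Lipschitz with the definitions matching on the overlap. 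Then $g := \ell^{-1}\chi_{K}$ is an upper gradient of $f$, where $K$ is the compact set on which $f$ is non-constant and not locally constant — crucially, $g$ is supported in a bounded set (contained in $\overline{B}(x_0, T)$ for some $T$), because outside a large ball $f$ is identically $0$ on $E^1$ and identically $1$ on $E^2$ and identically $0$ on $U \setminus \overline{B}(x_0,r_0)$... Here I would need to be slightly careful: $f$ is $0$ on $E^1$ but on the other unbounded components sitting inside $U$ it equals $\min\{1, \ell^{-1}\dist(\cdot, E^1)\}$, which need not be locally constant far out. To fix this, I would instead take $f \equiv 0$ on $E^1$, $f \equiv 1$ on $E^2$, and on the core $U$ let $f$ be $\ell^{-1}$ times the truncated distance to $\partial E^1 \cap \partial U$ capped at $1$ only within a bounded neighborhood, then extended by the constant $1$ outward along $U$. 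Cleanest is: pick $R_1 > r_0$ large enough that all bounded components of $X \setminus B(x_0,r_0)$ lie in $B(x_0,R_1)$, and set $f = \min\{1, \ell^{-1}\dist(x, E^1 \cup (X \setminus B(x_0, R_1)))\}$ with $\ell = \dist(E^1, X\setminus B(x_0,R_1))$-type normalization — the point is only to produce \emph{some} $f \in D^{1,p}$ with $f = 0$ on all of $E^1$ and $f \ge \tfrac12$ on all of $E^2$ and with $g_f$ compactly supported, hence $g_f \in L^p(X,\mu)$ since bounded sets have finite measure. So $f \in D^{1,p}(X)$.

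Finally, for the lower bound: I claim $\mu(E^1) = \infty$ or $\mu(E^2) = \infty$ — in fact both are infinite. Since $E^i$ is an unbounded, open, connected, locally quasiconvex subset, it contains points arbitrarily far from $x_0$, hence (by unboundedness and connectedness, covering $E^i$ by a chain of unit balls going to infinity, each of $\mu$-measure $\ge a$ and, after passing to a $1$-separated subfamily via a standard $5r$-covering/greedy argument, contributing a definite amount of measure) we get $\mu(E^i) = \infty$. This is the only place the hypothesis $\inf_x \mu(B(x,r)) > 0$ is used, and it is the main technical obstacle: producing a $1$-separated (or $r$-separated) sequence of balls inside $E^i$ escaping to infinity, which follows because $E^i$ is unbounded and path-connected (being open in a length space) so it contains a curve of infinite length, along which one extracts separated points. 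Given $\mu(E^1) = \mu(E^2) = \infty$: for any $c \in \R$, either $c \le \tfrac14$, in which case $|f - c| \ge \tfrac14$ on $E^2$ (where $f \ge \tfrac12$), so $\int_{E^2}|f-c|^p\,d\mu = \infty$; or $c > \tfrac14$, in which case $|f-c| = |c| > \tfrac14$ on $E^1$ (where $f = 0$), so $\int_{E^1}|f-c|^p\,d\mu = \infty$. Either way $f - c \notin L^p(X,\mu)$, so $f \notin N^{1,p}(X) + \R$, completing the proof.
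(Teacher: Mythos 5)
Your overall strategy is the same as the paper's: build a truncated-distance function that is $0$ on one end and (essentially) $1$ on another, with compactly supported upper gradient, and derive $f-c\notin L^p(X,\mu)$ from the fact that both ends have infinite measure. (Your chain-of-separated-balls argument for $\mu(E^i)=\infty$ is in substance the paper's: pick $x\in E_j$ with $\dist(x,K_j)>r$, use the length-space property to see that $B(x,r)$ is connected and hence contained in $E_j$, and iterate $\mu(E_j)\ge a+\mu(E_{j'})$.) The genuine gap is in the construction of $f$ itself. Your ``cleanest'' formula $f=\min\{1,\ell^{-1}\dist(x,E^1\cup(X\setminus B(x_0,R_1)))\}$ vanishes on all of $X\setminus B(x_0,R_1)$, hence on $E^2\setminus B(x_0,R_1)$; so $f$ is \emph{not} $\ge\tfrac12$ on $E^2$ — indeed $f$ is compactly supported, so $f-c\in L^p$ for $c=0$ whenever $\mu$ is finite on bounded sets, and the counterexample collapses. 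The obstruction you correctly identify (the function must become locally constant far out on \emph{every} unbounded component, not only $E^1$ and $E^2$) is resolved in the paper by a different choice of target set: with $\{F_j\}_j$ one of the ends, take $u(x)=\min\{1,\dist(x,X\setminus F_1)/\tau\}$ where $\tau=\dist(X\setminus F_1,\overline{F}_2)>0$. Then $u\equiv 0$ on $X\setminus F_1$, which contains the other end and all remaining components, $u\equiv 1$ on $F_2$, and — because $X$ is a length space and $F_1$ is a component of $X\setminus K_1$, so every path from $F_1$ to $X\setminus F_1$ passes through $K_1$ and therefore $\dist(x,X\setminus F_1)=\dist(x,K_1)$ on $F_1$ — the upper gradient $\tau^{-1}\chi_{\{\dist(\cdot,K_1)\le\tau\}}$ is supported in a compact set.

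A secondary issue: Lemma~\ref{lem:one_unbdd_comp} is stated for spaces with exactly $k$ ends, $k$ a (finite) positive integer, whereas Theorem~\ref{thm:2ends} assumes only that there are at least two ends, possibly infinitely many; so you cannot invoke that lemma as stated (and Lemma~\ref{lem:fin_conn_components} requires uniform local quasiconvexity, which a length space does satisfy with constant $1+\eps$). You do not actually need either lemma: two distinct ends directly provide, after discarding finitely many indices, the disjoint unbounded sets $E_1\subset X\setminus F_1$ and $F_2\subset F_1$ that the construction above requires.
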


\begin{proof}
Let $\{F_j\}_j$ and $\{E_j\}_j$ be two distinct ends at infinity. We may assume without loss of
generality that $\overline{E}_1\cap\overline{F}_1$ is empty. We first claim that for each $j \in \N$, 
we have $\mu(E_j) = \mu(F_j) = \infty$. Indeed, let $j \in \N$, and let $K_j$ be the corresponding 
compact set of $E_j$. Since $E_j$ is unbounded, there exists an $x \in E_j$ such that $\dist(x, K_j) > r$. 
Now, since $X$ is a length space, $B(x, r)$ is connected, and consequently $B(x, r) \subset E$. Moreover, 
since $X$ is proper, there exists a $j' \in \N$ such that $E_{j'} \cap B(x, r) = \emptyset$. 
It follows that $\mu(E_j) \ge a + \mu(E_{j'})$. By applying this argument inductively to $E_{j'}$, 
we conclude that $\mu(E_j) = \infty$, and $\mu(F_j) = \infty$ follows analogously.

Next, let $S := X\setminus F_1$, noting that $E_1\subset S$, and let $\tau=\dist(S, \overline{F}_2) > 0$. Moreover, observe that if $K_1$ is the corresponding compact set of $F_1$ in the end $\{F_j\}_j$, then since $X$ is a length space, it follows that $\dist(S, x) = \dist(K_1, x)$ for all $x \notin S$. We define a function $u:X\to\R$ by 
\[
u(x)=\min\{1,\, \dist(x, S)/\tau\}.
\]
The function $u$ is $\tau^{-1}$-Lipschitz continuous on $X$ with an upper gradient 
$g=\tau^{-1}\, \chi_{V}$, where $V := \{x \in X \mid \dist(x, K_1) \le \tau\}$. Since $X$ is proper and 
$K_1$ is compact, $V$ is compact. Thus, since balls in $X$ have finite measure, we conclude that 
$\mu(V) < \infty$, and therefore $g\in L^p(X)$. 
Hence, $u\in D^{1,p}(X)$. On the other hand, $u=1$ on $F_2$ and $u=0$ on $E_1$; 
since both of these sets have infinite measure, we have $u-c \notin L^p(X)$ for all $c \in \R$.
\end{proof}

Note that by combining the previous result with our results on $p$-parabolic spaces, we obtain the 
following corollary. 

\begin{cor}\label{cor:parabolic_end}
	Let $(X, d, \mu)$ be a metric measure space that satisfies our assumptions and is of 
	uniformly locally $p$-controlled geometry, where $1 \le p < \infty$. Suppose that $X$ is a 
	length space and $\inf_{x \in X} \mu(B(x, 1)) > 0$. If $X$ has a $p$-parabolic end, then $D^{1,p}(X)\ne N^{1,p}(X)+\R$.
\end{cor}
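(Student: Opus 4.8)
The plan is to split the argument into two cases according to the number of ends of $X$; note that the number of ends is meaningful here because the hypothesis that $X$ possesses a $p$-parabolic end already guarantees that $X$ has at least one end, and the standing assumption $\mu(X)=\infty$ together with properness rules out the degenerate case of no ends, since a bounded proper space is compact and hence of finite measure.

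First I would treat the case where $X$ has at least two ends at infinity. Here $X$ is a length space satisfying the standing assumptions, and $\inf_{x\in X}\mu(B(x,1))>0$, so taking $r=1$ we are exactly in the setting of Theorem~\ref{thm:2ends}, which immediately gives $D^{1,p}(X)\ne N^{1,p}(X)+\R$. In this case the uniformly locally $p$-controlled geometry hypothesis is not needed at all.

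Next I would treat the case where $X$ has exactly one end at infinity. Since by assumption $X$ has a $p$-parabolic end, this unique end must itself be $p$-parabolic. Thus $X$ has finitely many ends, all of which are $p$-parabolic, and since $X$ is of uniformly locally $p$-controlled geometry, Lemma~\ref{lem:1paraend-paraspace} applies and shows that $X$ is $p$-parabolic. An application of Theorem~\ref{thm:para} then yields $D^{1,p}(X)\ne N^{1,p}(X)+\R$, completing this case.

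Since any space with two or more ends (finitely or infinitely many) falls into the first case and a space with a single end falls into the second, these two cases exhaust all possibilities and the corollary follows. I do not expect any substantial obstacle: the only point requiring a moment's care is confirming that the dichotomy is complete and that the hypotheses of Theorem~\ref{thm:2ends}, Lemma~\ref{lem:1paraend-paraspace}, and Theorem~\ref{thm:para} are satisfied in the respective cases, so the corollary is genuinely a synthesis of those earlier results.
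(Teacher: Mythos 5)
Your proposal is correct and is essentially identical to the paper's own proof: the same dichotomy on the number of ends, with Theorem~\ref{thm:2ends} handling the case of two or more ends and Lemma~\ref{lem:1paraend-paraspace} combined with Theorem~\ref{thm:para} handling the single-end case. The hypotheses of each cited result are verified correctly, so no further changes are needed.
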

\begin{proof}
	If $X$ has more than one end at infinity, then the claim follows from Theorem \ref{thm:2ends}. 
	Thus, it remains to consider the case when $X$ has exactly one end at infinity, which is $p$-parabolic by 
	assumption. In this case, it follows from Lemma \ref{lem:1paraend-paraspace} that $X$ is $p$-parabolic, and the 
	claim hence follows by Theorem \ref{thm:para}.
\end{proof}

Given the above results in conjunction with the results of Section~\ref{Sec:4}, the main remaining 
question for Gromov hyperbolic spaces $(X, d, \mu)$ is what happens when $X$ has only 
one end at $\infty$, and that end is $p$-hyperbolic with $p \le \beta_0/\eps_0$. Note that if 
$1<p<\beta/\eps$ and $u\in D^{1,p}(X)$ 
has an upper gradient $g\in L^p(X)$, then the function $g_\eps$ given
by $g_\eps(x)=e^{-\eps d(x,z_0)}g(x)$ is an upper gradient of $u$ in $(X,d_\eps)$, and as $p<\beta/\eps$, we have that 
$g_\eps\in L^p(X,\mu_\beta)$; that is, $u\in N^{1,p}(X,d_\eps,\mu_\beta)$. As $(X,d_\eps,\mu_\beta)$ supports a global
$p$-Poincar\'e inequality (see~\cite[Theorem~1.1]{BBS1}), we also have 
from~\cite[Proposition~7.1]{AS} that $u$ has an extension to $\partial_\eps X$
that is in $N^{1,p}(\overline{X}_\eps,d_\eps,\mu_\beta)$; this extension is given via Lebesgue point traces.

\section{Identifying functions in Dirichlet spaces that belong to $N^{1,p}(X)$ via behavior along 
paths}\label{Sec:N1p-tame}

Our objective in this section is to prove the second part of Theorem \ref{thm:main2}. In order to 
achieve this objective, we first show that under our standing assumptions, spaces of uniformly locally 
$p$-controlled geometry satisfy the $MEC_p$ property from Definition \ref{def:mecp}. Note that for 
spaces of globally $p$-controlled geometry, this is shown in \cite[Theorem 9.3.4]{HKSTbook}.

We begin by showing two lemmas that are used in the proof of this fact. The first one is a 
convenient interaction of lower semicontinuity and path integrals that occurs multiple times in our 
arguments.

\begin{lemma}\label{lem:lsc_for_path_conv}
	Let $(X, d, \mu)$ be a metric measure space that satisfies our standing assumptions. Let $\gamma_i 
	\colon [0, \infty) \to X$, $i \in \N$, be a sequence of paths with $\ell(\gamma_i) < \infty$, and 
	suppose that $\gamma_i$ are parametrized first by arclength and then as constant. Suppose that 
	$\gamma \colon [0, \infty) \to X$ is such that $\gamma_i \to \gamma$ locally uniformly. Then for 
	every lower semicontinuous function $\rho \colon X \to [0, \infty]$, we have
	\[
		\int_\gamma \rho \, \dd s \le \liminf_{i \to \infty} \int_{\gamma_i} \rho \, \dd s.
	\]
\end{lemma}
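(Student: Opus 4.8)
The strategy is the standard one for semicontinuity of integrals of lower semicontinuous integrands along converging curves: approximate $\rho$ from below by an increasing sequence of bounded Lipschitz functions, handle each bounded Lipschitz approximant by a direct comparison of path integrals using uniform convergence on a fixed interval, and then pass to the supremum with the monotone convergence theorem. First I would reduce to a compact parameter interval: since $\gamma$ has locally bounded length (it is a locally uniform limit of arclength-then-constant parametrized curves, hence $1$-Lipschitz, so actually $\ell(\gamma\vert_{[0,T]}) \le T$), and $\int_\gamma \rho\,\dd s = \sup_{T>0}\int_{\gamma\vert_{[0,T]}}\rho\,\dd s$, it suffices to prove $\int_{\gamma\vert_{[0,T]}}\rho\,\dd s \le \liminf_{i\to\infty}\int_{\gamma_i}\rho\,\dd s$ for each fixed $T>0$, because $\gamma_i\vert_{[0,T]}\to\gamma\vert_{[0,T]}$ uniformly and $\int_{\gamma_i}\rho\,\dd s \ge \int_{\gamma_i\vert_{[0,T]}}\rho\,\dd s$.

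Next I would invoke the classical fact that a lower semicontinuous function $\rho\colon X\to[0,\infty]$ on a metric space is the pointwise supremum of an increasing sequence of nonnegative Lipschitz functions $\rho_k$ with $\rho_k \le \rho$; one may take, for instance, $\rho_k(x) = \min\{k,\ \inf_{y\in X}(\rho(y)+k\,d(x,y))\}$ and note $\rho_k \upto \rho$ pointwise. Fix $k$. Since $\gamma_i\vert_{[0,T]}\to\gamma\vert_{[0,T]}$ uniformly and $\rho_k$ is continuous (indeed Lipschitz), we get $\rho_k(\gamma_i(t))\to\rho_k(\gamma(t))$ uniformly in $t\in[0,T]$; combining this with the fact that the speeds $|\gamma_i'|$ and $|\gamma'|$ are all bounded by $1$ a.e., and that the images lie in a fixed compact set (a bounded subset of the proper space $X$, hence relatively compact) on which $\rho_k$ is bounded, a routine estimate gives
\[
\int_{\gamma\vert_{[0,T]}}\rho_k\,\dd s
= \lim_{i\to\infty}\int_{\gamma_i\vert_{[0,T]}}\rho_k\,\dd s
\le \liminf_{i\to\infty}\int_{\gamma_i}\rho_k\,\dd s
\le \liminf_{i\to\infty}\int_{\gamma_i}\rho\,\dd s,
\]
where the last inequality uses $\rho_k\le\rho$. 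Finally, letting $k\to\infty$ and applying the monotone convergence theorem to the left-hand side (the path integral over the fixed finite-length curve $\gamma\vert_{[0,T]}$ is an integral against arclength measure, so $\rho_k\upto\rho$ forces $\int_{\gamma\vert_{[0,T]}}\rho_k\,\dd s \upto \int_{\gamma\vert_{[0,T]}}\rho\,\dd s$) yields $\int_{\gamma\vert_{[0,T]}}\rho\,\dd s \le \liminf_{i\to\infty}\int_{\gamma_i}\rho\,\dd s$; taking the supremum over $T$ completes the proof.

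The only genuinely delicate point is the convergence $\int_{\gamma_i\vert_{[0,T]}}\rho_k\,\dd s \to \int_{\gamma\vert_{[0,T]}}\rho_k\,\dd s$ for a fixed bounded Lipschitz $\rho_k$: one must be slightly careful because the curves $\gamma_i$ may have different lengths than $\gamma$, so the arclength parametrizations are over different intervals, and the "constant tail" convention matters. I expect to handle this by writing both integrals as $\int_0^T (\rho_k\circ\gamma_i)\,|\gamma_i'|\,\dd t$ versus $\int_0^T(\rho_k\circ\gamma)\,|\gamma'|\,\dd t$ and splitting $|\rho_k(\gamma_i(t))|\gamma_i'(t)| - \rho_k(\gamma(t))|\gamma'(t)|| $ into a term controlled by $\sup_t|\rho_k(\gamma_i(t))-\rho_k(\gamma(t))| \le \Lip(\rho_k)\sup_t d(\gamma_i(t),\gamma(t))\to 0$ times $\int_0^T|\gamma_i'|$, plus a term $\|\rho_k\|_\infty \int_0^T \bigl||\gamma_i'| - |\gamma'|\bigr|\,\dd t$. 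The second term is the subtle one; rather than fight with it directly, I would instead bypass it entirely by using lower semicontinuity of curve length under locally uniform convergence (so $\liminf_i \ell(\gamma_i\vert_{[0,T]}) \ge \ell(\gamma\vert_{[0,T]})$) together with the fact that for continuous integrands the path integral can be written as a limit of Riemann-type sums along partitions, making the comparison transparent — this is exactly the content of the standard results on path integrals in \cite{AT, HKSTbook}, and I would cite them for this elementary but technical step.
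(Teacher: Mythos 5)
Your plan is workable, but it takes a noticeably heavier route than the paper, and one intermediate assertion is false as written. The displayed equality $\int_{\gamma\vert_{[0,T]}}\rho_k\,\dd s=\lim_{i}\int_{\gamma_i\vert_{[0,T]}}\rho_k\,\dd s$ does not hold in general: since the $\gamma_i$ have unit speed on $[0,\ell(\gamma_i)]$ while the limit $\gamma$ is merely $1$-Lipschitz, the speeds need not converge (e.g.\ $\gamma_i$ oscillating with vanishing amplitude converge uniformly to a constant curve, yet $\int_{\gamma_i\vert_{[0,T]}}\rho_k\,\dd s\approx \rho_k(x_0)T$ while the left side is $0$). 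Only the inequality $\int_{\gamma\vert_{[0,T]}}\rho_k\,\dd s\le\liminf_i\int_{\gamma_i\vert_{[0,T]}}\rho_k\,\dd s$ is true, and that is exactly the semicontinuity of the weighted-length functional that you defer to \cite{AT, HKSTbook} in your final paragraph — so you do identify the gap and a legitimate repair, but the repair carries the real content of the step. The paper's proof is shorter and avoids all of this: it passes to a subsequence realizing the $\liminf$ with $\ell(\gamma_{i_j})\to M$, bounds $\int_0^m(\rho\circ\gamma)\abs{\gamma'}\le\int_0^m(\rho\circ\gamma)$ using only $\abs{\gamma'}\le 1$, applies Fatou's lemma together with the pointwise inequality $\rho(\gamma(t))\le\liminf_j\rho(\gamma_{i_j}(t))$ coming directly from lower semicontinuity (no Lipschitz approximation of $\rho$ is needed), and then exploits the parametrization convention: for $m<M$ the curves $\gamma_{i_j}$ eventually have $\smallabs{\gamma_{i_j}'}=1$ a.e.\ on $[0,m]$, so $\int_0^m(\rho\circ\gamma_{i_j})\,\dd t\le\int_{\gamma_{i_j}}\rho\,\dd s$; letting $m\to M^-$ finishes. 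Your approach buys nothing extra here, since the monotone Lipschitz approximation and the Riemann-sum comparison are both subsumed by the single Fatou step; if you write it up, replace the equality by the one-sided inequality and either prove the semicontinuity for continuous weights carefully or adopt the paper's direct Fatou argument.
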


\begin{proof}
	There exists an $M \in [0, \infty]$ and a subsequence 
	$\gamma_{i_j}$ of $\gamma_i$ such that $\lim_{j \to \infty} \ell(\gamma_{i_j}) = M$ and
	\[
		\lim_{j \to \infty} \int_{\gamma_{i_j}} \rho \, \dd s 
		= \liminf_{i \to \infty} \int_{\gamma_i} \rho \, \dd s.
	\]
	Due to the 
	way that $\gamma_i$ have been parametrized, we have that $\gamma$ is 1-Lipschitz, and that $\gamma$ 
	is constant on $[M, \infty)$ if $M$ is finite. Let $m \in [0, M)$. We may then use the $1$-Lipschitz property of 
	$\gamma$, Fatou's lemma, and the lower semicontinuity of $\rho$ to obtain that
	\begin{multline*}
		\int_0^m (\rho \circ \gamma)(t) \abs{\gamma'(t)} \, \dd t
		\le \int_0^m (\rho \circ \gamma)(t) \, \dd t\\
		\le \int_0^m \liminf_{j \to \infty} (\rho \circ \gamma_{i_j})(t) \, \dd t
		\le \liminf_{j \to \infty} \int_0^m (\rho \circ \gamma_{i_j})(t) \, \dd t.
	\end{multline*}
	Since $\lim_{j \to \infty} \ell(\gamma_{i_j}) = M > m$, and since $\gamma_i$ have been parametrized 
	by arclength, it follows that for sufficiently large $j$, we have $\smallabs{\gamma_{i_j}'(t)} = 1$ 
	for 
	a.e.\ $t \in [0, m]$. Hence,
	\[
		\liminf_{j \to \infty} \int_0^m (\rho \circ \gamma_{i_j})(t) \, \dd t
		\le \lim_{j \to \infty} \int_{\gamma_{i_j}} \rho \, \dd s
		 = \liminf_{i \to \infty} \int_{\gamma_{i}} \rho \, \dd s.
	\]
	Since $\gamma$ is constant on $[M, \infty)$, it hence follows that
	\[
		\int_{\gamma} \rho \, \dd s
		= \lim_{m \to M^{-}} \int_0^m (\rho \circ \gamma)(t) \abs{\gamma'(t)} \, \dd t
		\le \liminf_{i \to \infty} \int_{\gamma_{i}} \rho \, \dd s,
	\]
	completing the proof.
\end{proof}
	
\begin{lemma}\label{lem:upper-is-upper}
Let $(X,d,\mu)$ be a metric measure space  that satisfies our standing assumptions, 
and let $\rho$ be a non-negative
Borel measurable function on $X$ such that $\rho\in L^p(X)$. Fix $k \in [0, \infty)$ and a 
nonempty closed set $A\subset X$, and define
$u:X\to\R$ and $v:X\to\R\cup\{\infty\}$ by
\begin{align*}
u(x)&=\inf\bigg\lbrace k,\int_\gamma\!\rho\, ds\, :\, \gamma\text{ is rectifiable and connects }A\text{ 
to }x\bigg\rbrace,\\
v(x)&=\inf\bigg\lbrace \int_\gamma\!\rho\, ds\, :\, \gamma\text{ is rectifiable and connects }A\text{ to 
}x\bigg\rbrace.
\end{align*}
Then $u$ and $v$ are measurable, and
$\rho$ is an upper gradient of $u$. Moreover, if there exists a constant $C\ge 0$ 
with $\rho\le C$, and every point in $X$ can be connected to $A$ by a rectifiable 
curve, then $\rho$ is an upper gradient of $v$ as well.
\end{lemma}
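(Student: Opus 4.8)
The plan is to treat the two assertions separately: the upper gradient claims follow from an elementary concatenation argument, while the measurability claim needs a little descriptive set theory. I would begin with the upper gradient statements. Let $\gamma$ be a non-constant compact rectifiable curve with endpoints $x,y$. Concatenating $\gamma$ or its reversal with a curve joining $A$ to one endpoint shows that $x$ is joined to $A$ by a rectifiable curve precisely when $y$ is, and for any rectifiable $\sigma$ joining $A$ to $x$ the concatenation $\sigma\ast\gamma$ joins $A$ to $y$, so $v(y)\le\int_\sigma\rho\,\dd s+\int_\gamma\rho\,\dd s$; taking the infimum over $\sigma$ and symmetrizing gives $\smallabs{v(x)-v(y)}\le\int_\gamma\rho\,\dd s$ when $v(x),v(y)$ are both finite (and there is nothing to prove when $\int_\gamma\rho\,\dd s=\infty$). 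If $v(x)=\infty$, then either $v(y)=\infty$ as well, so that $u(x)=u(y)=k$, or $v(y)<\infty$, in which case $\int_\gamma\rho\,\dd s$ must be infinite — otherwise concatenation would produce an $A$-to-$x$ curve of finite $\rho$-integral — and hence $\smallabs{u(x)-u(y)}\le k\le\int_\gamma\rho\,\dd s$. Since $t\mapsto\min\{k,t\}$ is $1$-Lipschitz and $u$ is everywhere real-valued, assembling these cases yields $\smallabs{u(x)-u(y)}\le\int_\gamma\rho\,\dd s$, so $\rho$ is an upper gradient of $u$. Under the extra assumptions that $\rho\le C$ and that every point is joined to $A$ by a rectifiable curve, $v$ is everywhere finite — bounded on any joining curve $\gamma$ by $C\,\ell(\gamma)$ — and the same estimate shows $\rho$ is an upper gradient of $v$; these assumptions serve precisely to keep $v$ finite, so that the convention in Definition~\ref{def:ug} causes no difficulty.

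For measurability, it suffices to prove that $v$ is $\mu$-measurable, since $u=\min\{k,v\}$ is the composition of $v$ with a continuous map. The obstacle is that near-optimal curves for $v(x)$ need not have bounded length, which blocks a direct compactness argument. To get around this, I would pass to the length-truncated functions
\[
	v_n(x):=\inf\Bigl\{\int_\gamma\rho\,\dd s\ :\ \gamma\text{ rectifiable},\ \gamma\text{ joins }A\text{ to }x,\ \ell(\gamma)\le n\Bigr\},
\]
so that $v=\inf_{n\in\N}v_n$; it then suffices to show each $v_n$ is $\mu$-measurable. Fix $n,m\in\N$. Every rectifiable curve of length at most $n$ joining $A$ to a point of $\overline B(x_0,m)$ stays in the compact set $\overline B(x_0,m+n)$ and starts in the compact set $A\cap\overline B(x_0,m+n)$, so, parametrizing such curves by arclength and then as constant, the Arzel\`a-Ascoli theorem shows that they lie in a compact metric space $\mathcal C_{n,m}$ of $1$-Lipschitz curves on which the endpoint map and the initial-point map are continuous.

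It remains to check that $\gamma\mapsto\int_\gamma\rho\,\dd s$ is Borel on $\mathcal C_{n,m}$. For lower semicontinuous $\rho$ this is exactly Lemma~\ref{lem:lsc_for_path_conv} (which applies because of the length bound), and since the class of non-negative Borel functions $\rho$ with this property contains the indicators of open sets and is closed under monotone increasing limits by the monotone convergence theorem, a functional monotone class argument extends it to every non-negative Borel $\rho$. Therefore, for each $t\in\R$, the set $\{v_n<t\}$ is the union over $m\in\N$ of the images, under the continuous endpoint map, of the Borel sets $\{\gamma\in\mathcal C_{n,m}:\gamma(0)\in A,\ \int_\gamma\rho\,\dd s<t\}$, and is hence an analytic subset of $X$. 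Since $X$ is proper it is a Polish space and $\mu$ is $\sigma$-finite, so analytic subsets of $X$ are $\mu$-measurable; consequently each $v_n$, and thus $v=\inf_n v_n$ and $u=\min\{k,v\}$, is $\mu$-measurable. (Alternatively, this measurability can be read off from the argument used to show that $\sim_\rho$-equivalence classes are measurable in complete separable metric measure spaces carrying a locally finite Borel measure, cf.~\cite[Theorem~9.3.1]{HKSTbook}.) I expect the length truncation that makes the Arzel\`a-Ascoli theorem and Lemma~\ref{lem:lsc_for_path_conv} applicable to be the crux of the argument; the rest is routine bookkeeping.
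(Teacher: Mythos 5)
Your argument for the upper-gradient claims is essentially the paper's: both proofs rest on the same concatenation estimate $\smallabs{v(x)-v(y)}\le\int_\gamma\rho\,\dd s$ valid whenever $v(x)<\infty$, and both reduce the claim for $u$ to this via the $1$-Lipschitz truncation $u=\min\{k,v\}$ (the paper phrases the reduction as ``assume $u(x)<u(y)$, so $u(x)<k$ and hence $v(x)=u(x)<\infty$,'' while you run an explicit case analysis on which of $v(x),v(y)$ is infinite; these are equivalent). The only genuine divergence is in the measurability step: the paper disposes of it in one sentence by citing \cite[Theorem~9.3.1]{HKSTbook}, whereas you give a self-contained argument via length truncation, Arzel\`a--Ascoli, lower semicontinuity of $\gamma\mapsto\int_\gamma\rho\,\dd s$ (which correctly reuses Lemma~\ref{lem:lsc_for_path_conv}), a functional monotone class argument to pass from lower semicontinuous to general Borel $\rho$, and universal measurability of analytic sets. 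That route is sound and is, in effect, a proof of the cited result adapted to this setting; it buys self-containedness at the cost of length, and you rightly flag at the end that the citation suffices. The remaining details you label routine (e.g.\ that the relevant families of $1$-Lipschitz parametrizations form compact sets on which the endpoint map is continuous, and that the path integral is insensitive to the loss of exact arclength parametrization in the limit) are indeed standard.
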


\begin{proof}
Since $A$ is a nonempty closed set, we know that both $u$ and $v$ are measurable functions on $X$ 
by~\cite[Theorem~9.3.1]{HKSTbook}. 
We first prove the following statement, which we use to prove both of our remaining claimed results: if $x, y 
\in X$ are such that $v(x) < \infty$, and if $\gamma$ is a rectifiable curve with end points $x$ and 
$y$, then
\begin{equation}\label{eq:common_subclaim}
	\abs{v(x) - v(y)} \le \int_\gamma\!\rho\, \dd s.
\end{equation}

For this, fix $x$, $y$, and $\gamma$, and suppose that $v(x) < \infty$. We may assume that 
$\int_\gamma \rho\, \dd s < \infty$, as the claim is trivial otherwise. Since $v(x) < \infty$, the set 
of rectifiable curves connecting $A$ to $x$ is nonempty. Thus, for every $\eps>0$, there exists a 
rectifiable curve $\beta_\eps$ in $X$ connecting $A$ to $x$ such that $\int_{\beta_\eps}\rho\, ds \le 
v(x) + \eps$. Now, the concatenation $\gamma_\eps$ of $\gamma$ and $\beta_\eps$ is a rectifiable curve
in $X$ connecting $A$ to $y$, and thus,
\[
	v(y)\le \int_{\gamma_\eps}\!\rho\, \dd s
	=\int_\gamma\!\rho\, \dd s +\int_{\beta_\eps}\!\rho\, \dd s
	\le \int_\gamma\!\rho\, \dd s+v(x)+\eps.
\]
In particular, since $v(x) < \infty$ and $\int_\gamma \rho\, \dd s < \infty$, we have $v(y) < \infty$. 
Hence, by the exact same argument with $x$ and $y$ reversed, we have 
\[
	v(x) \le \int_\gamma\!\rho\, \dd s+v(y)+\eps.
\]
Thus, letting $\eps \to 0$ in the previous two estimates, the proof of \eqref{eq:common_subclaim} is 
complete.

Next, we prove the two claims of the lemma. For the claim on $v$, suppose that $\rho \le C$ and 
for every $x \in X$, there exists a rectifiable curve $\gamma_x$ connecting $x$ to $A$. Then we have 
$v(x) \le C \ell(\gamma_x) < \infty$ for every $x \in X$, and \eqref{eq:common_subclaim} thus applies, 
proving the claim for $v$.

For the claim on $u$, let $x, y \in X$, and let $\gamma$ be a rectifiable curve connecting $x$ to $y$, 
with the aim of showing that
\[
|u(x)-u(y)|\le \int_\gamma\!\rho\, \dd s.
\]
If $u(x) = u(y)$, then the above is trivially true. Hence, by symmetry, we may assume that $u(x) < 
u(y)$. In particular, since $u(y) \le k$, we have $u(x) < k$, and hence $v(x) = u(x) < \infty$. Thus, 
by \eqref{eq:common_subclaim}, we have 
\[
	\abs{u(x) - u(y)} = u(y) - v(x) \le v(y) - v(x) \le \abs{v(x) - v(y)} 
	\le \int_\gamma\!\rho\, \dd s.
\]
The claimed result for $u$ hence follows.
\end{proof}

We are now ready to prove the $MEC_p$-property of spaces of uniformly locally $p$-controlled 
geometry. 
	
\begin{lemma}\label{lem:MECp}
Let $1 \le p < \infty$, and suppose that $(X, d, \mu)$ is a space of uniformly locally 
$p$-controlled geometry that satisfies our standing assumptions. 
Then, $X$ has the $MEC_p$ property. 
\end{lemma}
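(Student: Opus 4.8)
The plan is to show that for each non-negative Borel function $\rho\in L^p(X)$ the relation $\sim_\rho$ has an equivalence class $E(\rho)$ with $\mu(X\setminus E(\rho))=0$. Fix such a $\rho$ and write $\Gamma_\infty$ for the family of rectifiable curves $\gamma$ with $\int_\gamma\rho\,ds=\infty$; Lemma~\ref{lem:useful}(1) gives $\Mod_p(\Gamma_\infty)=0$. Recall that the $\sim_\rho$-classes are $\mu$-measurable, and note that $\mu$ is $\sigma$-finite (as $X=\bigcup_j B(x_0,j)$), so at most countably many classes have positive measure. The argument will have three parts: (A) any two disjoint positive-measure sets are joined by a curve family of positive $p$-modulus; (B) hence there is at most one positive-measure class and, if it exists, it is co-null; (C) a positive-measure class does exist. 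Then (B) and (C) together finish the proof, with $E(\rho)$ the unique positive-measure class.

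For (A), the plan is the familiar Poincar\'e-inequality argument. Suppose $E,F\subset X$ are disjoint Borel sets of finite positive measure with $\Mod_p(\Gamma(E,F))=0$. By Lemma~\ref{lem:useful}(1), and replacing the resulting function by a sufficiently small multiple of itself, one obtains a non-negative Borel $\rho_0\in L^p(X)$ of arbitrarily small $L^p$-norm with $\int_\gamma\rho_0\,ds=\infty$ for all $\gamma\in\Gamma(E,F)$. Since $\mu$ is a finite Borel measure on bounded sets, hence inner regular by compact sets on the proper space $X$, choose compact $E'\subset E$, $F'\subset F$ of positive measure and a ball $B\supset E'\cup F'$. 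Putting $u(x)=\min\{1,\inf_\gamma\int_\gamma\rho_0\,ds\}$, with infimum over rectifiable curves joining $E'$ to $x$, Lemma~\ref{lem:upper-is-upper} (with $A=E'$, $k=1$) shows $u$ is $\mu$-measurable and has $\rho_0$ as an upper gradient; moreover $u\equiv 0$ on $E'$ and $u\equiv 1$ on $F'$, the latter because every rectifiable curve from $E'$ into $F'$ belongs to $\Gamma(E,F)$. Applying the $(1,p)$-Poincar\'e inequality on $B$ — legitimate because, $X$ being connected, the doubling property and the Poincar\'e inequality may be assumed on balls of any prescribed radius, at the cost of worse (radius-dependent) constants — and using $u\equiv 0$ on $E'$, $u\equiv 1$ on $F'$, one gets $1\le |u_B|+|1-u_B|\lesssim\|\rho_0\|_{L^p(X)}$, the implied constant depending only on $B$, $\mu(E')$ and $\mu(F')$. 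Letting $\|\rho_0\|_{L^p(X)}\to 0$ gives a contradiction, proving (A).

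Part (B) is then immediate: were there a class $C$ with $\mu(C)>0$ and $\mu(X\setminus C)>0$, then for bounded positive-measure $E\subset C$, $F\subset X\setminus C$ every curve in $\Gamma(E,F)$ joins points in distinct classes and so lies in $\Gamma_\infty$, contradicting (A); and the same argument applied to $B\setminus C$ versus $B\cap C$ (once $\mu(B\cap C)>0$) shows a positive-measure class leaves no positive-measure ``dust'' in any ball $B$. For (C), the plan is to use the abundance of curves in a Poincar\'e space: for every ball $B_0=B(x_0,r_0)$ there is a $\mu$-full measurable subset $G\subset B_0$ lying inside a single $\sim_\rho$-class, so that $\mu([x]_\rho)\ge\mu(G)>0$ for $x\in G$. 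In the globally $p$-controlled setting this is classical and underlies \cite[Theorem~9.3.4]{HKSTbook}; in our uniformly locally $p$-controlled setting it follows by the same reasoning, since (as in (A)) the doubling and $p$-Poincar\'e properties hold on balls of any fixed radius — one reduces to lower semicontinuous $\rho$ by the Vitali--Carath\'eodory theorem and then carries out the curve-selection argument using properness of $X$ together with Lemma~\ref{lem:lsc_for_path_conv}. Granting this, cover $X$ by countably many such balls $B_i$, with corresponding full subsets $G_i$; then $\mu(X\setminus\bigcup_i G_i)=0$. Since $X$ is connected, the intersection graph of $\{B_i\}$ is connected, each nonempty $B_i\cap B_j$ (being open) has positive measure, and $G_i$, being full in $B_i$, is full in $B_i\cap B_j$; hence the classes containing the $G_i$ all coincide in a single class $C$, and $\mu(X\setminus C)\le\mu(X\setminus\bigcup_i G_i)=0$, so $C=E(\rho)$.

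I expect the main obstacle to be part (C): transferring the pencil-of-curves / abundance-of-curves statement from the global $p$-Poincar\'e setting to the uniformly locally $p$-controlled one. Parts (A) and (B) should be short given Lemma~\ref{lem:upper-is-upper} and the scale flexibility of the hypotheses for connected $X$; the substantive work lies in (C), where one adapts the argument behind \cite[Theorem~9.3.4]{HKSTbook}. The enabling observation throughout is the one recorded after Definition~\ref{def:controlled-geom}: connectedness of $X$ upgrades the uniformly local doubling and Poincar\'e inequalities to hold at every fixed scale, which lets the global-scale proof be run on individual balls and then patched together.
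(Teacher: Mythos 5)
Your plan is correct and has the same two\nobreakdash-part architecture as the paper's proof, which splits Lemma~\ref{lem:MECp} into Lemma~\ref{lem:MECp-fullmeasure} (a positive-measure class is co-null) and Lemma~\ref{lem:MECp-existence} (a positive-measure class exists). The difference lies in how the first half is executed. The paper works entirely at scales $r<R_0$: it shows via the local Poincar\'e inequality and the truncated distance function of Lemma~\ref{lem:upper-is-upper} (with the truncation level $k\to\infty$ rather than $\|\rho_0\|_{L^p}\to 0$) that no ball of radius $<R_0$ can split into positive-measure pieces of $E$ and its complement, and then concludes by a purely topological argument: the open sets $U$ (points with a ball avoiding $F$ a.e.) and $W$ (points with a ball avoiding $E$ a.e.) partition the connected space $X$. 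You instead fix one large ball containing compact positive-measure pieces of both sets and apply the Poincar\'e inequality at that scale, which requires the local-to-any-fixed-scale upgrade of the doubling and Poincar\'e hypotheses; the paper asserts this upgrade in the remark after Definition~\ref{def:controlled-geom}, and since your constants already depend on the ball through $\mu(B)/\mu(E')$ and $\mu(B)/\mu(F')$, the scale-dependent Poincar\'e constant is harmless. Both routes are sound; the paper's avoids the upgrade, yours avoids the $U$/$W$ connectedness step. Your parts (A)--(B) phrased through $\Mod_p(\Gamma(E,F))$ and $\Gamma_\infty$ are an equivalent repackaging. For part (C), which you correctly flag as the substantive work, your sketch (Vitali--Carath\'eodory reduction to lower semicontinuous $\rho$, curve selection via properness and Arzel\`a--Ascoli, passage to the limit with Lemma~\ref{lem:lsc_for_path_conv}) is exactly the paper's Lemma~\ref{lem:MECp-existence}; the one ingredient your sketch omits is the quantitative mechanism that makes the selected curves have uniformly bounded length, namely the restricted maximal function sets $G_m$ and the telescoping pointwise estimate $|u_k(x)-u_k(y)|\lesssim m\,d(x,y)$ on $G_m$, which is what bounds $u_k(x)$ uniformly in $k$ before Arzel\`a--Ascoli can be invoked. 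Your final patching of full-measure sets $G_i$ over a countable cover via connectedness of the intersection graph is valid but redundant once (B) is in place, since a single positive-measure class is automatically co-null.
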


We split the proof into two distinct parts. 

\begin{lemma}\label{lem:MECp-fullmeasure}
	Let $1 \le p < \infty$, and suppose that $(X, d, \mu)$ is a space of uniformly locally 
	$p$-controlled geometry that satisfies our standing assumptions. Let $0\leq\rho\in L^p(X)$ be a 
	Borel function, and let $E$ be a $\rho$-equivalence class in $X$. Then $E$ is $\mu$-measurable, and if moreover 
	$\mu(E) > 0$, then $E$ is a set of full measure.
\end{lemma}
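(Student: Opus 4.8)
The plan is to prove the full-measure claim by working with the function $u := \chi_E$: I will produce upper gradients of $u$ of arbitrarily small $L^p$-norm, use the uniformly local $p$-Poincar\'e inequality to conclude that $u$ is $\mu$-a.e.\ locally constant, and then upgrade this to $\mu$-a.e.\ globally constant using connectedness of $X$. The $\mu$-measurability of $E$ requires no extra work here: $X$ is proper, hence complete and, being $\sigma$-compact, separable, while $\mu$ is locally finite; thus $E$ is $\mu$-measurable by the fact recorded after Definition~\ref{def:mecp}, i.e.\ \cite[Theorem~9.3.1]{HKSTbook}.

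Now assume $\mu(E) > 0$, and let $\Gamma_0$ be the family of rectifiable curves in $X$ with exactly one endpoint in $E$. If some $\gamma \in \Gamma_0$ satisfied $\int_\gamma \rho\, ds < \infty$, then its two endpoints would be related by $\sim_\rho$, and so both would lie in the equivalence class $E$, a contradiction; hence $\int_\gamma \rho\, ds = \infty$ for every $\gamma \in \Gamma_0$. Since $\rho \in L^p(X)$, part~(1) of Lemma~\ref{lem:useful} yields $\Mod_p(\Gamma_0) = 0$ together with a non-negative Borel function $h \in L^p(X)$ with $\int_\gamma h\, ds = \infty$ for every $\gamma \in \Gamma_0$. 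The purpose of producing such an $h$ is that for each $\varepsilon > 0$ the function $\varepsilon h$ is a genuine upper gradient of $u$: along any rectifiable curve not in $\Gamma_0$ the two endpoints share the same value of $u$, so the upper gradient inequality is trivial, while along any curve in $\Gamma_0$ the path integral of $\varepsilon h$ equals $+\infty$.

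Since $u$ is bounded it lies in $L^1_\loc(X)$, so for each $\varepsilon > 0$ the pair $(u, \varepsilon h)$ may be fed into the uniformly local $p$-Poincar\'e inequality: for $x \in X$ and $0 < r < R_0$,
\[
\jint_{B(x,r)} \bigl| u - u_{B(x,r)} \bigr|\, d\mu \le C\, r\, \varepsilon \left( \jint_{B(x,\lambda r)} h^p\, d\mu \right)^{1/p},
\]
and letting $\varepsilon \to 0^+$ forces $u$ to equal the constant $u_{B(x,r)} \in \{0,1\}$ $\mu$-a.e.\ on $B(x,r)$. Comparing two radii and using $\mu(B(x,r)) > 0$, this constant depends only on $x$, not on $r \in (0,R_0)$; call it $\varepsilon(x)$. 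To pass from ``locally a.e.\ constant'' to ``a.e.\ constant'', I would invoke uniform local quasiconvexity of $X$ (Lemma~\ref{lem:bdd_geom_is_ULQC}): when $d(x,y)$ is small, a short curve joining $x$ to $y$ has a point lying in $B(x,r_x) \cap B(y,r_y)$ for suitable $r_x, r_y \in (0,R_0)$, and since this intersection is a nonempty open set it has positive measure, so $\varepsilon(x) = \varepsilon(y)$. Thus $\varepsilon(\cdot)$ is locally constant, hence constant on the connected space $X$; covering $X$ by countably many small balls then shows that $u$ equals that single constant $\mu$-a.e.\ on $X$. As $\mu(E) > 0$, the constant must be $1$, i.e.\ $\mu(X \setminus E) = 0$, so $E$ has full measure. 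The one place that demands genuine care is this final chaining step, since $X$ is only assumed connected rather than geodesic; this is exactly where uniform local quasiconvexity is needed.
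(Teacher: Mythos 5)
Your proof is correct, but it takes a genuinely different route from the paper's in the key step. The paper argues by contradiction on a single ball $B(x,r)$ meeting both $E$ and $F:=X\setminus E$ in positive measure: it picks a closed $K\subset B(x,r)\cap E$ of comparable measure via inner regularity, forms the truncated $\rho$-path-distance functions $u_k(y)=\min\{k,\inf_\gamma\int_\gamma\rho\,ds\}$ to $K$ (whose measurability and upper gradient $\rho$ come from Lemma~\ref{lem:upper-is-upper}), notes $u_k\equiv 0$ on $K$ and $u_k\equiv k$ on $F$, and lets $k\to\infty$ so that the left side of the Poincar\'e inequality blows up while the right side stays fixed. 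You instead test the Poincar\'e inequality directly with $u=\chi_E$, observing that every compact rectifiable curve with exactly one endpoint in $E$ has $\int_\gamma\rho\,ds=\infty$, so that $\varepsilon\rho$ is a genuine upper gradient of $\chi_E$ for \emph{every} $\varepsilon>0$; letting $\varepsilon\to 0^+$ makes the right side vanish. Your version buys some economy: no appeal to Lemma~\ref{lem:upper-is-upper}, no inner-regularity selection of $K$, and no contradiction argument. Both proofs finish with the same connectedness step (your locally-constant-function argument is just a repackaging of the paper's disjoint open cover $U\cup W=X$).

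One small remark: the uniform local quasiconvexity you invoke for the chaining step is not actually needed. If $d(x,y)<r_x<R_0$, then $y\in B(x,r_x)$, so for small $r_y$ the ball $B(y,r_y)$ is a nonempty open subset of $B(x,r_x)$ of positive measure on which $u$ is a.e.\ equal to both constants; hence $\varepsilon(x)=\varepsilon(y)$. Overlapping balls alone make $\varepsilon(\cdot)$ locally constant, and connectedness plus a countable cover (available since $X$ is proper, hence separable) finishes the argument exactly as you describe.
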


\begin{proof}
	Since $X$ is complete and separable, and since $\mu$-measures of balls are finite,  it 
	follows from~\cite[Theorem 1.8]{JJRRS} that $E$ is $\mu$-measurable; it is in fact an analytic set, 
	but we do not require this stronger property here.  Next, suppose that $\mu(E)>0$, and 
	let $F := X \setminus E$, with the objective of showing that $\mu(F)=0$.
	
	Let $R_0$ be the threshold such that the doubling property and Poincar\'e 
	inequality of $(X, d, \mu)$ hold for balls $B(x, r)$ with $r < R_0$. Suppose then that 
	there exist $x\in X$ and $0<r<R_0$ such that $\mu(B(x,r)\cap E)>0$ and $\mu(B(x,r)\cap F)>0$, 
	with the aim of reaching a contradiction. 
	By the Borel regularity of $\mu$, we may select a closed $K \subset B(x,r)\cap E$ such 
	that $\mu(B(x,r)\cap E) < 2\mu(K)$; see e.g.\ \cite[Proposition 3.3.37]{HKSTbook}. 
	
	Fixing $k \in \N$, we define $u \colon X \to [0, k]$ by
	\[
		u(y) =\inf \left\{ k,\,\int_{\gamma}\!\rho\,\dd s : \gamma \text{ connects } y \text{ to } 
		K \right\}
	\]
	for every $y\in X$. Then, by Lemma~\ref{lem:upper-is-upper}, $u$ is measurable and $\rho$ is an
		upper gradient of $u$. As $u$ is
		bounded by $k$, it follows that $u$ is
	in $L^1_\loc(X)$. Moreover, $u \equiv 0$ on $K$, and $u \equiv k$ on $F$ since every path connecting $F$ to $K$ also connects $F$ to $E$. 
	
	Since $u \equiv k$ on $B(x, r) \cap F$ and $u \ge 0$, we have
	\[
		u_{B(x,r)} \ge  \frac{1}{\mu(B(x,r))} \int_{B(x,r) \cap F} u\, \dd\mu = k\, \frac{\mu(B(x,r)\cap F)}{\mu(B(x,r))}.
	\]
	Thus, since $u \equiv 0$ on $K$ and $\mu(B(x,r)\cap E) < 2\mu(K)$, it follows that
	\begin{align*}
	\fint_{B(x,r)} |u-u_{B(x,r)}|\, \dd\mu
	\ge \frac{1}{\mu(B(x,r))} \int_{K} |u-u_{B(x,r)}|\, \dd\mu
	& =\frac{\mu(K)}{\mu(B(x,r))}\, u_{B(x,r)}\\
	&\ge k\, \frac{\mu(B(x,r)\cap F)\, \mu(K)}{\mu(B(x,r))^2}\\
	&> k\, \frac{\mu(B(x,r)\cap F)\, \mu(B(x,r) \cap E)}{2\mu(B(x,r))^2}.
	\end{align*}
	An application of the Poincar\'e inequality, together with the fact that $r< R_0$, yields
	\[
	k\ \frac{\mu(B(x,r)\cap E)\, \mu(B(x,r)\cap F)}{2\mu(B(x,r))^2}\le C\, r\, \left(\fint_{B(x,\lambda 
		r)}\rho^p\, \dd\mu\right)^{1/p},
	\]
	with $C$ independent of $x$, $r$, and $k$. 
	This is not possible under the assumption that
	$\mu(B(x,r)\cap E)>0$ and $\mu(B(x,r)\cap F)>0$. 
	Thus we conclude that for all $x \in X$ and $0 < r < R_0$, we have either 
	$\mu(B(x,r)\cap E)=0$ or $\mu(B(x,r)\cap F)=\mu(B(x,r)\setminus E)=0$, but not both since 
	$\mu(B(x, r)) > 0$.
	
	Next, we set $U$ to be the collection of all $x\in X$ such that $\mu(B(x,r)\cap F)=0$ for 
	some $r>0$. Then $U$ is an open subset of $X$ with $\mu$-almost every $x\in E$ belonging to 
	$U$. In particular, $U$ is nonempty. We also set $W$ to be the collection of all $x\in X$ 
	such that $\mu(B(x,r)\cap E)=0$ for some $r>0$, and note that $W$ is also open. By the 
	discussion in the previous paragraph of this proof, we  also know  that $U\cap W$ is empty
	and $U\cup W=X$.
	Since $X$ is connected,  it now follows that $W$ is empty, and hence $\mu(F)=0$; that 
	is, $E$ is the main equivalence class of $\rho$ in $X$.
\end{proof}

The remaining part of the statement of Lemma \ref{lem:MECp} is the following result.
	
\begin{lemma}\label{lem:MECp-existence}
	Let $1 \le p < \infty$, let $(X, d, \mu)$ be a space of uniformly locally $p$-controlled 
	geometry that satisfies our standing assumptions, and let $0\leq\rho\in L^p(X)$ be a Borel function. 
	Then there exists a $\rho$-equivalence class $E$ with $\mu(E) > 0$.
\end{lemma}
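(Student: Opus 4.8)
The plan is to reduce the statement to a purely local connectivity assertion and then extract that from the local $p$-Poincar\'e inequality. I claim it is enough to prove:

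$(\star)$ \emph{there is a structural constant $\Lambda\ge1$, depending only on the data $R_0,C_d,C,\lambda$ of the locally $p$-controlled geometry of $X$, such that for $\mu$-a.e.\ $x_0\in X$ and every $0<r<R_0/\Lambda$, the set of pairs $(x,y)\in B(x_0,r)\times B(x_0,r)$ with $x\not\sim_\rho y$ is a $\mu\times\mu$-null set.}

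Granting $(\star)$: fix such an $x_0$ and $r$, write $B=B(x_0,r)$, so that $0<\mu(B)<\infty$ by the standing assumptions. Since the $\mu\times\mu$-null set of $\rho$-disconnected pairs in $B\times B$ is contained in a $\mu\times\mu$-measurable null set, Tonelli's theorem gives that $\mu(\{y\in B:x\not\sim_\rho y\})=0$ for $\mu$-a.e.\ $x\in B$; fixing any such $x$, the $\sim_\rho$-equivalence class $E$ containing $x$ satisfies $\mu(E)\ge\mu(E\cap B)=\mu(B)-\mu(B\setminus E)=\mu(B)>0$, which is the assertion of the lemma. (One may then invoke Lemma~\ref{lem:MECp-fullmeasure} to see that $E$ is in fact of full measure, which together with the present lemma yields Lemma~\ref{lem:MECp}, but this is not needed here.)

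To prove $(\star)$ I would combine three ingredients. First, because $\rho\in L^p(X)$, Lemma~\ref{lem:useful}(1) applied with $\rho$ itself as the witness function shows that the family $\Gamma_\infty:=\{\gamma:\int_\gamma\rho\,ds=\infty\}$ satisfies $\Mod_p(\Gamma_\infty)=0$. Second, the local $p$-Poincar\'e inequality yields a connectivity estimate: with $\Lambda$ chosen as above, whenever $0<r<R_0/\Lambda$ and $A,A'\subset B(x_0,r)$ are $\mu$-measurable with $\mu(A),\mu(A')>0$, the family $\Gamma(A,A')$ of curves in $B(x_0,\Lambda r)$ joining $A$ to $A'$ has $\Mod_p(\Gamma(A,A'))>0$. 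This is proved exactly along the lines of the proof of Lemma~\ref{lem:MECp-fullmeasure}: for an admissible sequence $\rho_n$ for $\Gamma(A,A')$ with $\|\rho_n\|_{L^p}\to0$, the truncated potential $u_n(z)=\min\{1,\inf\{\int_\gamma\rho_n\,ds:\gamma\subset B(x_0,\Lambda r)\text{ joins }z\text{ to }A'\}\}$ has $\rho_n$ as an upper gradient by Lemma~\ref{lem:upper-is-upper} and lies in $L^1_\loc(X)$, while $u_n\equiv1$ on $A$ and $u_n\equiv0$ on $A'$; the Poincar\'e inequality on $B(x_0,\Lambda r)$ then bounds a quantity $\gtrsim\min\{\mu(A),\mu(A')\}/\mu(B(x_0,\Lambda r))$ by a multiple of $(\Lambda r)\|\rho_n\|_{L^p}\to0$, a contradiction. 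Combining the first two ingredients, $\Gamma(A,A')\setminus\Gamma_\infty$ has positive $p$-modulus, hence is nonempty, for every pair of positive-measure sets $A,A'\subset B(x_0,r)$. Third, and this is the heart of the matter, I would promote this to the measure-theoretic statement $(\star)$: the abundance of curves in a $p$-Poincar\'e space --- quantitatively, the telescoping construction underlying local quasiconvexity (Lemma~\ref{lem:bdd_geom_is_ULQC}), executed so as to avoid $\rho$-heavy branches for generic endpoints --- allows one to disintegrate a positive-modulus curve family over its pairs of endpoints, and, running the previous step with $A$ and $A'$ ranging over a countable basis of small sub-balls of $B(x_0,r)$ and slicing by Tonelli, to conclude that ``$\rho$-disconnectedness'' inside $B(x_0,r)\times B(x_0,r)$ is confined to a $\mu\times\mu$-null set. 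I expect this third step to be the main obstacle: converting the mere nonemptiness of $\Gamma(A,A')\setminus\Gamma_\infty$ (the existence of a single good curve) into a positive-$\mu\times\mu$-measure set of $\rho$-connected pairs of points requires a genuine pencil-of-curves argument, which is available in $p$-Poincar\'e spaces but is technically the most delicate point; the remaining bookkeeping --- fixing $r$ within the Poincar\'e range, covering $X$ by countably many such good balls, and discarding the Lebesgue-point and modulus-exceptional null sets --- is routine.
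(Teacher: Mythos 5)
Your first two ingredients are sound, but the proof has a genuine gap exactly where you flag it: step three is not a technicality to be routinely filled in, it is the entire content of the lemma, and the route you sketch does not close it. Knowing that $\Gamma(A,A')\setminus\Gamma_\infty$ is nonempty for every pair of positive-measure sets $A,A'\subset B(x_0,r)$ only tells you that \emph{some} point of $A$ is $\sim_\rho$-equivalent to \emph{some} point of $A'$; it does not localize which points, and no amount of shrinking $A,A'$ over a countable basis plus Tonelli slicing upgrades this to ``$\mu\times\mu$-a.e.\ pair is connected.'' Indeed, the statement ``every two positive-measure sets contain an equivalent pair'' is perfectly consistent with \emph{every} equivalence class being $\mu$-null (compare an ergodic equivalence relation all of whose classes are null: no two positive-measure sets can be separated, yet no class has positive measure). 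So $(\star)$ does not follow from ingredients 1 and 2, and the ``disintegration of a positive-modulus family over its endpoint pairs'' that you invoke is precisely the missing argument, not an available black box.

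The paper's proof avoids this endpoint-localization problem entirely by a direct potential-theoretic construction anchored at a \emph{fixed} base point. After replacing $\rho$ by a lower semicontinuous majorant (Vitali--Carath\'eodory), one sets $G_m=\{y\in B(x_0,\beta):M_{4\lambda\beta}(\rho^p)(y)\le m^p\}$, which exhausts $B(x_0,\beta)$ up to a null set by the restricted maximal function inequality, fixes $y_0\in G$, and considers the truncated potentials $u_k(x)=\inf_\gamma\int_\gamma(1+\min\{\rho,k\})\,\dd s$. The decisive quantitative input is the pointwise telescoping estimate $|u_k(x)-u_k(y)|\lesssim d(x,y)\,[M_{4\lambda\beta}(1+\rho)^p(x)+M_{4\lambda\beta}(1+\rho)^p(y)]^{1/p}$, which on $G_m$ gives a bound on $u_k(x)$ that is \emph{uniform in $k$}; one then extracts, for each $x\in G$, an Arzel\`a--Ascoli limit of near-minimizing curves (their lengths are controlled by the ``$+1$'' in the density) and uses lower semicontinuity of $\rho$ along uniform limits of curves plus monotone convergence to produce a single rectifiable curve from $y_0$ to $x$ with $\int_\gamma\rho\,\dd s<\infty$. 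This yields $G\subset E(y_0)$ and hence $\mu(E(y_0))>0$. If you want to complete your proof, you need some version of this mechanism (or another genuine pencil-of-curves construction); the maximal-function control on the potentials is what converts the Poincar\'e inequality into connectivity of \emph{specified} almost-every points rather than of unspecified points in positive-measure sets.
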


\begin{proof}
Let $C_d$ and $R_0$ be the parameters of uniform local doubling of $X$. By Lemma~\ref{lem:bdd_geom_is_ULQC}, 
$X$ is uniformly locally quasiconvex with parameters $R$ and $C_q$, where we 
may assume that $R = R_0$ by shrinking either one of them if necessary.
Notably, $X$ is locally rectifiably connected, and since $X$ is also connected, it follows that $X$ is 
rectifiably connected.

We adapt the argument 
from~\cite[Proof of Theorem~9.3.4]{HKSTbook}. 
Thanks to the Vitali-Carath\'eodory theorem, we can assume without loss of generality that $\rho$ is lower semicontinuous.
Since $\mu$ is locally uniformly doubling, the space $(X, d, \mu)$ satisfies a restricted 
Hardy-Littlewood maximal inequality; that is, there exists a constant $C_0 = C_0(C_d)\ge 
1$ such that whenever $h\in L^1(X)$,
$\tau>0$, and $0 < r \le R_0$, we have
\begin{equation}\label{eq:HL-max}
\mu\left(\lbrace y\in X\, :\, M_{r} h(y)>\tau\rbrace\right)\le \frac{C_0}{\tau}\, \int_X|h|\, 
\dd\mu.
\end{equation}
Here, given $r > 0$ and a Borel function $h \colon X \to \overline{\R}$, the \emph{restricted 
maximal function} $M_r h$ of $h$ is defined by
\[
	M_rh(x) = \sup_{0 < r' < r}\, \fint_{B(x, r')} |h| \, \dd\mu.
\]
For details, see e.g.\ the proof of \cite[Theorem 3.5.6]{HKSTbook}.

Fix $x_0\in X$, let $R:=R_{0}/10$, and let $\beta := R/(C_q(4\lambda +1))$, where $\lambda$ is 
the scaling constant
in the local Poincar\'e inequality supported by $X$.
For every positive integer $m$,
we set $G_m:=\{y\in B(x_0,\beta)\, :\, M_{4\lambda \beta}(\rho^p)(y)\le m^p\}$. As we have that 
$M_{4\lambda \beta}(\rho^p)=M_{4\lambda \beta}(\chi_{B(x_0,R)}\, \rho^p)$ on $B(x_0,\beta)$, it follows from~\eqref{eq:HL-max} that
$\mu(B(x_0,\beta)\setminus G_m)\le C_0\, m^{-p}\, \|\rho\|_{L^p(B(x_0,R))}^p$. 
It follows that with $G := \bigcup_{m=1}^\infty G_m$, we necessarily have $\mu(G)>0$ since 
$\mu(B(x_0,\beta)\setminus G)=0$.
Thus, it suffices to show that $G\subset E$ for some $\rho$-equivalence class $E$.

We fix a point $y_0\in G$, choose $m_0\in\N$ such that $y_0\in G_{m_0}$ and consider the functions 
\[
u_k(x):=\inf_\gamma\int_\gamma(1+\min\{\rho, k\})\, \dd s,
\]
where the infimum is over all curves $\gamma$ with end points $y_0$ and $x$, and $k\in\N$. 
Since $X$ is rectifiably connected, the funtions $u_k$ are finite-valued, and we may 
additionally invoke Lemma~\ref{lem:upper-is-upper} to conclude 
that $1+\min\{\rho,k\}$ is an upper gradient of $u_k$. Moreover, if $x, y \in B(x_0, \beta)$, then 
$x$ can be connected to $y$ with a rectifiable curve $\gamma_{xy}$ with $\ell(\gamma_{xy}) \le C_q 
d(x,y)$, and consequently $|u_k(x)-u_k(y)| \le (1+k)\, \ell(\gamma_{xy}) \le C_q\, (1+k)\, d(x,y)$. 
Thus, $u$ is Lipschitz on $B(x_0, \beta)$, and consequently every point in $B(x_0, \beta)$ is a Lebesgue 
point of $u_k$. 

Applying a double-telescoping argument as in the proof of~\cite[Theorem 8.1.7]{HKSTbook}
yields the estimate
\begin{align*}
|u_k(x)&-u_k(y)|\\
  &\le C\, d(x,y)\, \left[ M_{4\lambda \beta}(1+\min\{\rho,k\})^p(x)+M_{4\lambda \beta}(1+\min\{\rho,k\})^p(y)\right]^{1/p}\\
  &\le C\, d(x,y)\, \left[ M_{4\lambda \beta}(1+\rho)^p(x)+M_{4\lambda \beta}(1+\rho)^p(y)\right]^{1/p}
\end{align*}
for every $x,y\in B(x_0,\beta)$, where $C$ depends only on constants associated to the uniformly 
locally $p$-controlled geometry of $X$. 
Note that 
\[
	M_{4\lambda \beta}(1+\rho)^p\le 2^p\, [1+M_{4\lambda \beta}(\rho^p)].
\]
Thus, whenever $x,y\in G_m$, we have that for all $k\in\N$,
\[
	|u_k(x)-u_k(y)|\le C\, d(x,y)\, \left[2\, 2^p[1+m^p]\right]^{1/p}  \le 2^{2+1/p} C\, m \, d(x,y).
\]  
By setting $y = y_0$ and noting that $y_0 \in G_m$ and $u_k(y_0) = 0$ for $m \ge m_0$, it follows 
that for each $m \ge m_0$ and $x\in G_m$, $\{u_k(x)\}_k$ is a bounded non-decreasing sequence 
of numbers. We therefore obtain a finite-valued pointwise limit map $v := \lim_{k \to \infty} u_k$ on $G 
= \bigcup_{m \ge m_0} G_m$.

For each $x\in G$, and for each $k\in\N$, we know that there exists a rectifiable curve 
$\gamma_k$ in $X$, connecting
$y_0$ to $x$, such that
\[
\infty>v(x)\ge u_k(x)\ge \int_{\gamma_k}(1+\min\{\rho,k\})\, \dd s-2^{-k}.
\]
We parametrize the curves $\gamma_k$ by arc-length, in which case they are $1$-Lipschitz. By 
the above inequality, we have for each $k$ the estimate
\[
\ell(\gamma_k)\le v(x)+2^{-k}\le v(x)+1<\infty.
\]
Thus, the sequence of curves $\{\gamma_k\}_k$ is of uniformly bounded length, and we may use the 
Arzel\`a-Ascoli theorem to find 
a subsequence $\{\gamma_{k_j}\}_j$ that converges uniformly to a curve $\gamma$ connecting $y_0$ to 
$x$. Hence, by Lemma~\ref{lem:lsc_for_path_conv} and the lower semicontinuity of $\rho$, we have 
\begin{align*}
v(x)\ge \limsup_{j \to \infty}  \int_{\gamma_{k_j}}\!(1+\min\{\rho, k_j\})\, \dd s
&\ge \limsup_{j \to \infty} \int_{\gamma_{k_j}}\!(1+\min\{\rho, n\})\, \dd s\\
&\ge \int_\gamma \! (1+\min\{\rho, n\})\, \dd s
\end{align*}
for each positive integer $n$. 
Now, by the monotone convergence theorem, we have
\[
v(x)\ge \int_\gamma(1+\rho)\, \dd s.
\]
In particular, $\gamma$ is a rectifiable curve  connecting $y_0$ to $x$ such that $\int_\gamma 
\rho\, \dd s$ is finite, and hence  $x$ is in the same $\rho$-equivalence class as $y_0$. As 
this holds for each $x\in G$, we have 
$G\subset E$, where $E$ is the $\rho$-equivalence class of $y_0$.
\end{proof}

With the previous two lemmas, the proof of Lemma \ref{lem:MECp} is hence complete.

Before starting the proof of the second part of Theorem \ref{thm:main2}, we require one more major 
lemma. For the statement of this lemma, recall that a domain $\Omega$ in a metric space $(X,d)$ is a 
nonempty, open, connected subset of $X$. As the statement involves multiple claims, we split the proof 
into multiple parts.
	
\begin{lemma}\label{lem:lsc_lemma}
	Let $1 \le p < \infty$, and let $(X,d,\mu)$ be a space of uniformly locally $p$-bounded 
	geometry that satisfies our standing assumptions. 
	Let $\Omega \subset X$ be a bounded 
	domain, and let $\rho:X\to[0,\infty]$. Suppose that $\rho$ is lower semicontinuous, and that 
	there exists a constant $c > 0$ such that $\rho \geq c$ on $\overline{\Omega}$. Define
	\[
		u(x) = \inf \left\{ \int_{\gamma}\!\rho\,\dd s : 
		\gamma \text{ connects } x \text{ to } X \setminus \Omega \right\}.
	\]
	Then, $u$ is lower semicontinuous, and for every $x \in X$  
	there exists a rectifiable path $\gamma_x$ connecting $x$ to $X \setminus \Omega$ such that
	\[
		u(x) = \int_{\gamma_{x}}\!\rho\,\dd s.
	\]
	Moreover, if $u \in L^p(X)$, 
	then $\rho$ is a $p$-weak 
	upper gradient of $u$.
\end{lemma}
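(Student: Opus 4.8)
The plan is to prove the lower semicontinuity of $u$ and the existence of an optimal path for each point by a single Arzel\`a-Ascoli argument, and then to extract the $p$-weak upper gradient property from these two facts.

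First I would record the basic sub-additivity estimate, proved exactly as the inequality~\eqref{eq:common_subclaim} in the proof of Lemma~\ref{lem:upper-is-upper}: if $\beta$ is a rectifiable curve joining $x$ to $y$ and $u(x)<\infty$, then $\abs{u(x)-u(y)}\le\int_\beta\rho\,\dd s$, and moreover if exactly one of $u(x)$, $u(y)$ is infinite then $\int_\beta\rho\,\dd s=\infty$. Indeed, for $\eps>0$ pick a rectifiable curve from $x$ to $X\setminus\Omega$ whose $\rho$-integral is at most $u(x)+\eps$; truncating it at its first exit from $\overline\Omega$ keeps it rectifiable (there $\rho\ge c$, so its length is at most $c^{-1}(u(x)+\eps)$) and does not increase the $\rho$-integral, so concatenating it with $\beta$ gives $u(y)\le\int_\beta\rho\,\dd s+u(x)+\eps$; the symmetric inequality and $\eps\to 0$ then conclude.

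Next, for the lower semicontinuity of $u$ and the existence of the paths $\gamma_x$, fix $x\in X$ and a sequence $x_i\to x$, and assume that $L:=\liminf_i u(x_i)<\infty$ (the other case being trivial), passing to a subsequence so that $u(x_i)\to L$ with each $u(x_i)$ finite. Choose rectifiable curves $\gamma_i$ from $x_i$ to $X\setminus\Omega$ with $\int_{\gamma_i}\rho\,\dd s\le u(x_i)+i^{-1}$, truncate each at its first exit from $\overline\Omega$, and reparametrize by arclength and then as constant; since $\rho\ge c$ on $\overline\Omega$, the lengths $\ell(\gamma_i)$ are uniformly bounded, and since $\Omega$ is bounded and $X$ is proper, $\overline\Omega$ is compact, so by the Arzel\`a-Ascoli theorem a subsequence of $\{\gamma_i\}$ converges locally uniformly to a $1$-Lipschitz curve $\gamma$. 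Passing to a further subsequence so that $\ell(\gamma_i)$ converges and tracking the endpoints $\gamma_i(\ell(\gamma_i))\in X\setminus\Omega$ shows that $\gamma$ is a finite-length curve joining $x$ to $X\setminus\Omega$, whence by Lemma~\ref{lem:lsc_for_path_conv} and the lower semicontinuity of $\rho$ we get $u(x)\le\int_\gamma\rho\,\dd s\le\liminf_i\int_{\gamma_i}\rho\,\dd s\le L$. This is the lower semicontinuity; taking $x_i\equiv x$ along a minimizing sequence yields $\gamma_x$ with $\int_{\gamma_x}\rho\,\dd s=u(x)$ when $u(x)<\infty$, while if $u(x)=\infty$ any rectifiable curve from $x$ to $X\setminus\Omega$ works (one exists since $X$ is rectifiably connected, being connected and uniformly locally quasiconvex by Lemma~\ref{lem:bdd_geom_is_ULQC}).

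Finally, assume $u\in L^p(X)$. By the lower semicontinuity just shown, $N:=\{u=\infty\}=\bigcap_{k}\{u>k\}$ is Borel, and $\mu(N)=0$ since $u\in L^p(X)$. The sub-additivity estimate shows that the upper gradient inequality~\eqref{eq:fun-ug-ineq} for $u$, interpreted via the convention in Definition~\ref{def:ug}, holds for every non-constant compact rectifiable curve $\gamma$ unless both endpoints of $\gamma$ lie in $N$ and $\int_\gamma\rho\,\dd s<\infty$; let $\Gamma_0$ denote the family of such curves. For $\gamma\in\Gamma_0$ and any $w$ on the trajectory of $\gamma$, the subcurve of $\gamma$ from an endpoint to $w$ has $\rho$-integral at most $\int_\gamma\rho\,\dd s<\infty$, so the sub-additivity estimate forces $u(w)=\infty$; hence every $\gamma\in\Gamma_0$ has trajectory contained in $N$. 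Since a non-constant rectifiable curve has positive length and $\mu(N)=0$, the Borel function $\sigma:=\infty\cdot\chi_N$ satisfies $\int_X\sigma^p\,\dd\mu=0$ while $\int_\gamma\sigma\,\dd s=\infty$ for every $\gamma\in\Gamma_0$, so Lemma~\ref{lem:useful}(1) gives $\Mod_p(\Gamma_0)=0$, i.e.\ $\rho$ is a $p$-weak upper gradient of $u$. I expect this last step to be the main obstacle: $\rho$ is not assumed to belong to $L^p(X)$, so the standard closure theorems for upper gradients do not apply, and the modulus bound hinges on the precise identification of $\Gamma_0$ together with the observation that its members lie inside the null set $\{u=\infty\}$.
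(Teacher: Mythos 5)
Your proposal is correct. The first two conclusions (existence of the minimizing paths $\gamma_x$ and lower semicontinuity of $u$) are proved exactly as in the paper: truncation to get curves of uniformly bounded length via $\rho\ge c$ on $\overline\Omega$, Arzel\`a--Ascoli, and Lemma~\ref{lem:lsc_for_path_conv} applied to the lower semicontinuous $\rho$; merging the two into one limiting argument is only a cosmetic difference. The third part, however, takes a genuinely different route. The paper first establishes the pointwise inequality $\abs{u(x)-u(y)}\le\int_\gamma\rho\,\dd s$ when both values are finite (as you do), but then controls the exceptional curves by invoking the $MEC_p$ property of $X$ (Lemma~\ref{lem:MECp}): it takes the $\mu$-null complement $F$ of the main $\rho$-equivalence class, shows $\{u=\infty\}\subset F$, and kills the family of curves meeting $F$ by splitting it according to whether the intersection with $F$ has positive length. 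You instead observe that the hypothesis $u\in L^p(X)$ already forces $\mu(\{u=\infty\})=0$, identify the exceptional family precisely as the curves with both endpoints in $\{u=\infty\}$ and finite $\rho$-integral, and show via the sub-additivity estimate that any such curve has its \emph{entire} trajectory inside this null set, so that $\infty\cdot\chi_{\{u=\infty\}}$ witnesses zero modulus via Lemma~\ref{lem:useful}. This buys two things: it bypasses the $MEC_p$ machinery entirely, and it does not use $\rho\in L^p(X)$ — a hypothesis the paper's argument quietly relies on (both for $MEC_p$ and for the final modulus step) even though it is absent from the lemma's statement; your version therefore proves the lemma exactly as stated. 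The trade-off is that your argument leans on $u\in L^p(X)$ in an essential way, whereas the paper's $MEC_p$ route would control the infinity set of $u$ from $\rho$ alone; for the application in Proposition~\ref{prop:pathwiseNewt} both sets of hypotheses are available, so either proof suffices.
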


We point out here that as $\mu(X)=\infty$, we necessarily have $\mu(X\setminus\Om) > 0$ when $\Om$ is bounded.

\begin{proof}[Proof of the existence of $\gamma_x$]
	Let $x \in X$. Note that by Lemma~\ref{lem:bdd_geom_is_ULQC}, $X$ is locally rectifiably 
	path-connected. Since $X$ is also connected by our standing assumptions, it follows 
	that $X$ is rectifiably path-connected. Hence, 
	there is at least one rectifiable curve connecting $x$ to $X\setminus\Om$.
	
	The claim is trivial if $u(x) = \infty$, since we may then select $\gamma_x$ to be any 
	rectifiable path that connects $x$ to $X \setminus \Omega$, in which case $\int_{\gamma_x} \rho \, 
	\dd s = \infty$ by the definition of $u(x)$. The claim is also clearly true if  $x \in X 
	\setminus \Omega$, as 
	$u(x) = 0$ for such $x$, and we can use a constant path. Thus, suppose that $x \in \Omega$ 
	with $u(x) < \infty$, 
	and let $\{\gamma_i\}_i$ be an infimizing sequence of paths connecting $x$ to $X \setminus \Omega$, 
	where we have
	\begin{equation}\label{eq:int-control}
		u(x) \leq \int_{\gamma_i}\!\rho\,\dd s \leq u(x) + \frac{1}{i}
	\end{equation}
	for each $i$. Then, since $\rho \geq c$ on $\overline{\Omega}$, we have
	\begin{equation}\label{eq:length_estimate}
		\len(\gamma_i) \leq c^{-1} \int_{\gamma_i}\! \rho\,\dd s \leq c^{-1} (u(x) + 1)<\infty. 
	\end{equation}
	We apply a parametrization to each $\gamma_i$ so that it is arclength parametrized on $[0, \len 
	(\gamma_i)]$ 
	and constant on $[\len (\gamma_i), c^{-1} (u(x) + 1)]$. Thus, $\{\gamma_i\}_i$ forms a totally 
	bounded family of $1$-Lipschitz functions. By the Arzel\`a-Ascoli theorem, we have, after passing to 
	a subsequence (not re-labeled), 
	that $\{\gamma_i\}_i$ converges uniformly to a path $\gamma_{x}$ from $x$ to $X \setminus \Omega$.
	Hence, by applying Lemma \ref{lem:lsc_for_path_conv} and the lower semicontinuity of $\rho$, we 
	have
	\[
		\int_{\gamma_{x}}\!\rho\,\dd s
		\le \limsup_{i \to \infty} \int_{\gamma_i}\!\rho\,\dd s \le u(x).
	\] 
	Thus, the claim follows.
\end{proof}

\begin{proof}[Proof of the lower semicontinuity of $u$.]
	Let $\alpha \in [0, \infty)$, and suppose that $x_i \in X$ are points such that $u(x_i) 
	\leq \alpha$ 
	and $x_i \to x \in X$. Then by \eqref{eq:length_estimate}, $\len (\gamma_{x_i})$ are uniformly 
	bounded by $c^{-1}(\alpha + 1)$, and we again, after arclength-parametrizing and moving to a 
	subsequence, have that $\gamma_{x_i} \to \gamma$ uniformly. Hence, by Lemma~\ref{lem:lsc_for_path_conv}, we have 
	\[
		u(x) \leq \int_\gamma \! \rho \,\dd s 
		\leq \limsup_{i \to \infty} \int_{\gamma_{x_i}} \! \rho \,\dd s 
		= \limsup_{i \to \infty} u(x_i)
		\leq \alpha.
	\]
	Thus, sublevel sets of $u$ are closed, implying that $u$ is lower semicontinuous.
\end{proof}

\begin{proof}[Proof that $\rho$ is a $p$-weak upper gradient of $u$.]
	We first let $x, y \in X$ 
	be points such that $u(x) < \infty$ and $u(y)< \infty$, and let $\gamma$ be a path connecting $y$ to 
	$x$. Then, by the definitions of $u$ and $\gamma_x$,
	\[
		u(y) - u(x) \leq \left(\int_{\gamma \cup \gamma_x}\!\rho\,\dd s\right) - u(x) =  
		\left(\int_\gamma\!\rho\,\dd s\right) + u(x) - u(x) = \int_\gamma\!\rho\,\dd s.
	\]
	By performing the same inequality in reverse using $\gamma_y$, we get
	\[
		\abs{u(x) - u(y)} \leq \int_\gamma\!\rho\,\dd s.
	\]
	Thus, to conclude that $\rho$ is a $p$-weak upper gradient of $u$, let $\Gamma$ be the 
	family of all non-constant compact curves in $X$ whose trajectory has non-empty intersection with 
	the infinity set $\Lambda_\infty:=\{x\in X:u(x)=\infty\}$ of $u$. Our objective now is to show 
	that $\Gamma$ has vanishing $p$-modulus. 
	
	As $X$ has the $MEC_p$ property by Lemma \ref{lem:MECp}, 
	there exists a measurable set $F\subset X$ such that $\mu(F)=0$ and for all $x\in 
	X\setminus F$ and $y \in X$, there exists a rectifiable curve 
	$\gamma_{x,y}$ connecting $x$ and $y$ such that $\int_{\gamma_{x,y}}\!\rho\,ds<\infty$ if and only 
	if $y \in X \setminus F$. 
	Note that when $x\in X\setminus F$, we must have $u(x) < \infty$, as we can find a point $w\in 
	X\setminus(\Om\cup F)$
	by the fact that $\mu(X\setminus\Om)>0$. Hence, we have $\Lambda_\infty\subset F$. 
	
	Next, denote by $\Gamma_F$ the collection of all non-constant compact curves in $X$ whose 
	trajectory has non-empty 
	intersection with $F$, and by $\Gamma^+_F$ the subcollection of those curves whose trajectory 
	intersects 
	$E$ on a set of positive one-dimensional Hausdorff measure. Note that $\Lambda_\infty\subset F$ 
	implies 
	that $\Gamma\subset\Gamma_F$. Since $\mu(F)=0$, the function $\infty\chi_{F}$ is admissible for the 
	$p$-modulus of $\Gamma^+_F$, and thus $\Mod_p(\Gamma^+_F)=0$. For 
	$\gamma\in\Gamma_F\setminus\Gamma^+_F$, 
	there must exist a point $x$ along the trajectory of $\gamma$ that lies in $F$ and another point $y$ 
	along 
	the trajectory of $\gamma$ that lies outside of $F$. It follows that $\int_{\gamma}\rho\,ds=\infty$ 
	for 
	such a curve. As $\rho\in L^p(X)$, this can only happen for a collection of curves of $p$-modulus 
	zero, 
	and hence $\Mod_p(\Gamma_F\setminus\Gamma^+_F)=0$. Therefore, $\Mod_p(\Gamma) \le 
	\Mod_p(\Gamma_F) 
	\le \Mod_p(\Gamma^{+}_F) + \Mod_p(\Gamma_F \setminus \Gamma^{+}_F) = 0$, completing the proof of 
	Lemma \ref{lem:lsc_lemma}. 
\end{proof}

We are now ready to prove a proposition which contains the essential content of the second part of 
Theorem \ref{thm:main2}. For the statement, we say that a curve $\gamma:[0,\infty)\to X$ tends to 
$\infty$ if for each $R>0$ there exists $t_R>0$ such that $\gamma(t)\in X\setminus B( \gamma(0),R)$ for 
each $t>t_R$. Moreover, in what follows, we will use the fact that since $(X, d, \mu)$ satisfies 
our standing assumptions, there exists a positive continuous function $h_0$ on $X$ such that 
$h_0\in 
L^p(X)$.  
Such a function $h_0$ can be constructed for instance via a discrete convolution of the function $h$ defined 
as follows: fixing $x_0\in X$,  we set $E_n :=  B(x_0,n)\setminus B(x_0,n-1)$ for every 
positive integer $n$, with the understanding that $B(x_0,0)$ is the empty set, and then define 
\[
h := \sum_{n=1}^\infty \frac{2^{-n/p}}{\mu(E_n)^{1/p}}\, \chi_{E_n}.
\]
As $X$ is connected and unbounded, the sets $E_n$ have nonempty interior, and we hence have 
$\mu(E_n)>0$ for every $n$. Thus, the above series is well-defined and lies in $L^p(X)$.

\begin{prop}\label{prop:pathwiseNewt}
Let $1 \le p < \infty$, and let $(X,d,\mu)$ be a metric measure space of uniformly locally 
$p$-controlled geometry that satisfies our standing assumptions.
Suppose that $X$ supports a global $(p,p)$-Sobolev inequality
for compactly supported functions in $N^{1,p}(X)$ as in Definition~\ref{def:global-pp}.
Let $f\in D^{1,p}(X)$, and suppose that
there exists a constant $c \in \R$ so that for $p$-almost every curve $\gamma \colon [0, \infty) \to X$ 
tending to $\infty$, we have $\lim\limits_{t \to \infty} f(\gamma(t)) = c$. Then $f - c \in N^{1,p}(X)$.
\end{prop}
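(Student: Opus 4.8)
The plan is to prove $f-c\in L^p(X)$; since $f\in D^{1,p}(X)$, this gives $f-c\in N^{1,p}(X)$. Replacing $f$ by $f-c$ we may assume $c=0$, so $f\in D^{1,p}(X)$ and $f(\gamma(t))\to 0$ along $\Mod_p$-a.e.\ curve tending to $\infty$. First I would fix an everywhere positive, lower semicontinuous Borel function $\rho\in L^p(X)$ that is a $p$-weak upper gradient of $f$ and moreover has the property that every curve $\gamma$ with $\int_\gamma\rho\,\dd s<\infty$ avoids a fixed $\Mod_p$-negligible family. To arrange this I would take a $p$-integrable upper gradient of $f$, pass to a lower semicontinuous majorant via the Vitali--Carath\'eodory theorem applied to its $p$-th power, add the continuous positive $h_0\in L^p(X)$ recalled above, and finally add a Fuglede function $\rho'\in L^p(X)$ (also made lower semicontinuous) with $\int_\gamma\rho'=\infty$ on the $\Mod_p$-negligible family $\Gamma^\ast$ consisting of all curves along which the upper gradient inequality for $f$ fails on some subcurve, all curves of infinite $\rho$-length, and all curves tending to $\infty$ along which $f\not\to 0$; existence of $\rho'$ is Lemma~\ref{lem:useful}.

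Next I would fix an exhaustion of $X$ by bounded domains $\Omega_1\ssub\Omega_2\ssub\cdots$ with $\overline{\Omega_j}\subset\Omega_{j+1}$, $\bigcup_j\Omega_j=X$, and \emph{$\rho$-thin shells}, meaning $\dist_\rho(z,\,X\setminus\Omega_{j+1})\le 2^{-j}$ for all $z\in\partial\Omega_j$, where $\dist_\rho$ denotes distance in the weighted length metric $\rho\,\dd s$. Such an exhaustion can be built because $\rho\ge h_0$ is bounded below on each bounded set, so each $\Omega_{j+1}$ may be taken to be a sufficiently thin weighted neighborhood of $\overline{\Omega_j}$ truncated to a large metric ball. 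Applying Lemma~\ref{lem:lsc_lemma} with $\rho$ and each $\Omega_j$, the function
\[
	u_j(x):=\inf\Big\{\int_\gamma\rho\,\dd s:\gamma\text{ connects }x\text{ to }X\setminus\Omega_j\Big\}
\]
is lower semicontinuous, every $x$ has a minimizing curve $\gamma^j_x$ which we take to end on $\partial\Omega_j$, and $\rho$ is a $p$-weak upper gradient of $u_j$ once $u_j\in L^p(X)$. Since $u_j\equiv 0$ on $X\setminus\Omega_j$, each truncation $\min\{u_j,N\}$ is bounded, compactly supported, and has $\rho$ as a $p$-weak upper gradient (by the argument in the proof of Lemma~\ref{lem:lsc_lemma}); the global $(p,p)$-Sobolev inequality then gives $\int_X\min\{u_j,N\}^p\,\dd\mu\le C\int_X\rho^p\,\dd\mu$, and letting $N\to\infty$ yields $\int_X u_j^p\,\dd\mu\le C\int_X\rho^p\,\dd\mu$ uniformly in $j$. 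As $(u_j)_j$ is nondecreasing, monotone convergence shows the pointwise supremum $u:=\sup_j u_j$ satisfies $\int_X u^p\,\dd\mu\le C\int_X\rho^p\,\dd\mu<\infty$.

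The crux is then $|f|\le u$ $\mu$-a.e., which finishes the proof since it gives $f\in L^p(X)$. Fix $x$ in the main $\rho$-equivalence class (a set of full measure, by the $MEC_p$ property of $X$ from Lemma~\ref{lem:MECp}), so that $u_m(x)<\infty$ for each $m$, and fix $m\in\N$. I would chain the minimizers: set $P_1:=\gamma^m_x$ with endpoint $y_1\in\partial\Omega_m$, inductively set $P_{k+1}:=\gamma^{m+k}_{y_k}$ with endpoint $y_{k+1}\in\partial\Omega_{m+k}$, and let $\Sigma$ be the concatenation $P_1\cdot P_2\cdots$. By $\rho$-thinness of the shells, $\int_{P_{k+1}}\rho\,\dd s=u_{m+k}(y_k)\le 2^{-(m+k-1)}$, so $\int_\Sigma\rho\,\dd s\le u_m(x)+2^{-(m-1)}\le u(x)+2^{-(m-1)}<\infty$. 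Because $\Sigma$ has finite $\rho$-length while $\rho\ge h_0$ is bounded below on each bounded set, and because the points $y_k$ recede to $\infty$ (indeed $\dist(x_0,\partial\Omega_j)\to\infty$) while each tail of $\Sigma$ has tiny $\rho$-length and therefore cannot travel back into a fixed ball, the curve $\Sigma$ is a well-defined curve $[0,\infty)\to X$ tending to $\infty$. Finiteness of $\int_\Sigma\rho\,\dd s$ forces $\Sigma\notin\Gamma^\ast$, whence $f(\Sigma(t))\to 0$ and $|f(x)-f(\Sigma(t))|\le\int_\Sigma\rho\,\dd s\le u(x)+2^{-(m-1)}$ for all $t$; letting $t\to\infty$ gives $|f(x)|\le u(x)+2^{-(m-1)}$, and then $m\to\infty$ gives $|f(x)|\le u(x)$.

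I expect this chaining step — together with constructing the bounded, $\rho$-thin, exhausting domains $\Omega_j$ so that the tails of $\Sigma$ genuinely escape every ball — to be the main obstacle; the remaining steps are bookkeeping built on Lemmas~\ref{lem:lsc_for_path_conv}, \ref{lem:lsc_lemma} and~\ref{lem:MECp}. Note that the global $(p,p)$-Sobolev inequality is used only for the uniform $L^p$-bound on the $u_j$; it also precludes $p$-parabolicity of $X$, consistently with the fact that on a $p$-parabolic space one typically has $D^{1,p}(X)\ne N^{1,p}(X)+\R$.
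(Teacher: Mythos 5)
Your overall architecture matches the paper's up to the final step: take a lower semicontinuous $p$-integrable upper gradient augmented by a Fuglede function and by the positive continuous $h_0\in L^p(X)$, define $\rho$-distance functions to complements of an exhaustion, obtain a uniform $L^p$ bound from the global $(p,p)$-Sobolev inequality (your truncation trick with $\min\{u_j,N\}$ and Lemma~\ref{lem:upper-is-upper} is if anything cleaner than the paper here), pass to the monotone limit $u$, and finish by producing, for a.e.\ $x$, an infinite curve of finite $\rho$-length tending to $\infty$ along which $f\to c$. The paper produces that curve as a locally uniform Arzel\`a--Ascoli diagonal limit of the minimizers $\gamma_{x,r}$ for $u_r(x)$ defined relative to balls $B(x_0,r)$, using Lemma~\ref{lem:lsc_for_path_conv} to pass the $\rho$-length bound to the limit; you instead chain minimizers across shells. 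The chaining idea is attractive, but it rests on an assumption the paper never needs, and that assumption is where your proof has a genuine gap.

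The gap is the existence of the exhaustion with $\rho$-thin shells. You need bounded domains $\Omega_j$ with $\bigcup_j\Omega_j=X$ and $\dist_\rho(z,X\setminus\Omega_{j+1})\le 2^{-j}$ for \emph{every} $z\in\partial\Omega_j$, and you justify this by appealing to the fact that $\rho\ge h_0$ is bounded below on bounded sets. That lower bound works against you, not for you: since $\Omega_{j+1}$ is open and contains $\overline{\Omega_j}$, every curve from $z\in\partial\Omega_j$ to $X\setminus\Omega_{j+1}$ has length at least $\delta_j(z):=\dist(z,X\setminus\Omega_{j+1})$, hence $\rho$-integral at least $\delta_j(z)\,\inf_{B(z,\delta_j(z))}h_0$. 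Thinness therefore forces $X\setminus\Omega_{j+1}$ to come within metric distance roughly $2^{-j}/\inf_{B(z,1)}h_0$ of every point of $\partial\Omega_j$; so either $\Omega_{j+1}$ hugs $\Omega_j$ in a shell of rapidly summable metric width, in which case $\bigcup_j\Omega_j$ sits in a bounded $\rho$-neighborhood of $\Omega_1$ and need not be all of $X$ (already on the real line with $h_0\in L^p\setminus L^1$ the set $\{x:\dist_{h_0}(x,x_0)\le M\}$ is a bounded interval), or $\Omega_{j+1}$ develops slits reaching back toward $\partial\Omega_j$, which places points of $\partial\Omega_{j+1}$ at bounded distance from $x_0$ and destroys your claim that $\dist(x_0,\partial\Omega_j)\to\infty$, hence that $\Sigma$ tends to $\infty$. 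A second, independent problem is that even a metrically thin shell need not be $\rho$-thin: $\rho$ contains the Fuglede majorant and is only lower semicontinuous and $L^p$, so individual points $z$ may satisfy $\dist_\rho(z,w)=\infty$ for all $w\ne z$, and you give no mechanism for choosing $\partial\Omega_j$ to avoid the ($\mu$-null but possibly dense) complement of the main $MEC_p$ equivalence class. The rest of your write-up (the use of $\Gamma^\ast$ to control subcurves of $\Sigma$, the telescoping bound $\int_\Sigma\rho\le u_m(x)+2^{-(m-1)}$, the limit $m\to\infty$) is sound; to repair the proof, replace the chaining by the paper's route: take arclength-parametrized minimizers $\gamma_{x,r}$ for the balls $B(x_0,r)$, note that their lengths are bounded above in terms of $u(x)$ and below by $r-d(x,x_0)$, extract a locally uniform diagonal limit $\gamma_x$, and apply Lemma~\ref{lem:lsc_for_path_conv} to conclude $\int_{\gamma_x}\rho\le u(x)$.
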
 

\begin{proof}
We select an upper gradient $\rho \colon X \to [0, \infty]$ of $f$ which has the following 
properties:
\begin{enumerate}
\item\label{enum:ug_finite_p-int} we have 
\[
\int_{X}\!\rho^p\,\dd \mu < \infty;
\]
\item\label{enum:ug_infinite_on_exc} we have 
\[
\int_{\gamma}\!\rho\,\dd s = \infty
\] 
for every path $\gamma$ tending to $\infty$ for which $\lim\limits_{t \to \infty} f(\gamma(t)) \neq c$;
\item\label{enum:ug_lsc} the function $\rho$ is lower semicontinuous.
\end{enumerate}
Indeed, \eqref{enum:ug_finite_p-int} is possible by just the definition of $D^{1,p}(X)$, and we can add 
\eqref{enum:ug_infinite_on_exc} by~\cite[Chapter I Theorem 2]{Fuglede} since the family 
of paths $\gamma$ tending to $\infty$
with $\lim_{t \to \infty} f(\gamma(t)) \neq c$ has zero $p$-modulus. For \eqref{enum:ug_lsc}, 
the Vitali-Carath\'eodory theorem, see for example \cite[p.~108]{HKSTbook}, allows us to approximate the 
previous $\rho$ from above by a lower semicontinuous function in the $L^p$-norm.
	
We fix a point $x_0 \in X$ and a positive continuous function $h_0\in L^p(X)$.
We let $\rho_0 = \rho + h_0$, 
and note that $\rho_0$ is also an upper gradient of $f$ that satisfies 
\eqref{enum:ug_finite_p-int}-\eqref{enum:ug_lsc}. For each $r>0$, we define $u_r \colon X \to 
[0, \infty]$ as follows:
\begin{equation}\label{eq:ur_def}
u_r(x):= \inf \left\{ \int_{\gamma}\!\rho_0\,\dd s : \gamma \text{ connects } x \text{ to } X 
\setminus B(x_0, r) \right\}.
\end{equation}
In particular, since $X$ is proper, every $u_r$ is compactly supported with $\supp u_r \subset \overline{B(x_0, r)}$. 
Moreover, since $\rho$ is non-negative and $h_0$ is both positive and continuous, for every $r > 0$ 
there exists some 
$\tilde{c}>0$ such that $\rho_0 \geq \tilde{c}$ on $\overline{B(x_0, r)}$. As $\rho_0$ is lower semicontinuous 
by \eqref{enum:ug_lsc}, Lemma \ref{lem:lsc_lemma} yields that $u_r$ is lower semicontinuous, $\rho_0$ is an 
upper gradient of $u_r$, and there exist paths $\gamma_{x, r}$ connecting $x$ to $X \setminus 
B(x_0, r)$ with
\begin{equation*}\label{eq:ur_representative_path}
u_r(x) = \int_{\gamma_{x, r}}\!\rho_0\,\dd s.
\end{equation*}
	
Notably, $u_r$ is compactly supported and in $D^{1,p}(X)$, so $u_r \in N^{1,p}(X)$ by the local 
$p$-Poincar\'e inequality. The global Sobolev inequality then yields 
\[
\int_{X}\! \abs{u_r}^p\,\dd\mu \leq C\, \int_{X}\! \rho_0^p\,\dd\mu < \infty.
\]
We also observe that if $x \in X$ is fixed, then $u_r(x)$ is increasing with respect to $r$, since the family 
of paths that we infimize over in the definition of $u_r(x)$ from~\eqref{eq:ur_def} shrinks as $r$ increases. Thus, 
we have a well defined limit function
\[
u \colon X \to [0, \infty], \qquad u(x) = \lim_{r \to \infty} u_r(x),
\]
and we have by the monotone convergence theorem that
\begin{equation}\label{eq:u_is_Lp}
\int_{X}\! \abs{u}^p \,\dd\mu = \lim_{r \to \infty} \int_{X}\! \abs{u_r}^p \,\dd\mu \leq C\, \int_{X}\! \rho_0^p \,\dd\mu < \infty.
\end{equation}
We claim that $\abs{f(x) - c} \leq u(x)$ for a.e.\ $x \in X$; if this holds, then $f - c \in L^p(X)$ by 
\eqref{eq:u_is_Lp}, and the proof is complete. For this, fix a point $x \in X$ such that $u(x) < \infty$; 
since $u$ is in $L^p(X)$, this holds for a.e.\ $x \in X$. 
We then consider the paths $\gamma_r:=\gamma_{x, r}:[0,\infty)\to X$ identified above, parametrized first by arclength and 
then as constant. We wish to select a sequence $r_i$ of radii tending to $\infty$ so that $\gamma_{r_i}$ 
converge locally uniformly to some path $\gamma_x$ on the domain $[0, \infty)$. Note that
\[
(r - d(x, x_0)) \leq \len (\gamma_{r}),
\]
and therefore the lengths of the paths increase to infinity as $r$ increases to infinity. Moreover, 
as $\rho_0\geq\tilde{c}$ on $\overline{B(x_0,r)}$, we have
\[
\len(\gamma_{r})\leq\frac{1}{\tilde{c}}\int_{\gamma_{r}}\!\rho_0\,ds = \frac{1}{\tilde{c}}u_r(x)\leq \frac{1}{\tilde{c}}u(x)<\infty.
\]
Thus, we may, by the Arzel\`a-Ascoli theorem, first select a sequence $\gamma_{ r(i, 1)}$ such that $r(i, 1) \geq 1 + d(x, x_0)$ 
for all $i$ and $\gamma_{r(i, 1)}$ converges uniformly to a path on $[0, 1]$. Then we take a subsequence 
$\gamma_{r(i, 2)}$ of this sequence such that $r(i, 2) \geq 2 + d(x, x_0)$ for all $i$ and we have uniform 
convergence on $[0, 2]$. We continue 
inductively; once we have the subsequence $\gamma_{r(i,k)}$ is chosen such that $r(i,k)\ge k+d(x,x_0)$ for all $i$ that 
converges uniformly in $[0,k]$, we choose a further subsequence $\gamma_{r(i,k+1)}$ with
$r(i,k+1)\ge k+1+d(x,x_0)$ for each $i$ such that this further subsequence converges uniformly in $[0,k+1]$.
Finally, we take the diagonal subsequence 
$\gamma^i := \gamma_{r(i, i)}$, which converges locally uniformly to some path $\gamma_x:[0,\infty)\to 
X$ with $\gamma_x(0)=x$.

Now, since $\rho_0$ is lower semicontinuous, we have by Lemma~\ref{lem:lsc_for_path_conv} the 
estimate
\begin{equation*}
	\int_{\gamma_x}\! \rho_0 \,\dd s 
	\leq \limsup_{i \to \infty} \int_{\gamma^i}\! \rho_0 \,\dd s
	= \limsup_{i \to \infty} u_{r(i,i)}(x)
	= u(x). 
\end{equation*}
Finally, since the integral of $\rho_0$ over $\gamma_x$ is finite by our assumption that $u(x) < 
\infty$, 
we have by 
Condition~\eqref{enum:ug_infinite_on_exc} that $f \to c$ on $\gamma_x$. Hence, because $\rho_0$ is an upper 
gradient of $f$, we have
\[
\abs{f(x) - c} \leq \int_{\gamma}\! \rho_0 \,\dd s \leq u(x).
\]
Thus, the desired $\abs{f - c} \leq u$ has been shown almost everywhere, and the proof is complete.
\end{proof}

We then derive our main result of this section from the previous proposition. 

\begin{theorem}\label{thm:lp-f-at-infty}
Let $1 \le p < \infty$, and let $(X,d,\mu)$ be a metric measure space of uniformly locally 
$p$-controlled geometry that satisfies our standing assumptions.
Suppose that $X$ supports a global $(p,p)$-Sobolev inequality
for compactly supported functions in $N^{1,p}(X)$ as in Definition~\ref{def:global-pp}.
Let $f\in D^{1,p}(X)$. Then $f\in N^{1,p}(X)$ if and only
if for $p$-almost every curve $\gamma:[0,\infty)\to X$ tending to $\infty$, we have $\lim\limits_{t\to\infty}f(\gamma(t))=0$.
\end{theorem}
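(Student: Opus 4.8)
The plan is to derive Theorem~\ref{thm:lp-f-at-infty} directly from Proposition~\ref{prop:pathwiseNewt} together with a straightforward argument for the converse direction. The forward direction (the ``only if'' part) is the easier one: if $f \in N^{1,p}(X)$, then in particular $f$ has a $p$-integrable upper gradient $\rho$, and I would use Lemma~\ref{lem:useful}(1) (Fuglede's lemma) to find a $p$-exceptional family of curves off of which the path integral $\int_\gamma \rho\, ds$ is finite. For any curve $\gamma \colon [0,\infty) \to X$ tending to $\infty$ that is not in this exceptional family, I would restrict to the subcurves $\gamma|_{[0,t]}$ and use the upper gradient inequality to see that $t \mapsto f(\gamma(t))$ is uniformly continuous in the sense that $|f(\gamma(t)) - f(\gamma(s))| \le \int_{\gamma|_{[s,t]}} \rho\, ds \to 0$ as $s,t \to \infty$; hence the limit $L_\gamma := \lim_{t\to\infty} f(\gamma(t))$ exists. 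The remaining task is to show $L_\gamma = 0$ for $p$-a.e.\ such $\gamma$, and this is where I would invoke that $f \in L^p(X)$: the family of curves tending to $\infty$ along which $f$ has a nonzero limit should be $p$-exceptional, because such curves spend infinitely much ``mass'' in the superlevel set $\{|f| > |L_\gamma|/2\}$, which has finite measure since $f \in L^p$.

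More carefully, for the forward direction I would argue by contradiction or by a direct covering/exhaustion estimate. Suppose the family $\Gamma_{\neq 0}$ of curves tending to $\infty$ with $L_\gamma \neq 0$ had positive $p$-modulus. Decompose $\Gamma_{\neq 0}$ into subfamilies $\Gamma_k$ of curves with $|L_\gamma| > 1/k$; then some $\Gamma_k$ has positive modulus. Each $\gamma \in \Gamma_k$ eventually stays in $A_k := \{x \in X : |f(x)| > 1/(2k)\}$, which has finite measure by Chebyshev since $f \in L^p(X)$. But a curve that tends to $\infty$ and eventually stays in $A_k$ must traverse $A_k$ over unbounded parameter intervals, and since $A_k$ has finite measure while $X$ is unbounded, one can build, using the positive continuous $L^p$ function $h_0$ (which exists by the discussion preceding Proposition~\ref{prop:pathwiseNewt}) combined with a multiple of $\chi_{A_k \setminus B(x_0,R)}$ for large $R$, an admissible function for $\Gamma_k$ of arbitrarily small $L^p$-norm --- contradicting positivity of the modulus. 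This is essentially the argument behind Corollary~\ref{cor:integrable-trace-zero}-type statements and is a standard modulus estimate.

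For the converse direction (the ``if'' part), this is almost immediate from Proposition~\ref{prop:pathwiseNewt}: if for $p$-a.e.\ curve $\gamma$ tending to $\infty$ we have $\lim_{t\to\infty} f(\gamma(t)) = 0$, then Proposition~\ref{prop:pathwiseNewt} applied with $c = 0$ yields $f - 0 = f \in N^{1,p}(X)$. So the only real content to write here is the forward implication plus a sentence invoking the proposition for the reverse implication.

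I expect the main obstacle to be the modulus estimate in the forward direction: showing rigorously that the family of curves along which a fixed $L^p$ function has a nonzero limit at $\infty$ is $p$-exceptional. The subtlety is that ``tending to $\infty$'' and ``eventually staying in a finite-measure set'' must be combined correctly --- one needs that such a curve has infinite length inside $A_k$ outside of every ball (otherwise the curve could converge to a point, but then it would not tend to $\infty$), so that $\lambda \chi_{A_k \setminus B(x_0,R)} + h_0$ with $\lambda$ large becomes admissible for the tail of each such curve while having $L^p$-norm controlled by $\lambda \mu(A_k \setminus B(x_0,R))^{1/p} + \|h_0\|_{L^p}$; then letting $R \to \infty$ kills the first term since $\mu(A_k) < \infty$, but one must be slightly careful because making $R$ large forces $\lambda$ large to keep admissibility, so instead I would fix the threshold via $h_0$ alone: since $h_0$ is a positive continuous $L^p$ function, $\int_\gamma h_0\, ds = \infty$ for any curve of infinite length that eventually leaves every compact set along which $h_0$ is bounded below --- wait, that is not automatic either. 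The cleanest route is: a curve tending to $\infty$ that eventually stays in $A_k$ has $\int_\gamma \chi_{A_k}\, |ds|$ infinite only if it has infinite length in $A_k$; if it has finite length it converges to a point of $\overline{A_k}$, contradicting tending to $\infty$ unless $\overline{A_k}$ is unbounded --- but $A_k$ has finite measure, and in a space where balls have positive measure and the space is connected and proper, a finite-measure set cannot be ``unbounded in a way that supports convergent-to-$\infty$ curves of finite length,'' so the length is infinite. Making this last point precise --- perhaps by the doubling/Poincaré geometry forcing a measure lower bound on unit balls centered in $A_k$ far out --- is the delicate step, and I would handle it by noting that $\int_\gamma(\rho + h_0)\, ds = \infty$ forces either $\int_\gamma \rho\, ds = \infty$ (excluded for non-exceptional $\gamma$) or infinite length, and infinite length of a curve tending to $\infty$ inside the fixed finite-measure set $A_k$ yields, via $h_0$ being bounded below on compacta and $A_k$ having finite measure so that $\chi_{A_k}$ times a large constant is $L^p$-small-in-tail, the desired contradiction.
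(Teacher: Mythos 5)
Your proposal is correct and follows essentially the same route as the paper: the converse is exactly Proposition~\ref{prop:pathwiseNewt} with $c=0$, and the forward direction is Fuglede's lemma plus the observation that a curve tending to $\infty$ has an infinite-length tail. The paper streamlines the step you were worried about by replacing $f$ with $|f|$ and applying Fuglede to $f+g\in L^p(X)$, so that $\int_\gamma f\,\dd s<\infty$ on every non-exceptional curve, which immediately contradicts $f\ge c_\gamma/2$ on a tail of infinite length; this avoids your decomposition into the sets $A_k$ and dissolves your hesitation about $\overline{A_k}$ being unbounded, since a curve of finite length stays within a bounded set and therefore can never tend to $\infty$.
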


\begin{remark}\label{rem:no-Sobolev}
The proof of the above theorem will show that even if $X$ does not support a global $(p,p)$-Sobolev inequality, we must have
that $\lim_{t\to\infty}f(\gamma(t))=0$ for $p$-almost every curve tending to $\infty$ whenever $f\in N^{1,p}(X)$.
\end{remark}

\begin{proof}
Thanks to Proposition~\ref{prop:pathwiseNewt}, to prove this theorem it now suffices to verify that 
for every $f \in N^{1,p}(X)$, we have $\lim_{t \to \infty} f(\gamma(t)) = 0$ along $p$-almost every 
curve $\gamma$ tending to $\infty$.
By replacing $f$ with $|f|$ if need be, we can assume without loss of generality that $f\ge 0$.
We fix such a function $f$, choose an upper gradient $g\in L^p(X)$ of $f$, and set $\Gamma$ to 
be the collection of all curves $\gamma$ in $X$ for which $\int_\gamma(f+g)\, \dd s=\infty$. Then 
we have that $\Mod_p(\Gamma)=0$.

Let $\gamma:[0,\infty)\to X$ be a curve tending to $\infty$ such that $\gamma\not\in\Gamma$. Then in particular,
$\int\limits_\gamma\, g\, ds<\infty$, and hence, for each $\eps>0$ there  exists a 
positive real number $T_\eps>0$ such that
\[
\int_{\gamma\vert_{[T_\eps,\infty)}}g\, \dd s<\eps.
\]
As $g$ is an upper gradient of $f$, it follows that for all $t,s\in [T_\eps,\infty)$ we have the 
estimate $|f(\gamma(t))-f(\gamma(s))|<\eps$. Hence, we conclude that there exists  
a $c_\gamma\ge 0$ such that $\lim_{t\to\infty} f(\gamma(t))=c_\gamma$.

To complete the proof, it remains to show that $c_\gamma=0$. Suppose towards 
contradiction that $c_\gamma>0$. Then there exists  a positive number $T$ such that
$f(\gamma(t))\ge c_\gamma/2>0$ whenever $t\ge T$. It follows that 
$\int_{\gamma\vert_{[T,\infty)}}f\, \dd s=\infty$, violating the condition that $\int_\gamma\, f\, \dd 
s$ is finite. Thus, $c_\gamma=0$, completing the proof.
\end{proof}

Based on the above proof, we  know that whenever $(X,d,\mu)$ satisfies our standing 
assumptions, $f\in D^{1,p}(X)$ and $g$ is a $p$-weak upper gradient of $f$ with $\int_Xg^p\, 
\dd\mu$ finite, we have that for $p$-modulus almost every curve $\gamma$
tending to $\infty$, the limit $c_\gamma := \lim_{t\to\infty}f(\gamma(t))$ exists. As pointed out 
in the introduction, we know that
when $X=\R^n$ and $\gamma$ is also a radial line, we can choose $c_\gamma$ to be independent of $\gamma$, see~\cite{Usp},
and if $X=\R^n\times[0,\infty)$, then for $p$-modulus almost every vertical line $\gamma$ we can choose $c_\gamma$ to be 
independent of $\gamma$, see~\cite{Feff}. This is in contrast to functions in $D^{1,p}(X)\setminus(N^{1,p}(X)+\R)$ for $X$
either $\R^n$ or $\R^n\times[0,\infty)$ for which for each $c\in\R$ there is a positive $p$-modulus worth of curves $\gamma$ tending to
$\infty$ for which $c_\gamma\ne c$. We refer the reader to~\cite{EKN, KN, KKN, LN} for more information.

\section{The case of the standard hyperbolic space}\label{Sec:6}

Given the discussion in the preceding sections, the only situation where we do not know the
relationship between $N^{1,p}(X)+\R$ and $D^{1,p}(X)$ is the case where $X$ has only one end, and that end is $p$-hyperbolic.
The focus of this section is to consider one example of this, namely, the standard Riemannian hyperbolic $n$-space $\H^n$.
In particular, as an example of the fact that $N^{1,p}(X, d, \mu) + \R$ and $D^{1,p}(X, \mu, \R)$ can sometimes coincide even in 
non-compact spaces, we prove the following. 

\begin{theorem}\label{thm:Hn_classification}
Let $1\le p <\infty$, and let $\H^n$ denote the standard hyperbolic space of dimension 
$n>1$. Then we have $N^{1,p}(\H^n) + \R = D^{1,p}(\H^n)$ if and only if $p \le n-1$.
\end{theorem}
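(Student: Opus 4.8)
\emph{Proof proposal.} I will work in geodesic polar coordinates, writing $\H^n = (0,\infty)\times\S^{n-1}$ with metric $dr^2 + \sinh^2 r\, g_{\S^{n-1}}$ and volume element $dV = \sinh^{n-1}r\, dr\, d\sigma(\theta)$, where $\sigma$ is the standard measure on the unit sphere $\S^{n-1}$; for $f$ on $\H^n$ write $\bar f(r) := \fint_{\S^{n-1}} f(r,\cdot)\, d\sigma$. Since $N^{1,p}(\H^n) + \R \subseteq D^{1,p}(\H^n)$ is automatic, the theorem reduces to two implications: $p > n-1$ makes this inclusion strict, and $1 \le p \le n-1$ makes it an equality. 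Throughout, for $f\in D^{1,p}(\H^n)$ I will use that $f\in L^1_\loc(\H^n)$ and that its minimal $p$-weak upper gradient coincides a.e.\ with $\abs{\nabla f}$, and that $f$ is absolutely continuous along almost every radial ray (metric-space Fubini / ACL facts that I will invoke cleanly).

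For the implication $p>n-1$ I will produce an explicit counterexample. Fix $\theta_0\in\S^{n-1}$, choose $\psi\in C^\infty([0,\infty))$ with $\psi\equiv 0$ near $0$ and $\psi\equiv 1$ on $[2,\infty)$, choose a non-constant $\eta\in C^\infty([0,\pi])$ with $\eta\equiv 1$ near $0$ and $\eta\equiv 0$ near $\pi$, and set $f(r,\theta):=\psi(r)\,\eta(\dist_{\S^{n-1}}(\theta,\theta_0))$. Then $f\in C^\infty(\H^n)$, and for $r\ge 2$ one has $\abs{\nabla f}(r,\theta)\le C(\sinh r)^{-1}$ with support confined to the fixed region where $\dist_{\S^{n-1}}(\theta,\theta_0)$ lies in the transition interval of $\eta$, so
\[
\int_{\H^n}\abs{\nabla f}^p\, dV \lesssim 1 + \int_2^\infty (\sinh r)^{-p}\sinh^{n-1}r\, dr = 1 + \int_2^\infty (\sinh r)^{(n-1)-p}\, dr,
\]
which is finite precisely because $p>n-1$; hence $f\in D^{1,p}(\H^n)$. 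On the other hand, for every $r\ge 2$ the slice $f(r,\cdot)$ equals $1$ on a fixed-positive-measure cap around $\theta_0$ and equals $0$ on a fixed-positive-measure cap around $-\theta_0$, so $\int_{\S^{n-1}}\abs{f(r,\cdot)-c}^p\, d\sigma\ge\kappa>0$ for every $c\in\R$; integrating against $\sinh^{n-1}r\, dr$ forces $f-c\notin L^p(\H^n)$ for all $c$, i.e.\ $f\notin N^{1,p}(\H^n)+\R$.

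For the implication $1\le p\le n-1$, fix $f\in D^{1,p}(\H^n)$ with $g:=\abs{\nabla f}\in L^p$; I will find a constant $c_0$ with $f-c_0\in L^p(\H^n)$. \emph{Step 1 (radial limits).} Since $\int_1^\infty(\sinh t)^{-(n-1)/(p-1)}\, dt<\infty$ (with the obvious modification when $p=1$), Hölder's inequality gives $\int_1^\infty g(t,\theta)\, dt<\infty$ for $\sigma$-a.e.\ $\theta$, producing a measurable $c(\theta):=\lim_{r\to\infty}f(r,\theta)$ with $\abs{f(r,\theta)-c(\theta)}\le\int_r^\infty g(t,\theta)\, dt$ for a.e.\ $(r,\theta)$; the same estimate applied to $\abs{\bar f'(r)}\le\fint_{\S^{n-1}}g(r,\cdot)\, d\sigma$ shows $\int_1^\infty\abs{\bar f'(r)}\, dr<\infty$, hence $\bar f(r)\to c_0$ for some constant $c_0$. \emph{Step 2 (the limit is $c_0$ a.e. --- where $p\le n-1$ enters).} From $\int_0^\infty\big(\sinh^{n-1}r\int_{\S^{n-1}}\abs{\nabla f(r,\cdot)}^p\, d\sigma\big)\, dr = \int_{\H^n}g^p<\infty$ I choose $r_k\to\infty$ with $\sinh^{n-1}r_k\int_{\S^{n-1}}\abs{\nabla f(r_k,\cdot)}^p\, d\sigma\to 0$ and $f(r_k,\cdot)\in W^{1,p}(\S^{n-1})$. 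Rescaling the slice $\{r=r_k\}$ (a round sphere of radius $\sinh r_k$) to the unit sphere turns $\abs{\nabla f(r_k,\cdot)}$ into $(\sinh r_k)^{-1}\abs{\nabla_{\S^{n-1}}f(r_k,\cdot)}$, so the displayed quantity equals $(\sinh r_k)^{(n-1)-p}\int_{\S^{n-1}}\abs{\nabla_{\S^{n-1}}f(r_k,\cdot)}^p\, d\sigma$; since $p\le n-1$ the factor $(\sinh r_k)^{(n-1)-p}$ stays bounded below by a positive constant, whence $\norm{\nabla_{\S^{n-1}}f(r_k,\cdot)}_{L^p(\S^{n-1})}\to 0$. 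The Poincaré inequality on $\S^{n-1}$ then yields $\norm{f(r_k,\cdot)-\bar f(r_k)}_{L^p(\S^{n-1})}\to 0$; passing to a subsequence along which this holds $\sigma$-a.e.\ and combining with $f(r_k,\cdot)\to c(\cdot)$ a.e.\ and $\bar f(r_k)\to c_0$ gives $c\equiv c_0$. \emph{Step 3 (Hardy).} Now $\abs{f(r,\theta)-c_0}\le\int_r^\infty g(t,\theta)\, dt$ a.e., and the one-dimensional weighted Hardy inequality with weight $w(r)=\sinh^{n-1}r$ --- valid since the Muckenhoupt-type condition $\sup_{r>0}\big(\int_0^r w\big)^{1/p}\big(\int_r^\infty w^{-1/(p-1)}\big)^{(p-1)/p}<\infty$ holds (both factors behave like $e^{\pm(n-1)r/p}$ for large $r$, with no constraint on $p$; the case $p=1$ is handled directly) --- gives $\int_0^\infty\abs{f(r,\theta)-c_0}^p\sinh^{n-1}r\, dr\le C\int_0^\infty g(r,\theta)^p\sinh^{n-1}r\, dr$; integrating in $\theta$ shows $\norm{f-c_0}_{L^p(\H^n)}^p\le C\norm{g}_{L^p(\H^n)}^p<\infty$, so $f-c_0\in N^{1,p}(\H^n)$.

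The step I expect to be the crux is Step 2: upgrading ``$f$ has an $L^p$ gradient'' to ``$f$ has a single constant value at infinity''. Everything is governed by the bookkeeping identity $\int^\infty(\sinh r)^{(n-1)-p}\, dr<\infty\iff p>n-1$, which simultaneously makes the counterexample's gradient integrable and makes the rescaled--Poincaré argument work, with the borderline exponent $p=n-1$ landing (just barely) on the divergent side --- i.e.\ on the side where $D^{1,p}(\H^n)=N^{1,p}(\H^n)+\R$. A secondary point requiring care is the interface between the metric-space formalism and classical Riemannian calculus on $\H^n$ (identification of upper gradients with $\abs{\nabla f}$, ACL along a.e.\ ray, Fubini for $W^{1,p}$ slices), which is routine but must be stated precisely. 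Finally, I note that the use of the Poincaré inequality on $\S^{n-1}$ in Step 2 could be replaced by a condenser-capacity comparison inside the spherical slices, which may be closer to an argument that avoids intrinsic inequalities on the sphere altogether.
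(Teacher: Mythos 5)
Your proposal is correct, and your counterexample for $p>n-1$ is essentially the same as the paper's Lemma~\ref{lem:Hn_counterexamples}. The converse direction, however, follows a genuinely different route at the crux (your Step 2). The paper first constructs a trace measure $\mu_u$ on $\S^{n-1}$ from essential limits of spherical averages (Proposition~\ref{prop:Cauchy_for_averages}), shows it is a finite, absolutely continuous signed measure, and proves its density is constant by comparing averages over two isometric spherical caps, rotating one onto the other along the family $S(\alpha,r,\theta)=(r+\alpha,R_\alpha\theta)$; the resulting lateral estimate (Proposition~\ref{prop:lateral_estimate}) carries the factor $\norm{\sinh^{1-(n-1)/p}}_{L^{p/(p-1)}([r,r+\pi/2])}$, which is exactly where $p\le n-1$ enters. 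You instead slice at good radii $r_k$, observe that rescaling the slice to the unit sphere produces the factor $(\sinh r_k)^{(n-1)-p}$ in front of the tangential energy, and invoke the Poincar\'e inequality on $\S^{n-1}$ to force $f(r_k,\cdot)-\bar f(r_k)\to 0$ in $L^p(\S^{n-1})$. This is precisely the step the authors deliberately avoid --- they note that, unlike \cite{Strichartz}, they do not explicitly use Poincar\'e inequalities on $\S^{n-1}$ --- so your argument is closer in spirit to \cite[Theorem~5.8]{Strichartz}. The paper's rotation argument buys independence from intrinsic spherical analysis and packages the measurability issues into the trace-measure formalism; your slicing argument buys brevity and a very transparent localization of where the exponent condition is used. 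Your Step~3 is also a mild variant: the paper proves a one-dimensional inequality with weight $e^{\kappa t}$ by integration by parts (Lemma~\ref{lem:one_dim_exp_Sobolev}), which requires cutting $u$ off near the origin so the radial function vanishes at both ends, whereas you apply the dual weighted Hardy inequality directly to the majorant $\int_r^\infty g(t,\theta)\,dt$, needing no cutoff; just make sure your Muckenhoupt verification also covers $r\to 0^+$, where the second factor grows like $r^{(p-n)/p}$ but is absorbed by the first, which decays like $r^{n/p}$.
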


Our definition of $\H^n$ is based on the higher-dimensional Poincar\'e ball model, as this is a 
relatively standard model for it. However, we will also consider a polar coordinate representation of 
$\H^n$ based on a warp product metric on $\S^{n-1}\times[0,\infty)$, where Euclidean rays of the form $\{\theta\} 
\times [0, \infty)$ with $\theta \in \S^{n-1}$ are mapped into $\H^n$ isometrically, but lateral 
distances between such rays increase exponentially as one approaches infinity. This latter 
representation appears to be more convenient for proving Theorem \ref{thm:Hn_classification}. 

However, using the upper gradient formalism with the model based on $\S^{n-1}\times[0,\infty)$ is not well 
suited to carrying out the proof of Theorem \ref{thm:Hn_classification}. This is because in the 
argument, it is crucial for us to be able to split the derivative of a map $u \in D^{1,p}(\H^n)$ into 
components which are parallel to $\S^{n-1}$ and $[0, \infty)$, respectively, in order to compare the 
behavior of $u$ on two different rays of the form $\{\theta\} \times [0, \infty)$, $\theta \in 
\S^{n-1}$. In considering an upper gradient of $u$, this directional information on the derivative is 
lost. For this reason, we switch from the language of analysis on metric spaces to a more Riemannian 
style of analysis in this section.  

For our precise definition of $\H^n$, we set $\H^n := (\B^n, d_{\H^n})$, where $d_{\H^n}$ is 
the Riemannian distance arising from the smooth Riemannian metric $g_{\H^n}$ defined by
\begin{equation}\label{eq:comparison_of_riem_metrics}
	\ip{v}{w}_{g_{\H^n}} = \frac{4\ip{v}{w}}{(1 - \abs{x}^2)^2}, \qquad v, w \in T_x \B^n,
\end{equation}
where $\langle v,w\rangle$ denotes the Euclidean inner product of the vectors $v$ and $w$.
As we define $\H^n$ in this way, we may use $\B^n$ and $\H^n$ interchangeably in objects and statements that only depend on the smooth structure of the space. If a definition however depends on the metric, and no explicit metric is given, then the choice of $\B^n$ or $\H^n$ signals whether the definition uses the Euclidean or hyperbolic metric.
	
The Riemannian metric $g_{\H^n}$ also yields a norm $\abs{\cdot}_{g_{\H^n}}$ on tangent vectors $v \in T_x \B^n$, 
defined by $\abs{v}_{g_{\H^n}} = \sqrt{\langle v,v \rangle_{g_{\H^n}}}$. Thus, a norm also arises for 
covectors $\alpha \in T^*_x \B^n$ 
in the dual space by 
$\abs{\alpha}_{g_{\H^n}} = \sup \bigl\{\alpha(v) : v \in T_x \B^n, \abs{v}_{g_{\H^n}} = 1 \bigr\}$. 
The comparison with the Euclidean norm for covectors is that
\begin{equation}\label{eq:norm_for_covectors}
	\abs{\alpha}_{g_{\H^n}} = \frac{1 - \abs{x}^2}{2} \abs{\alpha}, \quad \alpha \in T^*_x \B^n.
\end{equation}
Note that since $\B^n \subset \R^n$, a covector field $\omega$ on $\B^n$, also known as a differential $1$-form, has a 
representation $\omega_x =\sum_{j=1}^n\omega_j(x)\, dx_j$, and its pointwise Euclidean norm is given by 
\[
	 |\omega_x| = \sqrt{ [\omega_1(x)]^2 + \dots + [\omega_n(x)]^2}.
\]

Given a differentiable function $f \in C^1(\H^n, \R)$, there are two main ways to reason about its derivative. 
The first is to define a gradient $\nabla_{\H} f$. To achieve this, we note that if $e_1, \dots, e_n$ are the 
standard basis vectors of $\R^n$, then using the standard identification of the tangent spaces of $\R^n$ 
with $\R^n$, the vectors $v_i = 2^{-1}(1-\abs{x}^2) e_i$ form a $g_{\H^n}$-orthonormal basis of 
$T_x \B^n$
by \eqref{eq:comparison_of_riem_metrics}. Thus, the hyperbolic gradient is given by
\begin{equation}\label{eq:hyp_gradient_def}
	\nabla_{\H} f(x) = \sum_{i=1}^n (\partial_{v_i} f(x)) v_i
	= \frac{(1-\abs{x}^2)^2}{4} \sum_{i=1}^n (\partial_{e_i} f(x)) e_i
	= \frac{(1-\abs{x}^2)^2}{4} \nabla f(x).
\end{equation}

The other approach is the exterior derivative $df$, which is a covector field, the value of which at a point $x$ is given by
\begin{equation}\label{eq:df_representation}
	df_x = \partial_{e_1} f (x) dx_1 + \dots + \partial_{e_n} f(x) dx_n.
\end{equation}
The exterior derivative in particular is independent of metric; unlike how $\nabla_\H f$ and $\nabla f$ are different, 
$df$ remains the same regardless if one considers it on $\B^n$ equipped with the Euclidean metric, or $\B^n$ 
equipped with the hyperbolic metric. The three derivatives are related to each other by the fact that if $v$ is 
an $n$-dimensional vector, we have
\[
	df_x(v) = \ip{\nabla f(x)}{v} = \ip{\nabla_\H f(x)}{v}_{g_{\H^n}}.
\]
Due to \eqref{eq:comparison_of_riem_metrics}, \eqref{eq:norm_for_covectors} and \eqref{eq:hyp_gradient_def}, 
we have $\abs{\nabla_{\H} f(x)}_{g_{\H^n}} = \abs{df_x}_{g_{\H^n}}$ for all $x \in \H^n$. Tying this theory to the 
metric version, this quantity also acts as the minimal 
$p$-weak
upper gradient $\rho_f$ of $f$:
\begin{equation}\label{eq:hyperbolic_upper_gradient}
	\rho_f(x) = \abs{df_x}_{g_{\H^n}} = \abs{\nabla_{\H} f(x)}_{g_{\H^n}} = \frac{1 - \abs{x}^2}{2} \abs{\nabla f(x)}.
\end{equation}
In most of our calculations, we lean towards using $df$ over $\nabla_{\H} f$, as there is one less dependence on the metric to keep track of. 

The Riemannian metric $g_{\H^n}$ also induces a volume form $\vol_{\H^n}$ on $\B^n$. 
The comparison with the Euclidean volume form $\vol_n$ on $\B^n$ is that
\begin{equation}\label{eq:volume_forms}
	\vol_{\H^n} = \frac{2^n}{(1 - \abs{x}^2)^n} \vol_n. 
\end{equation}
Thus, we obtain $L^p$-spaces of differential $1$-forms on $\H^n$, where a 
$1$-form $\omega \in L^p(\wedge^1 T^* \H^n)$ if
\[
\int_{\B^n}\! \abs{\omega}_{g_{\H^n}}^{p} \vol_{\H^n}
= \int_{\B^n}\! \frac{(1-|x|^2)^{p-n}}{2^{p-n}} \abs{\omega}^p \vol_n
< \infty.
\]
Moreover, note that $L^1_\loc(\H^n) = L^1_\loc(\B^n)$ and $L^1_\loc(\wedge^1 T^*\H^n) = L^1_\loc(\wedge^1 T^*\B^n)$, since the hyperbolic and Euclidean metrics and measures
are locally comparable. Therefore, if $f \in L^1_\loc(\H^n)$, we say that $f$ is weakly differentiable if it is weakly differentiable in the Euclidean sense, and we use the Euclidean weak gradient $\nabla f$ to define $\nabla_\H f$ as 
in~\eqref{eq:hyp_gradient_def}, and $df$ as in \eqref{eq:df_representation}.
We can then express the Riemannian Dirichlet space on $\H^n$ as
\begin{equation}\label{eq:Riemann-kobam}
	D^{1,p}(\H^n)
	:= \left\{ f \in L^1_\loc(\H^n) : 
	f \text{ is weakly differentiable and } df \in L^p(\wedge^1 T^* \H^n)\right\}.
\end{equation}
That is, for $p \neq \infty$, a function $f \in D^{1,p}(\H^n)$ is a locally integrable function satisfying
\begin{equation}\label{eq:hyperbolic_Lp_for_df}
	\int_{\B^n}\! \abs{df}_{g_{\H^n}}^p \vol_{\H^n}
	= \int_{\B^n}\! \frac{(1-|x|^2)^{p-n}}{2^{p-n}} \abs{\nabla f(x)}^p \vol_n
	< \infty.
\end{equation}

As discussed in Remark \ref{rem:Keith}, the metric Dirichlet space $D^{1,p}(\H^n, d_{\H^n}, \vol_{\H^n})$, which is based on upper gradients, coincides 
with the Riemannian Dirichlet space $D^{1,p}(\H^n)$ defined in~\eqref{eq:Riemann-kobam}. 
Moreover, the minimal $p$-weak upper gradient $\rho_f$ of a function $f \in D^{1,p}(\H^n)$ is related to its weak derivatives by \eqref{eq:hyperbolic_upper_gradient}. 
We also define the standard Sobolev space 
\[
W^{1,p}(\H^n) := D^{1,p}(\H^n) \cap L^p(\H^n), 
\]
and observe that it similarly coincides with 
$N^{1,p}(\H^n, d_{\H^n}, \vol_{\H^n})$. 

Next, we show that according to our metric definition, $\H^n$ is indeed $p$-hyperbolic for 
each $1 \le p<\infty$. 

\begin{lemma}\label{lem:Hn-p-hyperb}
Let $p \in [1, \infty)$, and let $E\subset\S^{n-1}$ be a Borel set with $\mathcal{H}^{n-1}(E)>0$, where 
$\mathcal{H}^{n-1}$ is measured using the Euclidean
metric on the unit sphere $\S^{n-1}$. Let $0<\rho<1$ and $\Gamma(\rho E)$ be the collection of all curves in 
$\H^n$ starting from 
$\rho E\subset \rho\S^{n-1}\subset\B^n$. Then the $p$-modulus $\Mod_p(\Gamma(\rho E))$ of 
$\Gamma(\rho E)$, measured with respect to the hyperbolic
	metric on $\B^n$, is positive. Moreover, $\H^n$ is $p$-hyperbolic.	
\end{lemma}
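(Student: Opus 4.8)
The plan is to reduce the claim to a lower bound for the $p$-modulus of one convenient subfamily of $\Gamma(\rho E)$. For each $\theta\in E$, let $\gamma_\theta$ denote the hyperbolic geodesic ray that starts at $\rho\theta\in\rho E$ and runs radially out to the ideal boundary point $\theta\in\S^{n-1}$; in the Poincar\'e ball model this is the Euclidean segment $\{s\theta : s\in[\rho,1)\}$. Put $\Gamma_0:=\{\gamma_\theta : \theta\in E\}$. Since $\Gamma_0\subseteq\Gamma(\rho E)$ and $\Mod_p$ is an outer measure, it suffices to prove $\Mod_p(\Gamma_0)>0$. For the computation I would pass to geodesic polar coordinates about the center $0\in\B^n$, in which $\H^n$ becomes the warped product $\S^{n-1}\times[0,\infty)$ with $g_{\H^n}=dt^2+\sinh^2(t)\,g_{\S^{n-1}}$ and $\vol_{\H^n}=\sinh^{n-1}(t)\,dt\,d\mathcal{H}^{n-1}$, and the curves $\gamma_\theta$ become the coordinate curves $t\mapsto(\theta,t)$. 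Parametrizing $\gamma_\theta$ so that $\gamma_\theta(t)$ is the point at hyperbolic distance $t$ from $0$ (so the parameter runs over $[t_0,\infty)$ and coincides with hyperbolic arclength), where $t_0:=d_{\H^n}(0,\rho\,\S^{n-1})\in(0,\infty)$ is independent of $\theta$ by symmetry, we have $\int_{\gamma_\theta}\sigma\,ds=\int_{t_0}^\infty\sigma(\gamma_\theta(t))\,dt$ for every Borel $\sigma\ge0$.

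The estimate itself is a double application of H\"older's inequality. Fix $\sigma$ admissible for $\Gamma_0$, so $\int_{t_0}^\infty\sigma(\gamma_\theta(t))\,dt\ge1$ for $\mathcal{H}^{n-1}$-almost every $\theta\in E$. For $p>1$, writing $\sigma(\gamma_\theta(t))=\big(\sigma(\gamma_\theta(t))\sinh^{(n-1)/p}(t)\big)\cdot\sinh^{-(n-1)/p}(t)$ and applying H\"older in $t$ with exponents $p$ and $p/(p-1)$ gives
\[
1\le C_0^{(p-1)/p}\bigg(\int_{t_0}^\infty\sigma(\gamma_\theta(t))^p\sinh^{n-1}(t)\,dt\bigg)^{1/p},\qquad C_0:=\int_{t_0}^\infty\sinh^{-\frac{n-1}{p-1}}(t)\,dt,
\]
and here $C_0<\infty$ precisely because $n>1$ makes the exponent $(n-1)/(p-1)$ positive while $t_0>0$ keeps $\sinh$ bounded away from $0$ on $[t_0,\infty)$ — this is the one spot where the hypotheses $n>1$ and $\rho>0$ are genuinely used. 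Raising to the $p$-th power, integrating over $\theta\in E$, applying H\"older once more in $\theta$ with the same exponents, and then recognizing via the polar Jacobian and Fubini that $\int_E\int_{t_0}^\infty\sigma^p\sinh^{n-1}(t)\,dt\,d\mathcal{H}^{n-1}(\theta)$ is an integral of $\sigma^p$ over a subset of $\H^n$, one obtains
\[
\mathcal{H}^{n-1}(E)\le C_0^{\,p-1}\int_{\H^n}\sigma^p\,d\vol_{\H^n}.
\]
Taking the infimum over admissible $\sigma$ yields $\Mod_p(\Gamma_0)\ge\mathcal{H}^{n-1}(E)/C_0^{\,p-1}>0$, hence $\Mod_p(\Gamma(\rho E))>0$. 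The case $p=1$ is only simpler: using $\sinh^{n-1}(t)\ge\sinh^{n-1}(t_0)$ on $[t_0,\infty)$ in place of H\"older gives directly $\Mod_1(\Gamma(\rho E))\ge\sinh^{n-1}(t_0)\,\mathcal{H}^{n-1}(E)$.

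For the final assertion I would apply the above with $E=\S^{n-1}$ and, say, $\rho=\tfrac12$, and fix any $r>t_0$, so that $\rho\,\S^{n-1}\subset B_{\H^n}(0,r)$. For each $R>r$, every ray $\gamma_\theta\in\Gamma_0$ starts inside $B(0,r)$ and eventually leaves $B(0,R)$, hence contains a subcurve joining $B(0,r)$ to $\H^n\setminus B(0,R)$; consequently every function admissible for $\Gamma(B(0,r),\H^n\setminus B(0,R))$ is admissible for $\Gamma_0$, whence $\Mod_p(\Gamma(B(0,r),\H^n\setminus B(0,R)))\ge\Mod_p(\Gamma_0)>0$. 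By Lemma~\ref{lem:mod-cap} this reads $\pcap_p(B(0,r),\H^n\setminus B(0,R))\ge\Mod_p(\Gamma_0)$ for every $R>r$, and since this relative capacity is nonincreasing in $R$, its limit as $R\to\infty$ is at least $\Mod_p(\Gamma_0)>0$. Hence $\H^n$ is $p$-hyperbolic by Definition~\ref{def:para-hyp}.

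The only real obstacle is the exponent bookkeeping in the two H\"older steps, arranged so that the weight $\sinh^{n-1}(t)$ reappears exactly as the polar volume Jacobian; everything else is routine. It is worth noting that the finiteness of $\int_{t_0}^\infty\sinh^{-(n-1)/(p-1)}(t)\,dt$ that drives the estimate here is insensitive to the relation between $p$ and $n-1$, so this argument only gives positivity of the modulus and not the sharp threshold $p\le n-1$ of Theorem~\ref{thm:Hn_classification}; obtaining that threshold requires the finer, more Riemannian analysis developed later in this section.
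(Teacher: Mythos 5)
Your proof is correct and follows essentially the same route as the paper's: restrict to the subfamily of radial rays over $E$, apply H\"older's inequality along each ray with the weight that reproduces the polar volume Jacobian, and integrate over $\theta\in E$ via Fubini. The only cosmetic difference is that you work in hyperbolic polar (arclength) coordinates with the $\sinh^{n-1}$ Jacobian, whereas the paper parametrizes the same segments Euclideanly on $[\rho,1)$ and carries the conformal factor $2/(1-t^2)$ through the computation; your treatment of the $p$-hyperbolicity conclusion via Lemma~\ref{lem:mod-cap} and monotonicity of the relative capacity is more detailed than the paper's one-line remark, and is correct.
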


\begin{proof}
	Let $\Gamma_L(\rho E)$ be the collection of all curves $\gamma_\theta:[\rho,1)\to\B^n$ given by $\gamma_\theta(t)=t\theta$, $\theta\in E$.
	Then $\Gamma_L(\rho E)\subset \Gamma(\rho E)$. Let $h$ be admissible, in the hyperbolic metric, for computing the $p$-modulus of 
	$\Gamma_L(\rho E)$. Then for each $\theta\in E$ we have that
	\begin{multline*}
		1 \le \int_\rho^1\! h(\gamma_\theta(t))\, \abs{\gamma_\theta'(t)}_{g_{\H^n}} \,\dd t
		= \int_\rho^1\! \frac{2h(t\theta)}{1 - t^2}\, \dd t
		= \int_\rho^1\! \frac{2^{n/p} h(t\theta)}{(1 - t^2)^{n/p}} 
			\frac{(1 - t^2)^{n/p - 1}}{2^{n/p - 1}} \, \dd t\\
		\le \left(\int_\rho^1\! \frac{h(t\theta)^p}{(1-t^2)^n}\, \dd t\right)^{\frac{1}{p}}\, \norm{t \mapsto \left(\frac{1-t^2}{2}\right)^{\frac{n-p}{p}} }_{L^{\frac{p}{p-1}}([\rho, 1))}.
	\end{multline*}
	Moreover, we have
	\[
		\norm{t \mapsto \left(\frac{1-t^2}{2}\right)^{\frac{n-p}{p}} }_{L^{\frac{p}{p-1}}([1-\rho, 1))}
		\leq C_{n, p} \, (1 - \rho)^{\frac{n-1}{p}},
	\]
	where $C_{n,p} = 1$ if $p = 1$ and $C_{n,p} = [(p-1)/(n-1)]^{(p-1)/p}$ otherwise.
	Thus, it follows that
	\[
		(1-\rho)^{1-n} \le C_{n,p}^p \int_\rho^1\!
		\frac{h(t\theta)^p}{(1-t^2)^n}\, \dd t.
	\]
	Integrating the above with respect to $\theta\in E$, we get
	\[
	\frac{\mathcal{H}^{n-1}(E)}{(1-\rho)^{n-1}} \le \frac{C_{n,p}^p}{2^n \rho^{n-1}}  
	\int_{\B^n}\! h^p\, 
	\dd\mathcal{H}^n_{\H^n}.
	\]
	Taking the infimum over all such $h$ gives
	\[
	\frac{2^n \rho^{n-1}\mathcal{H}^{n-1}(E)}{C_{n,p}^n(1-\rho)^{n-1}}
	\le \Mod_p(\Gamma_L(\rho E)) \le \Mod_p(\Gamma(\rho E)).
	\]
	This proves the first claim of the lemma. The second claim now follows by applying the above to the 
	case $E=\S^{n-1}$.
\end{proof}

\subsection{The polar coordinate model of the hyperbolic space}

While the Poincar\'e ball model of $\H^n$ is widely known and standard, it is not the most 
convenient model for us. We instead heavily employ a polar coordinate representation of 
$\H^n$, which is given by a diffeomorphism $\phi \colon (0, \infty) \times \S^{n-1} \to \B^n \setminus \{0\}$ 
that maps the lines $(0,\infty) \times \{\theta\}$ isometrically to hyperbolic geodesics. Explicitly, this map $\phi$ is given by
\begin{equation}\label{eq:phi_def}
	\phi(r, \theta) = \tanh\left(\frac{r}{2}\right) \theta
\end{equation}
for $r \in (0, \infty)$ and $\theta \in \S^{n-1}$.

We record the key features of $\phi$ in the following lemma. Note that we orient $\R \times \S^{n-1}$ 
so that $\phi$ is orientation-preserving. We also
recall from general differential geometry that, given a differentiable map $f : M \to N$ between smooth manifolds and a 
differential $k$-form $\omega$ on $N$, the pull-back $f^*\omega$ is defined by 
$(f^*\omega)_x(v_1, \dots, v_k) := \omega_{f(x)}(Df(x)v_1, \dots, Df(x)v_k)$ for all $x \in M$ and $v_1, \dots, v_k \in T_x M$
and encodes the classical chain rule.

\begin{lemma}\label{lem:polar_props}
	Let $r > 0$, $\theta \in \S^{n-1}$, and $v, w \in T_{(r, \theta)} (\R \times \S^{n-1})$, where $v$ is 
	tangential to $\R$ and $w$ is tangential to $\S^{n-1}$. Then the following results hold.
	\begin{enumerate}[label=(\roman*)]
		\item \label{enum:phi_locdiffeo} The map $\phi$ is a diffeomorphism.
		\item \label{enum:phi_radial} We have
		\[
		\abs{D\phi(r, \theta) v}_{g_{\H^n}} = \abs{v}.
		\]
		\item \label{enum:phi_tangential} We have
		\[
		\abs{D\phi(r, \theta) w}_{g_{\H^n}} = \sinh (r) \abs{w}.
		\]
		\item \label{enum:phi_perpendicular} We have
		\[
		\ip{D\phi(r, \theta) v}{D\phi(r, \theta) w}_{g_{\H^n}} = 0.
		\]
		\item \label{enum:phi_jacobian} The map $\phi$ has the hyperbolic Jacobian
		\[
			\phi^* \vol_{\H^n} = \sinh^{n-1}(r) \vol_{\R \times \S^{n-1}},
		\]
		where $\vol_{\R\times\S^{n-1}}$ is the Euclidean volume form on $\R\times\S^{n-1}$.
	\end{enumerate}
\end{lemma}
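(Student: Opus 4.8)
The plan is to verify the five assertions by a direct computation, abbreviating $s(r) := \tanh(r/2)$ so that $\phi(r,\theta) = s(r)\theta$ and $\abs{\phi(r,\theta)} = s(r) \in (0,1)$ for every $r > 0$. Two elementary identities will be used throughout: $s'(r) = \tfrac{1}{2}(1 - s(r)^2)$, and $\tfrac{2s(r)}{1 - s(r)^2} = 2\sinh(r/2)\cosh(r/2) = \sinh(r)$. The key observation that makes all of \ref{enum:phi_radial}--\ref{enum:phi_perpendicular} fall out at once is that the conformal factor in \eqref{eq:comparison_of_riem_metrics}, evaluated at the image point $\phi(r,\theta)$, depends only on $r$, being equal to $\tfrac{2}{1 - s(r)^2}$.

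For \ref{enum:phi_radial}--\ref{enum:phi_perpendicular} I would compute the differential of $\phi$ explicitly. Differentiating $\phi(r,\theta) = s(r)\theta$ in the $r$-direction gives $D\phi(r,\theta)\partial_r = s'(r)\theta$, a Euclidean vector of norm $s'(r)$; differentiating along a curve in $\S^{n-1}$ through $\theta$ with velocity $w \in T_\theta\S^{n-1} \subset \R^n$ gives $D\phi(r,\theta)w = s(r)w$. Applying the conformal factor $\tfrac{2}{1-s(r)^2}$, we get $\abs{D\phi(r,\theta)\partial_r}_{g_{\H^n}} = \tfrac{2}{1-s(r)^2}s'(r) = 1$ by the first identity, which is \ref{enum:phi_radial}; and $\abs{D\phi(r,\theta)w}_{g_{\H^n}} = \tfrac{2s(r)}{1-s(r)^2}\abs{w} = \sinh(r)\abs{w}$ by the second, which is \ref{enum:phi_tangential}. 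For \ref{enum:phi_perpendicular}, since $g_{\H^n}$ is a positive scalar multiple of the Euclidean inner product at each point and $s'(r)\theta$ is Euclidean-orthogonal to $s(r)w$ (as $\theta \perp w$ in $\R^n$), the two images are $g_{\H^n}$-orthogonal. Assertion \ref{enum:phi_locdiffeo} can then be obtained either by noting that $r \mapsto s(r)$ is a smooth strictly increasing bijection of $(0,\infty)$ onto $(0,1)$ with smooth inverse, so that $x \mapsto (s^{-1}(\abs{x}), x/\abs{x})$ is a smooth two-sided inverse of $\phi$ on $\B^n \setminus \{0\}$, or by invoking the inverse function theorem once \ref{enum:phi_radial}--\ref{enum:phi_perpendicular} show $D\phi(r,\theta)$ to be a linear isomorphism everywhere.

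For \ref{enum:phi_jacobian} the quickest route is to combine the Euclidean Jacobian of $\phi$ with \eqref{eq:volume_forms}. From the same differential computation, the Euclidean Jacobian of $\phi$ at $(r,\theta)$ equals $s'(r)\,s(r)^{n-1}$ — a unit factor in the radial direction and a factor $s(r)$ in each of the $n-1$ spherical directions — so that $\phi^*\vol_n = s'(r)\,s(r)^{n-1}\,\vol_{\R\times\S^{n-1}}$, the sign being fixed by our orientation convention on $\R\times\S^{n-1}$. Pulling back \eqref{eq:volume_forms} and using the two identities then yields
\[
	\phi^*\vol_{\H^n}
	= \frac{2^n}{(1-s(r)^2)^n}\,s'(r)\,s(r)^{n-1}\,\vol_{\R\times\S^{n-1}}
	= \left(\frac{2s(r)}{1-s(r)^2}\right)^{n-1}\vol_{\R\times\S^{n-1}}
	= \sinh^{n-1}(r)\,\vol_{\R\times\S^{n-1}}.
\]
Alternatively, parts \ref{enum:phi_radial}--\ref{enum:phi_perpendicular} together with the polarized version of \ref{enum:phi_tangential} identify the pull-back metric $\phi^*g_{\H^n}$ with the warped product $dr^2 + \sinh^2(r)\,g_{\S^{n-1}}$, whose volume form, evaluated on the frame $\{\partial_r, e_2, \dots, e_n\}$ with the $e_i$ round-orthonormal, is $\sinh^{n-1}(r)\,\vol_{\R\times\S^{n-1}}$.

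There is no genuine obstacle here; the lemma is a routine computation. The only points that require mild care are bookkeeping the meaning of $\abs{w}$ for $w \in T_\theta\S^{n-1}$ — it is the norm inherited from the Euclidean metric on $\R^n$, which coincides with the round metric on $\S^{n-1}$ — and checking that the orientation convention on $\R \times \S^{n-1}$ is indeed the one that makes $\phi$ orientation-preserving, so that \ref{enum:phi_jacobian} comes out with the correct sign rather than its negative.
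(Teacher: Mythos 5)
Your proof is correct and follows essentially the same route as the paper's: compute $D\phi$ in the radial and spherical directions, apply the conformal factor $2/(1-\tanh^2(r/2))$ for (ii)--(iv), and assemble (v) from the directional scaling factors (your direct Euclidean-Jacobian computation combined with \eqref{eq:volume_forms} is just an equivalent reshuffling of the same data). If anything, your treatment of (i) via the explicit smooth inverse $x \mapsto (s^{-1}(\abs{x}), x/\abs{x})$ is slightly more careful than the paper's one-line dismissal.
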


Note that when $r>0$ is large, $\sinh(r)\approx e^r$, and for small $r>0$ we have that $\sinh(r)\approx r$.

\begin{proof}
	Claim \ref{enum:phi_locdiffeo} is immediately clear, since $\phi$ is smooth and bijective. 
	Since the vector $v$ is 
	tangential to $\R$, we have $v = t e_1$ for some value of $t$, where $e_1$ 
	is the standard basis vector of $T\R$. Thus, we have
	\begin{equation}\label{eq:phi_radial}
		D\phi(r, \theta) v
		= \left(\frac{d}{dr} \tanh\left(\frac{r}{2}\right)\right) t\theta
		= \left(\frac{1}{2 \cosh^2(r/2)}\right) t\theta.
	\end{equation}
	By \eqref{eq:comparison_of_riem_metrics} and standard hyperbolic identities, it follows that
	\[
	\abs{D\phi(r, \theta) v}_{g_{\H^n}} = \frac{2}{1 - \tanh^2(r/2)} \cdot \frac{1}{2 \cosh^2(r/2)}\abs{v} = \abs{v},
	\]
	completing the proof of \ref{enum:phi_radial}. On the other hand, for the vector $w$ tangential to $\S^{n-1}$, we have
	\begin{equation}\label{eq:phi_tangential}
		D\phi(r, \theta) w 
		= \tanh\left(\frac{r}{2}\right) w.
	\end{equation}
	Thus, by \eqref{eq:comparison_of_riem_metrics} and hyperbolic identities, we get
	\[
	\abs{D\phi(r, \theta) w}_{g_{\H^n}} = \frac{2}{1 - \tanh^2(r/2)} \cdot \tanh\left(\frac{r}{2}\right) \abs{w}  = \sinh(r) \abs{w},
	\]
	proving \ref{enum:phi_tangential}.
	
	We also get \ref{enum:phi_perpendicular}, since $g_{\H^n}$ is conformal to the Euclidean metric on $\B^n$, and since $\theta$ is perpendicular to $w$. Now, $\phi$ is orientation-preserving, $D\phi$ is norm-preserving on vectors tangent to $\R$ by \ref{enum:phi_radial}, $D\phi$ scales all distances by $\sinh(r)$ on vectors tangent to $\S^{n-1}$ by \ref{enum:phi_tangential}, and $D\phi$ moreover retains the perpendicularity of these two tangent spaces by \ref{enum:phi_perpendicular}. Thus, \ref{enum:phi_jacobian} follows.
\end{proof}

With these properties, we can conveniently express Sobolev norms of hyperbolic functions. 
For this, if $f$ is function on $\R \times \S^{n-1}$, we decompose its derivative to $\R$ and 
$\S^{n-1}$ -components, denoted $df = d_{\R} f + d_{\S^{n-1}} f$; that is, if 
$v_1, v_2 \in T_{(r, \theta)} \R \times \S^{n-1}$ are such that $v_1$ is tangential to $\R$ and $v_2$ is 
tangential to $\S^{n-1}$, then $d_{\R} f_{(r, \theta)}(v_1 + v_2) = df_{(r, \theta)}(v_1)$ and  $d_{\S^{n-1}} f_{(r, \theta)}(v_1 + v_2) = df_{(r, \theta)}(v_2)$.

\begin{lemma}\label{lem:hyperbolic_chara}
	Let $u \in D^{1,p}(\H^n)$, where $p \in [1, \infty)$, and denote $f = u \circ \phi$. Then $f$ is a weakly differentiable function in $(0, \infty) \times \S^{n-1}$, and for every measurable set $E \subset \H^n$, we have
	\begin{gather*}
		\norm{u}_{L^p(E)}^p = \int_{\phi^{-1} E} \sinh^{n-1}(r) \abs{f}^p \vol_{\R \times \S^{n-1}},\\
		\norm{du}_{L^p(E)}^p \approx 
		 \int_{\phi^{-1} E} \sinh^{n-1}(r)\, \left(\left(\abs{d_{\R}u}^p + \sinh^{-p}(r) 
		 \abs{d_{\S^{n-1}}u}^p \right)\right) \vol_{\R \times \S^{n-1}},
	\end{gather*}
	where the comparison constants depend only on $p$.
\end{lemma}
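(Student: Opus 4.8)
The plan is to reduce everything to the two facts recorded in Lemma~\ref{lem:polar_props}: that $\phi$ is a diffeomorphism with hyperbolic Jacobian $\sinh^{n-1}(r)\,\vol_{\R\times\S^{n-1}}$, and that $D\phi$ acts isometrically on vectors tangent to $\R$, by the factor $\sinh(r)$ on vectors tangent to $\S^{n-1}$, and preserves orthogonality of these two subspaces. First I would note that since $\phi$ is a smooth diffeomorphism of $(0,\infty)\times\S^{n-1}$ onto $\B^n\setminus\{0\}$, and since weak differentiability is a local notion that is invariant under composition with a diffeomorphism, $f=u\circ\phi$ is weakly differentiable on $(0,\infty)\times\S^{n-1}$, with $df=\phi^*(du)$ in the sense of distributions. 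The exclusion of the single point $0$ is harmless: a point has measure zero for both $\vol_{\H^n}$ and $\vol_{\R\times\S^{n-1}}$-pushforward, so the integral identities are unaffected.

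The $L^p$-norm identity for $u$ itself is the cleaner of the two. By the change of variables formula for the diffeomorphism $\phi$ together with Lemma~\ref{lem:polar_props}\ref{enum:phi_jacobian}, for any measurable $E\subset\H^n$,
\[
\int_E \abs{u}^p\,\vol_{\H^n}
= \int_{\phi^{-1}E}\abs{u\circ\phi}^p\,\phi^*\vol_{\H^n}
= \int_{\phi^{-1}E}\sinh^{n-1}(r)\,\abs{f}^p\,\vol_{\R\times\S^{n-1}},
\]
which is exactly the first claimed formula. This requires only the volume-form part of Lemma~\ref{lem:polar_props}.

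For the derivative identity, the point is to compute $\abs{(du)_{\phi(r,\theta)}}_{g_{\H^n}}$ in terms of the decomposition $df=d_\R f+d_{\S^{n-1}}f$. Since $\phi^*(du)=df$, for a tangent vector $v=v_1+v_2$ at $(r,\theta)$ with $v_1$ radial and $v_2$ spherical we have $df(v_1+v_2)=(du)(D\phi\,v_1+D\phi\,v_2)$, and by Lemma~\ref{lem:polar_props}\ref{enum:phi_radial}--\ref{enum:phi_perpendicular} the vectors $D\phi\,v_1$ and $D\phi\,v_2$ are $g_{\H^n}$-orthogonal with $\abs{D\phi\,v_1}_{g_{\H^n}}=\abs{v_1}$ and $\abs{D\phi\,v_2}_{g_{\H^n}}=\sinh(r)\abs{v_2}$. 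Working in the dual, the covector norm $\abs{du}_{g_{\H^n}}$ at $\phi(r,\theta)$ therefore satisfies
\[
\abs{(du)_{\phi(r,\theta)}}_{g_{\H^n}}^2
= \abs{(d_\R f)_{(r,\theta)}}^2 + \sinh^{-2}(r)\,\abs{(d_{\S^{n-1}} f)_{(r,\theta)}}^2,
\]
since a covector on an orthogonal direct sum has squared norm equal to the sum of the squared norms of its restrictions, and the restriction to the spherical factor gets the reciprocal scaling $\sinh^{-1}(r)$ because norms on the dual transform inversely. Raising to the power $p/2$, using the elementary two-sided bound $(a^2+b^2)^{p/2}\approx a^p+b^p$ with constants depending only on $p$, and then applying the change of variables with Jacobian $\sinh^{n-1}(r)$ as before, gives
\[
\norm{du}_{L^p(E)}^p
\approx \int_{\phi^{-1}E}\sinh^{n-1}(r)\Bigl(\abs{d_\R u}^p+\sinh^{-p}(r)\abs{d_{\S^{n-1}}u}^p\Bigr)\vol_{\R\times\S^{n-1}},
\]
which is the second formula (here I write $d_\R u, d_{\S^{n-1}}u$ for $d_\R f, d_{\S^{n-1}}f$ following the lemma's notational convention). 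The only genuinely delicate point is the dualization step: one must be careful that the norm of a covector restricted to an orthogonal summand scales as the \emph{inverse} of the vector-norm scaling on that summand, so the spherical component picks up $\sinh^{-1}(r)$ rather than $\sinh(r)$; but this is immediate once one writes things in an orthonormal frame adapted to the splitting. Everything else is routine change of variables and the scalar inequality $(a^2+b^2)^{p/2}\approx a^p+b^p$.
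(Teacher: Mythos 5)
Your proof is correct and follows essentially the same route as the paper: chain rule for Sobolev functions under the diffeomorphism $\phi$, change of variables with the Jacobian $\sinh^{n-1}(r)$, and the orthogonal splitting from Lemma~\ref{lem:polar_props} to relate the covector norms. The only cosmetic difference is that you compute $\abs{du}_{g_{\H^n}}$ directly in an orthonormal frame adapted to the radial/spherical splitting on the cylinder side, whereas the paper first decomposes $du = d_R u + d_T u$ on the ball side and pulls back each component; both reduce to the same dual-norm scaling by $\sinh^{-1}(r)$ on the spherical factor.
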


\begin{proof}
	The fact that $f$ is weakly differentiable is due to a chain rule of Sobolev maps and local diffeomorphisms; 
	see e.g.\ \cite[Theorem 3.41]{Adams-Fournier_SobolevBook} for the Euclidean version, which generalizes to 
	manifolds such as $\R \times \S^{n-1}$ by use of charts and a smooth partition of unity. By the chain rule, 
	we also have $df = \phi^* du = du \circ D\phi$ almost everywhere on $\R \times \S^{n-1}$. The first identity then follows by Lemma \ref{lem:polar_props} \ref{enum:phi_jacobian} and a change of variables:
	\[
	\int_{E} \abs{u}^p \vol_{\H^n} 
	= \int_{\phi^{-1} E} \phi^* (\abs{u}^p \vol_{\H^n})
	= \int_{\phi^{-1} E} \abs{u(\phi(r, \theta))}^p\sinh^{n-1}(r) \vol_{\R \times \S^{n-1}}(r, \theta).
	\]
	
	For the second claim, by the exact same argument, we also have
	\[
	\int_{E} \abs{du}_{g_{\H^n}}^p \vol_{\H^n} 
	= \int_{\phi^{-1} E} \phi^* (\abs{du}_{g_{\H^n}}^p \vol_{\H^n})	= \int_{\phi^{-1} E} \smallabs{du_{\phi(r, \theta)}}_{g_{\H^n}}^p \sinh^{n-1}(r) \vol_{\R \times \S^{n-1}}(r, \theta).
	\]
	In a manner similar to how we decomposed $df$, we decompose $du = d_R u + d_T u$ on the image of 
	$\phi$, where if $w_1, w_2 \in T_x \B^n$ are two vectors such that $w_1$ is radial and $w_2$ is tangential, 
	then $d_R u_x (w_1 + w_2) = du_x(w_1)$ and $d_T u_x(w_1 + w_2) = du_x(w_2)$. Since the radial and tangential 
	spaces on $\B^n\setminus \{0\}$ are orthogonal to each other, and since $\H^n$ is conformal to $\B^n$, we can 
	conclude with standard linear algebra that
	\[
		\abs{du_x}_{g_{\H^n}}^p = \left( \abs{d_R u_x}_{g_{\H^n}}^2 + \abs{d_T u_x}_{g_{\H^n}}^2  
		\right)^\frac{p}{2} \approx \abs{d_R u_x}_{g_{\H^n}}^p + \abs{d_T u_x}_{g_{\H^n}}^p
	\]
	for all $x \in \B^n \setminus \{0\}$, with comparison constants depending only on $p$. Moreover, 
	since $D\phi$ maps vectors of $T\R$ to radial vectors in $T\B^n$, and maps vectors of $T\S^{n-1}$ to 
	vectors in $T\B^n$ that are 
	tangential in the polar coordinate sense, we obtain using $df = \phi^* du$ that $d_\R f = \phi^* d_R 
	u$ and $d_{\S^{n-1}} f = \phi^* d_T u$ a.e.\ on $\R \times \S^{n-1}$. Using 
	\ref{enum:phi_radial}-\ref{enum:phi_perpendicular} of Lemma \ref{lem:polar_props}, we can hence 
	reason that
	\begin{align*}
		\abs{d_{\R} f_{(r, \theta)}} 
		&= \abs{(\phi^* d_R u)_{(r, \theta)}}
		= \abs{d_R u_{\phi(r, \theta)} \circ D\phi(r, \theta)}
		= \abs{d_R u_{\phi(r, \theta)}}_{g_{\H^n}} \qquad \text{and}\\
		\abs{d_{\S^{n-1}} f_{(r, \theta)}} 
		&=  \abs{(\phi^* d_T u)_{(r, \theta)}}
		= \abs{d_T u_{\phi(r, \theta)} \circ D\phi(r, \theta)}
		= \sinh(r) \abs{d_T u_{\phi(r, \theta)}}_{g_{\H^n}}
	\end{align*}
	for a.e.\ $(r, \theta) \in \R \times \S^{n-1}$. The second claim therefore follows.
\end{proof}

Thanks to Lemma \ref{lem:hyperbolic_chara}, we can already present the simple counterexamples that yield the ``only if'' -direction of Theorem \ref{thm:Hn_classification}.

\begin{lemma}\label{lem:Hn_counterexamples}
	If $p \in (n-1, \infty]$, then $D^{1,p}(\H^n)\ne W^{1,p}(\H^n)+\R$. 
\end{lemma}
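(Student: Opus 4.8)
The plan is to exhibit, for each $p \in (n-1,\infty]$, an explicit radial function $u$ on $\H^n$ that lies in $D^{1,p}(\H^n)$ but is unbounded at infinity in a way that prevents $u-c$ from being in $L^p(\H^n)$ for any constant $c$. Using the polar coordinate model and Lemma~\ref{lem:hyperbolic_chara}, a radial function $u = u(r)$ (i.e.\ $f = u \circ \phi$ depending only on $r$) has $d_{\S^{n-1}} f = 0$, so
\[
\norm{du}_{L^p(\H^n)}^p \approx \omega_{n-1}\int_0^\infty \sinh^{n-1}(r)\, |u'(r)|^p\, dr,
\qquad
\norm{u-c}_{L^p(\H^n)}^p = \omega_{n-1}\int_0^\infty \sinh^{n-1}(r)\, |u(r)-c|^p\, dr,
\]
where $\omega_{n-1} = \mathcal{H}^{n-1}(\S^{n-1})$. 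For large $r$ we have $\sinh(r) \approx e^r$, so the first integral near infinity behaves like $\int^\infty e^{(n-1)r}|u'(r)|^p\, dr$. The idea is to choose $u'(r)$ decaying like $e^{-(n-1)r/(p-1)}$ times a mildly divergent factor, which makes $\int^\infty e^{(n-1)r}|u'(r)|^p\,dr$ converge (this is where $p > n-1$ is used: the exponent $(n-1) - p(n-1)/(p-1) = -(n-1)/(p-1) < 0$) while $u(r) \to \infty$ or at least $u$ fails to have a finite limit, and moreover $u(r)-c$ is eventually bounded below in absolute value by a positive constant on a set of infinite $\vol_{\H^n}$-measure.

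Concretely, first I would handle $p = \infty$ separately and quickly: take $u(r) = r$ near infinity (suitably smoothed near $0$), so $du$ has $|du|_{g_{\H^n}} = |u'(r)| = 1$ bounded, hence $u \in D^{1,\infty}(\H^n)$, yet $u - c = r - c$ is unbounded, so $u - c \notin L^\infty(\H^n)$. For finite $p \in (n-1,\infty)$, I would take $u$ with $u(r) = 1$ for $r \le 1$ and, for $r \ge 1$,
\[
u(r) = \int_1^r e^{-(n-1)s/p}\, ds,
\]
which is smooth, increasing, and satisfies $|u'(r)|^p = e^{-(n-1)r}$ for $r \ge 1$. Then $\int_1^\infty \sinh^{n-1}(r)|u'(r)|^p\, dr \approx \int_1^\infty e^{(n-1)r} e^{-(n-1)r}\, dr = \int_1^\infty dr = \infty$ — so this naive choice fails. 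Instead I would decrease the exponent slightly: choose $u'(r) = e^{-\alpha r}$ for $r \ge 1$ with $\alpha$ chosen so that $(n-1) - p\alpha < 0$ but $\alpha \le n-1$, i.e.\ $(n-1)/p < \alpha \le n-1$, which is possible precisely because $p > n-1$ forces $(n-1)/p < n-1$. Then $\int_1^\infty \sinh^{n-1}(r)|u'(r)|^p\, dr \approx \int_1^\infty e^{((n-1) - p\alpha)r}\, dr < \infty$, so $u \in D^{1,p}(\H^n)$; but $u(r) = u(1) + \int_1^r e^{-\alpha s}\, ds \to u(1) + \alpha^{-1}e^{-\alpha}$ as $r \to \infty$, a \emph{finite} limit, so this still does not produce a non-$L^p$ function unless the limit is nonzero — and indeed it is nonzero, so taking $c \ne \lim u$ makes $u - c$ bounded away from $0$ on $\{r \ge R\}$ for large $R$, a set of infinite $\vol_{\H^n}$-measure; and for $c = \lim u$, the decay of $u(r) - c = -\int_r^\infty e^{-\alpha s}\, ds \approx -\alpha^{-1}e^{-\alpha r}$ gives $\int_1^\infty \sinh^{n-1}(r)|u(r)-c|^p\, dr \approx \int_1^\infty e^{((n-1) - p\alpha)r}\, dr$, which — with a \emph{different} constraint — I need to be infinite. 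So I must choose $\alpha$ more carefully, or use a non-exponential $u'$.

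The cleanest fix, and the one I would carry out in the writeup, is to take $u'(r) = e^{-(n-1)r/p} r^{-1/p}$ (for $r \ge 2$, say, smoothed near $0$). Then $|u'(r)|^p = e^{-(n-1)r} r^{-1}$, so $\int_2^\infty \sinh^{n-1}(r)|u'(r)|^p\,dr \approx \int_2^\infty e^{(n-1)r}e^{-(n-1)r}r^{-1}\,dr = \int_2^\infty r^{-1}\,dr = \infty$ — still divergent. The issue is that with the measure $\sinh^{n-1}(r)\,dr \approx e^{(n-1)r}\,dr$ weighting things, the borderline case is exactly $p = n-1$, and for $p > n-1$ the $D^{1,p}$ integrability is \emph{easier} while $L^p$ integrability of a nonconstant bounded-energy function becomes \emph{impossible} — which is the whole point. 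So the right statement is: for $p > n-1$, any $u = u(r)$ with $\int^\infty e^{(n-1)r}|u'|^p < \infty$ must have $u'$ decaying faster than $e^{-(n-1)r/p}$ in an $L^p$-averaged sense, forcing $u$ to have a finite limit $c$; but then I can \emph{always} take $c' \ne c$, and $u - c'$ is bounded away from zero near infinity on an infinite-measure set, hence not in $L^p$. Thus the correct simple counterexample is: for $r\ge 1$ set $u'(r) = e^{-\alpha r}$ with any $\alpha$ satisfying $(n-1)/p < \alpha$ (say $\alpha = (n-1)/p + 1$, which also makes $p\alpha > n-1$ automatically, and ensures $u'$ is smooth and the energy integral $\int_1^\infty e^{(n-1)r}e^{-p\alpha r}\,dr$ converges); then $u$ is bounded with limit $c := u(1) + \alpha^{-1}e^{-\alpha}$, $u \in D^{1,p}(\H^n)$, and since $u$ is bounded while $\vol_{\H^n}(\H^n) = \infty$, the function $u$ itself is not in $L^p(\H^n)$ unless $c = 0$; but then $u - c$ decays at rate $e^{-\alpha r}$, giving $\int_1^\infty e^{(n-1)r}e^{-p\alpha r}\,dr < \infty$ for $u-c$ as well, so $u - c \in L^p$ and we would need instead to also break $N^{1,p}$-membership for \emph{every} translate. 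The resolution is that we must ensure $u$ has \emph{no} finite limit: this is impossible for bounded-energy radial $u$ when $p > n-1$, so a radial counterexample with $c = \lim u$ genuinely lands in $W^{1,p} + \R$. Therefore the real counterexample must be \emph{non-radial}, or we must accept that the correct claim is only that $W^{1,p} \ne D^{1,p}$ (not $W^{1,p} + \R \ne D^{1,p}$) — but the Lemma asserts $W^{1,p} + \R$, so the function must be non-radial.

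\medskip

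Given the subtlety above, the plan I would actually execute is the following. For $p = \infty$, take $u(x) = d_{\H^n}(0,x)$, which has $\rho_u \equiv 1 \in L^\infty$ but is unbounded, so $u - c \notin L^\infty(\H^n)$ for every $c$, giving $u \in D^{1,\infty}\setminus(W^{1,\infty}+\R)$. For finite $p \in (n-1,\infty)$, I would build a function that oscillates between two disjoint "spherical caps at infinity" so that no single constant $c$ makes $u - c$ small on both, exploiting that the hyperbolic volume of a region $\{r > R, \theta \in U\}$ for fixed open $U \subset \S^{n-1}$ is infinite. Concretely, fix two disjoint open sets $U_0, U_1 \subset \S^{n-1}$, and using Lemma~\ref{lem:hyperbolic_chara} define $u$ on $(0,\infty)\times\S^{n-1}$ to interpolate: $u = i$ on a fattening-at-infinity neighborhood of $(R,\infty)\times U_i$ for $i = 0,1$, with the transition happening in the $\S^{n-1}$ direction. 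The $\S^{n-1}$-derivative contributes $\int \sinh^{n-1}(r)\sinh^{-p}(r)|d_{\S^{n-1}}u|^p \approx \int e^{(n-1-p)r}\,dr$ along the transition region, which converges precisely because $p > n-1$; the $r$-derivative is supported near $r = R$ and contributes a finite amount. Hence $u \in D^{1,p}(\H^n)$. But $u \equiv 0$ on an infinite-measure set and $u \equiv 1$ on another infinite-measure set, so for each $c \in \R$ at least one of $\{|u - c| \ge 1/2\}$ has infinite measure, whence $u - c \notin L^p(\H^n)$. This completes the proof of Lemma~\ref{lem:Hn_counterexamples}. The main obstacle is organizing the transition function on $\S^{n-1}$ cleanly: one should take a fixed Lipschitz function $\psi \colon \S^{n-1} \to [0,1]$ with $\psi \equiv i$ on $U_i$, a cutoff $\eta \colon (0,\infty) \to [0,1]$ with $\eta \equiv 0$ near $0$ and $\eta \equiv 1$ on $[R,\infty)$, and set $u(\phi(r,\theta)) = \eta(r)\psi(\theta)$; then $|d_{\S^{n-1}}u| \le \eta(r)\,\Lip(\psi)$ is bounded, so the tangential energy is $\lesssim \int_0^\infty \sinh^{n-1}(r)\sinh^{-p}(r)\,dr$, finite near $0$ (where $\sinh^{n-1-p}(r) \approx r^{n-1-p}$ is integrable since we may take $R$ large and $\eta$ supported in $[1,\infty)$, avoiding the origin entirely) and finite near $\infty$ since $e^{(n-1-p)r}$ decays, while the radial energy $\int \sinh^{n-1}(r)|\eta'(r)|^p\,dr$ is a finite integral over the compact support of $\eta'$.
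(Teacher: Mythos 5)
Your final plan is correct and is essentially the paper's own proof: $u(x)=d_{\H^n}(x,0)$ for $p=\infty$, and $u=(\eta(r)\psi(\theta))\circ\phi^{-1}$ with the tangential energy $\int_1^\infty\sinh^{n-1-p}(r)\,dr<\infty$ precisely because $p>n-1$, together with the two infinite-measure level sets $\{u=0\}$ and $\{u=1\}$ ruling out every constant $c$. The lengthy exploration of radial candidates is harmless (and your observation that finite-energy radial functions land in $W^{1,p}+\R$ when $p>n-1$ is sound), but none of it is needed for the final argument.
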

\begin{proof}
	For $p = \infty$, we use $u(x) = d_{\H^n}(x, 0)$. Then $u$ is $1$-Lipschitz from $\H^n$ to $\R$, and thus $u \in D^{1,\infty}(\H^n)$. On the other hand, 
	$u$ is unbounded, and thus also $u + c$ is unbounded for every $c \in \R$. Hence, $u \notin W^{1, \infty}(\H^n) + \R$.
	
	For $p \in (n-1, \infty)$, we select two relatively 
	open sets $U_1, U_2 \subset \S^{n-1}$ that are of positive distance from each other, and let $\psi \in C^{\infty}(\S^{n-1}, [0, 1])$ be a function that 
	is $0$ on $U_1$ and $1$ on $U_2$. We select a smooth cutoff function $\eta \in C^{\infty}([0, \infty),[0, 1])$ with $\eta \equiv 1$ on $[1, \infty)$ and 
	$\eta \equiv 0$ on a neighborhood of $0$. We then define
	\[
	f \colon \R \times \S^{n-1} \to \R, \quad f(r, \theta) = \eta(r) \psi(\theta).
	\]
	
	Next, we let $u = f \circ \phi^{-1}$ on $\H^n \setminus \{0\}$. Since $\eta(r)$ vanishes for small $r$, $u$ extends to a smooth function 
	$u \in C^\infty(\H^n, \R)$. We then apply Lemma \ref{lem:hyperbolic_chara}, and observe that
	\begin{multline*}
		\int_{[1, \infty) \times \S^{n-1}}  \left( \sinh^{n-1}(r) \abs{d_{\R}f}^p + \sinh^{n-1-p}(r) \abs{d_{\S^{n-1}}f}^p \right)\\
		= \int_{\S^{n-1}} \int_1^\infty \sinh^{n-1-p}(r) \abs{d\psi}^p(\theta) \, dr \vol_{\S^{n-1}} \theta\\
		= \left(\int_1^\infty \sinh^{-(p-(n-1))}(r) \, dr \right) \norm{d\psi}_{L^p(T^*\S^{n-1})}^p < \infty.
	\end{multline*} 
	A similar integral over $(0, 1) \times \S^{n-1}$ is also finite, since $df$ is locally bounded and vanishes near 0. Thus, $u \in D^{1,p}(\H^n)$. 
	However, we cannot have $u + c \in W^{1,p}(\H^n)$ for any $c$, since there is a set of infinite measure in $\H^n$ where $u \equiv 1$, and there 
	also is a set of infinite measure in $\H^n$ where $u \equiv 0$. Thus, $u \notin W^{1,p}(\H^n) + \R$.
\end{proof}

\subsection{Existence of traces}

We now proceed to show that if $u \in D^{1,p}(\H^n)$ with $p \in [1, \infty)$, then $u$ has a type of trace on $\S^{n-1}$. For this, let $\phi$ be 
the hyperbolic polar coordinate map from \eqref{eq:phi_def}. For convenience, we use $\phi_r$ to denote the map $\S^{n-1} \to \H^n$ defined 
by $\phi_r(\theta) = \phi(r, \theta)$, and $\phi^\theta$ to denote the map $(0, \infty) \to \H^n$ defined by $\phi^\theta(r) = \phi(r, \theta)$.

We also adopt for every $E \subset \S^{n-1}$, every $r \in (0, \infty)$, and every subset $S \subset (0, \infty)$ the notation 
$E(\H^n, r) = \phi_r (E)$ and $E(\H^n, S) = \phi(S \times E)$. For every $r > 0$, the space $\S^{n-1}(\H^n, r)$ is an $(n-1)$-dimensional 
submanifold of $\H^n$. Therefore, $g_{\H^n}$ induces a volume form $\vol_{\S^{n-1}(\H^n, r)}$ on it. The map 
$\phi_r \colon \S^{n-1} \to \S^{n-1}(\H^n, r)$ is conformal, and by the properties shown in Lemma \ref{lem:polar_props}, 
its Jacobian is given by
\begin{equation}\label{eq:spherical_volume_pullback}
	\phi_r^* \vol_{\S^{n-1}(\H^n, r)} 
	= \sinh^{n-1} (r) \vol_{\S^{n-1}}.
\end{equation} 

If $u \in L^1_\loc(\H^n)$, $E \subset \S^{n-1}$, and $r > 0$, then we denote
\[
u_{E(\H^n, r)} = \fint_{E(\H^n, r)} u(y) \vol_{\S^{n-1}(\H^n, r)}(y).
\]
Note that a standard Fubini-type argument yields that this integral is finite for a.e.\ $r \in (0, \infty)$, and changing $u$ in a set of measure zero only changes this integral in a null-set of $r \in (0, \infty)$. 

The result which allows us to define traces for $u \in D^{1,p}(\H^n)$ is the following estimate.

\begin{prop}\label{prop:Cauchy_for_averages}
	Let $u \in D^{1,p}(\H^n)$ with $p \in [1, \infty)$, and let $E \subset \S^{n-1}$ be a measurable set with $\vol_{\S^{n-1}}(E) > 0$. Then for almost all $r, s \in (0, \infty)$ with $r < s$, we have
	\begin{multline*}
		\abs{u_{E(\H^n, s)} - u_{E(\H^n, r)}}\\ 
		\leq \frac{1}{[\vol_{\S^{n-1}}(E)]^\frac{1}{p}}
		\norm{\sinh^{-\frac{n-1}{p}}}_{L^{\frac{p}{p-1}}([r, s])}
		\left( \int_{E(\H^n, [r,s])} \abs{du}_{g_{\H^n}}^p \vol_{\H^n} \right)^\frac{1}{p}.
	\end{multline*}
\end{prop}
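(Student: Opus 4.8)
The plan is to reduce the estimate to a one‑dimensional computation along radial geodesics, using the polar coordinate description from Lemma~\ref{lem:hyperbolic_chara} and Lemma~\ref{lem:polar_props}. Write $f = u \circ \phi$, which is weakly differentiable on $(0,\infty)\times\S^{n-1}$ by Lemma~\ref{lem:hyperbolic_chara}. By \eqref{eq:spherical_volume_pullback}, for any measurable $E\subset\S^{n-1}$ and a.e.\ $r$ we have $u_{E(\H^n,r)} = \fint_{E} f(r,\theta)\,\vol_{\S^{n-1}}(\theta)$, i.e.\ the hyperbolic spherical average is just the Euclidean spherical average of $f$ on the slice $\{r\}\times E$. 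So the left‑hand side becomes $\bigl|\fint_E (f(s,\theta)-f(r,\theta))\,\vol_{\S^{n-1}}(\theta)\bigr|$.

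First I would handle the radial variation of $f$. Since $f$ is weakly differentiable, for $\mathcal H^{n-1}$‑a.e.\ $\theta\in\S^{n-1}$ the function $r\mapsto f(r,\theta)$ is absolutely continuous on compact subintervals of $(0,\infty)$ (this is the standard Fubini/ACL property of Sobolev functions, here transported to $\R\times\S^{n-1}$ via charts), and for a.e.\ $r<s$ and a.e.\ such $\theta$ we get
\[
|f(s,\theta)-f(r,\theta)| \le \int_r^s \bigl|d_{\R}f_{(t,\theta)}\bigr|\,dt.
\]
Averaging over $\theta\in E$ and using the triangle inequality for integrals,
\[
\bigl|u_{E(\H^n,s)}-u_{E(\H^n,r)}\bigr|
\le \frac{1}{\vol_{\S^{n-1}}(E)}\int_E \int_r^s \bigl|d_{\R}f_{(t,\theta)}\bigr|\,dt\,\vol_{\S^{n-1}}(\theta).
\]
Then I would insert the weight $\sinh^{(n-1)/p}(t)$ and its reciprocal and apply Hölder's inequality in the variable $t$ with exponents $p$ and $p/(p-1)$ (for $p=1$ this step is trivial, with the $L^\infty$ norm of the weight):
\[
\int_r^s \bigl|d_{\R}f_{(t,\theta)}\bigr|\,dt
\le \norm{\sinh^{-\frac{n-1}{p}}}_{L^{\frac{p}{p-1}}([r,s])}
\left(\int_r^s \sinh^{n-1}(t)\,\bigl|d_{\R}f_{(t,\theta)}\bigr|^p\,dt\right)^{\!1/p}.
\]

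Finally I would integrate over $\theta\in E$ and apply Hölder's inequality once more, this time in $\theta$ with exponents $p$ and $p/(p-1)$ against the constant function $1$ on $E$, producing a factor $\vol_{\S^{n-1}}(E)^{(p-1)/p}$ that combines with the prefactor $\vol_{\S^{n-1}}(E)^{-1}$ to give $\vol_{\S^{n-1}}(E)^{-1/p}$. This yields
\[
\bigl|u_{E(\H^n,s)}-u_{E(\H^n,r)}\bigr|
\le \frac{1}{\vol_{\S^{n-1}}(E)^{1/p}}\,\norm{\sinh^{-\frac{n-1}{p}}}_{L^{\frac{p}{p-1}}([r,s])}
\left(\int_E\int_r^s \sinh^{n-1}(t)\,\bigl|d_{\R}f_{(t,\theta)}\bigr|^p\,dt\,\vol_{\S^{n-1}}(\theta)\right)^{\!1/p}.
\]
The last step is to recognize the double integral as $\int_{E(\H^n,[r,s])}\abs{d_{\R}u}_{g_{\H^n}}^p\,\vol_{\H^n}$ using the Jacobian identity from Lemma~\ref{lem:polar_props}\ref{enum:phi_jacobian} together with the identity $\abs{d_{\R}f_{(r,\theta)}} = \abs{d_R u_{\phi(r,\theta)}}_{g_{\H^n}}$ established in the proof of Lemma~\ref{lem:hyperbolic_chara}; since $\abs{d_R u}_{g_{\H^n}}\le\abs{du}_{g_{\H^n}}$ pointwise, this is bounded by $\int_{E(\H^n,[r,s])}\abs{du}_{g_{\H^n}}^p\,\vol_{\H^n}$, which is exactly the claimed bound.

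The only genuinely delicate point is the bookkeeping of null sets: the absolute continuity of $r\mapsto f(r,\theta)$ holds only for a.e.\ $\theta$, the finiteness of the spherical averages $u_{E(\H^n,r)}$ holds only for a.e.\ $r$, and one must make sure that the resulting inequality is valid for a.e.\ pair $(r,s)$ with $r<s$ — this is the standard Fubini argument already invoked in the text right before the statement, so I would simply cite that and keep the measure‑theoretic remarks brief. Everything else is Hölder applied twice and a change of variables; no real obstacle is expected.
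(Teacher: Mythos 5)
Your proposal is correct and follows essentially the same route as the paper's proof: reduce to spherical averages of $f = u\circ\phi$ via \eqref{eq:averages_match}, use absolute continuity on a.e.\ radial line plus the fundamental theorem of calculus, apply H\"older twice (in $t$ with the $\sinh^{(n-1)/p}$ weight, then in $\theta$), and convert back via the change of variables. Your closing observation that the radial integral equals $\int_{E(\H^n,[r,s])}\abs{d_R u}_{g_{\H^n}}^p\vol_{\H^n}$ exactly (so that only the pointwise bound $\abs{d_R u}_{g_{\H^n}}\le\abs{du}_{g_{\H^n}}$ is needed, with no comparison constant) is the right way to land the constant-free inequality as stated.
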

\begin{proof}
	By changing $u$ on a set of measure zero, which only affects our statement in a null-set of $r$ and $s$, we may assume that $u$ is locally absolutely continuous on the line $r \mapsto \phi^\theta(r)$ for a.e.\ $\theta \in \S^{n-1}$. Indeed, if
	$\Omega$ is an open subset of $\R^{n-1}$, then every Sobolev function in $W^{1,p}_\loc(\R \times \Omega)$ is locally absolutely continuous on $\R \times \{x\}$ for a.e.\ $x \in \Omega$; see e.g.\ \cite[4.9.2 Theorem 2]{Evans-Gariepy-book}. The claim then follows for $u$ by utilizing diffeomorphic charts of the form $\id_\R \times \psi$ where $\psi$ is a chart on $\S^{n-1}$.
	
	We let $f = u \circ \phi$ as in Lemma \ref{lem:hyperbolic_chara}, and additionally denote $f(r, \theta) = f_r(\theta) = f^\theta(r)$. We observe by \eqref{eq:spherical_volume_pullback} and the change of variables -formula that
	\begin{equation}\label{eq:averages_match}
		u_{E(\H^n, r)} = \int_{E} \dfrac
		{\phi_r^* (u \vol_{\S^{n-1}(\H^n, r)})}
		{\sinh^{n-1}(r)  \vol_{\S^{n-1}}(E)}
		= \fint_{E} f_r \vol_{\S^{n-1}}.
	\end{equation}
	for a.e.\ $r \in (0, \infty)$. Thus, we get
	\[
	\abs{u_{E(\H^n, s)} - u_{E(\H^n, r)}}
	= \abs{
		\fint_{E} (f_s - f_r) \vol_{\S^{n-1}}
	}
	\]
	for almost all $r$ and $s$. 
	
	Recall that by our choice of $u$, the maps $f^\theta$ are locally absolutely continuous for a.e.\ $\theta \in \S^{n-1}$. Thus, for such $\theta$, we have
	\[
	\abs{f_s(\theta) - f_r(\theta)}
	= \abs{\int_r^s \frac{df^\theta(\rho)}{d\rho} \, d\rho}
	\le \int_r^s \abs{d_\R f_{(\rho, \theta)}} \, d\rho .
	\]
	We chain this estimate with an application of H\"older's inequality, which yields that
	\begin{multline*}
		\int_r^s \smallabs{d_\R f_{(\rho, \theta)}} \, d\rho
		= \int_r^s \smallabs{d_\R f_{(\rho, \theta)}} \sinh^{\frac{n-1}{p}}(\rho) \sinh^{-\frac{n-1}{p}}(\rho) \,
		d\rho\\
		\leq \norm{\sinh^{-\frac{n-1}{p}}}_{L^{\frac{p-1}{p}}([r, s])} \left( \int_r^s \smallabs{d_\R f_{(\rho, \theta)}}^p \sinh^{n-1}(\rho) \, d\rho \right)^\frac{1}{p}.
	\end{multline*}
	Thus, by integrating the resulting estimate over $\theta$ and applying H\"older's inequality a second time, we get,
	\begin{multline*}
		\abs{u_{E(\H^n, s)} - u_{E(\H^n, r)}}
		= \abs{\fint_{E} (f_s - f_r) \vol_{\S^{n-1}}}\\
		\leq 
		\fint_{E} 
		\left[ \norm{\sinh^{-\frac{n-1}{p}}}_{L^{\frac{p-1}{p}}([r, s])} \left( \int_r^s \smallabs{d_\R f_{(\rho, \theta)}}^p 	
		\sinh^{n-1}(\rho) \, d\rho \right)^\frac{1}{p} 
		\right] \vol_{\S^{n-1}}\\
		\leq \norm{\sinh^{-\frac{n-1}{p}}}_{L^{\frac{p-1}{p}}([r, s])}
		\left( \fint_{E} \int_r^s \smallabs{d_\R f_{(\rho, \theta)}}^p \sinh^{n-1}(\rho) \, d\rho \vol_{\S^{n-1}} \right)^\frac{1}{p}.
	\end{multline*}
	The claim then follows by the characterization of $\norm{du}_{L^p(E(\H^n, r, s))}$ given in Lemma \ref{lem:hyperbolic_chara}.
\end{proof}

With the aid of Proposition \ref{prop:Cauchy_for_averages}, we are able to define a form of trace measure for $u \in D^{1,p}(\H^n)$. For the definition, if $g \colon (a, \infty) \to \R$ is a map, we say that
\[
\esslim_{r \to \infty} g(r) = c \quad \text{if} \quad 
\lim_{\substack{r \to \infty \\ r \notin E}} g(r) = c,
\]
for some null-set $E \subset (a, \infty)$.

\begin{defn}\label{def:trace_measure}
	Let $u \in D^{1,p}(\H^n)$ with $p \in [1, \infty)$. We define the \emph{trace measure} $\mu_u$ of $u$ on $\S^{n-1}$ as follows: 
	\begin{equation}\label{eq:trace_measure_def}
		\mu_u(E) = \esslim_{r \to \infty} \int_{E} u \circ \phi_r \vol_{\S^{n-1}}.
	\end{equation}
	Note that if $\vol_{\S^{n-1}}(E) > 0$, then the integral on the right hand side of \eqref{eq:trace_measure_def} equals $\vol_{\S^{n-1}}(E) u_{E(\H^n, r)}$ by \eqref{eq:averages_match}. Thus, by Proposition \ref{prop:Cauchy_for_averages}, the integrals form a Cauchy net outside a null-set of radii $r$, and the limit is thus well-defined. If on the other hand $\vol_{\S^{n-1}}(E) = 0$, then the integrals are well-defined and equal zero for a.e.\ $r > 0$, implying that $\mu_{u}(E) = 0$. Thus, $\mu_u(E)$ is well defined for all measurable $E \subset \S^{n-1}$.
\end{defn}

\begin{lemma}\label{lem:trace_props}
	Let $u \in D^{1,p}(\H^n)$ with $p \in [1, \infty)$. Then $\mu_u$ is a finite signed measure on $\S^{n-1}$. Moreover, $\mu_u$ is absolutely continuous with respect to $\vol_{\S^{n-1}}$.
\end{lemma}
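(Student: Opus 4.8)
The plan is to establish the three claims — finiteness, the measure property, and absolute continuity — essentially in one stroke by producing a good candidate for the density of $\mu_u$ and then verifying that $\mu_u$ coincides with integration against that density. First I would fix $u \in D^{1,p}(\H^n)$ and let $f = u \circ \phi$ as in Lemma~\ref{lem:hyperbolic_chara}, so that $d_\R f \in L^p$ with weight $\sinh^{n-1}(r)$ along the $\R$-direction. The key point established by Proposition~\ref{prop:Cauchy_for_averages} is that for \emph{every} measurable $E \subset \S^{n-1}$ with $\vol_{\S^{n-1}}(E)>0$, the net $r \mapsto \int_E u \circ \phi_r \, \vol_{\S^{n-1}}$ is Cauchy as $r \to \infty$ outside a null set of radii, and the definition of $\mu_u(E)$ is this essential limit. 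Using \eqref{eq:averages_match}, $\int_E u\circ\phi_r\,\vol_{\S^{n-1}} = \vol_{\S^{n-1}}(E)\, u_{E(\H^n,r)}$, so the Cauchy estimate from Proposition~\ref{prop:Cauchy_for_averages} reads
\[
\Bigl|\int_E u\circ\phi_s\,\vol_{\S^{n-1}} - \int_E u\circ\phi_r\,\vol_{\S^{n-1}}\Bigr| \le [\vol_{\S^{n-1}}(E)]^{1-\frac1p}\,\bigl\|\sinh^{-\frac{n-1}{p}}\bigr\|_{L^{p/(p-1)}([r,s])}\,\Bigl(\int_{E(\H^n,[r,s])}|du|_{g_{\H^n}}^p\,\vol_{\H^n}\Bigr)^{1/p}.
\]

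Next I would identify the density. Consider the map $r \mapsto \Phi_r := (\phi_r)_\# \bigl( (u\circ\phi_r)\,\vol_{\S^{n-1}}\bigr)$, i.e.\ the signed measure on $\S^{n-1}$ given by $E \mapsto \int_E u\circ\phi_r\,\vol_{\S^{n-1}}$; by the displayed inequality with $E$ ranging over all measurable sets, and the fact that $\|\sinh^{-\frac{n-1}{p}}\|_{L^{p/(p-1)}([r,s])} \to 0$ as $r,s \to \infty$ (since $\sinh^{-\frac{n-1}{p}}$ decays exponentially, it is certainly $L^{p/(p-1)}$ near $\infty$), together with absolute continuity of the integral $\int_{E(\H^n,[r,\infty))}|du|^p\,\vol_{\H^n} \to 0$, the densities $u\circ\phi_r$ form a Cauchy net in $L^1(\S^{n-1}, \vol_{\S^{n-1}})$: indeed taking $E = \{u\circ\phi_s - u\circ\phi_r > 0\}$ and its complement and adding gives an $L^1$-bound on $\|u\circ\phi_s - u\circ\phi_r\|_{L^1(\S^{n-1})}$ that vanishes as $r,s\to\infty$ outside a null set of radii. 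Here one must be slightly careful: Proposition~\ref{prop:Cauchy_for_averages} holds for a.e.\ pair $(r,s)$, but since the exceptional set is null one can pass to a sequence $r_k \to \infty$ avoiding it, obtain an $L^1$-limit $g \in L^1(\S^{n-1}, \vol_{\S^{n-1}})$ of $u\circ\phi_{r_k}$, and then check via the same estimate that in fact $u\circ\phi_r \to g$ in $L^1$ along the full essential limit. Then $\mu_u(E) = \lim_k \int_E u\circ\phi_{r_k}\,\vol_{\S^{n-1}} = \int_E g\,\vol_{\S^{n-1}}$ for every measurable $E$, which simultaneously shows that $\mu_u$ is a countably additive signed measure (it equals $g\,\vol_{\S^{n-1}}$), that it is finite (with $|\mu_u|(\S^{n-1}) \le \|g\|_{L^1(\S^{n-1})} < \infty$), and that it is absolutely continuous with respect to $\vol_{\S^{n-1}}$ (since $\vol_{\S^{n-1}}(E) = 0$ forces $\int_E g\,\vol_{\S^{n-1}} = 0$).

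The main obstacle I anticipate is the bookkeeping around the null set of exceptional radii in Proposition~\ref{prop:Cauchy_for_averages}: one needs the Cauchy/convergence statement to be uniform over \emph{all} measurable sets $E$ at once (in the $L^1$-norm of densities), not just for a fixed $E$, in order to upgrade from "net of numbers is Cauchy" to "net of $L^1$-functions is Cauchy." This is handled by the standard trick of testing the signed-measure difference on the positive and negative sets of $u\circ\phi_s - u\circ\phi_r$, which converts the supremum over $E$ into the total variation, and the estimate in Proposition~\ref{prop:Cauchy_for_averages} with $\vol_{\S^{n-1}}(E) \le \vol_{\S^{n-1}}(\S^{n-1})$ bounds exactly this. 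A secondary minor point is verifying that $\sinh^{-\frac{n-1}{p}} \in L^{p/(p-1)}$ near $\infty$ for all $p \in [1,\infty)$ — for $p = 1$ the exponent $p/(p-1)$ is $\infty$ and $\sinh^{-(n-1)}$ is bounded near $\infty$, while for $p > 1$ the integrand decays like $e^{-\frac{(n-1)}{p-1}r}$, so integrability is immediate in all cases; thus the tail $\|\sinh^{-\frac{n-1}{p}}\|_{L^{p/(p-1)}([r,\infty))} \to 0$, which is what drives the Cauchy property. Once these points are in place, the lemma follows with no further computation.
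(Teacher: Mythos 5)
Your proposal is correct, but it takes a genuinely different route from the paper. The paper proves countable additivity directly: it fixes a disjoint decomposition $E=\bigcup_i E_i$, uses Proposition~\ref{prop:Cauchy_for_averages} to build an $i$-indexed dominant for the partial integrals $\int_{E_i}u\circ\phi_s\,\vol_{\S^{n-1}}$, checks via H\"older's inequality for sums that this dominant is summable, and then invokes dominated convergence with respect to the counting measure to exchange the essential limit with the sum; finiteness and absolute continuity are then obtained as separate (easy) observations, and the density $\tr(u)$ is only extracted afterwards, abstractly, via Radon--Nikodym in Definition~\ref{def:trace_function}. You instead upgrade Proposition~\ref{prop:Cauchy_for_averages} to the statement that the slice functions $u\circ\phi_r$ form a Cauchy net in $L^1(\S^{n-1})$ (by testing on the positive and negative sets of $u\circ\phi_s-u\circ\phi_r$), and identify $\mu_u$ with integration against the $L^1$-limit $g$; this delivers countable additivity, finiteness, and absolute continuity in one stroke, and has the added benefit of realizing the trace of Definition~\ref{def:trace_function} concretely as an $L^1$-limit of slices rather than as an abstract Radon--Nikodym derivative. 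The one point that genuinely needs care --- and which you correctly flag --- is that the exceptional null set of radii in Proposition~\ref{prop:Cauchy_for_averages} must be taken uniformly over all measurable $E$, since your sets $E^{\pm}$ depend on the pair $(r,s)$; this is legitimate because, in the proof of that proposition, the exceptional radii arise only from the choice of the absolutely continuous representative and from the integrability of $u\circ\phi_r$ on $\S^{n-1}$, neither of which depends on $E$. With that observation made explicit, your argument is complete.
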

\begin{proof}
	We first prove that $\mu_u$ is a measure. It is immediately clear from the definition that $\mu_u(\emptyset) = 0$, so the step that requires attention is countable additivity. For this, let $E \subset \S^{n-1}$ be a disjoint union of measurable sets $E_i \subset \S^{n-1}$, where $i \in \Z_{> 0}$. For a.e.\ $r \in (0, \infty)$, we have $u \circ \phi_r \in L^1(\S^{n-1})$, and thus
	\[
	\int_{E} u \circ \phi_r \vol_{\S^{n-1}}
	= \sum_{i=1}^\infty \int_{E_i} u \circ \phi_r \vol_{\S^{n-1}}.
	\]
	Hence, the question is about whether we can perform the following exchange of limit and sum, where the left hand side is $\mu_{u}(E)$ and the right hand side is $\sum_{i=1}^\infty \mu_{u}(E_i)$:
	\begin{equation}\label{eq:lim_sum_exchange}
		\esslim_{r \to \infty} \sum_{i=1}^\infty \int_{E_i} u \circ \phi_r \vol_{\S^{n-1}}
		= \sum_{i=1}^\infty \esslim_{r \to \infty} \int_{E_i} u \circ \phi_r \vol_{\S^{n-1}}
	\end{equation}
	
	We justify this switch by next using dominated convergence with respect to the counting measure. We first select $r_0 \in (0, \infty)$ such that $u \circ \phi_{r_0} \in L^1(\S^{n-1})$, and such that for all of the sets $E_i$, the estimate of Proposition \ref{prop:Cauchy_for_averages} is valid on $E_i$ for this $r_0$ and a.e.\ $s > r_0$. Then, for all $E_i$, Proposition \ref{prop:Cauchy_for_averages} yields a dominant
	\begin{multline*}
		\abs{\int_{E_i} u \circ \phi_s \vol_{\S^{n-1}}} \le 
		\int_{E_i} \abs{u \circ \phi_{r_0}} \vol_{\S^{n-1}}\\
		+ \left[ \vol_{\S^{n-1}}(E_i) \right]^\frac{p-1}{p} 
		\norm{\sinh^{-\frac{n-1}{p}}}_{L^{\frac{p}{p-1}}([r_0, \infty))}
		\left( \int_{E_i(\H^n, [r_0, \infty))} \abs{du}_{g_{\H^n}}^p \vol_{\H^n} \right)^\frac{1}{p}
	\end{multline*}
	for a.e.\ $s > r_0$. Moreover, this dominant has a finite sum over $i$, since by using H\"older's inequality for sums, we have
	\begin{multline*}
		\sum_{i = 1}^\infty 
		\left[ \vol_{\S^{n-1}}(E_i) \right]^\frac{p-1}{p} 
		\left( \int_{E_i(\H^n, [r_0, \infty))} \abs{du}_{g_{\H^n}}^p 
		\vol_{\H^n} \right)^\frac{1}{p}\\
		\le \left( \vol_{\S^{n-1}}(E) \right)^\frac{p-1}{p} 
		\left( \int_{E(\H^n, [r_0, \infty))} \abs{du}_{g_{\H^n}}^p 
		\vol_{\H^n} \right)^\frac{1}{p} < \infty. 
	\end{multline*}
	Thus, dominated convergence applies, and \eqref{eq:lim_sum_exchange} hence follows, completing the proof that $\mu_u$ is a measure.
	
	The remaining properties are simpler in comparison. Indeed, finiteness follows from Proposition \ref{prop:Cauchy_for_averages}, since Cauchy sequences of real numbers always converge to a finite limit. Absolute continuity is then immediate from the definition, as $\mu_u(E) = 0$ whenever $\vol_{\S^{n-1}}(E) = 0$.
\end{proof}

Thus, we are led to the following type of trace functions.

\begin{defn}\label{def:trace_function}
	Let $u \in D^{1,p}(\H^n)$ with $p \in [1, \infty)$. By Lemma \ref{lem:trace_props}, $\mu_u$ is a signed measure on $\S^{n-1}$ that is absolutely continuous with respect to $\vol_{\S^{n-1}}$. Hence, $\mu_u$ has a Radon-Nikodym derivative with respect to $\vol_{\S^{n-1}}$; we call this Radon-Nikodym derivative the \emph{trace} of $u$, and denote it by $\tr(u)$. In particular, $\tr(u) \in L^1(\S^{n-1})$, and
	\[
	\int_E \tr(u) \vol_{\S^n} = \mu_u(E) = \esslim_{r \to \infty} \int_{E} u \circ \phi_r \vol_{\S^{n-1}}
	\]
	for every measurable $E \subset \S^{n-1}$.
\end{defn}

\subsection{Constant traces}

With the trace $\tr(u)$ defined for $u \in D^{1,p}(\H^n)$, our next step is to show that if $p \le n-1$, then $\tr(u)$ is constant. We reduce this fact to the following proposition.

\begin{prop}\label{prop:lateral_estimate}
	Let $u \in D^{1,p}(\H^n)$ with $p \in [1, n-1]$. Let $C_1, C_2 \subset \S^{n-1}$ be a pair of 
	isometric spherical caps. Then for a.e.\ $r > \sinh^{-1}(1)$, there exists a constant $C_p$,
	depending only on $p$, such that
	\begin{multline*}
		\abs{u_{C_1(\H^n, r)} - u_{C_2(\H^n, r)}}\\ 
		\leq \frac{C_p}{[\vol_{\S^{n-1}}(C_1)]^\frac{1}{p}}
		\norm{\sinh^{1 - \frac{n-1}{p}}}_{L^{\frac{p}{p-1}}([r, r+\pi/2])}
		\left( \int_{\S^{n-1}(\H^n, [r,r+\pi/2])} \abs{du}_{g_{\H^n}}^p \vol_{\H^n} \right)^\frac{1}{p}.
	\end{multline*}
\end{prop}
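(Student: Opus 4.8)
The goal is a ``lateral'' Poincar\'e-type estimate comparing the spherical averages of $u$ over two isometric caps $C_1, C_2 \subset \S^{n-1}$ on the same hyperbolic sphere of radius $r$. The natural strategy is to connect $C_1$ and $C_2$ by pushing them outward to a larger radius, where the exponentially growing metric $\sinh(r)$ makes the spheres ``large'' and hence makes lateral transport cheap. Concretely, I would interpolate: for a.e.\ fixed radius $\rho \in [r, r + \pi/2]$, construct a bi-Lipschitz (indeed isometric after the obvious rotation) correspondence between $C_1(\H^n, \rho)$ and $C_2(\H^n, \rho)$ coming from a rotation of $\S^{n-1}$ that carries $C_1$ onto $C_2$, and estimate how much $u$ changes along the short arcs of $\S^{n-1}$ realizing this rotation. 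Since any two isometric caps are related by an isometry of $\S^{n-1}$ that can be written as a composition of rotations each moving points by at most $\pi/2$ of arc length (this is why the interval has length $\pi/2$), it suffices to handle the case where the rotation angle is bounded by $\pi/2$, which also controls which radii we integrate over.

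\textbf{Key steps.} First I would pass, as in the proof of Proposition~\ref{prop:Cauchy_for_averages}, to a representative of $u$ that is locally absolutely continuous along a.e.\ line $\rho \mapsto \phi^\theta(\rho)$ and along a.e.\ ``great-circle'' arc within each sphere; the latter requires a Fubini/chart argument on $\S^{n-1}$ analogous to the radial one, using that $f = u\circ\phi$ is weakly differentiable on $(0,\infty)\times\S^{n-1}$ by Lemma~\ref{lem:hyperbolic_chara}. Second, for a.e.\ $r$, I would average the pointwise estimate
\[
\abs{u(\phi(\rho,\theta_1)) - u(\phi(\rho,\theta_2))} \le \int_{\sigma_{\theta_1\theta_2}} \abs{d_{\S^{n-1}} f}\, ds_{\S^{n-1}}
\]
over pairs $(\theta_1,\theta_2)$ matched by the fixed rotation, where $\sigma_{\theta_1\theta_2}$ is the short spherical geodesic between them (of length $\le \pi/2$). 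Taking the $E$-average in $\theta$ and then applying H\"older in the $\S^{n-1}$ variable and in $\rho$, the weight that appears is $\sinh^{n-1}(\rho)$ from the volume form $\vol_{\S^{n-1}(\H^n,\rho)}$ against $\sinh(\rho)$ from the arc-length on $\S^{n-1}(\H^n,\rho)$ (Lemma~\ref{lem:polar_props}\ref{enum:phi_tangential}), so that $\abs{d_{\S^{n-1}} f}$ gets paired with $\sinh^{-1+(n-1)/p}(\rho)$ on the H\"older-conjugate side. This is exactly where the factor $\norm{\sinh^{1-(n-1)/p}}_{L^{p/(p-1)}([r,r+\pi/2])}$ comes from, and it is why the hypothesis $p \le n-1$ is essential: then $1 - (n-1)/p \le 0$, so $\sinh^{1-(n-1)/p}$ is bounded (hence integrable) on $[r, r+\pi/2]$ for all large $r$, whereas for $p > n-1$ it would blow up. Third, I would glue: $\abs{u_{C_1(\H^n,r)} - u_{C_2(\H^n,r)}}$ is controlled by $\abs{u_{C_1(\H^n,r)} - u_{C_1(\H^n,\rho)}} + \abs{u_{C_1(\H^n,\rho)} - u_{C_2(\H^n,\rho)}} + \abs{u_{C_2(\H^n,\rho)} - u_{C_2(\H^n,r)}}$ for a conveniently chosen $\rho$ (or, better, averaged over $\rho \in [r, r+\pi/2]$ to absorb exceptional radii), where the outer two terms are handled by Proposition~\ref{prop:Cauchy_for_averages} and the middle term by the lateral estimate just derived; collecting the $L^p$-norms of $\abs{du}_{g_{\H^n}}$ over $\S^{n-1}(\H^n, [r, r+\pi/2])$ via the characterization in Lemma~\ref{lem:hyperbolic_chara} yields the stated bound, with $C_p$ absorbing the combinatorial constant (number of rotation steps, $\le 4$ say) and the H\"older constants depending only on $p$.

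\textbf{Main obstacle.} The delicate point is the bookkeeping of the two intertwined averagings: one needs the estimate to hold for a.e.\ $r$, but the lateral comparison at radius $\rho$ and the radial comparisons between $r$ and $\rho$ each fail on their own null-sets of radii, and the rotation-matching of $\theta$'s must be measurable and measure-preserving on $E = C_1$. Handling this cleanly requires either averaging the inequality over $\rho \in [r, r+\pi/2]$ (so that the ``for a.e.\ $r$'' in the conclusion survives) and using Fubini to route the exceptional sets correctly, together with a careful choice of the rotation(s) realizing the cap isometry so that the intermediate caps stay within bounded arc-distance and the geodesics $\sigma_{\theta_1\theta_2}$ sweep out a region of $\S^{n-1}$ on which one can apply H\"older uniformly. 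The rest — the appearance of $\sinh^{1-(n-1)/p}$ and the role of $p \le n-1$ — is essentially forced by the geometry once the setup is right.
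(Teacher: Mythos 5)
Your argument is correct and yields the stated estimate, but it is organized differently from the paper's proof. You decompose $\smallabs{u_{C_1(\H^n,r)}-u_{C_2(\H^n,r)}}$ by a three-term triangle inequality --- go out radially from $r$ to a larger radius $\rho$, compare the two caps laterally on the sphere of radius $\rho$ using a rotation of $\S^{n-1}$ and the tangential derivative $d_{\S^{n-1}}f$, then come back radially --- and you average over $\rho\in[r,r+\pi/2]$ to turn the codimension-one lateral integral into the full shell integral and to absorb exceptional radii; the radial legs are recycled from Proposition~\ref{prop:Cauchy_for_averages} after noting $\sinh^{-(n-1)/p}\le\sinh^{1-(n-1)/p}$ for $r>\sinh^{-1}(1)$. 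The paper instead uses a single ``spiral'' map $S(\alpha,r,\theta)=(r+\alpha,R(\alpha,\theta))$ that moves radially and angularly simultaneously, runs it for angle $\sigma/2$ from each cap so the two halves meet at a common middle cap at radius $r+\sigma/2$ (whence the interval length $\pi/2$, since $\sigma\le\pi$), and estimates the full differential $\smallabs{df}$ along these curves in one H\"older step, using the unit-Jacobian change of variables under $S^r$. Your route buys modularity (the radial comparison is quoted rather than reproved) at the cost of slightly heavier bookkeeping of null sets in two separate averaging variables; the paper's route is more self-contained and avoids the triangle inequality entirely. Two small corrections: the length $\pi/2$ of the radial interval is not forced in your setup (you may rotate by the full angle $\sigma\le\pi$ at a single radius, so the shell width is a free parameter), and your remark that $p\le n-1$ is needed because $\sinh^{1-(n-1)/p}$ would otherwise ``blow up'' on $[r,r+\pi/2]$ is inaccurate --- that norm is finite on a compact interval for every $p$, and indeed the estimate itself holds for all $p\in[1,\infty)$; the hypothesis $p\le n-1$ is only needed later (Corollary~\ref{cor:constant_trace}) to make the right-hand side tend to zero as $r\to\infty$. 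Neither point affects the validity of your proof.
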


Before proving Proposition \ref{prop:lateral_estimate}, we point how the constancy of the trace follows from it.

\begin{cor}\label{cor:constant_trace}
	Let $u \in D^{1,p}(\H^n)$ with $p \in[1, n-1]$. If $C_1, C_2 \subset \S^{n-1}$ are a pair of isometric spherical caps, then we have
	\[
	\mu_u(C_1) = \mu_u(C_2).
	\]
	Consequently, $\tr(u)$ is constant.
\end{cor}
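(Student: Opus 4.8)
The plan is to derive Corollary \ref{cor:constant_trace} from Proposition \ref{prop:lateral_estimate} by two observations: first, that the right-hand side of the estimate tends to zero as $r \to \infty$ when $p \in [1, n-1]$, and second, that any two measurable subsets of $\S^{n-1}$ of equal positive volume can be related through chains of isometric spherical caps, so that equality of $\mu_u$ on isometric caps upgrades to $\tr(u)$ being a.e.\ constant.

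For the first observation, I would fix a pair of isometric spherical caps $C_1, C_2 \subset \S^{n-1}$ and let $r \to \infty$ through the full-measure set of radii for which the estimate of Proposition \ref{prop:lateral_estimate} holds. The factor $\norm{\sinh^{1-\frac{n-1}{p}}}_{L^{p/(p-1)}([r, r+\pi/2])}$ is bounded: since $\sinh(\rho) \approx e^\rho$ for large $\rho$, on the interval $[r, r+\pi/2]$ the integrand is comparable to $e^{(1-\frac{n-1}{p})\rho \cdot \frac{p}{p-1}}$, whose integral over an interval of fixed length $\pi/2$ is comparable to $e^{(1-\frac{n-1}{p})\frac{p}{p-1} r}$; as $p \le n-1$ gives $1 - \frac{n-1}{p} \le 0$, this stays bounded (indeed tends to $0$ when $p < n-1$, and is bounded when $p = n-1$). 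Meanwhile the factor $\left(\int_{\S^{n-1}(\H^n, [r, r+\pi/2])} \abs{du}_{g_{\H^n}}^p \vol_{\H^n}\right)^{1/p}$ tends to $0$ as $r \to \infty$, because $du \in L^p(\wedge^1 T^* \H^n)$ and the sets $\S^{n-1}(\H^n, [r, r+\pi/2])$ are pairwise ``almost disjoint'' — more precisely, $\phi^{-1}(\S^{n-1}(\H^n, [r,r+\pi/2])) = [r, r+\pi/2] \times \S^{n-1}$, and as $r \to \infty$ these slices exhaust tail regions of finite total $\abs{du}_{g_{\H^n}}^p$-mass, so the mass on each slice vanishes. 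Combining, $\abs{u_{C_1(\H^n, r)} - u_{C_2(\H^n, r)}} \to 0$, and since $u_{C_i(\H^n, r)} \to \mu_u(C_i)/\vol_{\S^{n-1}}(C_i)$ and $\vol_{\S^{n-1}}(C_1) = \vol_{\S^{n-1}}(C_2)$, we conclude $\mu_u(C_1) = \mu_u(C_2)$.

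For the second observation, I would argue that $\tr(u)$ is constant $\vol_{\S^{n-1}}$-a.e. Since the group $O(n)$ of rotations of $\S^{n-1}$ acts transitively on spherical caps of a fixed radius, the relation $\mu_u(C_1) = \mu_u(C_2)$ for all isometric caps says the measure $\mu_u$ assigns the same value to $R(C)$ for every $R \in O(n)$ and every cap $C$. Fixing a small cap $C_0$ and letting $R$ range over $O(n)$, the function $R \mapsto \mu_u(R(C_0))$ is constant; expressing $\mu_u(R(C_0)) = \int_{R(C_0)} \tr(u)\, \vol_{\S^{n-1}}$ and averaging over $R \in O(n)$ with respect to Haar measure, one gets by Fubini that $\mu_u(R(C_0))$ equals $\vol_{\S^{n-1}}(C_0) \cdot \fint_{\S^{n-1}} \tr(u)\, \vol_{\S^{n-1}}$; so for every rotation $R$ and every radius of cap, $\fint_{R(C_0)} \tr(u) = \fint_{\S^{n-1}} \tr(u) =: c$. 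Since caps of all radii and centers form a differentiation basis for $\vol_{\S^{n-1}}$, the Lebesgue differentiation theorem gives $\tr(u) = c$ for $\vol_{\S^{n-1}}$-a.e.\ point of $\S^{n-1}$, i.e.\ $\tr(u)$ is constant.

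The main obstacle I anticipate is the bookkeeping in the first step: verifying rigorously that the hyperbolic $L^p$-energy of $du$ over the annular slices $\S^{n-1}(\H^n, [r, r+\pi/2])$ tends to $0$, and that the $\norm{\sinh^{1-\frac{n-1}{p}}}_{L^{p/(p-1)}}$ factor is genuinely bounded (rather than merely finite for each fixed $r$) uniformly as $r \to \infty$ — this is where the hypothesis $p \le n-1$ is essential and where one must be careful with the borderline case $p = n-1$ and with the $p = 1$ case where the dual exponent is infinite and the $L^\infty$ norm of $\sinh^{1-(n-1)}$ over $[r, r+\pi/2]$ must instead be controlled. The transitivity/averaging argument in the last step is standard once set up, but one should state cleanly that isometric spherical caps realize all rotations, so that constancy of $\mu_u$ on isometric caps is exactly rotation-invariance of $\mu_u$, from which the constancy of the Radon–Nikodym derivative is immediate.
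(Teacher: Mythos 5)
Your proposal is correct and follows essentially the same route as the paper: let $r\to\infty$ in the estimate of Proposition~\ref{prop:lateral_estimate} (the $\norm{\sinh^{1-\frac{n-1}{p}}}$ factor stays bounded since $p\le n-1$, while the tail energy of $du$ vanishes), and then deduce constancy of $\tr(u)$ from the Lebesgue differentiation theorem applied to spherical caps. The only difference is cosmetic: the paper skips your Haar-averaging step and simply observes that, since $\mu_u$ of a cap depends only on its radius, the differentiation limit defining $\tr(u)(\theta)$ is independent of $\theta$.
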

\begin{proof}
	The estimate of Proposition \ref{prop:lateral_estimate} yields that
	\[
	\esslim_{r \to \infty} \abs{u_{C_1(\H^n, r)} - u_{C_2(\H^n, r)}} = 0.
	\]
	Since also $\vol_{\S^{n-1}}(C_1) = \vol_{\S^{n-1}}(C_2)$ by isometry, we obtain the desired 
	\begin{align*}
		\mu_u(C_1) 
		= \esslim_{r \to \infty} \vol_{\S^{n-1}}(C_1) u_{C_1(\H^n, r)}
		= \esslim_{r \to \infty} \vol_{\S^{n-1}}(C_2) u_{C_2(\H^n, r)} 
		= \mu_u(C_2).
	\end{align*}
	For the claim that $\tr(u)$ is constant, the Lebesgue differentiation theorem yields for $\vol_{\S^{n-1}}$-a.e.\ $\theta \in \S^{n-1}$ that
	\[
	\tr(u) (\theta) = \lim_{r \to 0} \frac{\mu_u(\S^{n-1} \cap \B^n(\theta, r))}{\vol_{\S^{n-1}}(\S^{n-1} \cap \B^n(\theta, r))}.
	\]
	By the previous part of the Corollary, the limit on the right hand side is independent of $\theta$, thus proving that $\tr(u)$ is constant.
\end{proof}

Next, we prove Proposition \ref{prop:lateral_estimate}.

\begin{proof}[Proof of Proposition \ref{prop:lateral_estimate}]
	We again denote $f = u \circ \phi$. Let $\sigma$ be the spherical distance between the centers of $C_1$ and $C_2$, and note that $\sigma \leq \pi$. Let $R \colon [0, \sigma] \times \S^{n-1} \to \S^{n-1}$ be a rotation map, where $R(\alpha, \theta)$ rotates $\theta$ by the angle $\alpha \in [0, \sigma]$ around a given axis, where the axis and direction of rotation are chosen so that $R(\sigma, C_1) = C_2$. We also define a map $S \colon [0, \sigma] \times (0, \infty) \times \S^{n-1} \to (0, \infty) \times \S^{n-1}$ by $S(\alpha, r, \theta) = (r+\alpha, R(\alpha, \theta))$. For convenience, we also denote $R(\alpha, \theta) = R_\alpha(\theta) = R^\theta(\alpha)$ and $S(\alpha, r, \theta) = S_\alpha(r, \theta) = S^r(\alpha, \theta) = S^{r, \theta}(\alpha)$, similarly to the corresponding notation for $\phi$.
	
	Our main objective is to prove that there exists a constant $C_p' \in (0, \infty)$ depending only on 
	$p$ such that
	\begin{multline}\label{eq:twisted_corridor_estimate}
		\abs{u_{(R_{\sigma/2}C_1)(\H^n, r+\sigma/2)} - u_{C_1(\H^n, r)}}\\ 
		\le \frac{C_p'}{[\vol_{\S^{n-1}}(C_1)]^\frac{1}{p}} 
		\norm{\sinh^{1 - \frac{n-1}{p}}}_{L^{\frac{p}{p-1}}([r, r+\sigma/2])}
		\left( \int_{\S^{n-1}(\H^n, [r,r+\sigma/2])} \abs{du}_{g_{\H^n}}^p \vol_{\H^n} \right)^\frac{1}{p}.
	\end{multline}
	for a.e.\ $r \in (0, \infty)$. We start as in the proof of Proposition \ref{prop:Cauchy_for_averages}, and observe that
	\begin{multline}\label{eq:twisted_average_int_conversion}
		\abs{u_{(R_{\sigma/2}C_1)(\H^n, r+\sigma/2)} - u_{C_1(\H^n, r)}}
		= \abs{
			\fint_{C_1} \left(f \circ S^{r, \theta}(\sigma/2)  - f \circ S^{r, \theta}(0)\right) \vol_{\S^{n-1}}(\theta)
		}
	\end{multline}
	for a.e.\ $r \in (0, \infty)$. We then note that we can similarly as before select a representative of $u$ for which $f \circ S^{r, \theta}$ is locally absolutely continuous for a.e.\ $r \in (0, \infty)$ and a.e.\ $\theta \in \S^{n-1}$. Applying the fundamental theorem of calculus, the chain rule, and standard operator norm estimates, we obtain for a.e.\ $r$ and $\theta$ that
	\begin{multline*}
		\abs{f \circ S^{r, \theta}(\sigma/2) - f \circ S^{r, \theta}(0)}
		= \abs{\int_{0}^{\sigma/2} d(f \circ S^{r, \theta})} 
		\leq \int_{0}^{\sigma/2} \smallabs{df_{S^{r, \theta}(\alpha)}}\cdot  \smallabs{(S^{r, \theta})'(\alpha)} \, d\alpha.
	\end{multline*}
	
	We note that  $\smallabs{(S^{r, \theta})'(\alpha)}^2 = 1 + \smallabs{(R^\theta)'(\alpha)}^2$. Here, $0 \leq \smallabs{(R^\theta)'(\alpha)} \leq 1$, where the value is smaller the closer $\theta$ is to the axis of rotation. In particular, $1 \leq  \smallabs{(S^{r, \theta})'(\alpha)} \leq \sqrt{2}$. By combining this estimate with H\"older's inequality, we get
	\begin{multline*}
		\int_{0}^{\sigma/2} \smallabs{df_{S^{r, \theta}(\alpha)}} \cdot \smallabs{(S^{r, \theta})'(\alpha)} \, d\alpha\\
		\leq \sqrt{2} \int_{0}^{\sigma/2} \smallabs{df_{S^{r, \theta}(\alpha)}} \sinh^{\frac{n-1}{p} - 1}(r + \alpha) \sinh^{1 -\frac{n-1}{p}}(r + \alpha) \, d\alpha\\
		\leq \sqrt{2} \norm{\sinh^{1 - \frac{n-1}{p}}}_{L^{\frac{p}{p-1}}([r, r+\sigma/2])} \left( \int_{0}^{\sigma/2} \smallabs{df_{S^{r, \theta}(\alpha)}}^p \sinh^{n-1 - p}(r + \alpha) \, d\alpha\right)^\frac{1}{p}.
	\end{multline*}
	Thus, by chaining together the previous two estimates, integrating over $\theta$, and applying H\"older's inequality once more, we obtain that
	\begin{align}
		&\abs{ \fint_{C_1} \left(f \circ S^{r, \theta}(\sigma/2)  - f \circ S^{r, \theta}(0)\right) \vol_{\S^{n-1}}(\theta)}\label{eq:twisted_average_int_estimate}\\
		&\qquad\le \frac{\sqrt{2}}{\left[\vol_{\S^{n-1}}(C_1)\right]^\frac{1}{p}} \norm{\sinh^{1 - \frac{n-1}{p}}}_{L^{\frac{p}{p-1}}([r, r+\sigma/2])} \times \nonumber\\
		&\qquad\hspace{3cm}\left( \int_{[0, \sigma/2] \times C_1} \smallabs{df_{S^{r, \theta}(\alpha)}}^p \sinh^{n-1 - p}(r + \alpha) \, \vol_{\R \times \S^{n-1}}(\alpha, \theta) \right)^\frac{1}{p}.\nonumber
	\end{align}
	
	We then wish to perform a change of variables under $S^r$. For this, we note that the $\R$-component of $S^r(\alpha, \theta)$ is $r+\alpha$, which is independent of $\theta$ and satisfies $\partial_\alpha (r+\alpha) = 1$. Moreover, the $\S^{n-1}$-component of $S^r(\alpha, \theta)$ is $R(\alpha+r, \theta)$, which for fixed $\alpha$ and $r$ is an orientation-preserving isometry of $\S^{n-1}$ with respect to $\theta$. It follows that the Jacobian of $S^r$ is identically 1; that is, 
	\[
	(S^r)^* \vol_{\R \times \S^{n-1}} = \vol_{\R \times \S^{n-1}}.
	\]
	Thus, since $S^r([0, \sigma/2] \times C_1) \subset [r, r+\sigma/2] \times \S^{n-1}$ we conclude that
	\begin{multline*}
		\int_{[0, \sigma/2] \times C_1} \smallabs{df_{S^{r, \theta}(\alpha)}}^p \sinh^{n-1 - p}(r + \alpha) \, \vol_{\R \times \S^{n-1}}(\alpha, \theta)\\
		\le \int_{[r, r+\sigma/2] \times \S^{n-1}}
		\smallabs{df_{(\rho, \theta)}}^p\sinh^{n-1 - p}(\rho) \, \vol_{\R \times \S^{n-1}}(\rho, \theta).
	\end{multline*}
	Moreover, since we assumed that $r > \sinh^{-1}(1)$, we have for all $\rho \ge r$ the estimate $\sinh^{n-1-p}(\rho) \le \sinh^{n-1}(\rho)$, which in turn yields us
	\begin{align*}
		\smallabs{df_{(\rho, \theta)}}^p\sinh^{n-1 - p}(\rho)
		&= \smallabs{d_\R f_{(\rho, \theta)} + d_{\S^{n-1}} f_{(\rho, \theta)}}^p \sinh^{n-1 - p}(\rho)\\
		&\le 2^{p-1} \left( \smallabs{d_\R f_{(\rho, \theta)}}^p \sinh^{n-1}(\rho)
		+ \smallabs{d_{\S^{n-1}} f_{(\rho, \theta)}}^p \sinh^{n-1-p}(\rho) \right).
	\end{align*} 
	Thus, we may combine the previous estimates with Lemma~\ref{lem:hyperbolic_chara} to obtain that
	\begin{multline}\label{eq:twisted_avg_int_into_hyp_norm}
		\int_{[0, \sigma/2] \times C_1} \smallabs{df_{S^{r, \theta}(\alpha)}}^p \sinh^{n-1 - p}(r + \alpha) \, \vol_{\R \times \S^{n-1}}(\alpha, \theta)\\
		\lesssim \int_{\S^{n-1}(\H^n, [r, r+\sigma/2])}
		\smallabs{du}_{g_{\H^n}}^p \vol_{\H^n},
	\end{multline}
	with the comparison constant depending only on $p$.
	
	The proof of \eqref{eq:twisted_corridor_estimate} is hence completed by combining all of 
	\eqref{eq:twisted_average_int_conversion}, \eqref{eq:twisted_average_int_estimate}, and 
	\eqref{eq:twisted_avg_int_into_hyp_norm}. With \eqref{eq:twisted_corridor_estimate} shown, we then 
	perform the same estimate, but with an alternate rotation map $R'$ with the opposite direction of 
	rotation, in order to obtain an identical upper bound for $\smallabs{u_{(R_{\sigma/2}C_1)(\H^n, 
	r+\sigma/2)} - u_{C_2(\H^n, r)}}$. By combining these two upper bounds, and setting $C_p = 
	2C_p'$,  the proof of the Proposition is complete.
\end{proof}

\subsection{Limits along geodesics}

We then utilize our trace measure results from the previous section to conclude the following.

\begin{prop}\label{lem:conv_along_radii}
	Let $u \in D^{1,p}(\H^n)$ with $p \in [1, \infty)$, and denote $f = u \circ \phi$. Suppose that $\tr(u)$ is constant (up to null-set), and denote its constant value by $c$. Then there is a $L^1_\loc$-representative of $u$ such that, for $\vol_{\S^{n-1}}$-a.e.\ $\theta \in \S^{n-1}$, we have
	\[
	\lim_{r \to \infty} f(r, \theta) = c.
	\]
\end{prop}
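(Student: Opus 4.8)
The plan is to first show that $f^\theta(r):=f(r,\theta)$ converges along almost every radial line by a Hardy-type estimate, and then to identify the common value of these limits with $c$ using the trace measure $\mu_u$ from Definition~\ref{def:trace_measure}.

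First I would fix, exactly as in the proof of Proposition~\ref{prop:Cauchy_for_averages}, an $L^1_\loc$-representative of $u$ (equivalently of $f=u\circ\phi$) for which the maps $r\mapsto f^\theta(r)$ are locally absolutely continuous for $\vol_{\S^{n-1}}$-a.e.\ $\theta\in\S^{n-1}$; this comes from the Euclidean ACL property applied in charts of the form $\id_\R\times\psi$, and it does not change $\mu_u$ or $\tr(u)$ since these are insensitive to null-set modifications. For such $\theta$ and $0<r<s$, the fundamental theorem of calculus and Hölder's inequality against the weight $\sinh^{(n-1)/p}(\rho)$ give
\[
	\abs{f^\theta(s)-f^\theta(r)}
	\le \int_r^s \abs{d_\R f_{(\rho,\theta)}}\,d\rho
	\le \norm{\sinh^{-\frac{n-1}{p}}}_{L^{\frac{p}{p-1}}([r,\infty))}\left(\int_r^\infty \abs{d_\R f_{(\rho,\theta)}}^p\sinh^{n-1}(\rho)\,d\rho\right)^{\frac1p},
\]
with the convention $L^{p/(p-1)}=L^\infty$ when $p=1$. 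Since $\sinh(\rho)\approx e^{\rho}$ for large $\rho$, the quantity $A(r):=\norm{\sinh^{-(n-1)/p}}_{L^{p/(p-1)}([r,\infty))}$ is finite for every $r>0$ and tends to $0$ as $r\to\infty$. By Lemma~\ref{lem:hyperbolic_chara} and the definition \eqref{eq:Riemann-kobam} of $D^{1,p}(\H^n)$, the function $(\rho,\theta)\mapsto\abs{d_\R f_{(\rho,\theta)}}^p\sinh^{n-1}(\rho)$ is integrable over $[1,\infty)\times\S^{n-1}$, so by Fubini its integral over $[1,\infty)$ is finite for a.e.\ $\theta$. For each such $\theta$ the right-hand side above tends to $0$ as $r\to\infty$ uniformly in $s>r$, so $\{f^\theta(r)\}_r$ is Cauchy and has a finite limit $c_\theta:=\lim_{r\to\infty}f^\theta(r)$.

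It remains to show $c_\theta=c$ for a.e.\ $\theta$. Choosing $r_0\ge1$ with $f(r_0,\cdot)\in L^1(\S^{n-1})$ (possible by $u\in L^1_\loc(\H^n)$ and Fubini), the displayed estimate gives, for all $s\ge r_0$ and a.e.\ $\theta$,
\[
	\abs{f(s,\theta)}\le g(\theta):=\abs{f(r_0,\theta)}+A(r_0)\left(\int_{r_0}^\infty\abs{d_\R f_{(\rho,\theta)}}^p\sinh^{n-1}(\rho)\,d\rho\right)^{\frac1p},
\]
and Hölder's inequality on $\S^{n-1}$ shows $g\in L^1(\S^{n-1})$. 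Combining this $s$-uniform dominant with the pointwise convergence $f(s,\theta)\to c_\theta$ and applying dominated convergence, we get $\lim_{s\to\infty}\int_E f(s,\theta)\,\vol_{\S^{n-1}}=\int_E c_\theta\,\vol_{\S^{n-1}}$ for every measurable $E\subset\S^{n-1}$; in particular this genuine limit agrees with the essential limit defining $\mu_u(E)$, so $\mu_u(E)=\int_E c_\theta\,\vol_{\S^{n-1}}$. On the other hand $\mu_u(E)=\int_E\tr(u)\,\vol_{\S^{n-1}}=c\,\vol_{\S^{n-1}}(E)$ since $\tr(u)\equiv c$. Comparing the two expressions over all $E$ forces $c_\theta=c$ for a.e.\ $\theta$, proving the proposition.

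The only genuinely delicate points are the selection of a representative absolutely continuous along a.e.\ radial line — handled precisely as in Proposition~\ref{prop:Cauchy_for_averages} — and the construction of the $s$-uniform $L^1(\S^{n-1})$ dominant needed for dominated convergence; the decay $A(r)\to0$ and the finiteness of $g$ are then routine consequences of the exponential growth of $\sinh$ and Hölder's inequality.
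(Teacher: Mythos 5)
Your proposal is correct. The first half (existence of the radial limits $c_\theta$ via the weighted H\"older/Cauchy estimate on a representative that is locally absolutely continuous along a.e.\ ray) is essentially identical to the paper's Lemma~\ref{lem:lim_exists_along_radii}. The second half, however, takes a genuinely different route. The paper identifies the limit with $c$ by contradiction: it invokes a Scorza--Dragoni-type measurability lemma (Lemma~\ref{lem:lim_measurability_along_radii_general}) to show that the super/sub-level sets $\cL\cI_f^{+}(c+k^{-1})$, $\cL\cS_f^{-}(c-k^{-1})$ and $\cI_f^{+}(c',j_0)$ are measurable, extracts one of positive measure if $A_f(c)$ were not of full measure, and derives the contradiction $c\,\vol_{\S^{n-1}}(\cI) < c'\,\vol_{\S^{n-1}}(\cI) \le \mu_u(\cI) = c\,\vol_{\S^{n-1}}(\cI)$. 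You instead build an $s$-uniform $L^1(\S^{n-1})$ dominant (whose integrability follows from H\"older on the finite-measure sphere, exactly as in the paper's Lemma~\ref{lem:trace_props}) and apply dominated convergence to get $\mu_u(E)=\int_E c_\theta\,\vol_{\S^{n-1}}$ for every measurable $E$, then compare densities. Your argument is more direct and actually proves something slightly stronger without using the constancy hypothesis until the last line, namely that $\tr(u)(\theta)$ coincides a.e.\ with the radial limit $c_\theta$; measurability of $\theta\mapsto c_\theta$ is free, being an a.e.\ pointwise limit of measurable slices $f(s_k,\cdot)$ along a suitable sequence $s_k\to\infty$. The only point to be slightly careful about, which you handle adequately, is that $f(s,\cdot)$ need only be measurable and integrable for a.e.\ $s$, so the dominated-convergence limit must be matched with the $\esslim$ in Definition~\ref{def:trace_measure}; both ignore a null set of radii, so they agree.
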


We split the proof of Proposition~\ref{lem:conv_along_radii} into several parts. First, we show that the 
limits along a.e.\ radial path exist. Note that if $E \subset \S^{n-1}$ with $\mathcal{H}^{n-1}(E) 
> 0$, then rays of the form $\phi([1, \infty) \times \{\theta\})$ with $\theta \in E$ form a family of 
positive $p$-modulus; see Lemma~\ref{lem:Hn-p-hyperb}. Thus, our considerations here are in a sense 
similar to those at the end of Section \ref{Sec:N1p-tame}. 

\begin{lemma}\label{lem:lim_exists_along_radii}
	Let $u \in D^{1,p}(\H^n)$ with $p \in [1, \infty)$, and denote $f = u \circ \phi$. Then there is a $L^1_\loc$-representative of $u$ such that, for $\vol_{\S^{n-1}}$-a.e.\ $\theta \in \S^{n-1}$, the limit $\lim_{r \to \infty} f(r, \theta)$ exists.
\end{lemma}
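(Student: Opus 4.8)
The plan is to reduce the existence of radial limits to the one-dimensional behavior of $f$ along the fibers $r\mapsto f(r,\theta)$, controlled by the radial part of the energy $\int_{\mathbb{R}\times\mathbb{S}^{n-1}}\sinh^{n-1}(r)\,\abs{d_\R u}^p\,\vol_{\mathbb{R}\times\mathbb{S}^{n-1}}$, which is finite by Lemma~\ref{lem:hyperbolic_chara}. First I would fix an $L^1_\loc$-representative of $u$ for which $f^\theta(r) := f(r,\theta)$ is locally absolutely continuous for $\vol_{\mathbb{S}^{n-1}}$-a.e.\ $\theta$, exactly as in the proof of Proposition~\ref{prop:Cauchy_for_averages} (via the ACL property of Sobolev functions on $\mathbb{R}\times\Omega$ together with diffeomorphic charts on $\mathbb{S}^{n-1}$). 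For such $\theta$ and for $1<s<r$ we have
\[
\abs{f(r,\theta)-f(s,\theta)}\le \int_s^r \abs{d_\R f_{(\rho,\theta)}}\,d\rho
\le \left(\int_s^\infty \sinh^{-\frac{n-1}{p-1}}(\rho)\,d\rho\right)^{\frac{p-1}{p}}\left(\int_s^r \abs{d_\R f_{(\rho,\theta)}}^p\sinh^{n-1}(\rho)\,d\rho\right)^{1/p}
\]
by Hölder's inequality (for $p=1$ one interprets the first factor as $\esssup_{\rho\ge s}\sinh^{-(n-1)}(\rho)$, which is the cleaner bound there). The point is that $\sinh^{-\frac{n-1}{p-1}}$ is integrable near $\infty$ since $\sinh(\rho)\approx e^\rho$ grows exponentially and $\tfrac{n-1}{p-1}>0$; hence the first factor tends to $0$ as $s\to\infty$ regardless of the sign of $n-1-p$.

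Next I would set $g(\theta):=\int_1^\infty \abs{d_\R f_{(\rho,\theta)}}^p\sinh^{n-1}(\rho)\,d\rho$ and observe that, by Lemma~\ref{lem:hyperbolic_chara} and Tonelli's theorem, $\int_{\mathbb{S}^{n-1}} g\,\vol_{\mathbb{S}^{n-1}}\lesssim \norm{du}_{L^p(\mathbb{S}^{n-1}(\H^n,[1,\infty)))}^p<\infty$, so $g(\theta)<\infty$ for $\vol_{\mathbb{S}^{n-1}}$-a.e.\ $\theta$. For any such $\theta$ (also chosen in the full-measure set where $f^\theta$ is locally absolutely continuous), the displayed estimate shows that $\{f(r,\theta)\}_{r}$ is Cauchy as $r\to\infty$: given $\eps>0$, choose $s_0$ so large that $(\int_{s_0}^\infty\sinh^{-\frac{n-1}{p-1}})^{(p-1)/p}\, g(\theta)^{1/p}<\eps$, and then $\abs{f(r,\theta)-f(s,\theta)}<\eps$ for all $r,s\ge s_0$. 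Hence the limit $\lim_{r\to\infty}f(r,\theta)$ exists and is finite for $\vol_{\mathbb{S}^{n-1}}$-a.e.\ $\theta$, which is the claim.

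The only genuinely delicate point is the measurability/representative bookkeeping: one must make sure that the single $L^1_\loc$-representative chosen at the start is simultaneously absolutely continuous along almost every radial fiber \emph{and} has $g(\theta)<\infty$ along almost every fiber, and that the exceptional null-set of $\theta$'s does not depend on $r$. This is handled exactly as in Proposition~\ref{prop:Cauchy_for_averages}: the ACL representative is fixed once and for all, and finiteness of $g$ a.e.\ is a consequence of Tonelli applied to the finite integral $\norm{du}_{L^p(\mathbb{S}^{n-1}(\H^n,[1,\infty)))}^p$. I expect no other obstacle; the $p=1$ case is entirely analogous with the exponent-conjugate factor replaced by an essential supremum, and near $r=0$ nothing needs to be said since we only take $r\to\infty$.
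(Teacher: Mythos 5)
Your argument is correct and is essentially identical to the paper's proof: both fix the ACL representative as in Proposition~\ref{prop:Cauchy_for_averages}, apply H\"older with the weight $\sinh^{(n-1)/p}$ to bound $\abs{f(r,\theta)-f(s,\theta)}$ by a tail norm of $\sinh^{-(n-1)/p}$ times the radial energy along the fiber, and use Tonelli together with Lemma~\ref{lem:hyperbolic_chara} to get that this energy is finite for a.e.\ $\theta$, so the values form a Cauchy net. No gaps; your explicit handling of the $p=1$ case via the essential supremum is the standard reading of the $L^{p/(p-1)}$ norm there.
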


\begin{proof}
	We select the representative of $u$ so that $f$ is locally absolutely continuous on $(0, \infty) \times \{\theta\}$ for a.e.\ $\theta \in \S^{n-1}$; see the beginning of the proof of Proposition \ref{prop:Cauchy_for_averages}. We then observe that for a.e.\ $\theta \in \S^{n-1}$, we have by absolute continuity that for all $r_1, r_2 \in (1, \infty)$ with $r_1 < r_2$,
	\begin{multline*}
		\abs{f(r_2, \theta) - f(r_1, \theta)}
		\leq \int_{r_1}^{r_2} \abs{d_\R f}(r, \theta) \, dr\\
		\leq \norm{\sinh^{-\frac{n-1}{p}}(r)}_{L^\frac{p}{p-1}([r_1, \infty))}
		\left( \int_{1}^\infty \sinh^{n-1}(r) \abs{d_\R f}^p(r, \theta) \, dr \right)^\frac{1}{p},
	\end{multline*} 
	where by Lemma \ref{lem:hyperbolic_chara}, the right hand side tends to $0$ at a.e.\ $\theta \in \S^{n-1}$ as $r_1 \to \infty$. Thus, $r \mapsto f(r, \theta)$ forms a Cauchy net at a.e.\ $\theta \in \S^{n-1}$, and hence, $\lim_{r \to \infty} f(r, \theta)$ exists for a.e.\ $\theta \in \S^{n-1}$.
\end{proof}

Next, we record a technical measurability result used in our argument. 
The result is closely related to various 
Scorza--Dragoni -type theorems, see e.g.\ \cite[Theorem 3.8]{Dacorogna-book}, and its proof is 
relatively standard, but we regardless outline the proof for the unfamiliar reader. 

\begin{lemma}\label{lem:lim_measurability_along_radii_general}
	Let $Z$ be a $\sigma$-finite measure space, and let $f \colon Z \times (0, \infty) \to \R$ be a measurable function such that $t \mapsto f(x, t)$ is continuous for a.e.\ $x \in Z$. Then the sets
	\begin{align*}
		\cI_f^{+}(t, r) &= \{ x \in Z : \textstyle \inf_{\rho \geq r} f(x, \rho) \ge t \},\\
		\cS_f^{-}(t, r) &= \{ x \in Z : \textstyle \sup_{\rho \geq r} f(x, \rho) \le t \},\\
		\cL\cI_f^{+}(t) &= \{ x \in Z : \textstyle \liminf_{\rho \to \infty} f(x, \rho) \ge t \},  \text{ and}\\
		\cL\cS_f^{-}(t) &= \{ x \in Z : \textstyle \limsup_{\rho \to \infty} f(x, \rho) \le t \}
	\end{align*}
	are all measurable subsets of $Z$ for all $t \in \R$ and $r > 0$.
\end{lemma}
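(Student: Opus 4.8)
The plan is to reduce each of the four sets to a countable Boolean combination of sets that are manifestly measurable, using the continuity of $t \mapsto f(x,t)$ to replace suprema and infima over the uncountable ray $[r,\infty)$ by suprema and infima over a countable dense subset $\Q \cap [r, \infty)$. The first observation is that for each fixed $\rho > 0$, the slice $x \mapsto f(x, \rho)$ is a measurable function on $Z$: this follows from Fubini/Tonelli-type considerations, since $f$ is jointly measurable on the product space $Z \times (0, \infty)$, so $\{x : f(x, \rho) \ge t\}$ is measurable for all but possibly a null-set of $\rho$; but in fact one can argue that for \emph{every} $\rho$ the slice is measurable by noting that a jointly measurable function has measurable slices after modification on a null set, and then using the continuity hypothesis to pin down the slice everywhere. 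A cleaner route: since $t \mapsto f(x, t)$ is continuous for a.e.\ $x$, we have $\inf_{\rho \ge r} f(x, \rho) = \inf_{\rho \in \Q, \rho \ge r} f(x, \rho)$ for a.e.\ $x$, and similarly for the supremum.

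\textbf{Key steps.} First I would let $N \subset Z$ be the null set outside of which $t \mapsto f(x,t)$ is continuous, and work on $Z \setminus N$ (handling $N$ trivially at the end, since subsets of a null set in a complete measure space are measurable — or, if completeness is not assumed, one notes $N$ itself is measurable by hypothesis and the sets in question intersected with $N$ can be absorbed). Second, for each $q \in \Q$ with $q \ge r$, the set $\{x : f(x, q) \ge t\}$ is measurable; call it $A_q(t)$. Third, I claim
\[
\cI_f^{+}(t, r) \setminus N = \bigcap_{\substack{q \in \Q \\ q \ge r}} \{x \in Z \setminus N : f(x,q) \ge t\} = \left(\bigcap_{\substack{q \in \Q,\, q \ge r}} A_q(t)\right) \setminus N,
\]
which is measurable as a countable intersection of measurable sets minus a null set; the equality holds because for $x \notin N$, continuity gives $\inf_{\rho \ge r} f(x, \rho) = \inf_{q \in \Q, q \ge r} f(x, q)$, and $\inf_{q} f(x,q) \ge t$ is equivalent to $f(x,q) \ge t$ for all such $q$. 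The analogous statement for $\cS_f^{-}(t,r)$ uses a countable union in the obvious dual way. Fourth, for the limit sets, write $\liminf_{\rho \to \infty} f(x, \rho) = \sup_{m \in \N} \inf_{\rho \ge m} f(x, \rho)$, so that
\[
\cL\cI_f^{+}(t) = \{x : \textstyle\sup_m \inf_{\rho \ge m} f(x,\rho) \ge t\} = \bigcap_{k \in \N} \bigcup_{m \in \N} \cI_f^{+}\!\left(t - \tfrac1k,\, m\right),
\]
a countable combination of the already-established measurable sets; dually for $\cL\cS_f^{-}(t)$, using $\limsup_{\rho \to \infty} f(x,\rho) = \inf_m \sup_{\rho \ge m} f(x,\rho)$.

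\textbf{Main obstacle.} The only genuinely delicate point is justifying that the slice $x \mapsto f(x, \rho)$ is measurable on $Z$ for the \emph{specific} countable set of $\rho$'s we wish to use — joint measurability only guarantees measurable slices for almost every $\rho$, not every $\rho$. The fix is to use the continuity hypothesis to enlarge the allowable set: since $t \mapsto f(x,t)$ is continuous for a.e.\ $x$, and since we may pick a countable dense set $D \subset (0, \infty)$ consisting of points $\rho$ for which the slice $x \mapsto f(x, \rho)$ \emph{is} measurable (such $\rho$ form a co-null, hence dense, subset of $(0, \infty)$), we then have $\inf_{\rho \ge r} f(x, \rho) = \inf_{\rho \in D, \rho \ge r} f(x, \rho)$ for a.e.\ $x$ by density and continuity, and the argument proceeds verbatim with $D$ in place of $\Q$. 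I expect this to be the step requiring the most care in the writeup, though it is entirely routine; the rest is bookkeeping with countable unions and intersections. One should also remark that the $\sigma$-finiteness of $Z$ is what guarantees Fubini-type statements (and hence the co-null set of good slices) apply; this is why it appears in the hypotheses.
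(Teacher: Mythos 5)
Your proposal is correct and follows essentially the same route as the paper: measurability of a.e.\ slice via Fubini, a countable dense set of ``good'' radii combined with continuity to write $\cI_f^{+}(t,r)$ as a countable intersection (up to a null set), and then a countable union/intersection formula expressing the limit sets in terms of the $\cI_f^{+}(t-k^{-1}, m)$. Your formula $\cL\cI_f^{+}(t)=\bigcap_{k}\bigcup_{m}\cI_f^{+}(t-k^{-1},m)$ is equivalent to the paper's (whose extra intersection over $j$ is redundant by the monotonicity of $r\mapsto\inf_{\rho\ge r}f(x,\rho)$), and you correctly flag the same two delicate points the paper handles: restricting to radii with measurable slices, and absorbing the exceptional null set.
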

\begin{proof}
	It suffices to prove the claims for $\cI_f^{+}(t, r)$ and $\cL\cI_f^{+}(t)$, as the supremum claims follow by applying the infimum claims to $-f$. 
	
	Note that the map $x \mapsto f(x, r)$ is measurable for a.e.\ $r \in (0, \infty)$; this fact is generally considered a part of Fubini's theorem, where it ensues that the iterated integrals used in the statement are well-defined. We denote
	\[
	Z_f^{+}(r, t) = \{ x \in Z : f(x, r) \ge t \}.
	\]
	Then for every $t \in \R$, $Z_f^{+}(r, t)$ are measurable for a.e.\ $r > 0$; we denote this full-measure set of radii by $\cR_t$. We also denote the full measure set of $x \in Z$ where the map $r \mapsto f(x, r)$ is continuous by $\cF$.
	
	Now, fix $t$. Since $\cR_t$ is of full measure, it is dense in $(0, \infty)$. Moreover, since $(0, \infty)$ is second-countable, $\cR_t$ is also second-countable, and thus separable. Hence, we may select a countable subset $\cQ_t \subset \cR_t$ that is dense in $(0, \infty)$. Now, if $x \in \cF$, then by continuity, we have $\inf_{\rho \geq r} f(x, \rho) \ge t$ if and only if $f(x, \rho) \ge t$ for all $\rho \in \cQ_t \cap [r, \infty)$. Thus, it follows that
	\[
	\cI_f^{+}(t, r) \cap \cF = \bigcap_{\rho \in \cQ_t \cap [r, \infty)} Z_f^{+}(\rho, t).
	\]
	Since the right hand side is a countable intersection of measurable sets, we conclude that $\cI_f^{+}(t, r) \cap \cF$ is measurable. And since $\cF$ is of full measure, it follows that $\cI_f^{+}(t, r)$ is measurable for all $t \in \R$. 
	
	For $\cL\cI_f^{+}(t)$, one shows that
	\begin{equation}\label{eq:limit_set_def}
		\cL\cI_f^{+}(t) = \bigcap_{k = 1}^\infty \bigcap_{j = 1}^\infty 
		\bigcup_{i = j}^\infty \cI_f^{+}(t - k^{-1}, i).
	\end{equation}
	We leave the proof of \eqref{eq:limit_set_def} to the reader, as it is a straightforward process of reasoning about membership in sets, where the only observation one needs is that $\inf_{\rho \geq r_1} f(x, \rho) \le \inf_{\rho \geq r_2} f(x, \rho)$ whenever $r_1 \le r_2$. Thus, $\cL\cI_f^{+}(t)$ is measurable, since it can be formed out of measurable sets by countable unions and intersections.
\end{proof}

We are now ready to prove Proposition~\ref{lem:conv_along_radii}

\begin{proof}[Proof of Proposition~\ref{lem:conv_along_radii}]
	We use the representative of $u$ from Lemma \ref{lem:lim_exists_along_radii} which makes $f$ locally 
	absolutely continuous on almost all lines of the form $(0, \infty) \times \{\theta\}$. Note that, in 
	particular, this means that Lemma \ref{lem:lim_measurability_along_radii_general} applies to $f$ 
	with $(Z, \mu) = (\S^{n-1}, \vol_{\S^{n-1}})$. 
	Let $L_f$ again denote the set of $\theta \in \S^{n-1}$ where $\lim_{r \to \infty} f(r, \theta)$ exists, in which case $L_f$ is a full-measure set. We then let
	\[
	A_f(c) = \{ \theta \in L_f : \lim_{r \to \infty} f(r, \theta) = c\},
	\]
	and suppose towards contradiction that $A_f(c)$ is not of full measure. Using the notation of 
	Lemma~\ref{lem:lim_measurability_along_radii_general}, we have
	\[
	L_f \setminus A_f(c) = L_f \cap \bigcup_{k=1}^\infty \left( \cL\cI_f^{+}(c + k^{-1}) \cup \cL\cS_f^{-}(c - k^{-1})\right).
	\]
	Since the sets $\cL\cI_f^{+}(t)$ and $\cL\cS_f^{-}(t)$ are measurable by Lemma \ref{lem:lim_measurability_along_radii_general}, there exists a $k \in \Z_{> 0}$ such that either $\cL\cI_f^{+}(c + k^{-1})$ or $\cL\cS_f^{-}(c - k^{-1})$ has positive measure. We restrict our discussion to the case where $\cL\cI_f^{+}(c + k^{-1})$ has positive measure, as the proof is symmetric for the other case.
	
	Let $c' = c + k^{-1} - (k+1)^{-1}$, where we note that $c' > c$. By \eqref{eq:limit_set_def}, we observe that
	\[
	\cL\cI_f^{+}(c + k^{-1}) \subset \bigcup_{i=j}^\infty \cI_f^{+}(c', j),
	\]
	where the sets $\cI_f^{+}(c', j)$ are also measurable by Lemma \ref{lem:lim_measurability_along_radii_general}. Thus, we can find a $j_0 > 0$ such that $\cI_f^{+}(c', j_0)$ has positive measure. 
	Therefore, we have that 
	$c \vol_{\S^{n-1}}(\cI_f^{+}(c', j_0))< c' \vol_{\S^{n-1}}(\cI_f^{+}(c', j_0))$.
	Now, however, we can use the assumption that $\tr(u) \equiv c$ and the definition of the set $\cI_f^{+}(c', j_0)$ to conclude that
	\begin{multline*}
		c \vol_{\S^{n-1}}(\cI_f^{+}(c', j_0))
		< c' \vol_{\S^{n-1}}(\cI_f^{+}(c', j_0))
		= \esslim_{r \to \infty} \int_{\cI_f^{+}(c', j_0)} c' \, \vol_{\S^{n-1}}\\
		\le \esslim_{r \to \infty} \int_{\cI_f^{+}(c', j_0)} f \circ \phi_r \, \vol_{\S^{n-1}}
		= \mu_u(\cI_f^{+}(c', j_0))
		= c \vol_{\S^{n-1}}(\cI_f^{+}(c', j_0)),
	\end{multline*}
	which is impossible. Thus, $A_f(c)$ has full measure, and the claim is proven. 
\end{proof}

\subsection{One-dimensional Sobolev inequality}

The last piece required to complete the proof of Theorem \ref{thm:Hn_classification} is the following one-dimensional Sobolev inequality which, beneath the technical considerations, is just a simple integration by parts followed by H\"older's inequality.

\begin{lemma}\label{lem:one_dim_exp_Sobolev}
	Let $p \in [1, \infty)$, let $\kappa > 0$, and let $f \colon (0, \infty) \to \R$ be a locally absolutely continuous function with $\lim_{t \to 0} f(t) = \lim_{t \to \infty} f(t) = 0$. Then
	\[
	\int_0^\infty \abs{f(t)}^p e^{\kappa t} \, dt 
	\leq \left( \frac{p}{\kappa} \right)^p 
	\int_0^\infty \abs{f'(t)}^p e^{\kappa t} \, dt 
	\]
\end{lemma}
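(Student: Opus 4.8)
The plan is to prove the inequality by integration by parts, using the absolute continuity hypothesis and the vanishing conditions at $0$ and $\infty$ to kill the boundary terms. First I would reduce to the case where $f$ is nonnegative: replacing $f$ by $\abs{f}$ does not change $\int_0^\infty \abs{f}^p e^{\kappa t}\, dt$, preserves local absolute continuity, and only possibly decreases $\int_0^\infty \abs{f'}^p e^{\kappa t}\, dt$ since $\abs{\abs{f}'} \le \abs{f'}$ a.e. We may further assume the right-hand side is finite, as otherwise there is nothing to prove. I would also observe that under these hypotheses $\abs{f}^p$ is itself locally absolutely continuous with derivative $p \abs{f}^{p-1} f' \sgn f$ a.e. (using $p \ge 1$; for $p=1$ this is immediate, for $p>1$ it follows from the chain rule for Sobolev/AC functions composed with the $C^1$-away-from-origin map $s \mapsto \abs{s}^p$, together with an approximation argument near the zero set of $f$).

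The core computation is then: write $\int_0^\infty \abs{f(t)}^p e^{\kappa t}\, dt = \frac{1}{\kappa}\int_0^\infty \abs{f(t)}^p \frac{d}{dt}(e^{\kappa t})\, dt$ and integrate by parts on a truncated interval $[\delta, T]$. This yields
\[
\int_\delta^T \abs{f}^p e^{\kappa t}\, dt
= \frac{1}{\kappa}\Bigl[\abs{f(T)}^p e^{\kappa T} - \abs{f(\delta)}^p e^{\kappa \delta}\Bigr]
- \frac{p}{\kappa}\int_\delta^T \abs{f}^{p-1}(\sgn f) f'\, e^{\kappa t}\, dt.
\]
The subtle point is controlling the boundary term $\abs{f(T)}^p e^{\kappa T}$ as $T \to \infty$: knowing merely $f(T) \to 0$ is not enough, so I would argue that the finiteness of $\int_0^\infty \abs{f}^p e^{\kappa t}\,dt$ (which, if it were infinite, would make the desired inequality force the right side to be infinite too — so we may assume it finite, or alternatively run the whole argument with $f$ truncated and pass to a limit) together with monotonicity-type considerations forces $\liminf_{T\to\infty}\abs{f(T)}^p e^{\kappa T} = 0$ along a sequence $T_k \to \infty$; similarly $\abs{f(\delta)}^p e^{\kappa\delta} \to 0$ as $\delta \to 0$ since $f(\delta)\to 0$ and $e^{\kappa\delta}\to 1$. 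Choosing such sequences and passing to the limit, the boundary terms vanish and we obtain
\[
\int_0^\infty \abs{f}^p e^{\kappa t}\, dt
\le \frac{p}{\kappa}\int_0^\infty \abs{f}^{p-1} \abs{f'}\, e^{\kappa t}\, dt.
\]

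Finally I would apply H\"older's inequality with exponents $p/(p-1)$ and $p$ (when $p>1$; the case $p=1$ is immediate since then the factor $\abs{f}^{p-1}$ is just $1$), splitting the weight as $e^{\kappa t} = (e^{\kappa t})^{(p-1)/p}\cdot (e^{\kappa t})^{1/p}$:
\[
\frac{p}{\kappa}\int_0^\infty \abs{f}^{p-1}\abs{f'} e^{\kappa t}\, dt
\le \frac{p}{\kappa}\left(\int_0^\infty \abs{f}^p e^{\kappa t}\, dt\right)^{\frac{p-1}{p}} \left(\int_0^\infty \abs{f'}^p e^{\kappa t}\, dt\right)^{\frac{1}{p}}.
\]
Combining the two displays and dividing through by $\bigl(\int_0^\infty \abs{f}^p e^{\kappa t}\, dt\bigr)^{(p-1)/p}$ (legitimate once we know this quantity is finite and nonzero; the zero case is trivial) yields $\bigl(\int_0^\infty \abs{f}^p e^{\kappa t}\, dt\bigr)^{1/p} \le \frac{p}{\kappa}\bigl(\int_0^\infty \abs{f'}^p e^{\kappa t}\, dt\bigr)^{1/p}$, which is the claim after raising to the $p$-th power. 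The main obstacle I anticipate is the rigorous handling of the boundary term at infinity and the chain rule for $\abs{f}^p$ — both are standard but require care with the a.e.-defined derivatives and a truncation/limiting argument rather than a naive application of the fundamental theorem of calculus on $(0,\infty)$ directly.
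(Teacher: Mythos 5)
Your core computation --- integrating $\abs{f}^p \, d(e^{\kappa t}/\kappa)$ by parts, applying H\"older with the weight split as $(e^{\kappa t})^{(p-1)/p}(e^{\kappa t})^{1/p}$, and absorbing the factor $\bigl(\int_0^\infty \abs{f}^p e^{\kappa t}\,dt\bigr)^{(p-1)/p}$ --- is exactly the paper's argument, and your treatment of the chain rule for $\abs{f}^p$ and of the boundary term at $\delta \to 0$ is fine. The gap is in the boundary term at infinity, and you have half-identified it yourself. Your $\liminf$ argument (extracting $T_k \to \infty$ with $\abs{f(T_k)}^p e^{\kappa T_k} \to 0$) only works once you already know $\int_0^\infty \abs{f}^p e^{\kappa t}\,dt < \infty$, and your stated reduction to that case --- ``if it were infinite, the desired inequality would force the right side to be infinite too, so we may assume it finite'' --- is circular: the inequality is precisely what you are trying to prove, so you cannot invoke it to dispose of the case $\int_0^\infty \abs{f}^p e^{\kappa t}\,dt = \infty$ with $\int_0^\infty \abs{f'}^p e^{\kappa t}\,dt < \infty$. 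This case is not vacuous a priori (knowing only $f(t) \to 0$ gives no control on $\abs{f(T)}^p e^{\kappa T}$), and it is exactly the case the lemma must exclude in its application, where finiteness of the left side is the conclusion being sought.

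The fix you gesture at (``run the argument with $f$ truncated and pass to a limit'') is the right idea, but the truncation must be in the \emph{range}, not the domain: a domain truncation to $[\delta, T]$ leaves you with the same uncontrolled boundary term. The paper sets $f_\eps = \sgn(f)\,(\abs{f}-\eps)_+$; since $f \to 0$ at both ends of $(0,\infty)$, the set $\{\abs{f} \ge \eps\}$ is compact in $(0,\infty)$, so $f_\eps$ is compactly supported, locally absolutely continuous with $\abs{f_\eps'} \le \abs{f'}$ a.e., and has finite weighted $L^p$-norm automatically. The integration by parts then has no boundary terms at all, the absorption step is legitimate because the relevant integral is finite, and letting $\eps \to 0$ with monotone convergence yields the inequality in full generality, including the case where the left-hand side is infinite. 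With that one modification your proof closes completely.
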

\begin{proof}
	We fix $\eps > 0$ and define a function $f_\eps \colon (0, \infty) \to [0, \infty)$ by the following truncation procedure:
	\[
	f_\eps(t) = \begin{cases}
		f(t) + \eps & f(t) \le -\eps,\\
		0 & -\eps < f(t) < \eps,\\
		f(t) - \eps & f(t) \ge \eps.
	\end{cases}
	\]
	By our assumption on the limits of $f$, $f_\eps$ is compactly supported. Moreover, since we have 
	$\abs{f_\eps(t_1) - f_\eps(t_2)} \le \abs{f(t_1) - f(t_2)}$ for all $t_1, t_2 \in (0, \infty)$, and since $f$ is locally absolutely continuous, we get immediately from the $\eps$-$\delta$ -definition of absolute continuity that $f_\eps$ is locally absolutely continuous. We also observe that $\abs{f_\eps'(t)} \le \abs{f'(t)}$ for a.e.\ $t \in (0, \infty)$. 
	
	The function $s \mapsto \abs{s}^p$ on the reals is locally Lipschitz. Thus, by a chain rule of locally Lipschitz and locally absolutely continuous functions (see e.g.\ \cite[Theorem 3.59]{Leoni_SobolevBook}), $\abs{f_\eps}^p$ is locally absolutely continuous with
	\[
	\frac{d}{dt} \abs{f_\eps(t)}^p = p \abs{f_\eps(t)}^{p-2} f_\eps(t) f_\eps'(t) 
	\]
	for a.e.\ $t \in (0, \infty)$, where we interpret the right hand side as 0 whenever $f_\eps'(t) = 0$.
	
	Since $f_\eps$ is compactly supported, by using integration by parts for absolutely continuous functions (see e.g.\ \cite[Corollary 3.23]{Leoni_SobolevBook}), we thus have
	\[
	\int_0^\infty \abs{f_\eps(t)}^p e^{\kappa t} \, dt
	= -\int_0^\infty p \abs{f_\eps(t)}^{p-2} f_\eps(t) f_\eps'(t) \frac{e^{\kappa t}}{\kappa} \, dt.
	\]
	We then apply H\"older's inequality on the right hand side, obtaining that
	\[
	\int_0^\infty \abs{f_\eps(t)}^p e^{\kappa t} \, dt
	\leq \frac{p}{\kappa} 
	\left( \int_0^\infty \abs{f_\eps(t)}^p e^{\kappa t} \, dt \right)^\frac{p-1}{p}
	\left( \int_0^\infty \abs{f_\eps'(t)}^p e^{\kappa t} \, dt \right)^\frac{1}{p}.
	\]
	Since $f_\eps$ is continuous and compactly supported, the first integral on the right hand side is finite, and we may thus absorb it to the left hand side, obtaining
	\[
	\int_0^\infty \abs{f_\eps(t)}^p e^{\kappa t} \, dt
	\le \left( \frac{p}{\kappa} \right)^p \int_0^\infty \abs{f_\eps'(t)}^p e^{\kappa t} \, dt
	\le \left( \frac{p}{\kappa} \right)^p \int_0^\infty \abs{f'(t)}^p e^{\kappa t} \, dt.
	\]
	Finally, we let $\eps \to 0$ and use monotone convergence on the left hand side of the above estimate, and the claim follows.
\end{proof}

\subsection{Completion of the proof of Theorem~\ref{thm:Hn_classification}}

\begin{proof}[Proof of Theorem \ref{thm:Hn_classification}]
	If $p > n-1$, then $W^{1,p}(\H^n) + \R \ne D^{1,p}(\H^n)$ by Lemma \ref{lem:Hn_counterexamples}. Thus, for the remaining part of the theorem, we suppose that $u \in D^{1,p}(\H^n)$ with $p \in [1, n-1]$, with the objective of showing that $u - c \in D^{1,p}(\H^n)$ for some $c \in \R$. Since $u \in D^{1,p}(\H^n)$ with $p \in [1, \infty)$, the trace $\tr(u)$ exists. Moreover, since $p \le n-1$, we have by Corollary \ref{cor:constant_trace} that $\tr(u) \equiv c$ for some constant value $c \in \R$. We claim that $u - c \in L^p(\H^n)$. 
	
	For this, we first note that we may assume that $u - c$ vanishes in a unit ball $\B_{\H^n}(0, 1)$ around our chosen origin of $\H^n$. We achieve this by removing a compactly supported piece of $u$ by using a cutoff function in $C^\infty_0(\H^n)$ and noting that the removed piece has finite $L^p$-norm by a local Sobolev inequality. Following this, we denote $f = (u - c) \circ \phi$, and note that by 
	Proposition~\ref{lem:conv_along_radii}, we may assume that $\lim_{r \to \infty} f(r, \theta) = 0$ for a.e.\ $\theta \in \S^{n-1}$. 
	
	Now, we are in position to apply Lemma \ref{lem:one_dim_exp_Sobolev}, which tells us that for a.e.\ $\theta \in \S^{n-1}$, we have
	\[
	\int_0^\infty \abs{f(r, \theta)}^p e^{(n-1)r} \, dr
	\le \left( \frac{p}{n-1} \right)^p 
	\int_0^\infty \abs{\frac{d}{dr} f(r, \theta)}^p e^{(n-1)r} \, dr.
	\]
	Moreover, we have for all $r \ge 1$ that $\left(2^{-1} - e^{-2}\right) e^{r} \le \sinh(r) \le 2^{-1} e^{r}$. Thus, since $f(r, \theta)$ vanishes for $r < 1$, we can convert the previous estimate into
	\[
	\int_0^\infty \abs{f(r, \theta)}^p \sinh^{n-1}(r) \, dr
	\le \left( \frac{e^2}{e^2 - 2} \right)^{n-1} \left( \frac{p}{n-1} \right)^p 
	\int_0^\infty \abs{d_\R f_{(r, \theta)}}^p \sinh^{n-1}(r) \, dr.
	\]
	But now, by integrating this estimate over $\S^{n-1}$ and applying Lemma \ref{lem:hyperbolic_chara}, we conclude that
	\begin{multline*}
		\norm{u-c}_{L^p(\H^n)}^p 
		= \int_{(0, \infty) \times \S^{n-1}} \sinh^{n-1}(r) \abs{f(r, \theta)}^p \, dr \vol_{\S^{n-1}}\\
		\lesssim \int_{(0, \infty) \times \S^{n-1}} \sinh^{n-1}(r) \abs{d_\R f_{(r, \theta)}}^p \, dr 
		\vol_{\S^{n-1}}
		\lesssim \norm{du}_{L^p(T^* \H^n)}^p < \infty,
	\end{multline*}
	completing the proof of the claim.
\end{proof}

\end{document}